\theoremstyle{plain} 
\newtheorem{thm}{Theorem}[section]
\newtheorem{lemma}[thm]{Lemma}
\newtheorem{prop}[thm]{Proposition}
\newtheorem{corollary}[thm]{Corollary}
\theoremstyle{definition}
\newtheorem{definition}[thm]{Definition}
\newtheorem{remark}[thm]{Remark}
 \theoremstyle{plain} 
\newcommand{\thistheoremname}{}
\newtheorem{genericthm}[thm]{\thistheoremname}
\newtheorem*{genericthm*}{\thistheoremname}
\newenvironment{namedthm*}[1]
  {\renewcommand{\thistheoremname}{#1}%
   \begin{genericthm*}}
  {\end{genericthm*}}
\newcommand{\C}{\mathbb{C}}
\newcommand{\Q}{\mathbb{Q}}
\newcommand{\N}{\mathbb{N}}
\newcommand{\Z}{\mathbb{Z}}
\newcommand{\cM}{\mathcal{M}}
\newcommand{\cN}{\mathcal{N}}
\newcommand{\cO}{\mathcal{O}}
\newcommand{\cS}{\mathcal{S}}
\newcommand{\sD}{\mathscr{D}}
\renewcommand{\d}{\partial}
\newcommand{\GL}{\mathrm{GL}}
\newcommand{\gr}{\mathrm{gr}}
\newcommand{\IC}{\mathrm{IC}}
\newcommand{\Sym}{\mathrm{Sym}}
\newcommand{\lie}{\mathfrak{g}}
\newcommand{\op}{\operatorname}
\DeclareMathOperator{\Ker}{Ker}
\DeclareMathOperator{\supp}{supp}
\title{Filtrations of $\sD$-modules along semi-invariant functions}
\author{Andr\'as C. L\H{o}rincz and Ruijie Yang}
\begin{document}

\begin{abstract}
 Given a smooth algebraic variety $X$ with an action of a connected reductive linear algebraic group $G$, and an equivariant $\sD$-module $\cM$, we study the $G$-decompositions of the associated $V$-, Hodge, and weight filtrations.

If $\cM$ is the localization of a $\sD$-module $\cS$ underlying a pure twistor $\sD$-module (e.g. when $\cS$ is simple) along a semi-invariant function, we determine the weight level of any element in an irreducible isotypic component of $\cM$ in terms of multiplicities of roots of $b$-functions. If $\cS$ underlies a pure Hodge module, we show that the Hodge level is governed by the degrees of another class of polynomials, also expressible in terms of $b$-functions. 
 
 As an application, if $X$ is an affine spherical variety, we describe these filtrations representation-theoretically in terms of roots of $b$-functions, and compute all higher multiplier and Hodge ideals associated with semi-invariant functions. Explicit examples include the spaces of general, skew-symmetric, and symmetric matrices, as well as the Freudenthal cubic on the fundamental representation of $E_6$.

\end{abstract}

\maketitle

\section{Introduction}

The main goal of this paper is to compute the $V$-filtration of Kashiwara and Malgrange for equivariant $\sD$-modules along semi-invariant functions with respect to a reductive linear algebraic group, using representation-theoretic techniques. Among the applications, we characterize the weight level and Hodge level of elements in an irreducible isotypic component of the localization of $\sD$-modules along such semi-invariant functions.

In the theory of $\sD$-modules, the $V$-filtration determines the nearby and vanishing cycles and plays an essential role in the theory of mixed Hodge/twistor modules \cite{Saito88,MochizukitwistorDmodule}. On the other hand, beginning with the work of Budur and Saito \cite{BS05}, it has been observed that several key invariants in birational geometry - such as the log canonical threshold and multiplier ideals - are shadows of the $V$-filtration on the structure sheaf. In recent years, this link has been explored intensively. For example, by leveraging the Hodge filtration on the $V$-filtration, \emph{Hodge ideals} \cite{MP16} and \emph{higher multiplier ideals} \cite{SY23} (see also \cite{Saito16}) have been introduced as refinements of multiplier ideals.

Despite its significance, the explicit computation of the $V$-filtration remains challenging and has been carried out only for a few classes of polynomials:  semi-quasi-homogeneous polynomials \cite{Saito09}, normal crossing divisors \cite{Saito88} and Newton-nondegenerate polynomials \cite{Saito94}. In subsequent work, the corresponding higher multiplier and Hodge ideals were also computed, see \cite{SY23,MP16,Zhang,Saito16,JKSY}. For related work on free or parametrically prime divisors, see also \cite{freedivisor,BD24,dakin}.

The starting point of this paper is the observation that, in the presence of a group action, one can explicitly compute the isotypic components of the $V$-filtrations along semi-invariant functions - provided they correspond to irreducible isotypic components of equivariant $\sD$-modules.  In particular, this yields a large new class of examples of hypersurfaces that often have \emph{non-isolated} singularities, including semi-invariant functions on affine spherical varieties.

\subsection{Filtrations of equivariant $\sD$-modules and irreducible isotypic components}\label{sec:1.1}
Suppose we have an affine space $X$ with an action by a connected reductive linear algebraic group $G$. We will focus on a left (strongly) \emph{$G$-equivariant} $\sD_X$-module $\cM$, along a semi-invariant function $f$ of weight $\sigma$  with respect to $G$. For technical reasons, we require  $\cM$ to be regular holonomic with quasi-unipotent monodromy along $f$ (which is automatic when $X$ has only finitely many $G$-orbits; see \cite[Theorem 11.6.1]{HTT}).

Next, we want to look at the graph embedding of $f$, denoted $\iota:X\to X\times \C$, and consider its direct image $\iota_{+}\cM$. By the semi-invariance of $f$, we have a natural diagonal $G$-action on $X\times \C$ such that the map $\iota$ is $G$-equivariant, and the $V$-filtration $V^{\alpha}\iota_{+}\cM$ along $\{t=0\}$ is (weakly) $G$-equivariant for all $\alpha$, where $t$ is the coordinate of $\C$ (see Lemma \ref{lemma: G and s compatible on V filtration}). For simplicity, we work with $\mathcal{M} = \cS_f$, the localization of a \emph{simple} equivariant $\sD$-module $\cS$ along $f$, supported on $X$. Then there is an isomorphism of $\sD_X\langle s,t\rangle$-modules
\begin{equation}\label{eqn: Malgrange isomorphism intro}
\iota_{+} \mathcal{M} \xrightarrow{\sim} \mathcal{M}[s] f^s,
\end{equation}
where $s = -\partial_t t$ and $t$ acts by $s\mapsto s+1$ (cf. \cite{Malgrange,MPVfiltration}). 

Let $\Lambda$ be the set of integral dominant weights of $G$, parameterizing the finite-dimensional irreducible $G$-modules. For $\lambda\in \Lambda$, let $\cM_{\lambda}$ denote the associated $G$-isotypic component of $\cM$. From now on, we fix a $\lambda \in \Lambda$ such that 
\[ \textrm{$\cM_\lambda$ is an \emph{irreducible} $G$-module}.\]
\noindent Under this assumption, $f$ must be homogeneous \cite[Proposition 4.3]{saki} and we denote $d\colonequals \deg f$. In Proposition \ref{prop:bfunirred}, we show that there exist a unique semi-invariant differential operator $\partial f$ with constant coefficients of weight $-\sigma$, and a degree $d$ polynomial $b_{\lambda}(s)$, such that for any $0\neq m\in \cM_{\lambda}$,
\begin{equation}\label{eqn: blambda} \partial f \cdot m f^{s+1} = b_\lambda(s) \cdot m f^s,\end{equation}
and $b_m(s)=b_{\lambda}(s) \colonequals\prod_{i=1}^{d} (s + \lambda_i)$ with $\lambda_i\in \Q$. This type of equation originates from the idea of M. Sato for computing $b$-functions of
semi-invariants on prehomogeneous spaces, and is further developed in recent work \cite{holoprehom}. See Section \ref{sec:moreb} for additional explicit results on such $b_{\lambda}(s)$.

To describe the $V$-filtration, we define a new class of polynomials: for $m\in \cM$ and $\alpha \in \Q$, the $p$-\emph{function} $p_{m,\alpha}(s)\in \C[s]$  is the monic polynomial of smallest degree such that $p_{m,\alpha}(s)mf^s\in V^{\alpha}\iota_{+}\cM$. Due to $G$-equivariance of the $V$-filtration, any two nonzero elements in $\cM_{\lambda}$ yield the same polynomial, denoted by $p_{\lambda,\alpha}(s)$.  Set 
\begin{equation}\label{eqn: [s+a]}
[s + a]_k\colonequals \begin{cases}\prod_{i=0}^{k-1} (s + a + i), \quad &\text{if } k \geq 1,\\
1, \quad &\text{otherwise}.
\end{cases} \end{equation}
\begin{thm}\label{thm: V filtration general G}
The isomorphism \eqref{eqn: Malgrange isomorphism intro} induces a $(\C[s],G)$-isomorphism:
\[ (V^{\alpha}\iota_{+}\cM)_{\lambda}\cong (p_{\lambda,\alpha}(s))\cdot \cM_{\lambda}f^s,\]
where $(p(s))$ denotes the ideal in $\C[s]$ generated by $p(s)$, and
\begin{equation}\label{eqn: plambdaalpha in general}
p_{\lambda,\alpha}(s) = \prod_{i=1}^{d} [s + \lambda_i]_{\lceil \alpha - \lambda_i\rceil}.
\end{equation}
\end{thm}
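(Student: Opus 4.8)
The plan is to first reduce the statement to the identification of the polynomial $p_{\lambda,\alpha}(s)$, which is essentially formal, and then to prove \eqref{eqn: plambdaalpha in general} by a two-sided estimate on the divisors of $p_{\lambda,\alpha}$. For the reduction: $V^{\alpha}\iota_{+}\cM$ is a $V^0\sD_{X\times\C}$-submodule, hence stable under $s=-\d_t t$, and it is $G$-equivariant by Lemma~\ref{lemma: G and s compatible on V filtration}; therefore $(V^{\alpha}\iota_{+}\cM)_\lambda$ is a $G$-stable $\C[s]$-submodule of $\cM_\lambda[s]f^s\cong\cM_\lambda\otimes_\C\C[s]$. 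Since $\cM_\lambda$ is irreducible, Schur's lemma identifies the $G$-stable $\C[s]$-submodules of $\cM_\lambda\otimes_\C\C[s]$ with the ideals of $\C[s]$, so $(V^{\alpha}\iota_{+}\cM)_\lambda=(p_{\lambda,\alpha}(s))\cdot\cM_\lambda[s]f^s$ for the unique monic generator, which is $p_{\lambda,\alpha}(s)$ by definition. It thus remains to prove \eqref{eqn: plambdaalpha in general}; writing $\lambda_{\min}=\min_i\lambda_i$, and using that $s+\alpha$ acts nilpotently on $\gr_V^{\alpha}$, one sees that $(V^{\alpha}\iota_{+}\cM)_\lambda$ forces $p_{\lambda,\alpha}(s)=\prod_{\gamma<\alpha}(s+\gamma)^{e_\lambda(\gamma)}$ with $e_\lambda(\gamma)\in\Z_{\geq 0}$, so the claim is equivalent to $e_\lambda(\gamma)=m(\gamma):=\#\{\,i:\gamma-\lambda_i\in\Z_{\geq0}\,\}$ for all $\gamma$. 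Three further inputs will be used: (i) since $\cM=\cS_f$ is $f$-localized, $t$ acts bijectively on $\iota_{+}\cM$ and $tV^{\alpha}=V^{\alpha+1}$ for all $\alpha$, which on the $\lambda$-part gives $p_{\lambda+\sigma,\alpha+1}(s)=p_{\lambda,\alpha}(s+1)$; (ii) by Proposition~\ref{prop:bfunirred}, $b_\lambda(s)$ is the minimal $b$-function of $mf^s$, so by the standard relation between the minimal $b$-function and the $V$-filtration one has $mf^s\in V^{\lambda_{\min}}\setminus V^{>\lambda_{\min}}$, in particular $p_{\lambda,\alpha}=1$ for $\alpha\leq\lambda_{\min}$; (iii) \eqref{eqn: blambda} yields the operator identities $\d f\cdot t=b_\lambda(s)$ and $\d f\cdot f=b_\lambda(s)$ on $\cM_\lambda[s]f^s$ (with $\d f$ a constant-coefficient operator of weight $-\sigma$, and note $b_{\lambda+k\sigma}(s)=b_\lambda(s+k)$ because $\d f$ and $t$ commute).

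For the \emph{upper bound} $p_{\lambda,\alpha}\mid\prod_i[s+\lambda_i]_{\lceil\alpha-\lambda_i\rceil}$: since $\d f\in\sD_X\subseteq V^0\sD_{X\times\C}$, applying $\d f$ to $(V^{\alpha}\iota_{+}\cM)_{\lambda+\sigma}=(p_{\lambda+\sigma,\alpha}(s))\cdot\cM_{\lambda+\sigma}[s]f^s$ and using \eqref{eqn: blambda} shows that $(p_{\lambda+\sigma,\alpha}(s)b_\lambda(s))\cdot\cM_\lambda[s]f^s\subseteq (V^{\alpha}\iota_{+}\cM)_\lambda$, hence $p_{\lambda,\alpha}(s)\mid p_{\lambda+\sigma,\alpha}(s)b_\lambda(s)$; combining this with (i) gives $p_{\lambda,\alpha}(s)\mid p_{\lambda,\alpha-1}(s+1)\,b_\lambda(s)$. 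In terms of the exponents this reads $e_\lambda(\gamma)\leq e_\lambda(\gamma-1)+\mathrm{ord}_{s=-\gamma}b_\lambda$, and iterating down into the range $\gamma<\lambda_{\min}$ (where $e_\lambda\equiv0$) yields $e_\lambda(\gamma)\leq\sum_{j\geq0}\mathrm{ord}_{s=-(\gamma-j)}b_\lambda=m(\gamma)$, which is exactly the asserted divisibility.

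For the \emph{lower bound} $e_\lambda(\gamma)\geq m(\gamma)$ I would argue by induction on the critical values $\gamma\in\bigcup_i(\lambda_i+\Z_{\geq0})$. Granting $e_\lambda(\delta)=m(\delta)$ for $\delta<\gamma$, one has $p_{\lambda,\gamma}(s)=\prod_i[s+\lambda_i]_{\lceil\gamma-\lambda_i\rceil}$, and it suffices to show that the class of $w:=p_{\lambda,\gamma}(s)\,mf^s$ in $\gr_V^{\gamma}$ is killed by $(s+\gamma)^{m(\gamma)}$ but not by $(s+\gamma)^{m(\gamma)-1}$; equivalently, that $\mathrm{ord}_V(w)=\mathrm{ord}_V\!\big((s+\gamma)^{m(\gamma)-1}w\big)=\gamma$. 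To compute these $V$-orders I would track the minimal $b$-function, using $\d f\cdot f=b_\lambda(s)$ to see that multiplying a section by $(s+\delta)$ transforms its minimal $b$-function $b(s)$ into $(s+\delta+1)\,b(s)/(s+\delta)$ whenever $(s+\delta)\mid b(s)$; feeding the factors of $p_{\lambda,\gamma}(s)$ into this rule in increasing order of $\delta$, every root of $b_\lambda$ is transported to a position $\geq\gamma$, with exactly $m(\gamma)$ of them ending at $-\gamma$. Hence the minimal $b$-function of $w$ (and, by the same computation, of $(s+\gamma)^{m(\gamma)-1}w$) has $-\gamma$ as its largest root, of multiplicity $m(\gamma)$; since $\mathrm{ord}_V$ of a section equals minus the largest root of its minimal $b$-function, this gives the two required non-vanishings, so $e_\lambda(\gamma)\geq m(\gamma)$ and, with the upper bound, $e_\lambda(\gamma)=m(\gamma)$, completing the induction and the proof of \eqref{eqn: plambdaalpha in general}.

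The hard part is exactly the last step: one must verify that the transformation rule for minimal $b$-functions under multiplication by $(s+\delta)$ is an \emph{equality}, not merely a divisibility — i.e. that in the presence of the rigid relation \eqref{eqn: blambda} no accidental drop of the $b$-function occurs along the sections $p_{\lambda,\gamma}(s)mf^s$. This is where the minimality assertion of Proposition~\ref{prop:bfunirred} (that $b_\lambda$ is genuinely the $b$-function of $mf^s$, not just some polynomial satisfying \eqref{eqn: blambda}) is essential; I expect this, rather than the bookkeeping in the two estimates, to be the main obstacle.
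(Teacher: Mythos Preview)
Your overall strategy --- reduce to identifying the monic generator $p_{\lambda,\alpha}$ via Schur, then prove two divisibilities --- is the same as the paper's, and your reduction step and upper bound are both correct. The paper, however, packages things differently: it first proves an \emph{exact} formula for $b_{p(s)m}(s)$ for arbitrary $p(s)\in\C[s]$ (Theorem~\ref{thm:bfun}), and then applies Sabbah's characterization (Theorem~\ref{thm: Sabbah}) in one line; your inductive bookkeeping on the critical values $\gamma$ is then unnecessary, since ``all roots of $b_{p(s)m}$ are $\leq -\alpha$'' translates directly into $\prod_i[s+\lambda_i]_{\lceil\alpha-\lambda_i\rceil}\mid p(s)$.

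The gap you flag in the lower bound is real and is precisely the content of Theorem~\ref{thm:bfun}: the transformation rule $b_{(s+\delta)w}(s)=(s+\delta+1)\,b_w(s)/(s+\delta)$ must hold as an equality, and it must be iterable to sections already of the form $p(s)mf^s$ (not just $mf^s$). The relation $\partial f\cdot f=b_\lambda(s)$ by itself gives only the easy divisibility $b_{p(s)m}\mid b_m\cdot c_{p,b_m}(s+1)/c_{p,b_m}(s)$ (Proposition~\ref{prop:bgcd}). The missing idea for the reverse divisibility is Proposition~\ref{prop:invdiff}: the operator $P(s,t)$ realizing the \emph{minimal} $b$-function of $w=p(s)mf^s$ can be chosen $G$-semi-invariant of weight $-\sigma$ (this uses Jacobson density plus the Reynolds operator, and needs that the $G$-span of $w$ is irreducible). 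Once $P$ is semi-invariant, one writes $P=\sum_{i\geq 0}P_i t^i$ with each $P_i\in\sD_X[s]$ semi-invariant of weight $-(i+1)\sigma$, and then Schur's lemma together with Proposition~\ref{prop:bfunirred} (applied to $f^{i+1}$) forces $P_i\cdot mf^{s+i+1}=q_i(s)mf^s$ with $\prod_{j=0}^i b_m(s+j)\mid q_i(s)$. Summing gives $b_m(s)\cdot c_{p,b_m}(s+1)\mid b_{p(s)m}(s)\cdot p(s)$, hence the reverse divisibility. Without this semi-invariance step you cannot exclude an accidental drop of the $b$-function, so your sketch as written is incomplete exactly where you say it is.
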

Since $\cS$ is simple, it underlies a \emph{pure twistor $\sD$-module} on $X$ \cite{Mochizuki}, say of weight $q$. Then $\cM\cdot f^{-\alpha}$ underlies a mixed twistor $\sD$-module for $\alpha\in \Q$ \cite{MochizukitwistorDmodule}, and hence carries a weight filtration starting at weight $q$. If $\cS$ underlies a polarized Hodge module (e.g. $\cS=\cO_X$) \cite{Saito88}, then $\cS$ also underlies a pure twistor module, and the corresponding weight filtrations on $\cM$ coincide up to a shift.  It turns out that the weight filtration is controlled by the multiplicity of $s=-\alpha$ as a root the $b$-function of $p_{m,\alpha}(s)mf^s$ for $m\in \cM$, denoted by $\nu_{m,\alpha}$. Using the expression for $p_{\lambda,\alpha}(s)$ \eqref{eqn: plambdaalpha in general} and Theorem \ref{thm:bfun}, for a nonzero $m\in \cM_{\lambda}$ we obtain
\begin{equation}\label{eqn: numalpha}\nu_{\lambda,\alpha}\colonequals\nu_{m,\alpha} = \# \{ i \mid \alpha - \lambda_i \in \mathbb{Z}_{\geq 0} \}.\end{equation} 
\begin{thm}\label{thm:weightisot}
    For any non-zero $m \in \cM_\lambda$, $\ell\in \N$ and $\alpha\in \Q$, we have:
  \[ m \cdot f^{-\alpha} \, \in W_{q+\ell}(\cM\cdot f^{-\alpha})  \Longleftrightarrow \nu_{m,\alpha}\leq \ell. \]
\end{thm}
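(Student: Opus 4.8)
The plan is to reduce the theorem to a general principle that reads the weight level of a section of a twisted localization off the order of vanishing of its $b$-function at $s=-\alpha$, and then to apply this to the distinguished lift $p_{m,\alpha}(s)\,mf^{s}$ produced by Theorem~\ref{thm: V filtration general G}. First I would set the stage. As $\cS$ is simple it underlies a pure twistor $\sD$-module of weight $q$, so $\cM\cdot f^{-\alpha}$ underlies a mixed twistor $\sD$-module with weight filtration $W_\bullet$ beginning in weight $q$. The weight filtration, being canonical, is preserved by the induced $G$-action, exactly as the $V$-filtration is (Lemma~\ref{lemma: G and s compatible on V filtration}); hence each $W_{q+\ell}(\cM\cdot f^{-\alpha})$ is a $G$-submodule. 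Its intersection with $\cM_\lambda\cdot f^{-\alpha}$ --- an irreducible $G$-submodule of $\cM\cdot f^{-\alpha}$, since it is a character twist of the irreducible $\cM_\lambda$ --- is therefore either $0$ or all of $\cM_\lambda\cdot f^{-\alpha}$. Thus the truth of ``$m\cdot f^{-\alpha}\in W_{q+\ell}$'' is independent of the chosen $0\neq m\in\cM_\lambda$, and it suffices to determine the smallest $\ell$ for which it holds.

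The core input, which I would isolate as a lemma, belongs to the structure theory of mixed twistor modules obtained by localizing a pure module (Mochizuki~\cite{MochizukitwistorDmodule}; in the Hodge setting Saito~\cite{Saito88}, cf.\ also \cite{SY23}). It states: for $u\in V^{\alpha}\iota_{+}\cM$ with image $\bar u\in\cM\cdot f^{-\alpha}$ under the specialization $s\mapsto-\alpha$, one has
\[
\bar u\in W_{q+\ell}(\cM\cdot f^{-\alpha})\quad\Longleftrightarrow\quad \operatorname{ord}_{s=-\alpha} b_u(s)\leq\ell,
\]
where $b_u(s)$ is the $b$-function of the section $u$. The mechanism is that the graded pieces $\gr_{V}^{\alpha+j}\iota_{+}\cM$ $(j\geq 0)$ underlie mixed twistor modules carrying the nilpotent part of the $s$-action, the weight filtration on $\cM\cdot f^{-\alpha}$ is assembled from them as an iterated extension starting in weight $q$, and $\bar u$ reaches exactly into the layer indexed by $\operatorname{ord}_{s=-\alpha}b_u(s)$; heuristically $b_u$ records the nilpotent depth of $u$ along $f$, and for a localization each unit of depth costs one unit of weight. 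Its proof combines the explicit description of the $\gr_{V}^{\bullet}$ with the strict compatibility of the canonical and variation maps with the weight filtrations.

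To conclude, take $u\colonequals p_{m,\alpha}(s)\,mf^{s}=p_{\lambda,\alpha}(s)\,mf^{s}$, which lies in $V^{\alpha}\iota_{+}\cM$ by the definition of $p_{m,\alpha}$, with $p_{\lambda,\alpha}$ given by \eqref{eqn: plambdaalpha in general}. One checks $p_{\lambda,\alpha}(-\alpha)\neq0$: the factor $[s+\lambda_i]_{\lceil\alpha-\lambda_i\rceil}$ evaluated at $s=-\alpha$ equals $\prod_{j=0}^{\lceil\alpha-\lambda_i\rceil-1}(\lambda_i-\alpha+j)$, which vanishes only if $\alpha-\lambda_i\in\{0,1,\dots,\lceil\alpha-\lambda_i\rceil-1\}$; but if $\alpha-\lambda_i\in\Z_{\geq 0}$ then $\alpha-\lambda_i=\lceil\alpha-\lambda_i\rceil$ lies outside this set, if $\alpha-\lambda_i\notin\Z$ then this set contains no element equal to $\alpha-\lambda_i$, and if $\lceil\alpha-\lambda_i\rceil\leq 0$ the factor is $1$. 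Hence $\bar u=p_{\lambda,\alpha}(-\alpha)\cdot mf^{-\alpha}$ is a nonzero multiple of $mf^{-\alpha}$, so $mf^{-\alpha}\in W_{q+\ell}\iff\bar u\in W_{q+\ell}$. Since $\nu_{m,\alpha}=\operatorname{ord}_{s=-\alpha}b_u(s)$ by definition, the lemma yields $m\cdot f^{-\alpha}\in W_{q+\ell}(\cM\cdot f^{-\alpha})\iff\nu_{m,\alpha}\leq\ell$, which is the assertion; the explicit value $\nu_{m,\alpha}=\#\{i\mid\alpha-\lambda_i\in\Z_{\geq 0}\}$ of \eqref{eqn: numalpha} then follows from Theorem~\ref{thm:bfun} but is not needed for the statement.

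The principal obstacle is the lemma of the second paragraph: proving that the weight level of $\bar u$ on the twisted localization is governed by $\operatorname{ord}_{s=-\alpha}b_u(s)$. Two points demand care --- the normalization, so that $W_\bullet$ on $\cM\cdot f^{-\alpha}$ begins at $q$ and climbs by a single step per unit of vanishing order (rather than by two, as a bare monodromy filtration would), and the fact that $\cM$ is already $f$-localized, which is precisely what restricts attention to ``one side'' of the monodromy picture and accounts for the starting weight being exactly $q$. With that established, the remaining steps are formal manipulations with the explicit polynomials supplied by Theorems~\ref{thm: V filtration general G} and~\ref{thm:bfun}.
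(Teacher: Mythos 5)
Your reduction to the chosen element being irrelevant is fine, and your ``$\Leftarrow$'' direction is essentially the paper's: $\nu_{m,\alpha}\leq\ell$ puts the class of $p_{m,\alpha}(s)mf^s$ in $\Ker N^{\ell}\subseteq W(N)_{\ell-1}\gr^{\alpha}_V\iota_{+}\cM$ by \eqref{eqn: mult via ker} and the convolution formula, and the surjection \eqref{eqn:evaluation map on W(N)} then lands $m\cdot f^{-\alpha}$ in $W_{q+\ell}$. The genuine gap is the ``lemma'' of your second paragraph, which you invoke as a black box and justify only heuristically (``each unit of depth costs one unit of weight''). As stated, for an \emph{arbitrary} $u\in V^{\alpha}\iota_{+}\cM$ the equivalence $\bar u\in W_{q+\ell}\Leftrightarrow \mathrm{ord}_{s=-\alpha}b_u(s)\leq\ell$ is false in the forward direction: replacing $u$ by $u'=u+(s+\alpha)w$ with $w\in V^{\alpha}\iota_{+}\cM$ does not change $\bar u$ (evaluation at $s=-\alpha$ kills $(s+\alpha)w$), but by \eqref{eqn: mult via ker} the multiplicity of $b_{u'}$ at $-\alpha$ is the nilpotency order of $[u]+N[w]$, which can be made strictly larger than $\ell$ whenever $N$ is not of nilpotency order one on $\gr^{\alpha}_V\iota_{+}\cM$. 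So the order of vanishing of $b_u$ depends on the lift, and your lemma can only be true for the \emph{specific} lift $p_{m,\alpha}(s)mf^s$ --- but for that lift it is precisely the content of the theorem, so nothing has been proved.

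What is actually needed for ``$\Rightarrow$'', and what the paper supplies, is a mechanism forcing a lift of $m\cdot f^{-\alpha}$ to have the canonical form. From the convolution formula one gets $W(N)_{\ell-1}\subseteq\Ker N^{\ell}+\mathrm{Im}\,N$, hence via \eqref{eqn: evaluation map on grV} a surjection $\Ker N^{\ell}\twoheadrightarrow W_{q+\ell}(\cM\cdot f^{-\alpha})/\cS^{-\alpha}$, which becomes $G$-equivariant after twisting by $\C_{\alpha\sigma}$; restricting to the $\lambda$-isotypic component and using Theorem \ref{thm: V filtration general G}, any isotypic lift of the (nonzero) class of $m\cdot f^{-\alpha}$ must be of the form $c(s)p_{m,\alpha}(s)mf^s$ with $c(-\alpha)\neq 0$, so Lemma \ref{lemma: basic property of pfunction} together with \eqref{eqn: mult via ker} gives $\ell\geq\nu_{m,\alpha}$. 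Moreover the case $m\cdot f^{-\alpha}\in\cS^{-\alpha}=W_q(\cM\cdot f^{-\alpha})$ must be treated separately (your argument sees only the quotient by $\cS^{-\alpha}$, where this class is zero), using the equivariance of \eqref{eqn: ev sends V>alpha to Salpha} to produce a lift in $(V^{>\alpha}\iota_{+}\cM)_{\lambda}$ and conclude $\nu_{m,\alpha}=0$. None of this isotypic-component analysis, which is where the equivariance hypothesis actually enters, appears in your proposal; without it the forward implication is unsupported.
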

\begin{corollary}\label{cor: ValphaiotaS}
    Let $r_\lambda$ be the largest integer root of $(s+1)b_\lambda(s)$, then 
\begin{equation}\label{eqn: V0Slambda}
(V^\alpha \iota_{+} \mathcal{S})_\lambda= \left(\mathrm{lcm}\{[s-r_{\lambda}]_{r_{\lambda}+1}, p_{\lambda,\alpha}(s)\}\right) \cdot \cM_\lambda f^s.
\end{equation}
\end{corollary}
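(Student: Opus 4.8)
The plan is to compute the $\lambda$-isotypic submodule $(\iota_+\cS)_\lambda$ of $(\iota_+\cM)_\lambda=\cM_\lambda[s]f^s$ explicitly, and then combine it with Theorem~\ref{thm: V filtration general G} via a gcd/lcm argument. First I will reduce the statement. Since $\cS$ is simple it is $f$-torsion-free, so $\iota_+\cS\hookrightarrow\iota_+\cM$, and because $\iota$ is the graph of $f$ one has $\iota_+\cM=(\iota_+\cS)[t^{-1}]$ with quotient $\iota_+\cM/\iota_+\cS\cong\iota_+(\cM/\cS)$ supported on $\{t=0\}$. Using compatibility of the canonical $V$-filtration with submodules and quotients, together with the vanishing of $V^{>0}$ of a module supported on $\{t=0\}$, I get $V^\alpha\iota_+\cS=V^\alpha\iota_+\cM\cap\iota_+\cS$ for all $\alpha$ and $V^{>0}\iota_+\cM\subseteq\iota_+\cS$. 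As $\iota_+\cS$ is a simple $\sD_{X\times\C}$-module and $\sD_{X\times\C}\cdot V^{>0}\iota_+\cM$ is a nonzero submodule of it, this forces $\iota_+\cS=\sD_{X\times\C}\cdot V^{>0}\iota_+\cM$, i.e.\ $\iota_+\cS$ is the minimal extension of the localization $\iota_+\cM$ across $\{t=0\}$. Passing to $\lambda$-components (exact, commuting with intersections, $\C[s]$-linear by Lemma~\ref{lemma: G and s compatible on V filtration}) and using $(V^\alpha\iota_+\cM)_\lambda=(p_{\lambda,\alpha}(s))\cM_\lambda[s]f^s$, everything reduces to the identity $(\iota_+\cS)_\lambda=([s-r_\lambda]_{r_\lambda+1})\cM_\lambda[s]f^s$, since in the free $\C[s]$-module $\cM_\lambda[s]f^s$ intersections of principal submodules are computed by lcm.

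To prove this identity I will unwind $\sD_{X\times\C}\cdot V^{>0}\iota_+\cM$ in powers of $\partial_t$. Writing $\sD_{X\times\C}=\sum_{k\ge0}\partial_t^k\,V^0\sD_{X\times\C}$ and noting that $V^0\sD_{X\times\C}$ preserves $V^{>0}$, one gets $\iota_+\cS=\sum_{k\ge0}\partial_t^k\,V^{>0}\iota_+\cM$; since $\partial_t$ drops the $G$-weight by $\sigma$ (the weight of $f$), writing $\mu_k$ for the weight of $f^k\cM_\lambda$ (so $\cM_{\mu_k}=f^k\cM_\lambda$, still irreducible), this yields $(\iota_+\cS)_\lambda=\sum_{k\ge0}\partial_t^k\,(V^{>0}\iota_+\cM)_{\mu_k}$. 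Applying Theorem~\ref{thm: V filtration general G} in each component, $(V^{>0}\iota_+\cM)_{\mu_k}=(p_{\mu_k,0^+}(s))\cM_{\mu_k}[s]f^s$, and \eqref{eqn: blambda} applied to $f^k\cM_\lambda$ gives $b_{\mu_k}(s)=b_\lambda(s+k)$, hence $(\mu_k)_i=\lambda_i+k$ and $p_{\mu_k,0^+}(s)=\prod_i[s+\lambda_i+k]_{e_i-k}$, where $e_i=\lceil\epsilon-\lambda_i\rceil$ for small $\epsilon>0$ (so $e_i=-\lambda_i+1$ when $\lambda_i\in\Z$). Using the Malgrange identification, under which $\partial_t^k(m(s)f^s)=(-1)^k[s-k+1]_k\,f^{-k}m(s-k)f^s$ and $f^{-k}\cM_{\mu_k}=\cM_\lambda$, I conclude that $(\iota_+\cS)_\lambda=\cM_\lambda[s]f^s$ times the $\C[s]$-ideal generated by the polynomials $T_k(s):=[s-k+1]_k\prod_i[s+\lambda_i]_{(e_i-k)_+}$, $k\ge0$.

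It then remains to check $\gcd_{k\ge0}T_k(s)=[s-r_\lambda]_{r_\lambda+1}$. The divisibility $[s-r_\lambda]_{r_\lambda+1}\mid\gcd_k T_k$ will follow by observing that in each $T_k$ the factor $[s-k+1]_k$ supplies the roots $\{0,\dots,k-1\}$, while the integer root $-\lambda_{i_0}=r_\lambda$ of $b_\lambda$ (if $r_\lambda\ge0$; for $r_\lambda=-1$ nothing is needed) supplies the roots $\{k,\dots,r_\lambda\}$ when $k\le r_\lambda$, so $\{0,\dots,r_\lambda\}$ lies in the root set of every $T_k$. For the reverse: for $k$ exceeding all $e_i$ one has $T_k=[s-k+1]_k$, square-free with integer roots, pinning $\gcd_k T_k$ down to be square-free with roots in $\Z_{\ge0}$; and $T_0=p_{\lambda,0^+}(s)$ has all of its integer roots $\le r_\lambda$ (each $\lambda_i\in\Z$ contributes only the roots $\{0,\dots,-\lambda_i\}$), while any non-integral root of some $T_k$ is killed in $T_{k'}$ for $k'>e_i$, hence is not common to all of them. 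Together these force $\gcd_k T_k=[s-r_\lambda]_{r_\lambda+1}$, completing the proof.

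The conceptual heart of the argument is the identification $\iota_+\cS=\sD_{X\times\C}\cdot V^{>0}\iota_+\cM$, which is exactly where simplicity of $\cS$ (equivalently, that $\cS$ underlies a pure twistor $\sD$-module) is used, via $\iota_+\cS$ being a simple $\sD$-module. The main technical obstacle will be the bookkeeping in the last step: tracking, through the $\partial_t^k$-shifts and the ceilings $e_i$, precisely which roots survive in \emph{every} $T_k$; the delicate point is that the value $\lceil\epsilon-\lambda_i\rceil=-\lambda_i+1$ for integral $\lambda_i$ is what produces the extra integer root, and hence forces $(s+1)b_\lambda(s)$, rather than $b_\lambda(s)$, into the definition of $r_\lambda$.
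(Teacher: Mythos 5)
Your proof is correct, and it reaches the key identity $(\iota_+\cS)_\lambda=([s-r_\lambda]_{r_\lambda+1})\cdot\cM_\lambda[s]f^s$ by a genuinely different route than the paper. Both arguments start from $V^\alpha\iota_+\cS=V^\alpha\iota_+\cM\cap\iota_+\cS$ and reduce to computing $(\iota_+\cS)_\lambda$; but the paper simply reads off $(\iota_+\cS)_\lambda=([s-(r-1)]_r)\cdot\cM_\lambda f^s$ from $\iota_+\cS=\bigoplus_{\ell\geq 0}\cS\otimes\partial_t^\ell$ under \eqref{eqn: inverse of Malgrange isomorphism}, where $r$ is the least $\ell\geq 0$ with $\lambda+\ell\sigma\in\Lambda(\cS)$, and then identifies $r=r_\lambda+1$ by invoking Corollary \ref{cor:sinm} (hence Theorem \ref{thm:weightisot} and the twistor-theoretic input behind it). You instead characterize $\iota_+\cS$ as the minimal extension $\sD_{X\times\C}\cdot V^{>0}\iota_+\cM$, using Kashiwara's equivalence (simplicity of $\iota_+\cS$), strictness of the $V$-filtration along $\iota_+\cM\twoheadrightarrow\iota_+(\cM/\cS)$, and the vanishing of $V^{>0}$ for modules supported on $\{t=0\}$, and then compute its $\lambda$-part by expanding $\sD_{X\times\C}=\sum_{k\geq 0}\partial_t^k V^0\sD_{X\times\C}$, applying Theorem \ref{thm: V filtration general G} at the shifted weights $\lambda+k\sigma$ (legitimate, since irreducibility of $\cM_\lambda$ is equivalent to that of $\cM_{\lambda+k\sigma}$, and $b_{\lambda+k\sigma}(s)=b_\lambda(s+k)$), and carrying out the gcd computation $\gcd_k T_k=[s-r_\lambda]_{r_\lambda+1}$; I checked the bookkeeping (divisibility via the factor $[s-k+1]_k$ together with the $i_0$ with $-\lambda_{i_0}=r_\lambda$, and the reverse bound via square-freeness of $T_k$ for $k\gg 0$ and the integer roots of $T_0$ lying in $\{0,\dots,r_\lambda\}$) and it is correct. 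What your route buys: it stays entirely within the $V$-filtration formalism plus Theorem \ref{thm: V filtration general G}, avoiding Corollary \ref{cor:sinm} and Theorem \ref{thm:weightisot}; as a byproduct, comparing your answer with the direct computation of $(\iota_+\cS)_\lambda$ from the definition re-derives the $\alpha=0$ instance of Corollary \ref{cor:sinm} (namely $\lambda+\ell\sigma\in\Lambda(\cS)$ iff $\ell>r_\lambda$) purely from $b$-function combinatorics. What the paper's route buys is brevity: granting Corollary \ref{cor:sinm}, the whole proof is a few lines and none of the minimal-extension or gcd analysis is needed.
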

If $\cS$ underlies a complex pure Hodge module, then $\cM\cdot f^{-\alpha}$ underlies a complex mixed Hodge module \cite{MHMproject} and is equipped with a (functorial) Hodge filtration. Our next result shows that this is controlled by the \emph{degree} of $p_{m,\alpha}(s)$, which is equal to 
\[\sum_{i=1}^d \max\{\lceil\alpha-\lambda_i \rceil , 0 \} = \sum_{\beta < \alpha} \nu_{m, \beta}.\]
For simplicity, we state here only for $\cS=\cO_X$; for the general $\cS$ see Theorem \ref{thm:FS}.
\begin{thm}\label{thm:hodgeisot}
Let $\cM=(\cO_X)_f$. For a nonzero $m\in \cM_\lambda$, $k\in \N$ and $\alpha\in  \Q$, we have
    \[m \cdot f^{-\alpha} \, \in F_{k} (\cM\cdot f^{-\alpha}) \Longleftrightarrow 
\deg p_{m,\alpha}(s)\leq k.\]
\end{thm}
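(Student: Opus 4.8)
The strategy is to combine the explicit description of the $V$-filtration from Theorem~\ref{thm: V filtration general G} with Saito's description---used systematically in the theory of Hodge and higher multiplier ideals \cite{Saito88,MP16,MPVfiltration,SY23}---of the Hodge filtration of a twisted localization in terms of the $V$- and Hodge filtrations on $\iota_{+}\cS$. Since the Hodge filtration on $\cM\cdot f^{-\alpha}$ is functorial, it is compatible with the weak $G$-action and hence decomposes along $G$-isotypic components exactly as $V^{\alpha}\iota_{+}\cM$ does (Lemma~\ref{lemma: G and s compatible on V filtration}); so it suffices to prove the stated equivalence inside the irreducible $\cM_{\lambda}$.

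Reducing the Malgrange isomorphism \eqref{eqn: Malgrange isomorphism intro} modulo $(s+\alpha)$ produces a $\sD_{X}$-linear surjection $\bar{q}_{\alpha}\colon V^{\alpha}\iota_{+}\cM\twoheadrightarrow\cM\cdot f^{-\alpha}$, $\xi\mapsto\xi|_{s=-\alpha}$. The first task is to prove that the functorial Hodge filtration of the complex mixed Hodge module $\cM\cdot f^{-\alpha}$ from \cite{MHMproject} is computed from the Hodge filtration on $\iota_{+}\cM$ via $\bar{q}_{\alpha}$, the former being inherited through the localization along $t$ from the pure Hodge module $\iota_{+}\cS=\iota_{+}\cO_{X}$. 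Concretely I would establish a criterion of the shape: $m\cdot f^{-\alpha}\in F_{k}(\cM\cdot f^{-\alpha})$ if and only if the minimal multiple $p_{m,\alpha}(s)\,mf^{s}$ of $mf^{s}$ lying in $V^{\alpha}\iota_{+}\cM$ lies in $F_{k}\iota_{+}\cM$. A glance at \eqref{eqn: plambdaalpha in general} shows that $-\alpha$ is never a root of $p_{m,\alpha}(s)$, so $\bar{q}_{\alpha}$ carries $p_{m,\alpha}(s)\,mf^{s}$ to $p_{m,\alpha}(-\alpha)\,m\cdot f^{-\alpha}$, a nonzero multiple of $m\cdot f^{-\alpha}$; this gives the forward implication, while the reverse implication uses the minimality of $p_{m,\alpha}(s)$ together with Saito's compatibility relations $t\,F_{k}V^{\beta}=F_{k}V^{\beta+1}$ for $\beta\gg0$ and $\partial_{t}\,F_{k}\,\mathrm{gr}_{V}^{\beta}=F_{k+1}\,\mathrm{gr}_{V}^{\beta-1}$ for $\beta<1$, which pin the Hodge filtration down from the single step $V^{\alpha}$.

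It then remains to compute the Hodge order of $p_{m,\alpha}(s)\,mf^{s}$ inside $\iota_{+}\cM$ on the $\lambda$-component. Here the hypothesis $\cS=\cO_{X}$ enters decisively: $\cO_{X}$ underlies the trivial pure Hodge module, so the Hodge filtration on $\iota_{+}\cO_{X}$ is the order filtration $F_{k}\iota_{+}\cO_{X}=F_{k}\sD_{X\times\C}\cdot\delta$ generated in degree $0$, where $\delta$ is the standard generator of $\iota_{+}\cO_{X}$, and Saito's formula for the Hodge filtration of a localization along the smooth divisor $\{t=0\}$ expresses $F_{\bullet}\iota_{+}\cM$ purely through this order filtration and $V^{\bullet}\iota_{+}\cO_{X}$. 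By Theorem~\ref{thm: V filtration general G}, $(V^{\alpha}\iota_{+}\cM)_{\lambda}=\bigl(p_{\lambda,\alpha}(s)\bigr)\cdot\cM_{\lambda}[s]f^{s}$ is a rank-one free $\C[s]$-module (tensored with $\cM_{\lambda}$) with generator $p_{\lambda,\alpha}(s)\,mf^{s}$; rewriting the $t$- and $\partial_{t}$-actions via $s=-\partial_{t}t$ and the $b$-function equation $\partial f\cdot mf^{s+1}=b_{\lambda}(s)\,mf^{s}$ of \eqref{eqn: blambda}, one identifies (this is the technical core, see below) the induced Hodge filtration on the line $\C[s]\cdot mf^{s}$ with the filtration by $s$-degree. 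As every element of $(V^{\alpha}\iota_{+}\cM)_{\lambda}$ mapping onto a nonzero multiple of $m\cdot f^{-\alpha}$ under $\bar{q}_{\alpha}$ has $s$-degree at least $\deg p_{\lambda,\alpha}(s)$, with equality attained by $p_{\lambda,\alpha}(s)\,mf^{s}$, combining with the previous paragraph yields $m\cdot f^{-\alpha}\in F_{k}(\cM\cdot f^{-\alpha})\iff\deg p_{m,\alpha}(s)=\deg p_{\lambda,\alpha}(s)\le k$.

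The main obstacle is the Hodge-theoretic input above, and in particular controlling the effect of the localization. Because $\cM=(\cO_{X})_{f}$ is itself localized, an element $m\in\cM_{\lambda}$ can carry a pole along $D$, and a priori this pole order would shift its Hodge level; the content of the theorem is that this shift is already encoded in how the exponents $\lambda_{i}$---hence $p_{\lambda,\alpha}(s)$ and its degree---depend on which isotypic component $m$ lies in, so that no correction term remains after passing through $\bar{q}_{\alpha}$. I would keep this bookkeeping transparent by running the whole argument through $\iota_{+}\cS=\iota_{+}\cO_{X}$ with its order filtration rather than through $\iota_{+}\cM$ directly; this also isolates exactly the additional input needed for the general statement of Theorem~\ref{thm:FS} for $\cS$ underlying an arbitrary pure Hodge module, namely transporting the Hodge filtration of $\cS$ itself through the construction---which is trivial in the present case $\cS=\cO_{X}$.
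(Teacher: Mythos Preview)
Your overall strategy coincides with the paper's: both routes pass through the identity \eqref{eqn: Hodge filtration on M via V filtration}
\[
F_k(\cM\cdot f^{-\alpha})=\mathrm{ev}_{s=-\alpha}\bigl(V^{\alpha}\iota_{+}\cM\cap F_{k+1}\iota_{+}\cM\bigr)
\]
and then compute the $\lambda$-component of the right-hand side using Theorem~\ref{thm: V filtration general G}. The paper carries this out as Proposition~\ref{prop:hodgeVgen} and Corollary~\ref{cor:Fpart}; for $\cS=\cO_X$ one has $\Pi_k=\{r_\lambda+1,\dots,k\}$, whence $(V^{\alpha}\cap F_{k+1})_\lambda=(p_{\lambda,\alpha}(s))_{\le k}\cdot\cM_\lambda f^s$ for $\alpha>0$, and the equivalence is immediate.

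There is, however, a genuine gap in your outline. The formula \eqref{eqn: Hodge filtration on M via V filtration} is only available for $\alpha\ge 0$; you treat $\alpha\in\Q$ uniformly and never address this restriction. The paper handles $\alpha\le 0$ by a separate translation argument: choosing $r\in\Z$ with $\alpha+r>0$, one uses $F_k(\cM\cdot f^{-\alpha})=F_k(\cM\cdot f^{-(\alpha+r)})$ together with $p_{\lambda+r\sigma,\alpha+r}(s)=p_{\lambda,\alpha}(s+r)$ (so in particular the degrees match) to reduce to the positive case. Without this step your argument does not cover $\alpha<0$.

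A secondary issue concerns what you call the ``technical core''. Your claim that the induced Hodge filtration on $\C[s]\cdot mf^s$ is the $s$-degree filtration is not literally true for $(F_{k+1}\iota_{+}\cM)_\lambda$ when $\lambda\notin\Lambda(\cO_X)$: in that range only the $q_\ell(s)=[s-\ell+1]_\ell$ with $\ell\ge\ell_k>0$ appear (cf.\ the proof of Theorem~\ref{thm: V cap F any alpha general G}). What \emph{is} true, and what suffices, is that for $\alpha>0$ one has $(V^{\alpha}\cap F_{k+1}\iota_{+}\cM)_\lambda=(V^{\alpha}\cap F_{k+1}\iota_{+}\cS)_\lambda$ by strictness, and since $[s-r_\lambda]_{r_\lambda+1}\mid p_{\lambda,\alpha}(s)$ the intersection collapses to $(p_{\lambda,\alpha}(s))_{\le k}$. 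You flag this step but do not carry it out; it is exactly the content of Proposition~\ref{prop:hodgeVgen} and Corollary~\ref{cor:Fpart}. (Note also the index shift: it is $F_{k+1}\iota_{+}\cM$, not $F_k$, that enters.)
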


The proof of Theorem \ref{thm:hodgeisot} uses the following technical result (for the general $\cS$ see Proposition \ref{prop:hodgeVgen}). Using the evaluation map $\mathrm{ev}_{s=-\alpha}:V^{\alpha}\iota_{+}\cM\to \cM\cdot f^{-\alpha}$ from \eqref{eqn: evaluation map} and \cite[Corollary 1.2]{DY24}, one has \begin{equation}\label{eqn: Hodge filtration on M via V filtration}
F_k(\cM\cdot f^{-\alpha})= \mathrm{ev}_{s=-\alpha}(V^{\alpha}\iota_{+}\cM\cap F_{k+1}\iota_{+}\cM), \quad \forall \alpha\geq 0,\end{equation}
where $F_{k+1}\iota_{+}\cM = \sum_{i+j\leq k} F_i\cM\otimes \d_t^j$. 
\begin{thm}\label{thm: V cap F any alpha general G}
Let $\cM=(\mathcal{O}_X)_f$ and $k\in \N$. Then
\begin{align*}
    (V^{\alpha}\iota_{+}\cM\cap F_{k+1}\iota_{+}\cM)_{\lambda} =\begin{cases}\left(\mathrm{lcm}\{[s-\ell_{k}+1]_{\ell_{k}},p_{\lambda,\alpha}(s)\}\right)_{\leq k}\cdot \cM_\lambda f^s, &\textrm{if $\alpha\in \Q$}, \\
    (p_{\lambda,\alpha}(s))_{\leq k}\cdot \cM_\lambda f^s, &\textrm{if $\alpha\in \Q_{>0}$}. \end{cases}
\end{align*}
Here $I_{\leq k}$ denotes the subspace of elements of degree $\leq k$ in an ideal $I$, and $\ell_{k}$ is the minimal $\ell\geq 0$ such that $(F_{k-\ell}\cM)_{\lambda+\ell\sigma}\neq 0$, or equivalently, such that $\deg p_{\lambda,-\ell}(s)\leq k-\ell$ (if no such $\ell$ exists, set $[s-\ell_{k}+1]_{\ell_{k}}=0$).
\end{thm}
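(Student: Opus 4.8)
The plan is to transport the Hodge filtration $F_{k+1}\iota_{+}\cM$ through the Malgrange isomorphism \eqref{eqn: Malgrange isomorphism intro}, compute its $\lambda$-isotypic component, and intersect with the already-known $(V^{\alpha}\iota_{+}\cM)_{\lambda}$ of Theorem \ref{thm: V filtration general G}. The one ingredient I would import from the conventions of \eqref{eqn: Malgrange isomorphism intro} is that for $m\in F_i\cM$ the class of $m\otimes\d_t^j$ is sent, up to a nonzero scalar, to $[s-j+1]_j\,mf^{s-j}$; with this, $F_{k+1}\iota_{+}\cM=\sum_{i+j\le k}F_i\cM\otimes\d_t^j$ is carried inside $\cM[s]f^s$ to $\sum_{i+j\le k}[s-j+1]_j\,(F_i\cM)f^{s-j}$.

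Next I would extract the $\lambda$-isotypic component. Since $\d_t$ has $G$-weight $-\sigma$, the summand $F_i\cM\otimes\d_t^j$ meets the $\lambda$-component only through $(F_i\cM)_{\lambda+j\sigma}$, and multiplication by $f^{-j}$ carries this into $\cM_\lambda$. As $\cM_\lambda$ is irreducible and $\cM$ is $f$-localized, multiplication by $f^{-j}$ is a $G$-equivariant embedding $\cM_{\lambda+j\sigma}\hookrightarrow\cM_\lambda$, so each $\cM_{\lambda+j\sigma}$ is $0$ or irreducible, and each subspace $(F_i\cM)_{\lambda+j\sigma}$ (a $G$-submodule, as $F_\bullet\cM$ is $G$-stable) is either $0$ or all of $\cM_{\lambda+j\sigma}$, in which case $(F_i\cM)_{\lambda+j\sigma}\cdot f^{-j}=\cM_\lambda$. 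Since $F_\bullet\cM$ is increasing, the $i$-sum is absorbed and
\[(F_{k+1}\iota_{+}\cM)_{\lambda}=\sum_{j\in A_k}[s-j+1]_j\cdot\cM_\lambda\cdot f^s,\qquad A_k\colonequals\{\,0\le j\le k:\ (F_{k-j}\cM)_{\lambda+j\sigma}\ne 0\,\}.\]

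The crux is to prove $A_k=\{\ell_k,\ell_k+1,\dots,k\}$. For this I would use the $p$-function criterion $j\in A_k\iff\deg p_{\lambda,-j}(s)\le k-j$ — the content of Theorem \ref{thm:hodgeisot} at the non-positive integral twist $\alpha=-j$; establishing this with the correct logical dependencies (e.g. by an induction on $k$ built on \eqref{eqn: Hodge filtration on M via V filtration} at $\alpha=0$, rather than circularly through Theorem \ref{thm:hodgeisot}) is the step I expect to require the most care. Granting it, put $\phi(j)\colonequals j+\deg p_{\lambda,-j}(s)$; using $\deg p_{\lambda,\beta}(s)=\sum_{i=1}^d\max\{\lceil\beta-\lambda_i\rceil,0\}$ one finds $\phi(j+1)-\phi(j)=1-\#\{i:\lambda_i<-j\}$, which is non-decreasing in $j$, so $\phi$ is convex; moreover $\phi(j)\ge j$, and $\phi(k)\le k\iff\lambda_i\ge -k$ for all $i$, which is exactly the condition for $\min_{j\ge 0}\phi(j)\le k$, i.e. for $A_k\ne\emptyset$. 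Hence $A_k=\{j:\phi(j)\le k\}$ is, when nonempty, an interval with largest element $k$, that is $A_k=[\ell_k,k]$ (when empty, $\ell_k$ does not exist and the formula returns $0$, consistent with the stated convention). Finally, the falling factorials nest: $[s-\ell_k+1]_{\ell_k}\mid[s-j+1]_j$ for $\ell_k\le j\le k$, with quotients of degrees $0,1,\dots,k-\ell_k$ forming a basis of $\C[s]_{\le k-\ell_k}$; therefore $\sum_{j=\ell_k}^{k}[s-j+1]_j\cdot\cM_\lambda\cdot f^s=([s-\ell_k+1]_{\ell_k})_{\le k}\cdot\cM_\lambda\cdot f^s$.

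It remains to intersect with $(V^{\alpha}\iota_{+}\cM)_{\lambda}=(p_{\lambda,\alpha}(s))\cdot\cM_\lambda\cdot f^s$. In both subspaces the $\C[s]$-factor does not interact with $\cM_\lambda$, so the intersection is governed by $\C[s]$: a polynomial of degree $\le k$ lies in $(p_{\lambda,\alpha}(s))\cap([s-\ell_k+1]_{\ell_k})$ iff it is divisible by $\mathrm{lcm}\{p_{\lambda,\alpha}(s),[s-\ell_k+1]_{\ell_k}\}$, giving the first case. For the second, $\deg p_{\lambda,\beta}(s)$ is non-decreasing in $\beta$: if $\alpha\in\Q_{>0}$ and $\deg p_{\lambda,\alpha}(s)\le k$, then $\deg p_{\lambda,0}(s)\le k$, so $0\in A_k$, $\ell_k=0$, $[s-\ell_k+1]_{\ell_k}=1$, and the $\mathrm{lcm}$ is $p_{\lambda,\alpha}(s)$; while if $\deg p_{\lambda,\alpha}(s)>k$ both the first-case formula and $(p_{\lambda,\alpha}(s))_{\le k}$ vanish. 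In all cases we recover $(p_{\lambda,\alpha}(s))_{\le k}\cdot\cM_\lambda\cdot f^s$, as claimed.
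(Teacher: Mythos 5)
Your argument is correct, and its skeleton coincides with the paper's: transport $F_{k+1}\iota_+\cM$ through the (inverse) Malgrange isomorphism, write the $\lambda$-isotypic component as the span of the polynomials $[s-j+1]_j$ over the admissible set $A_k=\{j: (F_{k-j}\cM)_{\lambda+j\sigma}\neq 0\}$, identify that span with $([s-\ell_k+1]_{\ell_k})_{\leq k}$ once $A_k=\{\ell_k,\dots,k\}$ is known, and intersect with $(V^\alpha\iota_+\cM)_\lambda$ via Theorem \ref{thm: V filtration general G}. The genuine difference is in how the interval structure of $A_k$ is established: the paper invokes \cite[Proposition 13.1]{MP16}, i.e. $f\cdot F_{k-i}\cM\subseteq F_{k-i-1}\cM$, which yields the nesting $F_{k-i}\cM\otimes\cO_X(iD)\subseteq F_{k-i-1}\cM\otimes\cO_X((i+1)D)$ and hence upward-closedness of $A_k$ with no computation, whereas you convert membership in $A_k$ into $\deg p_{\lambda,-j}(s)\leq k-j$ via Theorem \ref{thm:hodgeisot} at the integral twists $\alpha=-j$ and prove the interval property through the convexity of $\phi(j)=j+\deg p_{\lambda,-j}(s)$; your increment formula $\phi(j+1)-\phi(j)=1-\#\{i:\lambda_i<-j\}$ and the nonemptiness criterion are correct. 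Your worry about circularity is unfounded: in the paper Theorem \ref{thm:hodgeisot} is proved before this theorem (via Proposition \ref{prop:hodgeVgen}, Theorem \ref{thm:FS} and Corollary \ref{cor:Fpart}, then extended to $\alpha\leq 0$ by an integral shift), and the paper itself cites it inside this very proof to justify the ``equivalently'' clause defining $\ell_k$; so the hedged fallback induction you sketch is unnecessary. You also deduce the $\alpha\in\Q_{>0}$ case from the general one by monotonicity of $\deg p_{\lambda,\beta}(s)$ in $\beta$, while the paper obtains it directly from Proposition \ref{prop:hodgeVgen}; both are valid, the paper's route via \cite{MP16} being independent of the explicit root data, yours being more self-contained on the combinatorial side. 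One cosmetic point: multiplication by $f^{-j}$ is $G$-equivariant only up to the character twist $\C_{-j\sigma}$, but since $\sigma$ is one-dimensional this does not affect your conclusion that $(F_i\cM)_{\lambda+j\sigma}\cdot f^{-j}$ is either $0$ or all of $\cM_\lambda$.
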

This technical result also allows us to describe the Hodge filtration on the nearby and vanishing cycles mixed Hodge modules, which is defined by
\[ F_k\gr^{\alpha}_V\iota_{+}\cM=\frac{V^{\alpha}\iota_{+}\cM\cap F_k\iota_{+}\cM}{V^{>\alpha}\iota_{+}\cM\cap F_k\iota_{+}\cM}.\]
Note that by Theorem \ref{thm: V filtration general G}, one has
\[(V^{>\alpha}\iota_{+}\cM)_{\lambda}=\left((s+\alpha)^{\nu_{\lambda,\alpha}}p_{\lambda,\alpha}(s)\right)\cdot \cM_{\lambda}f^s,\]
(see also \cite[Lemma 2.6]{DLY}), therefore we have the following.
\begin{corollary}\label{cor: Hodge filtration on nearby cycle}
Let $\cM=(\cO_X)_f$, then
\[ \left(F_k\gr^{\alpha}_V\iota_{+}\cM\right)_{\lambda}=\frac{(p_{\lambda,\alpha}(s))_{\leq k}}{((s+\alpha)^{\nu_{\lambda,\alpha}}p_{\lambda,\alpha}(s))_{\leq k}}\otimes \cM_{\lambda}f^s.\]
\end{corollary}

As an application, we compare the $\lambda$-isotypic components of Hodge and higher multiplier ideals, defined as:
\begin{equation}\label{eqn: definition of Hodge ideal}
I_k(\alpha D)\otimes \cO_X(kD)\cdot f^{-\alpha}=F_k((\cO_X)_f\cdot f^{-\alpha}), \quad \forall \alpha>0,\end{equation}
\begin{equation}\label{eqn: definition of higher multiplier ideals}
\tilde{I}_k(\alpha D)\otimes \cO_X(kD)= \frac{V^{\alpha}\iota_{+}\cO_X\cap F_{k+1}\iota_{+}\cO_X}{V^{\alpha}\iota_{+}\cO_X\cap F_{k}\iota_{+}\cO_X}, \quad \forall \alpha\in \Q.\end{equation}
Since $\tilde{I}_k(\alpha D)=\tilde{I}_{k-1}((\alpha+1)D)$ for $\alpha\leq 0$ \cite[Proposition 3.7]{SY23}, we may assume $\alpha>0$.
\begin{corollary}\label{corollary: isotypic component of Hodge and higher}
Assume $((\cO_X)_f)_{\lambda}$ is irreducible, then $\tilde{I}_k(\alpha D)_{\lambda}=I_k(\alpha D)_{\lambda}$ for $\alpha>0$.
\end{corollary}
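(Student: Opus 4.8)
The plan is to read off both $\lambda$-isotypic components from Theorem~\ref{thm: V cap F any alpha general G} and to observe that each is governed by the single invariant $\deg p_{\lambda,\alpha}(s)$. As noted in the excerpt we may assume $\alpha>0$, so we are in the second case of Theorem~\ref{thm: V cap F any alpha general G}: writing $\cM=(\cO_X)_f$,
\[
  \bigl(V^{\alpha}\iota_{+}\cO_X\cap F_{j+1}\iota_{+}\cO_X\bigr)_{\lambda}=(p_{\lambda,\alpha}(s))_{\le j}\cdot\cM_{\lambda}\cdot f^{s}\qquad\text{for all }j\ge 0 .
\]
Substituting $j=k$ and $j=k-1$ into the definition \eqref{eqn: definition of higher multiplier ideals} gives
\[
  \bigl(\tilde I_k(\alpha D)\otimes\cO_X(kD)\bigr)_{\lambda}\;\cong\;\frac{(p_{\lambda,\alpha}(s))_{\le k}}{(p_{\lambda,\alpha}(s))_{\le k-1}}\otimes_{\C}\cM_{\lambda},
\]
and since the truncated ideal $(p(s))_{\le m}$ of $\C[s]$ has dimension $\max(m-\deg p+1,\,0)$, the quotient on the right is one copy of $\cM_{\lambda}$ when $\deg p_{\lambda,\alpha}(s)\le k$ and is $0$ otherwise.

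For the Hodge side I would use \eqref{eqn: Hodge filtration on M via V filtration}, namely $F_k(\cM\cdot f^{-\alpha})=\mathrm{ev}_{s=-\alpha}\bigl(V^{\alpha}\iota_{+}\cM\cap F_{k+1}\iota_{+}\cM\bigr)$. On the $\lambda$-component the evaluation map sends $q(s)\,p_{\lambda,\alpha}(s)\,m f^{s}$ to $q(-\alpha)\,p_{\lambda,\alpha}(-\alpha)\,m f^{-\alpha}$, and a one-line check with the factorization \eqref{eqn: plambdaalpha in general} shows $p_{\lambda,\alpha}(-\alpha)\ne 0$: every root $-\lambda_i-j$ of $p_{\lambda,\alpha}(s)$ has $0\le j\le\lceil\alpha-\lambda_i\rceil-1<\alpha-\lambda_i$, hence $-\lambda_i-j\ne-\alpha$. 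Thus, as $q$ ranges over all polynomials of degree $\le k-\deg p_{\lambda,\alpha}(s)$, the scalar $q(-\alpha)p_{\lambda,\alpha}(-\alpha)$ ranges over all of $\C$, so $F_k(\cM\cdot f^{-\alpha})_{\lambda}$ equals $\cM_{\lambda}f^{-\alpha}$ exactly when $\deg p_{\lambda,\alpha}(s)\le k$ and is $0$ otherwise; equivalently, by \eqref{eqn: definition of Hodge ideal}, $\bigl(I_k(\alpha D)\otimes\cO_X(kD)\bigr)_{\lambda}$ is one copy of $\cM_{\lambda}$ precisely when $\deg p_{\lambda,\alpha}(s)\le k$ and $0$ otherwise. (This also recovers Theorem~\ref{thm:hodgeisot} in the present situation.) So the two isotypic components are nonzero under the very same numerical condition.

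Finally, to pass from ``$\tilde I_k(\alpha D)_{\lambda}$ and $I_k(\alpha D)_{\lambda}$ have the same $G$-module type'' to literal equality, I would invoke multiplicity one. Both $\tilde I_k(\alpha D)$ and $I_k(\alpha D)$ are $G$-equivariant $\cO_X$-submodules of $\cM=(\cO_X)_f$: for $\tilde I_k(\alpha D)$ because $(V^{\alpha}\cap F_{k+1})/(V^{\alpha}\cap F_{k})$ embeds into $\gr^{F}_{k+1}\iota_{+}\cO_X\cong\cO_X$ and hence $\tilde I_k(\alpha D)$ embeds into $\cO_X(-kD)\subseteq\cM$; for $I_k(\alpha D)=F_k(\cM\cdot f^{-\alpha})\cdot f^{k+\alpha}\subseteq\cM$ directly. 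Hence each of $\tilde I_k(\alpha D)_{\lambda}$, $I_k(\alpha D)_{\lambda}$ is a $G$-submodule of $\cM_{\lambda}$, which is irreducible by hypothesis, so each is either $0$ or all of $\cM_{\lambda}$; by the previous step both alternatives occur under the identical condition $\deg p_{\lambda,\alpha}(s)\le k$, so the two subsheaves coincide, proving $\tilde I_k(\alpha D)_{\lambda}=I_k(\alpha D)_{\lambda}$. The only point requiring genuine care is keeping the $G$-linearizations of $\cO_X(kD)$ and of the formal factor $f^{-\alpha}$ consistent, so that the weight index $\lambda$ used throughout --- in $p_{\lambda,\alpha}$, in the isotypic components of the twisted and untwisted ideals, and in the irreducibility hypothesis --- really matches up; with that in hand everything else is a direct consequence of Theorems~\ref{thm: V cap F any alpha general G} and \ref{thm:hodgeisot} together with the elementary nonvanishing $p_{\lambda,\alpha}(-\alpha)\ne 0$.
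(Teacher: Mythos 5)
Your argument is correct and follows essentially the same route as the paper: both isotypic components are read off from Theorem \ref{thm: V cap F any alpha general G} and Theorem \ref{thm:hodgeisot} (which you effectively re-derive via the evaluation map and the nonvanishing $p_{\lambda,\alpha}(-\alpha)\neq 0$), and both are governed by the single condition $\deg p_{\lambda,\alpha}(s)\le k$. Your only addition is spelling out the final multiplicity-one/irreducibility step that upgrades agreement of $G$-module types to literal equality of the $\lambda$-components, a point the paper's proof leaves implicit.
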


Our arguments crucially rely on the fact that the evaluation map \eqref{eqn: evaluation map} is well-behaved under the $G$-action. Namely, the isotypic components of lifts of isotypic elements under the evaluation map are pure tensors (rather than mixed tensors), by Theorem \ref{thm: V filtration general G}.  This is where the equivariance and multiplicity-freeness assumptions are pivotal.

\subsection{Filtrations on affine spherical varieties}
Next, we focus on the case when $X$ is a (smooth) affine spherical variety. By a classical argument based on Luna's slice theorem, our study of filtrations on equivariant $\sD$-modules can readily be reduced to the case where $X$ is a \emph{multiplicity-free space} - that is, a representation of $G$ such that $X$ has a dense orbit (equivalently, finitely many orbits) under the action of a Borel subgroup of $G$. Such spaces have received considerable interest, and are classified by the works of  Kac \cite{Kac}, Brion \cite{Brion}, Benson-Ratcliff \cite{class1}, Leahy \cite{class2}. They are important due to Roger Howe's philosophy that many results in invariant theory can be traced back to multiplicity-free spaces. An example of this that is particularly relevant to our paper is the Capelli identity \cite{howeumeda}. Another work displaying some unique features of multiplicity-free spaces from the point of view of $\sD$-modules is \cite{LW19}.

Let $f$ be a semi-invariant function with respect to $G$ of weight $\sigma$, $\cS$ a simple equivariant $\sD$-module whose support is not contained in $f^{-1}(0)$, and set $\mathcal{M} = \cS_f$,  its localization along $f$. The multiplicity-free assumption on $X$ implies that $\mathcal{M}$ is also multiplicity-free - that is, \emph{each} non-zero isotypic component $\cM_\lambda$ is irreducible. In this setup, the results from Section \ref{sec:1.1} hold for each isotypic component \emph{without} requiring $\mathrm{supp}(\cS)=X$. 

Consider the decomposition 
\[ \mathcal{M} = \bigoplus_{\lambda \in \Lambda(\mathcal{M})} U_\lambda,\]
where $\Lambda(\mathcal{M})$ is the set of integral dominant weights indexing the finite-dimensional irreducible $G$-modules, and $U_\lambda=\cM_\lambda$. Let $b_{\lambda}(s)=\prod (s+\lambda_i)$, as in Proposition \ref{prop:bfunirred}.

\begin{thm}\label{thm: V filtration on multiplicity free space via representation data}
We have $\deg b_\lambda(s) = \deg f$. For any $\alpha\in \Q$, there is a $(\mathbb{C}[s], G)$-decomposition:
\begin{equation*}
V^\alpha \iota_{+} \mathcal{M} = \bigoplus_{\lambda \in \Lambda(\mathcal{M})} (p_{\lambda,\alpha}(s)) \otimes \cM_\lambda f^s,
\end{equation*}
where $p_{\lambda,\alpha}(s) = \prod_{i=1}^d [s + \lambda_i]_{\lceil \alpha - \lambda_i\rceil}$. 
\end{thm}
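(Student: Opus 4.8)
The plan is to deduce this theorem from the results of Section~\ref{sec:1.1} by applying them to each $G$-isotypic component of $\cM$ separately; the only point that is not a direct quotation is that the standing hypothesis $\supp(\cS)=X$ in Section~\ref{sec:1.1} may be dropped here, since the multiplicity-freeness of $X$ forces $\cM$ to be multiplicity-free, and that is all the relevant arguments actually use.

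First I would assemble the formal framework. As $\cM=\cS_f$ is $f$-localized and $\cM$ is multiplicity-free (recalled above), each nonzero component $\cM_\lambda$, $\lambda\in\Lambda(\cM)$, is an irreducible $G$-module; moreover $\Lambda(\cM)$ is stable under $\lambda\mapsto\lambda+\sigma$ because $f$ acts invertibly on $\cM$, so $\cM_{\lambda+\sigma}$ is irreducible whenever $\cM_\lambda$ is. Since $\cM\neq 0$ and some $\cM_\lambda$ is irreducible, $f$ is homogeneous by \cite[Proposition~4.3]{saki}, so $d\colonequals\deg f$ is defined; and since $X$ has only finitely many $G$-orbits, $\cM$ is regular holonomic with quasi-unipotent monodromy along $f$ \cite[Theorem~11.6.1]{HTT}, so the $V$-filtration and the isomorphism \eqref{eqn: Malgrange isomorphism intro} are available. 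By Lemma~\ref{lemma: G and s compatible on V filtration}, $V^\alpha\iota_{+}\cM$ is a $G$-stable $\C[s]$-submodule of $\iota_{+}\cM\cong\cM[s]f^s=\bigoplus_\lambda\cM_\lambda[s]f^s$; taking $G$-isotypic components yields $V^\alpha\iota_{+}\cM=\bigoplus_\lambda(V^\alpha\iota_{+}\cM)_\lambda$, with each summand a $G$-stable $\C[s]$-submodule of $\cM_\lambda[s]f^s$. Since $\cM_\lambda$ is irreducible, Schur's lemma gives $\op{Hom}_G(\cM_\lambda,\cM_\lambda[s]f^s)\cong\C[s]$, so such submodules are exactly $I\otimes\cM_\lambda f^s$ for ideals $I\subseteq\C[s]$; as $\C[s]$ is a PID this gives $(V^\alpha\iota_{+}\cM)_\lambda=(p_{\lambda,\alpha}(s))\otimes\cM_\lambda f^s$ for the monic generator $p_{\lambda,\alpha}(s)$, the common $p$-function of the nonzero elements of $\cM_\lambda$. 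Summing over $\lambda\in\Lambda(\cM)$ is legitimate because every element of $\iota_{+}\cM$ involves only finitely many isotypic components.

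It then remains to identify $b_\lambda(s)$ and $p_{\lambda,\alpha}(s)$. The existence of the constant-coefficient semi-invariant operator $\partial f$ of weight $-\sigma$, the relation \eqref{eqn: blambda}, and the equality $\deg b_\lambda(s)=d$ are the content of Proposition~\ref{prop:bfunirred}, while the formula $p_{\lambda,\alpha}(s)=\prod_{i=1}^d[s+\lambda_i]_{\lceil\alpha-\lambda_i\rceil}$ is Theorem~\ref{thm: V filtration general G} together with \eqref{eqn: plambdaalpha in general}. So the genuine work is to check that the proofs of Proposition~\ref{prop:bfunirred} and Theorem~\ref{thm: V filtration general G} survive the weakening of $\supp(\cS)=X$ to the multiplicity-freeness of $\cM$. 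I expect this to be the main obstacle, and would handle it by re-examining the three places where $\supp(\cS)=X$ enters: (i) the construction of $\partial f$ via M.~Sato's method, which depends only on $(X,G,f)$ and not on $\cS$; (ii) the derivation of \eqref{eqn: blambda} and the rationality $\lambda_i\in\Q$, which come from applying the weight $-\sigma$ operator $\partial f$ to the irreducible $\cM_{\lambda+\sigma}$ and invoking Schur's lemma for $\cM_\lambda$; and (iii) the $b$-function and $V$-filtration computation underlying Theorem~\ref{thm: V filtration general G}, which after restriction to a point of the open $G$-orbit reduces to a one-variable calculation. The most delicate check is (iii): one must verify that, when $\supp(\cS)$ is a proper $G$-orbit closure meeting $\{f\neq 0\}$, the localization $\cS_f$ still exhibits the behaviour along $f$ that the argument requires, so that the $p$-function is computed by the same reduction. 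Once (i)--(iii) are confirmed, assembling the componentwise statements over $\lambda\in\Lambda(\cM)$ gives the claimed $(\C[s],G)$-decomposition and the equality $\deg b_\lambda(s)=\deg f$.
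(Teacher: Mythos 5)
Your proposal is correct and is essentially the paper's own proof: the paper establishes exactly your isotypic decomposition (multiplicity-freeness of $\cM$ from Lemma \ref{lem:multfree}, $G$- and $\C[s]$-stability of $V^{\alpha}\iota_{+}\cM$ from Lemma \ref{lemma: G and s compatible on V filtration}, and Schur's lemma identifying each component with an ideal $(p_{\lambda,\alpha}(s))\otimes\cM_{\lambda}f^s$), and then quotes Theorem \ref{thm: V filtration general G} and Proposition \ref{prop:bfunirred} componentwise. The ``genuine work'' you defer to checks (i)--(iii) is already built into the paper's statements: Proposition \ref{prop:bfunirred} is proved under the alternative hypothesis that $X$ is multiplicity-free (via the semi-invariant operator of Proposition \ref{prop:invdiff} and the factorization of Lemma \ref{lem:decomp}), and the hypotheses for Theorem \ref{thm:bfun} and Theorem \ref{thm: V filtration general G} only require $\supp(\cS)\not\subset f^{-1}(0)$ with $\cM_{\lambda}$ irreducible, so no further verification is needed --- in particular the ``restriction to a point of the open orbit'' reduction you sketch for (iii) is not how those arguments run and is not required.
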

Set $\gr^\alpha_V \iota_{+} \mathcal{M}\colonequals V^{\alpha}\iota_{+}\cM/V^{>\alpha}\iota_{+}\cM$; if $\alpha\in [0,1]$, they are nearby and vanishing cycles $\sD$-modules of $\cM$. Consider the nilpotent order $N=s+\alpha$ on $\gr^\alpha_V \iota_{+} \mathcal{M}$ and denote by $W(N)_{\bullet}$ the associated monodromy weight filtration. Denote by $\lie$ the Lie algebra of $G$.

\begin{corollary}\label{corollary: G decomposition of grV}
For any $\alpha\in \Q,\ell\in \Z$, there are $(\mathbb{C}[s], G)$-module decompositions
\[ 
\gr^\alpha_V \iota_{+} \mathcal{M} =\bigoplus_{\lambda \in \Lambda(\cM)} \frac{\C[s]}{(s+\alpha)^{\nu_{\lambda,\alpha}}} \otimes_{\C} \cM_{\lambda} f^s,
\]
\begin{align}\label{eqn: G decomposition of grV}
W(N)_\ell \gr^\alpha_V \iota_{+} \mathcal{M} &=\bigoplus_{\lambda \in \Lambda(\cM), \, \nu_{\lambda,\alpha}>-\ell} \frac{\C[s]}{(s+\alpha)^{\min\{\lfloor \frac{\nu_{\lambda,\alpha}+\ell+1}{2} \rfloor,  \nu_{\lambda,\alpha}\}}} \otimes_{\C} \cM_{\lambda} f^s,
\end{align}
where $\nu_{\lambda,\alpha}$ is the quantity from \eqref{eqn: numalpha}. Moreover, $\gr^{W(N)}_{\ell}\gr^{\alpha}_V\iota_{+}\cM$ is a \,$-\alpha\sigma$-twisted equivariant\footnote{The twisted equivariance holds more generally; see Proposition \ref{prop: quotient of weight are equivariant}.} semisimple $\sD_X$-module, with a multiplicity-free $\mathfrak{g}$-decomposition:
\begin{equation}\label{eqn: decomposition of grWgrV}\gr_\ell^{W(N)}\gr_V^\alpha \iota_+ \cM = \, \bigoplus_{\lambda \in \Lambda(\cM), \, \nu_{\lambda,\alpha}+\ell \text{ odd}, \, \nu_{\lambda,\alpha}>|\ell|} \,\, U_{\lambda-\alpha\sigma}.\end{equation}
\end{corollary}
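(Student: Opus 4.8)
The plan is to read everything off the explicit description of $V^\alpha\iota_+\cM$ in Theorem~\ref{thm: V filtration on multiplicity free space via representation data}, combined with the elementary representation theory of a single nilpotent operator. The first step is to compute $\gr^\alpha_V\iota_+\cM$. By Theorem~\ref{thm: V filtration on multiplicity free space via representation data}, both $V^\alpha\iota_+\cM$ and $V^{>\alpha}\iota_+\cM=\bigcup_{\beta>\alpha}V^\beta\iota_+\cM$ are $(\C[s],G)$-decomposed over $\Lambda(\cM)$, the $\lambda$-component of the first being the ideal generated by $p_{\lambda,\alpha}(s)=\prod_i[s+\lambda_i]_{\lceil\alpha-\lambda_i\rceil}$ and of the second the ideal generated by $p_{\lambda,>\alpha}(s)=\prod_i[s+\lambda_i]_{k_i'}$, where $k_i'=\lceil\alpha-\lambda_i\rceil$ if $\alpha-\lambda_i\notin\Z$ and $k_i'=\lceil\alpha-\lambda_i\rceil+1$ if $\alpha-\lambda_i\in\Z$. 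A direct cancellation of the two products gives $p_{\lambda,>\alpha}(s)/p_{\lambda,\alpha}(s)=(s+\alpha)^{\nu_{\lambda,\alpha}}$ with $\nu_{\lambda,\alpha}$ as in \eqref{eqn: numalpha}. Using the identity $(p)/(ph)\cong\C[s]/(h)$ together with the $G$-equivariance of the $V$-filtration (Lemma~\ref{lemma: G and s compatible on V filtration}), one obtains
\[
\gr^\alpha_V\iota_+\cM\;\cong\;\bigoplus_{\lambda\in\Lambda(\cM)}\frac{\C[s]}{(s+\alpha)^{\nu_{\lambda,\alpha}}}\otimes_\C\cM_\lambda f^s
\]
as $(\C[s],G)$-modules, on which $N=s+\alpha$ acts on the $\lambda$-summand as a single nilpotent Jordan block of size $\nu_{\lambda,\alpha}$.

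Next I would determine the monodromy weight filtration. Since $N$ commutes with the $G$-action (Lemma~\ref{lemma: G and s compatible on V filtration}), $W(N)_\bullet$ preserves the isotypic decomposition, so it suffices to treat one Jordan block $\C[s]/(s+\alpha)^{\nu}$: there the monodromy weight filtration of $N$ is the classical one attached to the irreducible $\mathfrak{sl}_2$-representation of dimension $\nu$, whose $H$-weights are $\nu-1,\nu-3,\dots,-(\nu-1)$; hence $W(N)_\ell$ is the submodule $(s+\alpha)^{\max\{0,\lceil(\nu-1-\ell)/2\rceil\}}\cdot\C[s]/(s+\alpha)^{\nu}\cong\C[s]/(s+\alpha)^{\min\{\lfloor(\nu+\ell+1)/2\rfloor,\,\nu\}}$. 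Summing over $\lambda$ yields \eqref{eqn: G decomposition of grV} (the condition $\nu_{\lambda,\alpha}>-\ell$ being exactly what makes the exponent positive). Passing to $\gr^{W(N)}_\ell=W(N)_\ell/W(N)_{\ell-1}$, the $\lambda$-component is one-dimensional over $\C$ precisely when $\ell$ occurs as an $\mathfrak{sl}_2$-weight of the size-$\nu_{\lambda,\alpha}$ block, i.e. when $\nu_{\lambda,\alpha}+\ell$ is odd and $\nu_{\lambda,\alpha}>|\ell|$, and vanishes otherwise; moreover $s$ acts on it by $-\alpha$.

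To finish I would identify the $\sD_X$-module structure. The monodromy of $\iota_+\cM$ along $f$ acts on $\gr^\alpha_V$ by $e^{-2\pi i\alpha}$, so $\gr^\alpha_V\iota_+\cM$, and hence every subquotient, is $-\alpha\sigma$-twisted $G$-equivariant as a $\sD_X$-module (cf. the discussion preceding the corollary); in particular the $\mathfrak{g}$-action on $\cM_\lambda f^s$ with $s=-\alpha$ is the $-\alpha\sigma$-twist of $\cM_\lambda$, which is the irreducible module $U_{\lambda-\alpha\sigma}$. Because $\cM=\cS_f$ is a localization and $\cS$ underlies a pure twistor (resp.\ Hodge) $\sD$-module, localization does not alter the nearby cycles along $f$, so that $\gr^\alpha_V\iota_+\cM$ is, up to a twist and a shift, the nearby-cycle $\sD$-module of a pure object; by Mochizuki's (resp.\ Saito's) structure theory the graded pieces $\gr^{W(N)}_\ell$ of its monodromy weight filtration are pure, hence semisimple as $\sD_X$-modules. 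Combined with the $\mathfrak{g}$-type count of the previous paragraph and the multiplicity-freeness of $X$ (so that each isotypic component that occurs is already $\sD_X$-irreducible), this gives \eqref{eqn: decomposition of grWgrV}.

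The representation-theoretic bookkeeping in the first two paragraphs is routine once Theorem~\ref{thm: V filtration on multiplicity free space via representation data} is in hand. The substantive point—and the step I expect to be the main obstacle—is the last one: showing that $\gr^{W(N)}_\ell\gr^\alpha_V\iota_+\cM$ is \emph{semisimple} as a $\sD_X$-module (not merely multiplicity-free), which genuinely uses the purity of the monodromy-weight graded of nearby cycles from mixed twistor/Hodge module theory, together with the care needed to reduce the localization $\cS_f$ to the pure object $\cS$ for every $\alpha\in\Q$ (in particular for monodromy eigenvalue $1$, i.e.\ integral $\alpha$) and to keep track of the twist by $-\alpha\sigma$ correctly.
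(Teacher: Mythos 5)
Your proposal is correct and follows essentially the same route as the paper: you read $\gr^\alpha_V\iota_+\cM$ off the explicit decomposition of Theorem \ref{thm: V filtration on multiplicity free space via representation data} (getting a Jordan block $\C[s]/(s+\alpha)^{\nu_{\lambda,\alpha}}$ on each $\lambda$-summand), compute $W(N)_\bullet$ blockwise (your $\mathfrak{sl}_2$/Jordan-block description is equivalent to the paper's use of the convolution formula \eqref{eqn: convolution formula}), invoke Mochizuki's purity of $\gr^{W(N)}_\ell$ of nearby cycles for semisimplicity, and obtain the $-\alpha\sigma$-twist by setting $s=-\alpha$, exactly as in Proposition \ref{prop: quotient of weight are equivariant}. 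Only cosmetic caveats: ``every subquotient of $\gr^\alpha_V$ is $-\alpha\sigma$-twisted equivariant'' should be restricted to subquotients on which $N$ acts by zero (which covers $\gr^{W(N)}_\ell$ since $N\cdot W_\ell\subseteq W_{\ell-2}$), and the reduction from $\cS_f$ to the pure object $\cS$ for arbitrary $\alpha$ is the standard shift by the invertible action of $t$ on $\iota_+\cM$ to land in $\alpha\in(0,1]$.
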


\begin{corollary}\label{thm: Gdecomposition of weight filtration}
Assume $\cS$ underlies a pure twistor $\sD$-module of weight $q$. Then, for $\alpha\in\Q,\ell\in \Z_{\geq 0}$, there is a $\lie$-decomposition:
\begin{equation}\label{eqn: G decomposition of Well M}
W_{q+\ell}(\mathcal{M}\cdot f^{-\alpha}) =\bigoplus_{\nu_{\lambda,\alpha} \leq \ell } \cM_{\lambda}\cdot f^{-\alpha} = \bigoplus_{\nu_{\lambda,\alpha} \leq \ell } U_{\lambda-\alpha \sigma}.
\end{equation}
In particular, $W_{q+d}(\mathcal{M}\cdot f^{-\alpha})=\mathcal{M}\cdot f^{-\alpha}$.
\end{corollary}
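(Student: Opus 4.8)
The plan is to reduce the statement to the pointwise characterization in Theorem~\ref{thm:weightisot}, using that on a multiplicity-free space $\cM$ — and hence its twist $\cM\cdot f^{-\alpha}$ — is semisimple with pairwise non-isomorphic irreducible isotypic components. First I would record that, since each nonzero $\cM_\lambda$ is irreducible, one has a decomposition $\cM\cdot f^{-\alpha}=\bigoplus_{\lambda\in\Lambda(\cM)}\cM_\lambda\cdot f^{-\alpha}$ as $(-\alpha\sigma)$-twisted equivariant $\sD_X$-modules, in particular as $\lie$-modules, where each summand $\cM_\lambda\cdot f^{-\alpha}$ is the irreducible module $U_{\lambda-\alpha\sigma}$; here the $\lie$-action on the twisted module is the infinitesimal action through the vector fields in $\sD_X$, corrected by the scalar coming from the character $-\alpha\sigma$.

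Next I would observe that, because $\cS$ underlies a pure twistor $\sD$-module, $\cM\cdot f^{-\alpha}$ underlies a mixed twistor $\sD$-module, so its weight filtration $W_\bullet$ is an exhaustive, bounded filtration by $\sD_X$-submodules starting in weight $q$. Each step $W_{q+\ell}(\cM\cdot f^{-\alpha})$ is a $\sD_X$-submodule, hence stable under the $\lie$-action above (a $\sD_X$-submodule is automatically closed under both the vector-field term and the scalar term). Being a $\lie$-submodule of the semisimple module $\cM\cdot f^{-\alpha}$, it is therefore a direct sum of a subset of the summands $\cM_\lambda\cdot f^{-\alpha}$. Alternatively, this is a direct consequence of Proposition~\ref{prop: quotient of weight are equivariant}, which gives the twisted equivariance of the graded pieces and hence of the steps of $W_\bullet$.

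It then remains to identify which summands appear. Fixing $\lambda\in\Lambda(\cM)$ and a nonzero $m\in\cM_\lambda$, irreducibility of $\cM_\lambda\cdot f^{-\alpha}$ forces $\cM_\lambda\cdot f^{-\alpha}\subseteq W_{q+\ell}(\cM\cdot f^{-\alpha})$ if and only if $m\cdot f^{-\alpha}\in W_{q+\ell}(\cM\cdot f^{-\alpha})$, which by Theorem~\ref{thm:weightisot} holds precisely when $\nu_{m,\alpha}=\nu_{\lambda,\alpha}\leq\ell$. Summing over $\lambda$ yields
\[ W_{q+\ell}(\cM\cdot f^{-\alpha})=\bigoplus_{\nu_{\lambda,\alpha}\leq\ell}\cM_\lambda\cdot f^{-\alpha}=\bigoplus_{\nu_{\lambda,\alpha}\leq\ell}U_{\lambda-\alpha\sigma}. \]
For the final assertion, Theorem~\ref{thm: V filtration on multiplicity free space via representation data} gives $\deg b_\lambda(s)=d$, so $b_\lambda(s)=\prod_{i=1}^{d}(s+\lambda_i)$ has exactly $d$ roots counted with multiplicity; hence $\nu_{\lambda,\alpha}=\#\{i\mid\alpha-\lambda_i\in\Z_{\geq0}\}\leq d$ for every $\lambda$, and taking $\ell=d$ above gives $W_{q+d}(\cM\cdot f^{-\alpha})=\cM\cdot f^{-\alpha}$.

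The one step that deserves care — and the only place where anything beyond bookkeeping enters — is the compatibility in the second paragraph: one must be sure that the weight filtration coming from the mixed twistor $\sD$-module structure on $\cM\cdot f^{-\alpha}$ is genuinely respected by the twisted equivariant structure, so that its steps are sums of isotypic components. This is exactly what functoriality of the weight filtration, or the cited Proposition~\ref{prop: quotient of weight are equivariant}, provides; once that is in place the statement is purely a matter of combining the semisimple decomposition with Theorems~\ref{thm:weightisot} and~\ref{thm: V filtration on multiplicity free space via representation data}.
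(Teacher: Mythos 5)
Your proof is correct, but it takes a different path from the paper's. The paper deduces the corollary from its computation of the associated graded pieces: it combines the isomorphism \eqref{eqn: associated graded of weight} with Corollary \ref{corollary: G decomposition of grV} to get $\gr^W_{q+\ell}(\cM\cdot f^{-\alpha})=\bigoplus_{\nu_{\lambda,\alpha}=\ell}U_{\lambda-\alpha\sigma}$ for $\ell>0$, anchors the bottom step via \eqref{eqn: definition of Salpha}, and then reads off the filtration steps (implicitly using, as you do, that in a multiplicity-free semisimple module a submodule is determined by which isotypic summands it contains). You instead bypass $\gr^\alpha_V$ and the convolution-formula computation entirely and apply the pointwise criterion of Theorem \ref{thm:weightisot} to one nonzero vector in each irreducible isotypic component, after observing that each $W_{q+\ell}(\cM\cdot f^{-\alpha})$ is a $\sD_X$-submodule and hence stable under the twisted $\lie$-action, so it is a sum of the summands $\cM_\lambda\cdot f^{-\alpha}\cong U_{\lambda-\alpha\sigma}$. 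This is arguably the more economical derivation given Theorem \ref{thm:weightisot}, while the paper's route has the side benefit of simultaneously recording the decomposition of $\gr^W_{q+\ell}$, which it reuses elsewhere (e.g.\ in Proposition \ref{prop:simples} and Corollary \ref{cor:samenumroot}). One small remark: your "alternative" citation of Proposition \ref{prop: quotient of weight are equivariant} is not quite on point, since that statement concerns the subquotients of $W(N)_\bullet\gr^\alpha_V\iota_+\cM$ rather than $W_\bullet(\cM\cdot f^{-\alpha})$; but your primary justification (a $\sD_X$-submodule is automatically closed under the operators $a(\xi)+\alpha\,d\sigma(\xi)$) is sound, so this does not affect the argument. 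Your treatment of the last assertion, via $\deg b_\lambda(s)=d$ and hence $\nu_{\lambda,\alpha}\leq d$, matches the paper's.
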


Since $\gr_{q+\ell}^W (\cM \cdot f^{-\alpha})$ is a semi-simple (and often simple) $\sD$-module, this also yields a method for determining the $G$-decompositions of the composition factors of $\cM$ (cf. \cite{characters}) -- more generally, see Proposition \ref{prop:simples} in this vain.

\begin{corollary}\label{cor: G decomposition of Hodge filtration}
Let $\cM=(\mathcal{O}_X)_f$. For $\alpha\in \Q$, there is a $\lie$-decomposition: 
\[
F_k (\mathcal{M} \cdot f^{-\alpha}) =\bigoplus_{\deg p_{\lambda,\alpha}(s) \leq k} \cM_{\lambda}\cdot f^{-\alpha}  = \bigoplus_{\deg p_{\lambda,\alpha}(s) \leq k} U_{\lambda-\alpha\sigma}.
\]
Furthermore, when $\alpha=0$, this is a $G$-decomposition.
\end{corollary}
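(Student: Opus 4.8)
The plan is to deduce this directly from Theorem \ref{thm:hodgeisot} together with the multiplicity-free structure on $\cM = (\cO_X)_f$ recorded in the paragraph preceding Theorem \ref{thm: V filtration on multiplicity free space via representation data}. Since $X$ is multiplicity-free, each nonzero isotypic component $\cM_\lambda$ is irreducible, so Theorem \ref{thm:hodgeisot} applies verbatim to every $\lambda\in\Lambda(\cM)$: for a nonzero $m\in\cM_\lambda$ one has $m\cdot f^{-\alpha}\in F_k(\cM\cdot f^{-\alpha})$ if and only if $\deg p_{m,\alpha}(s)\le k$, and the degree depends only on $\lambda$, equalling $\sum_{i=1}^d \max\{\lceil \alpha-\lambda_i\rceil,0\}=\deg p_{\lambda,\alpha}(s)$ by the formula \eqref{eqn: plambdaalpha in general} for $p_{\lambda,\alpha}(s)$ from Theorem \ref{thm: V filtration on multiplicity free space via representation data}. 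First I would note that $F_k(\cM\cdot f^{-\alpha})$ is a $\lie$-submodule (indeed the Hodge filtration is preserved by the weak equivariant structure, equivalently by the $\lie$-action differentiating the $G$-action, which is a standard feature of the functorial Hodge filtration on a complex mixed Hodge module; see the discussion around \eqref{eqn: Hodge filtration on M via V filtration}). Being a $\lie$-submodule of the multiplicity-free $\lie$-module $\cM\cdot f^{-\alpha}$, it is automatically a direct sum of a subset of the isotypic components $\cM_\lambda\cdot f^{-\alpha}$.

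It then remains to identify exactly which components occur. A component $\cM_\lambda\cdot f^{-\alpha}$ is contained in $F_k(\cM\cdot f^{-\alpha})$ if and only if some (equivalently, every) nonzero element of it lies in $F_k$, which by the previous paragraph happens precisely when $\deg p_{\lambda,\alpha}(s)\le k$. This gives
\[
F_k(\cM\cdot f^{-\alpha})=\bigoplus_{\deg p_{\lambda,\alpha}(s)\le k}\cM_\lambda\cdot f^{-\alpha}.
\]
To rewrite the right-hand side as $\bigoplus U_{\lambda-\alpha\sigma}$, I would recall that multiplication by $f^{-\alpha}$ sends the $G$-isotypic component $U_\lambda=\cM_\lambda$ of $\cM$ to the $\lambda-\alpha\sigma$ isotypic component of the twisted module $\cM\cdot f^{-\alpha}$, because $f$ has $G$-weight $\sigma$; this is exactly the bookkeeping already used in Corollary \ref{thm: Gdecomposition of weight filtration} and Corollary \ref{corollary: G decomposition of grV}, so I would simply invoke it.

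For the final sentence, when $\alpha=0$ there is no twist and $\cM\cdot f^{0}=\cM$ carries its genuine $G$-equivariant (not merely $\lie$-equivariant) structure, with the Hodge filtration $F_k\cM$ a genuine $G$-submodule; hence the decomposition is a $G$-decomposition, $F_k\cM=\bigoplus_{\deg p_{\lambda,0}(s)\le k}U_\lambda$. The only point requiring a little care — and the one I would expect to be the main obstacle, though it is minor — is the passage from "$F_k$ is $\lie$-stable" to "$F_k$ is a sum of isotypic components": for this one needs $\cM\cdot f^{-\alpha}$ to be a \emph{semisimple}, multiplicity-free $\lie$-module (so that submodules are sums of isotypic pieces), which is guaranteed by the multiplicity-free hypothesis on $X$ as stated in the text, but should be cited explicitly rather than taken for granted, since $\cM$ is an infinite-dimensional $\lie$-module and one is implicitly using that it decomposes as a direct sum of finite-dimensional irreducibles with finite multiplicities.
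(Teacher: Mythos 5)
Your argument is correct and is essentially the paper's own proof, which simply cites Theorem \ref{thm:hodgeisot}: the element-wise criterion $\deg p_{\lambda,\alpha}(s)\leq k$ plus the multiplicity-free decomposition of $\cM$ gives the stated sum of isotypic components. The extra care you take in justifying that $F_k(\cM\cdot f^{-\alpha})$ is $\lie$-stable (hence a sum of isotypic pieces) is a reasonable filling-in of what the paper leaves implicit, and it can indeed be seen from \eqref{eqn: Hodge filtration on M via V filtration} together with the $G$-stability of $V^{\alpha}\iota_{+}\cM$ and $F_{\bullet}\iota_{+}\cM$ and the twisted equivariance of $\mathrm{ev}_{s=-\alpha}$.
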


In turn, we obtain the $G$-decomposition of \emph{all} Hodge and higher multiplier ideals.
\begin{corollary}\label{corollary: G decomposition of Hodge ideals}
For $\alpha\in \Q_{>0}$, there is a $G$-decomposition:
\begin{equation}\label{eqn: G decomposition of Hodge ideals}
\tilde{I}_k(\alpha D)=\bigoplus_{\deg p_{\lambda,\alpha+k}(s)\leq k} U_\lambda=I_k(\alpha D).\end{equation}
Furthermore, $\tilde{I}_k(\alpha D)=I_k(\alpha D)$ as ideal sheaves.
\end{corollary}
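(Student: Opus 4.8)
The plan is to assemble the statement from the two decompositions that are already available — Corollary~\ref{cor: G decomposition of Hodge filtration} for the Hodge filtration on $(\cO_X)_f\cdot f^{-\alpha}$ and Corollary~\ref{corollary: isotypic component of Hodge and higher} for the agreement of isotypic components of $I_k$ and $\tilde I_k$ — and then to carry out the bookkeeping of twists that converts ``$\deg p_{\lambda,\alpha}(s)\le k$'' into ``$\deg p_{\lambda,\alpha+k}(s)\le k$''. Throughout, $X$ is the multiplicity-free space, $\cM=(\cO_X)_f$, $f$ is the semi-invariant of weight $\sigma$, and — this is the feature we exploit at every step — each isotypic component $\cM_\mu$, and hence each of $(\cO_X)_\mu$, $I_k(\alpha D)_\mu$ and $\tilde I_k(\alpha D)_\mu$, is either irreducible or zero, and $\cO_X$ itself is multiplicity-free.

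First I would compute $I_k(\alpha D)$ as a $G$-module. By the definition \eqref{eqn: definition of Hodge ideal} together with Corollary~\ref{cor: G decomposition of Hodge filtration}, $I_k(\alpha D)\otimes\cO_X(kD)\cdot f^{-\alpha}=F_k(\cM\cdot f^{-\alpha})=\bigoplus_{\deg p_{\mu,\alpha}(s)\le k}\cM_\mu\cdot f^{-\alpha}$, where the summand $\cM_\mu\cdot f^{-\alpha}$ has $G$-weight $\mu-\alpha\sigma$. Since $\cO_X(kD)=f^{-k}\cO_X$, the isomorphism of \eqref{eqn: definition of Hodge ideal} is the untwisting by the symbol $f^{k+\alpha}$ (of weight $(k+\alpha)\sigma$): concretely, $f$ is a nonzerodivisor on $\cM$ and maps the isotypic component $\cM_\mu$ isomorphically onto $\cM_{\mu+\sigma}$ (multiplicity-freeness), so the summand $\cM_\mu\cdot f^{-\alpha}$ is carried to $\cM_{\mu+k\sigma}\subseteq\cO_X$. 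Hence $I_k(\alpha D)=\bigoplus_{\deg p_{\mu,\alpha}(s)\le k}U_{\mu+k\sigma}$. Here I would also note that $I_k(\alpha D)$ is a genuine $G$-submodule of $\cO_X$, since $\alpha D$ is a $G$-invariant $\Q$-divisor and Hodge ideals are functorial; thus the $\lie$-decomposition of Corollary~\ref{cor: G decomposition of Hodge filtration} upgrades to a $G$-decomposition after untwisting. Finally, reindexing by $\lambda=\mu+k\sigma$: applying \eqref{eqn: blambda} to $fm\in\cM_{\lambda+\sigma}$ and substituting $s\mapsto s+1$ gives $b_{\lambda+\sigma}(s)=b_\lambda(s+1)$, so the roots of $b_{\lambda-k\sigma}(s)$ form the multiset $\{\lambda_i-k\}_i$ and
\[
\deg p_{\lambda-k\sigma,\alpha}(s)=\sum_i\max\{\lceil(\alpha+k)-\lambda_i\rceil,0\}=\deg p_{\lambda,\alpha+k}(s).
\]
Therefore $I_k(\alpha D)=\bigoplus_{\deg p_{\lambda,\alpha+k}(s)\le k}U_\lambda$, which is the asserted formula.

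Next I would invoke Corollary~\ref{corollary: isotypic component of Hodge and higher}: since every $((\cO_X)_f)_\lambda$ is irreducible or zero, it gives $\tilde I_k(\alpha D)_\lambda=I_k(\alpha D)_\lambda$ for all $\lambda$ and all $\alpha>0$. Summing over $\lambda$ yields $\tilde I_k(\alpha D)\cong I_k(\alpha D)$ as $G$-modules; and since two $G$-submodules of the multiplicity-free $\cO_X$ with the same isotypic components must coincide as subsheaves, they are equal as ideal sheaves — this is where the phrase ``as ideal sheaves'' is dispatched. For a general affine spherical variety $X$, Luna's slice theorem reduces the $V$-, Hodge- and $F$-filtrations, and hence both $I_k(\alpha D)$ and $\tilde I_k(\alpha D)$, étale-locally to the multiplicity-free situation, and both ideal sheaves are compatible with smooth (in particular étale) base change; so the equality established above propagates globally. (Alternatively, one can bypass Corollary~\ref{corollary: isotypic component of Hodge and higher} and compute $\tilde I_k(\alpha D)$ directly from $V^\alpha\iota_+\cO_X\cap F_{k+1}\iota_+\cO_X$ via the $G$-decomposition in Theorem~\ref{thm: V filtration on multiplicity free space via representation data}, but routing through the already-proved comparison is cleaner.)

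The one place that requires genuine care is the twist bookkeeping of the second paragraph — the simultaneous shift $\mu\leftrightarrow\lambda=\mu+k\sigma$ of $G$-weights and $\alpha\leftrightarrow\alpha+k$ of the $b$-function variable, mediated by $b_{\lambda+\sigma}(s)=b_\lambda(s+1)$ — together with the verification that the $\lie$-decomposition of $F_k(\cM\cdot f^{-\alpha})$ descends to an honest $G$-module decomposition of the untwisted ideal $I_k(\alpha D)\subseteq\cO_X$. Everything else is a direct appeal to Corollaries~\ref{cor: G decomposition of Hodge filtration} and~\ref{corollary: isotypic component of Hodge and higher} and to the multiplicity-freeness of $\cO_X$.
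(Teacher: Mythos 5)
Your proposal is correct and follows essentially the same route as the paper: the decomposition of $I_k(\alpha D)$ comes from Corollary \ref{cor: G decomposition of Hodge filtration} (i.e.\ Theorem \ref{thm:hodgeisot}) after untwisting by $f^{k+\alpha}$ and the reindexing $\deg p_{\lambda-k\sigma,\alpha}(s)=\deg p_{\lambda,\alpha+k}(s)$, while the identification $\tilde I_k(\alpha D)=I_k(\alpha D)$ as ideal sheaves is exactly the paper's one-line argument via Corollary \ref{corollary: isotypic component of Hodge and higher} together with the multiplicity-freeness of $\cO_X$ (each $(\cO_X)_\lambda$ irreducible forces componentwise agreement to be sheaf-level equality). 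Your closing excursion through Luna's slice theorem and étale base change is not needed, since the statement is made in the multiplicity-free setting, but it does no harm.
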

It follows that the Hodge ideals $I_k(\alpha D)$ are always left continuous - a property not true in general (cf. \cite[Example 10.5]{MPbirational} for $f=x^2+y^3$).

The special multiplicity-free space arising from the coordinate-wise scaling action of $G=(\C^*)^n$ on $X=\C^n$ corresponds to the normal crossing case. Here \cite[Proposition 3.5]{Saito90} can, in principle, be recovered from Corollary \ref{corollary: G decomposition of grV} and our general results (Proposition \ref{prop:hodgeVgen} and Theorem \ref{thm:FS}). 

\subsection{Spaces of matrices and other examples}
Finally, we apply our results to compute the relevant structures for classical semi-invariant functions: the determinant of a generic (symmeric) matrix, the Pfaffian and the Freudenthal cubic on the fundamental representation of $E_6$.

For the determinant, our results extend the work of Perlman--Raicu \cite{PerlmanRaicu}. Let $X$ be the space of $n\times n$ matrices and let $D$ be the divisor of singular matrices. In \cite[Theorem 1.1]{PerlmanRaicu}, the Hodge ideal $I_k(D)$ is expressed in terms of symbolic powers of determinantal ideals, which we recover using our methods. Moreover, Corollary \ref{corollary: G decomposition of Hodge ideals} implies:
\begin{corollary}\label{for: Hodge ideals and higher multiplier ideals for determinants}
Let $k\in \N$ and $\alpha\in (0,1]$, then  $\tilde{I}_k(\alpha D)= I_k(\alpha D)=I_k(D)$.\end{corollary}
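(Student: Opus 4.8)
The plan is to deduce Corollary \ref{for: Hodge ideals and higher multiplier ideals for determinants} from the general machinery specialized to the space $X = \C^{n\times n}$ of square matrices under the action $G = \GL_n \times \GL_n$, with $f = \det$ the unique (up to scalar) semi-invariant and $D = \{f = 0\}$. The first step is to record that $(X, G)$ is a multiplicity-free space: the coordinate ring $\C[X]$ decomposes with each irreducible $\GL_n\times \GL_n$-representation $S_\mu(\C^n)\otimes S_\mu(\C^n)$ appearing with multiplicity one (Cauchy formula), so Theorem \ref{thm: V filtration on multiplicity free space via representation data} and Corollaries \ref{cor: G decomposition of Hodge filtration}, \ref{corollary: G decomposition of Hodge ideals} all apply to $\cM = (\cO_X)_f$. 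In particular $\tilde I_k(\alpha D) = I_k(\alpha D)$ for all $\alpha > 0$ by Corollary \ref{corollary: G decomposition of Hodge ideals}, which already gives one of the two equalities, so the remaining content is the equality $I_k(\alpha D) = I_k(D)$ for $\alpha \in (0,1]$, i.e.\ left-continuity of $I_k(\alpha D)$ at $\alpha = 1$ together with constancy on the whole half-open interval.

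The second step is to pin down the numbers $\lambda_i$ attached to each isotypic component $\cM_\mu$ via the $b$-function equation \eqref{eqn: blambda}. For the determinant this is the classical Cayley identity: with $\partial f = \det(\partial_{ij})$ one has the $b$-function data coming from M.\ Sato's computation, and for the component indexed by a partition $\mu = (\mu_1 \geq \cdots \geq \mu_n \geq 0)$ (here allowing the ``denominators'' $f^{-1}$, so $\mu_i \in \Z$) the roots of $b_\mu(s)$ are $s = -(\mu_i + n - i + 1) + ?$ — more precisely I would invoke Proposition \ref{prop:bfunirred} and the explicit prehomogeneous-space computation (Section \ref{sec:moreb}) to get $b_\mu(s) = \prod_{i=1}^n (s + \mu_i + i)$ up to the standard normalization, so that the relevant rational numbers are $\lambda_i = \mu_i + i$ for the component built from $S_\mu\otimes S_\mu$ inside $\cO_X$, shifted appropriately for components of the localization. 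The key arithmetic observation is then that all these $\lambda_i$ are \emph{positive integers}.

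The third step is the arithmetic heart: since every $\lambda_i \in \Z_{>0}$, we have for $\alpha \in (0,1]$ that $\lceil \alpha - \lambda_i\rceil = \lceil \alpha\rceil - \lambda_i = 1 - \lambda_i$ (using $\lceil \alpha \rceil = 1$ for $\alpha \in (0,1]$), which is exactly the value $\lceil 1 - \lambda_i \rceil$ obtained at $\alpha = 1$. Hence $p_{\lambda,\alpha}(s) = \prod_i [s+\lambda_i]_{\lceil \alpha - \lambda_i\rceil}$ is literally independent of $\alpha$ on $(0,1]$, equal to $p_{\lambda,1}(s)$. Plugging this into the $G$-decomposition of Corollary \ref{cor: G decomposition of Hodge filtration}, the index set $\{\deg p_{\lambda,\alpha}(s) \leq k\}$ does not move as $\alpha$ ranges over $(0,1]$, so $F_k((\cO_X)_f \cdot f^{-\alpha})$ — hence $I_k(\alpha D)$ via \eqref{eqn: definition of Hodge ideal} — is constant on $(0,1]$; its value there is $I_k(D)$ by definition (or by the limiting/left-continuity statement following Corollary \ref{corollary: G decomposition of Hodge ideals}). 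Combining with the first step yields $\tilde I_k(\alpha D) = I_k(\alpha D) = I_k(D)$ for $\alpha \in (0,1]$.

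I expect the main obstacle to be the precise identification of the $\lambda_i$ in Step 2 and the bookkeeping of weight shifts between $\cO_X$ and its localization $(\cO_X)_f$: one must be careful that the components of $(\cO_X)_f$ are indexed by $\mu$ with possibly negative parts (powers of $f^{-1}$), that the $\lambda_i$ attached to such a component are still integers, and — crucially for the integrality argument — that they are all \emph{strictly positive} (equivalently, that $1$ is the log canonical threshold of $\det$ and no $\lambda_i$ vanishes). This should follow from the known fact that the $b$-function of $\det$ is $b(s) = \prod_{i=1}^n (s+i)$ with smallest root $-1$, combined with the multiplicity-free structure forcing each $b_\mu$ to be a ``shift'' of this pattern; once integrality and positivity of all $\lambda_i$ are in hand, the rest is the elementary ceiling computation above. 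A secondary, lighter point is matching our $I_k(\alpha D)$ with the Perlman--Raicu description in terms of symbolic powers of determinantal ideals, but for this corollary only the $\alpha$-independence is needed, so that comparison can be deferred to the surrounding discussion.
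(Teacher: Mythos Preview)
Your approach is essentially the same as the paper's: the equality $\tilde I_k(\alpha D)=I_k(\alpha D)$ is read off from Corollary \ref{corollary: G decomposition of Hodge ideals}, and the remaining equality $I_k(\alpha D)=I_k(D)$ for $\alpha\in(0,1]$ follows because all roots of $b_\lambda(s)$ are integers, so that $\lceil(\alpha+k)-\lambda_i\rceil$ (equivalently $\lceil\alpha-\lambda_i\rceil$ in your indexing via Corollary \ref{cor: G decomposition of Hodge filtration}) is constant on $(0,1]$. The paper records this implicitly in the Remark following the corollary, where the condition $\deg p_{\lambda,\alpha+k}(s)\le k$ is written out and visibly does not depend on $\alpha$.

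Two corrections to your write-up. First, the $b$-function from Proposition \ref{prop: bfunction iota determinant} is $b_{\underline p}(s)=\prod_{i=1}^n(s+p_i+n-i+1)$, so $\lambda_i=p_i+n-i+1$ rather than $p_i+i$; this is immaterial to the argument but worth getting right. Second, and more relevant to the obstacle you flag: strict positivity of the $\lambda_i$ is \emph{not} needed, and is in fact false for general $\lambda\in\Lambda(\cM)$ (for $\underline p=(-2,\dots,-2)$ one has $\lambda_n=-1$). Your own computation in Step~3 only uses $\lambda_i\in\Z$, which Proposition \ref{prop: bfunction iota determinant} provides; this alone forces $\lceil\alpha-\lambda_i\rceil=1-\lambda_i$ for every $\alpha\in(0,1]$, regardless of sign. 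So the ``crucial'' positivity worry in your final paragraph is a red herring, and once it is dropped the argument is complete.
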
 

Similarly, in Corollary \ref{cor: hodge ideal symmetric}, Corollary \ref{cor: Hodge ideal Pfaffian} and Corollary \ref{cor: Hodge ideal E6}, we compute these ideals for other semi-invariant functions using symbolic powers. It is noteworthy that, in the case of the determinant of a generic symmetric matrix, one has $\tilde{I}_1\left((\frac{1}{2}+\epsilon)D\right)\neq \cO_X$ for $0<\epsilon\ll 1$ (Remark \ref{remark: first hodge ideal for symmetric}), in which case the methods of \cite{PerlmanRaicu} likely do not apply.

On the other hand, Corollary \ref{thm: Gdecomposition of weight filtration} provides a new method for computing the characters of the simple components of $(\cO_X)_f\cdot f^{-\alpha}$ in a uniform matter, without using resolutions of singularities as in \cite{characters}. In fact, we can also compute the characters of the Hodge filtrations on these simples, generalizing the generic determinantal case of \cite{PerlmanRaicu}. We calculate this explicitly in the case of the Pfaffians (Corollary \ref{cor: simple for Phaffian}) and leave the details in the symmetric case to the interested readers. Furthermore, Corollary \ref{corollary: G decomposition of grV} recovers the nearby and vanishing cycle perverse sheaves computed in \cite[Theorem 4.12]{BG}, and lifts them to the level of mixed Hodge modules (see Remark \ref{remark: nearby cycle perverse sheaf for generic determinant}).

\smallskip

While the examples discussed above are multiplicity-free spaces, the results apply to many other interesting settings - for example, to semi-invariants and algebraic functions on prehomogeneous vector spaces. In such settings, our results can be also applied to the study of categories of equivariant $\sD$-modules (e.g. \cite{LW19}), and related objects and invariants, such as local cohomology modules \cite{LR20}, (co)homology, and various weight filtrations \cite{LR23, Perlman}; see Section 
\ref{sec:last} for further details.

\subsection*{Acknowledgement} We thank Bradley Dirks, Dougal Davis and Michael Perlman for helpful discussions.

\section{Recollection on the $V$-filtration of Kashiwara and Malgrange}
Let $X$ be a smooth algebraic variety, let $f$ be a regular function on $X$. Denote by $\iota: X \to X\times \C$ the graph embedding sending $x$ to $(x,f(x))$. Let $D=\mathrm{div}(f)$, and let $j:U=X-D\to X$ be the open embedding. Let $\cM$ be a regular holonomic $\sD_X$-module with quasi-unipotent monodromy along $f$. Let $t$ be the coordinate on $\C$ and set $s\colonequals-\d_t t$. 

\subsection{$V$-filtrations and $b$-functions}\label{sec: definition of V filtration}
\begin{thm}[{\cite{Kas83,Malgrange}, see also \cite[(4.1)]{Sabbah}}]\label{thm: uniqueness of V filtration}
    There exists a unique exhaustive decreasing filtration on $\iota_{+}\cM$, called the $V$-filtration along $f$, satisfying:
    \begin{enumerate}
        \item $V^{\bullet}\iota_{+}\cM$ is indexed left-continuously and discretely by rational numbers;
        \item For each $\alpha\in \Q$, $V^{\alpha}\iota_{+}\cM$ is a coherent $\sD_X\langle s,t\rangle$-module and satisfies: $t\cdot V^{\alpha}\iota_{+}\cM \subseteq V^{\alpha+1}\iota_{+}\cM$, with equality for $\alpha\gg 0$, and $\d_t\cdot V^{\alpha}\iota_{+}\cM\subseteq V^{\alpha-1}\iota_{+}\cM$;
    \item The operator $N:=s+\alpha$ acts nilpotently on $\gr_V^{\alpha}\iota_{+}\cM \colonequals V^\alpha \iota_+\cM / V^{>\alpha} \iota_+ \cM$.
    \end{enumerate}
\end{thm}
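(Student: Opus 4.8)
The plan is to prove this classical Kashiwara--Malgrange result in the two standard steps, uniqueness (formal, given the axioms) and existence (where holonomicity of $\cM$ and the Bernstein--Sato polynomial do the work); in the paper itself one would simply cite \cite{Kas83,Malgrange,Sabbah}.

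\emph{Uniqueness.} Suppose $U^\bullet$ and $V^\bullet$ both satisfy (1)--(3). By discreteness and left-continuity it suffices to show $U^\alpha\subseteq V^\alpha$ for each fixed $\alpha$, the reverse inclusion following by symmetry. The mechanism combines two facts. First, axiom (3) makes $s+\alpha$ nilpotent on $\gr^\alpha_V$, so $s+\beta$ acts \emph{invertibly} on $\gr^\gamma_V$ for every $\gamma\neq\beta$ (a nonzero scalar plus a nilpotent operator), and since $s=-\d_t t\in V^0\sD$ preserves $V^\bullet$, so does every polynomial in $s$. Second, coherence of $U^\alpha$ over $V^0\sD$ together with holonomicity of $\cM$ yields, for every local section $m$ of $U^\alpha$, a nonzero monic $b_m(s)$ with $b_m(s)\,m\in t\cdot V^0\sD\cdot m\subseteq U^{\alpha+1}$; the quasi-unipotent monodromy hypothesis makes the roots of $b_m$ rational, and one checks, using the axioms along $U^\bullet$, that the largest root of $b_m$ is $-\operatorname{ord}_U(m)$, so all roots are $\leq-\alpha$ since $\operatorname{ord}_U(m)\geq\alpha$. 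Now run the standard ping-pong: if some $m\in U^\alpha$ had $\gamma\colonequals\operatorname{ord}_V(m)<\alpha$ (well-defined by exhaustiveness and discreteness of $V^\bullet$, with $0\neq\bar m\in\gr^\gamma_V$), then applying in turn the $b$-functions of $m,\ b_m(s)m,\ \dots$ produces nonzero elements of $U^{\alpha+k}$ for every $k$, because at each stage the new factors $s+\beta_i$ have $\beta_i\geq\alpha>\gamma$ and hence act invertibly on $\gr^\gamma_V$, keeping the class there nonzero -- contradicting the separatedness $\bigcap_k U^{\alpha+k}=0$.

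\emph{Existence.} Write $\iota_+\cM=\bigoplus_{j\geq 0}\cM\otimes\d_t^{\,j}$ and set $V^0_{\mathrm{ind}}\colonequals V^0\sD\cdot(\cM\otimes 1)$, a $V^0\sD$-coherent submodule since $\cM$ is $\sD_X$-coherent. Holonomicity of $\cM$ furnishes a Bernstein--Sato polynomial $b(s)$ with $b(s)\,V^0_{\mathrm{ind}}\subseteq t\,V^0_{\mathrm{ind}}$, whose roots are rational by the quasi-unipotence hypothesis; consequently $s$ acts on each holonomic $\sD_X$-module $t^k V^0_{\mathrm{ind}}/t^{k+1}V^0_{\mathrm{ind}}$ with only finitely many generalized eigenvalues, all lying in a fixed finite union of residue classes modulo $\Z$. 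Decomposing these graded pieces into their generalized $s$-eigen-components, one assembles $V^\alpha$ for $\alpha$ in a fundamental interval by retaining, class by class, exactly the components whose eigenvalue lies in the appropriate half-open range, and extends to all $\alpha\in\Q$ by the forced rules $V^{\alpha+1}=t\,V^\alpha$ (for $\alpha$ above the largest root) and $V^{\alpha-1}=V^\alpha+\d_t V^\alpha$. It then remains to check (1)--(3): discreteness and left-continuity are built in; $\sD_X\langle s,t\rangle$-coherence and the inclusions $t\,V^\alpha\subseteq V^{\alpha+1}$, $\d_t V^\alpha\subseteq V^{\alpha-1}$ follow from the $V^0\sD$-module structure and from $b(s)$ annihilating the graded pieces; and nilpotence of $s+\alpha$ on $\gr^\alpha_V$ is precisely the generalized-eigenspace condition.

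\emph{Main obstacle.} The real work is in existence: one must verify that the filtration assembled from these eigen-components is $V^0\sD$-coherent in \emph{every} degree and that $t\,V^\alpha=V^{\alpha+1}$ for all $\alpha\gg 0$ -- not merely for the distinguished generator $\cM\otimes 1$ -- which requires propagating the Bernstein--Sato estimate to arbitrary local sections and tracking the roots modulo $\Z$ with care. Once existence is set up, the uniqueness argument and the axiom verification above are comparatively routine.
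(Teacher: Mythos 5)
The paper itself offers no proof of this statement: it is quoted verbatim from Kashiwara, Malgrange and Sabbah, so your proposal can only be judged as a reconstruction of the classical argument. Your existence half is a fair outline of the standard construction (the coherent $V^0\sD$-lattice generated by $\cM\otimes 1$, a Bernstein--Sato polynomial for that lattice, refinement of the $t$-adic filtration by generalized $s$-eigenvalues, propagation by $t$ and $\d_t$), with the genuinely laborious verifications correctly identified and deferred; as a sketch of a cited theorem this is acceptable.

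The uniqueness half, however, has a concrete misstep at the punchline. From $m\in U^\alpha$ with $\gamma:=\operatorname{ord}_V(m)<\alpha$ you produce, for each $k$, an element $m_k=b^{(k)}(s)\cdots b^{(1)}(s)\,m\in U^{\alpha+k}$ whose class in $\gr_V^\gamma\iota_{+}\cM$ is nonzero, and you claim this contradicts $\bigcap_k U^{\alpha+k}=0$. It does not: the $m_k$ are different elements, so no nonzero element of $\bigcap_k U^{\alpha+k}$ has been exhibited; moreover separatedness is not among the axioms (1)--(3) and would itself have to be derived. The correct finish lives on the $V$-side: locally, $U^{\alpha+k_0}$ is $V^0\sD$-coherent and $V^{\bullet}$ is exhaustive, so $U^{\alpha+k_0}\subseteq V^{\gamma_1}$ for some $\gamma_1$, and axiom (2) for $U^\bullet$ gives $U^{\alpha+k}=t^{\,k-k_0}\,U^{\alpha+k_0}\subseteq V^{\gamma_1+k-k_0}$ for $k\geq k_0\gg 0$; hence $U^{\alpha+k}\subseteq V^{>\gamma}$ for $k\gg 0$, so the class of $m_k$ in $\gr_V^\gamma$ must vanish, while invertibility of the factors $s+\beta_i$ ($\beta_i\geq\alpha>\gamma$) on $\gr_V^\gamma$ forces it to be nonzero --- that is the contradiction. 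Relatedly, your appeal to ``the largest root of $b_m$ is $-\operatorname{ord}_U(m)$'' is essentially Sabbah's characterization (Theorem \ref{thm: Sabbah}), which is normally deduced \emph{from} the uniqueness being proved; all you need, and all that follows directly from axiom (3) for $U^\bullet$ together with discreteness and local coherence (to bound nilpotency orders), is that for each $k$ there is some $b(s)$ with all roots $\leq-\alpha$ and $b(s)\,m\in U^{\alpha+k}$, where membership in the filtration, not in the cyclic module generated by $m$, suffices.
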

Assume that $f$ acts bijectively on $\cM$; equivalently, $\cM=\cM_f$, the localization of $\cM$ along $f$. Following \cite{MPVfiltration}, one can endow $\cM[s]f^s$ with a $\sD_X\langle s,t\rangle$-module structure via:
\begin{align*}
    \xi\cdot(ms^{\ell}f^s)&=(\xi(m)s^{\ell}+ms^{\ell+1}\frac{\xi(f)}{f})f^s, \quad \forall \xi\in T_X,\\
    s\cdot (ms^{\ell}f^s)&=ms^{\ell+1}f^s,\quad t\cdot (ms^{\ell}f^s)=\left(m(s+1)^{\ell} f\right) f^s.
\end{align*}
Define $\cM\cdot f^{-\alpha}\colonequals  \cM[s]f^s/(s+\alpha)$, and identify $\iota_{+}\cM \cong \sum_{\ell\geq 0} \cM\otimes \d_t^{\ell}$.

\begin{prop}[\cite{Malgrange,MPVfiltration}]\label{prop:malgrange}There is an isomorphism of $\sD_X \langle s,t \rangle$-modules
\begin{equation}\label{eqn: Malgrange isomorphism}
\cM[s]f^s \xrightarrow{\sim} \iota_+ \cM, \quad ms^{\ell}f^s\mapsto m\otimes (-\d_tt)^{\ell},
\end{equation}
with inverse given by
\begin{equation}\label{eqn: inverse of Malgrange isomorphism} m\otimes \d_t^{\ell} \to \frac{m}{f^{\ell}}\prod_{j=0}^{\ell-1}(-s+j)f^s.\end{equation}
\end{prop}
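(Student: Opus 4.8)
The plan is to construct the map $\phi\colon \cM[s]f^s\to \iota_{+}\cM$ of \eqref{eqn: Malgrange isomorphism} explicitly, by setting $\phi(m s^\ell f^s)\colonequals (-\d_t t)^\ell\cdot (m\otimes 1)$ — that is, apply the operator $(-\d_t t)^\ell = s^\ell$ to the element $m\otimes 1 \in \bigoplus_{j\ge 0}\cM\otimes\d_t^j = \iota_{+}\cM$ — and then to verify in turn that $\phi$ is well-defined, $\sD_X\langle s,t\rangle$-linear, bijective, and has inverse given by \eqref{eqn: inverse of Malgrange isomorphism}. First I would record the standard description of the $\sD$-module pushforward along the graph embedding: $\iota_{+}\cM\cong \bigoplus_{j\ge 0}\cM\otimes\d_t^j$ as an $\cO_X$-module, with $\d_t(m\otimes\d_t^j) = m\otimes\d_t^{j+1}$, $t(m\otimes\d_t^j) = fm\otimes\d_t^j - j\,m\otimes\d_t^{j-1}$, and $\xi(m\otimes\d_t^j) = \xi(m)\otimes\d_t^j - \xi(f)m\otimes\d_t^{j+1}$ for $\xi\in T_X$. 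From $s=-\d_t t$ I would then extract the two identities that drive everything: on $\iota_{+}\cM$ one has $(-s+j)(m\otimes\d_t^j) = fm\otimes\d_t^{j+1}$, so $\prod_{j=0}^{\ell-1}(-s+j)\,(m\otimes 1) = f^{\ell}\,m\otimes\d_t^{\ell}$; and $t s^{\ell} = (s+1)^{\ell} t$ as operators, since $ts=(s+1)t$.

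Next I would check $\sD_X\langle s,t\rangle$-linearity of $\phi$ by testing each generator against the defining formulas for $\cM[s]f^s$ from \cite{MPVfiltration}. For $s$ and for $a\in\cO_X$ this is immediate, as both commute with $s=-\d_t t$ on $\iota_{+}\cM$. For $t$ one uses $t(m\otimes 1)=fm\otimes 1$ together with $t s^{\ell}=(s+1)^{\ell}t$ to get $\phi(t\cdot m s^{\ell}f^s)=\phi\big((s+1)^{\ell}(fm)f^s\big)=(s+1)^{\ell}(fm\otimes 1)=t\cdot\phi(m s^{\ell}f^s)$. For a vector field $\xi$, since $[\xi,s]=0$ on $\iota_{+}\cM$, it is enough to verify $\xi(m\otimes 1)=\xi(m)\otimes 1 + s\cdot\big(m\tfrac{\xi(f)}{f}\otimes 1\big)$; this follows from the pushforward formula for $\xi$ and the computation $s(n\otimes 1)=-\d_t(fn\otimes 1)=-fn\otimes\d_t$ applied to $n=m\xi(f)/f\in\cM$ (here the localization hypothesis makes $m\xi(f)/f$ meaningful). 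Along the way I would note that these same formulas do define a genuine $\sD_X\langle s,t\rangle$-module structure on $\cM[s]f^s$, a routine relation check already contained in the cited references.

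For bijectivity I would argue by the leading term in the $\d_t$-degree. By the identity above, $\phi(m s^{\ell}f^s)=s^{\ell}(m\otimes 1)$ has top component $(-1)^{\ell}f^{\ell}m\otimes\d_t^{\ell}$, and since $f$ acts invertibly on $\cM$ (this is where $\cM=\cM_f$ enters), $m\mapsto f^{\ell}m$ is a bijection of $\cM$. Upper-triangularity in the $\d_t$-degree then gives surjectivity — the elements $s^{\ell}(m\otimes 1)$ with $\ell\le L$ span $\bigoplus_{j\le L}\cM\otimes\d_t^j$ — and injectivity — if $\sum_{\ell\le L}s^{\ell}(m_\ell\otimes 1)=0$, the $\d_t^{L}$-component forces $f^{L}m_L=0$, hence $m_L=0$, and one descends. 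Finally, to identify $\phi^{-1}$ with the formula in \eqref{eqn: inverse of Malgrange isomorphism}, it suffices to apply $\phi$ to $\tfrac{m}{f^{\ell}}\prod_{j=0}^{\ell-1}(-s+j)f^s$ and use $\prod_{j=0}^{\ell-1}(-s+j)\big(\tfrac{m}{f^{\ell}}\otimes 1\big)=f^{\ell}\cdot\tfrac{m}{f^{\ell}}\otimes\d_t^{\ell}=m\otimes\d_t^{\ell}$.

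No step is conceptually hard; the real work is bookkeeping — fixing the correct signs and the $s\mapsto s+1$ shift in the pushforward and module formulas so that all four compatibility checks line up, and making sure the invertibility of $f$ on $\cM$ is invoked exactly where the triangular system needs it. That is the step I expect to demand the most care.
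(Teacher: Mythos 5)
Your argument is correct and is essentially the standard proof from the cited sources \cite{Malgrange,MPVfiltration}: the paper itself offers no proof beyond the citation, and your construction (the explicit pushforward formulas, the identity $(-s+j)(m\otimes\d_t^j)=fm\otimes\d_t^{j+1}$, the commutation $ts^{\ell}=(s+1)^{\ell}t$, and upper-triangularity in $\d_t$-degree using the invertibility of $f$ on $\cM=\cM_f$) is exactly the bookkeeping those references carry out. All the key identities you state check out, including the sign in the top component $(-1)^{\ell}f^{\ell}m\otimes\d_t^{\ell}$ and the verification that the stated inverse formula is indeed a two-sided inverse.
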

\begin{definition}\label{definition: definition of BS polynomials}
For a local section $w\in \iota_{+}\cM$, the Bernstein-Sato polynomial $b_w(s)$ is the unique monic polynomial of minimal degree $b(s)\in\C[s]$ satisfying
\begin{align}\label{eqn: definition of b-function}
     P(s,t)\cdot (tw)=b(s)\cdot w,
\end{align}
for some $P(s,t)\in \sD_X\langle s,t\rangle$. For $m\in \cM$ and $p(s)\in \C[s]$, define
\[ b_{p(s)m}(s):= b_w(s), \quad \textrm{where $w=p(-\d_tt)\cdot (m\otimes 1)$}.\]
\end{definition}
\begin{remark}
    For $1\in \cM$, $b_1(s)$ denotes the usual $b$-function of $f$, and \emph{not} $b_f(s)$.
\end{remark}
\begin{lemma}[{\cite[Lemma 1.21]{holoprehom}}]\label{lem:bdiv} For $m \in \cO_X$, we have $b_{m}(s) \mid \prod_{i=0}^{\deg m} b_1(s+i)$.
\end{lemma}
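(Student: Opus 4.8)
The plan is to first restate the divisibility as a Bernstein–Sato functional equation and then induct on $\deg m$. By Proposition~\ref{prop:malgrange} and Definition~\ref{definition: definition of BS polynomials}, together with the observation $t^{k}(mf^{s+1})=f^{k}\cdot mf^{s+1}\in\cO_X\cdot mf^{s+1}$, one has $\sD_X\langle s,t\rangle\cdot t(mf^{s})=\sD_X[s]\cdot(mf^{s+1})$, so that $b_m(s)$ is the minimal monic $b(s)\in\C[s]$ with $b(s)\,mf^{s}\in\sD_X[s]\cdot(mf^{s+1})$. (The statement is local, so I would work in coordinates, $\cO_X=\C[x_1,\dots,x_n]$ with $\deg$ the polynomial degree.) Hence it suffices to produce the functional equation
\[
\Bigl(\prod_{i=0}^{e}b_1(s+i)\Bigr)\,mf^{s}\ \in\ \sD_X[s]\cdot(mf^{s+1}),\qquad e:=\deg m,
\]
and I would prove this by induction on $e$; for $e=0$ the claim is trivial, since then $\sD_X[s](mf^{s+1})=\sD_X[s](f^{s+1})$ and $b_m(s)=b_1(s)$.

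For the inductive step, fix $Q\in\sD_X[s]$ with $Q(f^{s+1})=b_1(s)f^{s}$. Multiplying by $m$ on the left gives $(mQ)(f^{s+1})=b_1(s)\,mf^{s}$, so $b_1(s)mf^{s}$ already lies in $\sD_X[s](f^{s+1})$ — but with the \emph{wrong} cyclic generator. To repair the generator I would expand $Q(mf^{s+1})$ by the Leibniz rule:
\[
Q(mf^{s+1})\ =\ m\,b_1(s)f^{s}\ +\ \sum_{0<|\gamma|\le e}(\partial^{\gamma}m)\,Q_{(\gamma)}(f^{s+1}),
\]
where $Q_{(\gamma)}\in\sD_X[s]$ is the $\gamma$-th iterated symbol of $Q$ (so $\operatorname{ord}Q_{(\gamma)}=\operatorname{ord}Q-|\gamma|$) and the sum is finite because $\partial^{\gamma}m=0$ for $|\gamma|>e$. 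Since $Q(mf^{s+1})\in\sD_X[s](mf^{s+1})$, this yields
\[
b_1(s)\,mf^{s}\ \equiv\ -\sum_{0<|\gamma|\le e}(\partial^{\gamma}m)\,Q_{(\gamma)}(f^{s+1})\pmod{\sD_X[s](mf^{s+1})},
\]
and every term on the right involves a polynomial $\partial^{\gamma}m$ of degree $\le e-1$. The idea is then to multiply through by $\prod_{i=1}^{e}b_1(s+i)$ and absorb each error term into $\sD_X[s](mf^{s+1})$, feeding in the inductive hypothesis for the lower-degree polynomials $\partial^{\gamma}m$ and the auxiliary identity $b_{fg}(s)=b_g(s+1)$ (which follows from $t$-equivariance of the $V$-filtration) to realign powers of $f$; a nested induction on $\operatorname{ord}Q$ shows the recursion terminates once $Q_{(\gamma)}\in\cO_X[s]$, each step costing exactly one extra shift. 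Tracking the shifts one accumulates a divisor of $\prod_{i=0}^{e}b_1(s+i)$, the final comparison of products using $\gcd(AB,CB)=B\,\gcd(A,C)$ in $\C[s]$.

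The main obstacle is precisely this last point: controlling the Leibniz error terms. The tempting shortcut is ``additivity'', $b_{m_1+m_2}(s)\mid\mathrm{lcm}(b_{m_1}(s),b_{m_2}(s))$, which would reduce $m$ to a single monomial and hence to iterated multiplication by a coordinate; but this \emph{fails}, because the $\sD_X[s]$-module generated by $(m_1+m_2)f^{s+1}$ is in general strictly smaller than $\sD_X[s](m_1f^{s+1})+\sD_X[s](m_2f^{s+1})$ — the Bernstein functional equation requires one cyclic generator on the right. So the error terms genuinely have to be carried along, and the delicate part is verifying that each consumes only a single shift of $b_1$, so that the total is $\deg m+1$ shifts rather than something exponential in $\operatorname{ord}Q$; it is exactly this sharp bound that feeds into Proposition~\ref{prop:bfunirred} and the later applications.
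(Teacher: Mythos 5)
The paper itself does not prove this lemma; it simply cites \cite[Lemma 1.21]{holoprehom}, so your proposal has to stand on its own. Your set-up is fine: the reduction of $b_m(s)$ to the condition $b(s)\,mf^s\in\sD_X[s]\cdot(mf^{s+1})$ is correct (since $t$ acts on $mf^{s+1}$ as multiplication by $f$, one indeed has $\sD_X\langle s,t\rangle\cdot(mf^{s+1})=\sD_X[s]\cdot(mf^{s+1})$), the Leibniz expansion $Q(mf^{s+1})=b_1(s)\,mf^s+\sum_{0<|\gamma|\le e}(\partial^\gamma m)\,Q_{(\gamma)}(f^{s+1})$ is correct, and the auxiliary identity $b_{fg}(s)=b_g(s+1)$ is also fine.

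The gap is the absorption step, and it is not a technicality but the entire content of the lemma. You must show that, after multiplying by $\prod_{i=1}^{e}b_1(s+i)$, each error term $(\partial^\gamma m)\,Q_{(\gamma)}(f^{s+1})$ lands in the cyclic module $\sD_X[s]\cdot(mf^{s+1})$. The inductive hypothesis you propose to ``feed in'' concerns the cyclic modules $\sD_X[s]\cdot\bigl((\partial^\gamma m)f^{s+1}\bigr)$, which have no containment relation with $\sD_X[s]\cdot(mf^{s+1})$ — this is exactly the obstruction you yourself point out when explaining why additivity of $b$-functions fails, and your argument does not overcome it. The ``nested induction on $\operatorname{ord}Q$'' is never specified: re-expanding the error terms by Leibniz only produces further elements of $\sD_X[s]\cdot f^{s+1}$ (or of modules generated by derivatives of $m$ times $f^{s+1}$), and no mechanism is given that ever re-attaches them to the generator $mf^{s+1}$. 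In effect, the step you need — that a bounded product of shifted copies of $b_1$ times an arbitrary element of $\sD_X[s]\cdot f^{s+1}$ lies in $\sD_X[s]\cdot(mf^{s+1})$ — is essentially equivalent to the lemma itself, so as written the argument begs the question; the claim that ``each step costs exactly one extra shift'' is asserted, not proved. A correct proof has to produce, by some explicit device, an operator equation whose right-hand generator is $mf^{s+1}$ throughout (this is what the cited reference does), and your outline does not yet contain that idea.
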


\begin{thm}[{\cite{Sabbah}, \cite[Corollary 2.7]{DLY}}]\label{thm: Sabbah}
For any $\alpha\in \mathbb{Q}$, we have
    \[ V^{\alpha}\iota_{+}\cM=\{w \in \iota_{+}\cM \mid \textrm{ all the roots of $b_w(s)$ are $\leq -\alpha$}\}.\]
Moreover, for $w\in V^{\alpha}\iota_{+}\cM$, one has
\begin{equation}\label{eqn: mult via ker}\mathrm{mult}_{s=-\alpha}b_{w}(s) = \min\{ \ell \mid (s+\alpha)^{\ell}\cdot [w]=0\in \mathrm{gr}^{\alpha}_V\iota_{+}\cM\}.\end{equation}
\end{thm}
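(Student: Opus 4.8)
The plan is to prove that $\tilde V^\alpha:=\{w\in\iota_{+}\cM\mid\text{all roots of }b_w(s)\text{ are }\le-\alpha\}$ equals $V^\alpha\iota_{+}\cM$, with the multiplicity statement falling out of the same analysis. The organizing principle I would use is that $b_w(s)$ is a \emph{minimal polynomial}. Given $w$, form the cyclic $V^{0}\sD_{X\times\C}$-module $\cN:=\sD_X\langle s,t\rangle\cdot w$ (coherent, since $\sD_X\langle s,t\rangle=V^{0}\sD_{X\times\C}$ is noetherian) and its quotient $\bar\cN:=\cN/t\cN$. A short computation using the relation $st=t(s-1)$ shows $\sD_X\langle s,t\rangle\cdot(tw)=t\cN$, so by Definition \ref{definition: definition of BS polynomials}, $b_w(s)$ is precisely the minimal polynomial of the $\C[s]$-action on the class $\bar w\in\bar\cN$. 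I would then equip $\cN$ and $\bar\cN$ with the filtrations induced from $V^{\bullet}\iota_{+}\cM$ and check, using Theorem \ref{thm: uniqueness of V filtration} and an Artin--Rees argument, that these remain exhaustive, separated and discretely indexed, with $s+\alpha$ acting nilpotently on $\gr^{\alpha}_V\cN$ and on the quotient $\gr^{\alpha}_V\bar\cN$. Establishing these properties of the induced filtrations — in particular separatedness of $\bar\cN$, i.e.\ that $\bigcap_\alpha(V^\alpha\cN+t\cN)=t\cN$ — is the step I expect to be the main obstacle; it is exactly where holonomicity of $\iota_{+}\cM$ is used, and is morally a repackaging of the existence part of Theorem \ref{thm: uniqueness of V filtration}.

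For the inclusion $\tilde V^\alpha\subseteq V^\alpha\iota_{+}\cM$, I would take $0\ne w\in\tilde V^\alpha$ and let $\beta$ be its exact $V$-order in $\iota_{+}\cM$ (finite, since the $V$-filtration is exhaustive and separated). Since the operator $P(s,t)$ in \eqref{eqn: definition of b-function} lies in $\sD_X\langle s,t\rangle=V^{0}\sD_{X\times\C}$ and hence preserves $V^{\beta+1}$, while $tw\in V^{\beta+1}$, the defining relation gives $b_w(s)w=P(s,t)(tw)\in V^{\beta+1}\subseteq V^{>\beta}\iota_{+}\cM$. Thus $b_w(s)[w]=0$ in $\gr^{\beta}_V\iota_{+}\cM$ with $[w]\ne0$, and nilpotence of $s+\beta$ there forces $-\beta$ to be a root of $b_w$; hence $-\beta\le-\alpha$, so $w\in V^{\beta}\subseteq V^{\alpha}$.

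For the reverse inclusion, I would take $w\in V^\alpha\iota_{+}\cM$ and let $\gamma_0$ be any root of $b_w$. As $b_w$ is the minimal polynomial of $s$ on $\C[s]\bar w$, there is $0\ne\bar v\in\C[s]\bar w$ with $s\bar v=\gamma_0\bar v$. Now $w\in\cN\cap V^\alpha\iota_{+}\cM=V^\alpha\cN$, and $\C[s]$ preserves the filtration, so $\bar v\in V^\alpha\bar\cN$; let $\beta\ge\alpha$ be the exact order of $\bar v$ in $\bar\cN$. In $\gr^{\beta}_V\bar\cN$ one has $[\bar v]\ne0$ and $(s-\gamma_0)[\bar v]=0$, so if $\gamma_0\ne-\beta$ then $s-\gamma_0=(s+\beta)-(\beta+\gamma_0)$ would be invertible (a nilpotent operator minus a nonzero scalar), forcing $[\bar v]=0$ — a contradiction. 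Hence $\gamma_0=-\beta\le-\alpha$, so every root of $b_w$ is $\le-\alpha$; this completes the proof that $V^\alpha\iota_{+}\cM=\tilde V^\alpha$.

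For the multiplicity formula I would fix $w\in V^\alpha$, set $\nu:=\min\{\ell\mid(s+\alpha)^{\ell}[w]=0\text{ in }\gr^{\alpha}_V\iota_{+}\cM\}$, and write $b_w(s)=(s+\alpha)^{m}c(s)$ with $c(-\alpha)\ne0$. By the characterization just proved all roots of $c$ lie strictly below $-\alpha$, so $c(s)$ is invertible on $\gr^{\alpha}_V\iota_{+}\cM$; since $b_w(s)[w]=0$ there (as in the first inclusion), this yields $(s+\alpha)^{m}[w]=0$, i.e.\ $\nu\le m$. For the reverse, $w':=(s+\alpha)^{\nu}w\in V^{>\alpha}\iota_{+}\cM$, so $b_{w'}(-\alpha)\ne0$ by the characterization; using $tw'=(s+1+\alpha)^{\nu}tw$ one rewrites $b_{w'}(s)w'=P'(s,t)(tw')$ as $\bigl(b_{w'}(s)(s+\alpha)^{\nu}\bigr)w=\bigl(P'(s,t)(s+1+\alpha)^{\nu}\bigr)(tw)$, whence $b_w(s)\mid b_{w'}(s)(s+\alpha)^{\nu}$ and $m\le\mathrm{mult}_{s=-\alpha}b_{w'}(s)+\nu=\nu$. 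Therefore $\mathrm{mult}_{s=-\alpha}b_w(s)=m=\nu$, as required.
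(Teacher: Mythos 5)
The paper offers no proof of this theorem: it is quoted from \cite{Sabbah} and \cite[Corollary 2.7]{DLY}, and your argument is essentially a reconstruction of that standard proof. Your key reduction is correct: since $t\,p(s)=p(s+1)\,t$, one has $\sD_X\langle s,t\rangle\cdot(tw)=t\cN$ for $\cN=\sD_X\langle s,t\rangle\cdot w$, so $b_w(s)$ is the minimal polynomial of $s$ on the class of $w$ in $\cN/t\cN$. The inclusion $\tilde V^\alpha\subseteq V^\alpha\iota_+\cM$ is argued correctly (any $P(s,t)$ lies in $V^0\sD_{X\times\C}$ and preserves $V^{\beta+1}$, so $b_w(s)[w]=0$ in $\gr^\beta_V\iota_+\cM$ and nilpotence of $s+\beta$ forces $-\beta$ to be a root), as is the converse via the $s$-eigenvector $\bar v$, because $\gr^\beta_V\bar\cN$ is an $s$-compatible subquotient of $\gr^\beta_V\iota_+\cM$, so $s+\beta$ is nilpotent there and $\gamma_0=-\beta\le-\alpha$. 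The multiplicity argument is also sound: invertibility of $c(s)$ on $\gr^\alpha_V$ gives $\nu\le m$, and $t(s+\alpha)^{\nu}=(s+\alpha+1)^{\nu}t$ together with the fact that the admissible $b$'s form an ideal of $\C[s]$ gives $b_w(s)\mid b_{w'}(s)(s+\alpha)^{\nu}$, hence $m\le\nu$.

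The one step you leave open, separatedness of the induced filtration on $\bar\cN$, i.e.\ $\bigcap_\gamma\bigl(V^\gamma\cN+t\cN\bigr)=t\cN$, is indeed the crux of $V^\alpha\subseteq\tilde V^\alpha$, but it is more routine than you suggest, and it is not ``where holonomicity is used'': holonomicity, regularity and quasi-unipotence are what grant Theorem \ref{thm: uniqueness of V filtration}, which you already assume; what is needed here is only noetherian filtered algebra over $V^0\sD_{X\times\C}$. Concretely, $\cN\subseteq V^\alpha\iota_+\cM$ is a coherent $V^0$-submodule, and $\{V^\gamma\iota_+\cM\}_{\gamma\ge\alpha}$ is a good filtration of the coherent module $V^\alpha\iota_+\cM$ (each piece coherent, $tV^\gamma\subseteq V^{\gamma+1}$ with equality for $\gamma\gg0$); by the Artin--Rees/goodness lemma the induced filtration $U^\gamma=\cN\cap V^\gamma\iota_+\cM$ is again good, so $U^{\gamma+1}=tU^\gamma\subseteq t\cN$ for $\gamma\gg0$, which is exactly the separatedness you need, while exhaustion, discreteness and nilpotence on graded pieces are immediate from Theorem \ref{thm: uniqueness of V filtration}. (You also implicitly use separatedness of $V^\bullet\iota_+\cM$ itself to define the exact order $\beta$ of $w$; this is part of the standard package accompanying Theorem \ref{thm: uniqueness of V filtration}.) With that lemma quoted or proved, your argument is complete and agrees with the proof in the cited sources.
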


\subsection{Weight filtrations}\label{sec: weight filtrations}
Let $W(N)_{\bullet}\gr^{\alpha}_V\iota_{+}\cM$  denote the monodromy weight filtration centered at $0$, which satisfies the convolution formula (see e.g. \cite[(23)]{DM05})
    \begin{equation}\label{eqn: convolution formula}
    W(N)_\ell = \sum_{i+j=\ell} \ker N^{i+1}\cap \mathrm{Im} N^{-j}.
    \end{equation}
Assume $\cM=\cS_f$, where $\cS$ is a simple $\sD$-module. By \cite[Theorem 19.4.1]{Mochizuki}, $\cS$ underlies a pure twistor $\sD$-module, say of weight $q$. One can show that the weight filtration on $\cM\cdot f^{-\alpha}$ satisfies $\gr^W_{\ell}(\cM\cdot f^{-\alpha})=0$ if $\ell\not\in [q,q+\dim X]$, and 
\begin{equation}\label{eqn: definition of Salpha}W_{q}(\cM\cdot f^{-\alpha})= j_{!\ast}(j^{\ast}\cS\cdot f^{-\alpha})=:\cS^{-\alpha}.\end{equation}
Consider the evaluation map
\begin{equation}\label{eqn: evaluation map} \mathrm{ev}_{s=-\alpha}: V^{\alpha}\iota_{+}\cM \xrightarrow{\eqref{eqn: inverse of Malgrange isomorphism}} \cM[s]f^s \xrightarrow{s=-\alpha}\cM\cdot f^{-\alpha}.\end{equation}
Since $\cS$ is simple, it follows from \cite[Proposition 5.11]{DLY} that \begin{equation}\label{eqn: ev sends V>alpha to Salpha}\mathrm{ev}_{s=-\alpha}(V^{>\alpha}\iota_{+}\cM)=\cS^{-\alpha}.\end{equation}
Therefore, $\mathrm{ev}_{s=-\alpha}$ induces an exact sequence \footnote{If $\cS$ underlies a pure Hodge module, this coincides with \cite[(2.24.3)]{Saito90}) via $\gr^{\alpha}_V\iota_{+}\cM\cong \gr^0_V\iota_{+}(\cM\cdot f^{-\alpha})$. See also \cite{MM}.}
\begin{equation}\label{eqn: evaluation map on grV}
\gr^{\alpha}_V\iota_{+}\cM \xrightarrow{N} \gr^{\alpha}_V\iota_{+}\cM\xrightarrow{ev} \cM\cdot f^{-\alpha}/\cS^{-\alpha}\to 0.\end{equation}
For $\ell>0$, the strictness of the weight filtration then yields a surjective map
\begin{equation}\label{eqn:evaluation map on W(N)} W(N)_{\ell-1} \gr^{\alpha}_V\iota_{+}\cM \twoheadrightarrow W_{q+\ell}(\cM\cdot f^{-\alpha})/\cS^{-\alpha}, \end{equation}
and an isomorphism (cf. \cite[(2.11.10)]{Saito90})
\begin{equation}\label{eqn: associated graded of weight} \frac{\gr^{W(N)}_{\ell-1}\gr^{\alpha}_V\iota_{+}\cM}{N\cdot \gr^{W(N)}_{\ell+1}\gr^{\alpha}_V\iota_{+}\cM}\xrightarrow{\sim} \gr^W_{q+\ell}(\cM\cdot f^{-\alpha}).\end{equation}

\subsection{Some results on $b$-functions}
Assume that $f$ acts on $\cM$ bijectively. We investigate the relation between $b_{p(s)w}(s)$ and $b_w(s)$. Recall the polynomial $[s+r]_k$ from \eqref{eqn: [s+a]}. 

\begin{lemma}\label{lem:conversediv}
For $w \in \iota_{+}\cM$ and $r \in \C$, there exists $k\in\Z_{>0}$ such that 
\[b_{(s+r)w}(s) \, \mid \,\, [s+r+1]_k \cdot b_w(s).
\]
\end{lemma}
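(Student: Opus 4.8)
The plan is to reduce the claimed divisibility to an explicit membership statement inside $\sD_X\langle s,t\rangle\,t(s+r)w$ and then to prove that membership by a recursion that terminates after finitely many steps. Write $w' = (s+r)w$ for brevity. By Definition~\ref{definition: definition of BS polynomials}, the polynomial $b_{w'}(s)$ generates the ideal $\{b(s)\in\C[s] : b(s)\,w'\in\sD_X\langle s,t\rangle\,tw'\}$ of $\C[s]$, and $tw' = t(s+r)w$; hence it suffices to produce an integer $k\ge 1$ with
\[ [s+r+1]_k\,b_w(s)\,w' \ \in\ \sD_X\langle s,t\rangle\cdot tw'. \]

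To this end I would fix an operator $P\in\sD_X\langle s,t\rangle$ realizing the defining relation $b_w(s)\,w = P\cdot tw$ and write it in normal form $P = \sum_{i=0}^{n} P_i\,t^{i}$ with $P_i\in\sD_X[s]$ (possible since $ts=(s+1)t$ and $\sD_X$ commutes with $s$ and $t$), so that $b_w(s)\,w = \sum_{i=0}^{n} P_i\,t^{i+1}w$. The two algebraic inputs are the commutation rule $(s+a)\,t^{j} = t^{j}(s+a-j)$ and the rewriting $(s+c)\,w = w' + (c-r)\,w$. Set $R_m := \sum_i (i+1)^{\underline m}\,P_i\,t^{i+1}w$, where $(i+1)^{\underline m} = (i+1)i\cdots(i+2-m)$ is the falling factorial, so that $R_0 = b_w(s)\,w$; and set $A_m := \big(\sum_i (i+1)^{\underline m}P_i\,t^{i}\big)t \in \sD_X\langle s,t\rangle\cdot t$. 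Pushing $(s+r+m)$ to the right through $P_i$ and through $t^{i+1}$, and then applying the rewriting rule to $(s+r+m-(i+1))w$, yields the recursion
\[ (s+r+m)\,R_m \ =\ A_m\,w' \ -\ R_{m+1}, \qquad m\ge 0, \]
in which each $A_m w'$, and hence $c(s)\,A_m w'$ for any $c(s)\in\C[s]$, lies in $\sD_X\langle s,t\rangle\,tw'$, and moreover $(s+r)\,R_0 = (s+r)\,b_w(s)\,w = b_w(s)\,w'$.

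The key observation is that this recursion terminates: the coefficient $(i+1)^{\underline{m+1}} = (i+1)i\cdots(i+1-m)$ has $0$ among its factors as soon as $i\le m-1$, so $R_{m+1} = 0$ once $m\ge n+1$; in particular $R_{n+2} = 0$. Telescoping the displayed identities for $m = 0,1,\dots,n+1$ and substituting $R_{n+2}=0$ then expresses $\big(\prod_{m=0}^{n+1}(s+r+m)\big)R_0$ as a $\C[s]$-linear combination of $A_0 w',\dots,A_{n+1}w'$, which lies in $\sD_X\langle s,t\rangle\,tw'$. Since $\prod_{m=0}^{n+1}(s+r+m) = (s+r)\cdot[s+r+1]_{n+1}$ and $(s+r)R_0 = b_w(s)\,w'$, this is precisely $[s+r+1]_{n+1}\,b_w(s)\,w'\in\sD_X\langle s,t\rangle\,tw'$. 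Taking $k=n+1$ and invoking the minimality of $b_{w'}(s)$ gives $b_{(s+r)w}(s)\mid[s+r+1]_{n+1}\,b_w(s)$.

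I expect the main difficulty to lie in the bookkeeping: correctly propagating the shifts $(s+a)t^{j}=t^{j}(s+a-j)$ through the computation and verifying that the coefficients accumulating in the $R_m$ are exactly the falling factorials $(i+1)^{\underline m}$, which is what makes the recursion stop. The remaining steps — the telescoping and the passage from the membership statement to the divisibility via minimality — are purely formal. (Note that the value $k=n+1$ obtained this way depends on $w$ only through the $t$-degree $n$ of the chosen operator $P$, which is all that the existence assertion requires.)
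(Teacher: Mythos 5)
Your proof is correct. The reduction is legitimate: the set of $b(s)\in\C[s]$ with $b(s)w'\in \sD_X\langle s,t\rangle\cdot tw'$ is an ideal generated by $b_{w'}(s)$, so membership of $[s+r+1]_k b_w(s)$ suffices. The key recursion checks out: since each $P_i\in\sD_X[s]$ commutes with $s$ and $(s+r+m)t^{i+1}=t^{i+1}(s+r+m-i-1)$, applying $(s+r+m-i-1)w=w'+(m-i-1)w$ and the identity $(i+1)^{\underline{m}}\,(i+1-m)=(i+1)^{\underline{m+1}}$ gives exactly $(s+r+m)R_m=A_mw'-R_{m+1}$, and $R_{n+2}=0$ because $(i+1)^{\underline{n+2}}=0$ for all $0\le i\le n$. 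Telescoping then expresses $(s+r)[s+r+1]_{n+1}\,b_w(s)\,w=[s+r+1]_{n+1}b_w(s)\,w'$ as a $\C[s]$-combination of the elements $A_mw'\in\sD_X\langle s,t\rangle\cdot tw'$, so $k=n+1$ works and minimality gives the divisibility. For comparison: the paper gives no argument of its own here, deferring entirely to the proof of \cite[Lemma 2.6]{DLY}; your computation is a self-contained substitute of the same general flavor (manufacturing an operator relation for $(s+r)w$ out of one for $w$ by repeatedly commuting the linear factor past powers of $t$), with the added benefit that the exponent $k=n+1$ is explicit in terms of the $t$-degree $n$ of the chosen operator $P$ realizing $b_w(s)$, which is more than the bare existence statement requires.
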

\begin{proof}
It follows from the proof of \cite[Lemma 2.6]{DLY}.
\end{proof}
\begin{lemma}\label{lemma: basic property of q(s)w} 
For $w \in V^{\alpha}\iota_{+}\cM$ with $\alpha \in \Q$ and $q(s)\in \C[s]$ with $q(-\alpha)\neq 0$, we have
    \[\mathrm{mult}_{s=-\alpha}b_{q(s)w}(s) = \mathrm{mult}_{s=-\alpha}b_{w}(s).\]
   \end{lemma}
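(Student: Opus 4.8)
The plan is to reduce both Bernstein--Sato multiplicities at $s=-\alpha$ to the kernel characterization \eqref{eqn: mult via ker} on the graded piece $\gr_V^{\alpha}\iota_{+}\cM$, and then to observe that multiplication by $q(s)$ acts there as an \emph{invertible} operator commuting with $N=s+\alpha$.

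First I would note that $V^{\alpha}\iota_{+}\cM$ is a $\sD_X\langle s,t\rangle$-module, hence in particular stable under multiplication by $\C[s]$; so $q(s)w\in V^{\alpha}\iota_{+}\cM$, and \eqref{eqn: mult via ker} applies to both $w$ and $q(s)w$, giving
\[
\mathrm{mult}_{s=-\alpha}b_{w}(s)=\min\{\ell\geq 0\mid (s+\alpha)^{\ell}[w]=0\},\qquad
\mathrm{mult}_{s=-\alpha}b_{q(s)w}(s)=\min\{\ell\geq 0\mid (s+\alpha)^{\ell}[q(s)w]=0\},
\]
the bracketed classes being taken in $\gr_V^{\alpha}\iota_{+}\cM$. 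Since multiplication by $q(s)$ preserves the $V$-filtration, it descends to $\gr_V^{\alpha}\iota_{+}\cM$, where $[q(s)w]=q(s)\cdot[w]$.

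Next I would check that $q(s)$ acts invertibly on $\gr_V^{\alpha}\iota_{+}\cM$. Writing $q(s)=q(-\alpha)+(s+\alpha)h(s)$ with $h(s)\in\C[s]$, on $\gr_V^{\alpha}\iota_{+}\cM$ the operator $N=s+\alpha$ is nilpotent by Theorem \ref{thm: uniqueness of V filtration}(3) and commutes with $h(s)$, so $q(s)=q(-\alpha)\cdot\mathrm{id}+N h(s)$ is the sum of an invertible scalar operator (here the hypothesis $q(-\alpha)\neq 0$ enters) and a nilpotent operator commuting with it; hence $q(s)$ is invertible on $\gr_V^{\alpha}\iota_{+}\cM$ and commutes with $N$. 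Consequently, for every $\ell\geq 0$,
\[
(s+\alpha)^{\ell}[q(s)w]=q(s)\,(s+\alpha)^{\ell}[w]=0\quad\Longleftrightarrow\quad (s+\alpha)^{\ell}[w]=0,
\]
so the two minima above agree, which is the assertion.

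I do not expect a genuine obstacle here: the content is entirely in the two inputs — the kernel formula \eqref{eqn: mult via ker} and the nilpotence of $N$ on $\gr_V^{\alpha}\iota_{+}\cM$ — and the only point requiring a moment's care is the invertibility of $q(s)$ on $\gr_V^{\alpha}\iota_{+}\cM$, which is precisely where $q(-\alpha)\neq 0$ is used. Note also that the argument is insensitive to whether $w$ or $q(s)w$ happens to vanish: if $q(s)w=0$ then $[w]=0$ by invertibility, so both multiplicities are $0$.
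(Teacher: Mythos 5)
Your proof is correct, and it takes a genuinely different (more self-contained) route than the paper's. The paper splits the equality into two inequalities and outsources both: the bound $\mathrm{mult}_{s=-\alpha}b_{w}(s)\leq \mathrm{mult}_{s=-\alpha}b_{q(s)w}(s)$ is quoted from \cite[Lemma 2.5]{DLY}, and the reverse bound is deduced from Theorem \ref{thm: Sabbah} together with the divisibility statement of Lemma \ref{lem:conversediv}. You instead obtain both inequalities at once from the second part of Theorem \ref{thm: Sabbah}, i.e.\ the kernel characterization \eqref{eqn: mult via ker}, applied to $w$ and to $q(s)w$ (both lie in $V^{\alpha}\iota_{+}\cM$, since $V^{\alpha}$ is a $\sD_X\langle s,t\rangle$-submodule by Theorem \ref{thm: uniqueness of V filtration}), combined with the elementary observation that $q(s)=q(-\alpha)+N h(s)$ acts invertibly on $\gr^{\alpha}_V\iota_{+}\cM$ because $N$ is nilpotent there and $q(-\alpha)\neq 0$. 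What this buys: you avoid Lemma \ref{lem:conversediv} and the external multiplicity inequality from \cite{DLY} entirely, at the cost of leaning on the finer statement \eqref{eqn: mult via ker} (which the paper does record, citing \cite[Corollary 2.7]{DLY}); note also that the inequality $\mathrm{mult}_{s=-\alpha}b_{q(s)w}\leq \mathrm{mult}_{s=-\alpha}b_{w}$ needs no invertibility at all, only that $q(s)$ commutes with $N$, while the opposite inequality is exactly where injectivity of $q(s)$ on $\gr^{\alpha}_V$ (hence the hypothesis $q(-\alpha)\neq 0$) enters — your parenthetical handling of the degenerate case $q(s)w=0$ is consistent with this.
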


\begin{proof}
By \cite[Lemma 2.5]{DLY}, we have $\textrm{mult}_{s=-\alpha}b_{w}(s)\leq \textrm{mult}_{s=-\alpha}b_{q(s)w}(s).$ The reverse inequality follows from Theorem \ref{thm: Sabbah} and Lemma \ref{lem:conversediv}.
   \end{proof}

For $p(s), q(s) \in \C[s]$, we introduce the notation
\begin{equation}\label{eqn: notation cpq} c_{p,q}(s) = \gcd_{i\in \Z_{\geq 0}}\left( p(s+i) \cdot \prod_{j=0}^{i-1} q(s+j) \right).\end{equation}

\begin{prop}\label{prop:bgcd}
Let $m\in \cM$ and $p(s) \in \C[s]$. Then 
\begin{equation}\label{bpsm divides bm with another factor}b_{p(s) m}(s) \, \mid \, b_m(s) \cdot \dfrac{c_{p,b_m}(s+1)}{c_{p,b_m}(s)}.\end{equation}
In particular, $\deg b_{p(s)m}(s) \leq \deg b_m(s)$.
\end{prop}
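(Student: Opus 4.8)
The plan is to recast the claimed bound as a polynomial divisibility, reduce to $p$ of degree one, and then settle the linear case via a $V$-filtration computation.

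\emph{Reformulation.} Separating the $i=0$ term in \eqref{eqn: notation cpq} yields the recursion $c_{p,q}(s)=\gcd\bigl(p(s),\,q(s)c_{p,q}(s+1)\bigr)$; in particular $c_{p,q}(s)$ divides $q(s)c_{p,q}(s+1)$, so $R_p(q)(s):=q(s)c_{p,q}(s+1)/c_{p,q}(s)$ lies in $\C[s]$, and since $\deg c_{p,q}(s+1)=\deg c_{p,q}(s)$ we get $\deg R_p(q)=\deg q$. With $q=b_m$ this makes sense of the bound in \eqref{bpsm divides bm with another factor} and, once that bound is proved, gives $\deg b_{p(s)m}(s)\le\deg b_m(s)$ at once. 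As divisibility of monic polynomials amounts to a pointwise inequality of root multiplicities, it suffices to prove $\mathrm{mult}_{s=-\alpha}b_{p(s)m}(s)\le\mathrm{mult}_{s=-\alpha}R_p(b_m)(s)$ for every $\alpha\in\Q$, and Theorem \ref{thm: Sabbah} together with \eqref{eqn: mult via ker} is the tool for controlling the left-hand side.

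\emph{Reduction to linear $p$.} I prove more generally that $b_{p(s)w}(s)\mid R_p(b_w)(s)$ for every local section $w$ (recall $f$ acts bijectively on $\cM$), by induction on $\deg p$. For the inductive step, factor $p(s)=(s+r)\tilde p(s)$ and put $w'=\tilde p(s)w$, so that $p(s)w=(s+r)w'$ and, by induction, $b_{w'}\mid R_{\tilde p}(b_w)$. Granting the linear case $b_{(s+r)w'}\mid R_{s+r}(b_{w'})$, the step is completed by two elementary facts about the operation $R$: monotonicity, $q\mid q'\Rightarrow R_{s+r}(q)\mid R_{s+r}(q')$, and the chain identity $R_{s+r}\bigl(R_{\tilde p}(q)\bigr)=R_{(s+r)\tilde p}(q)$. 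Both are purely combinatorial and follow from the multiplicity formula $\mathrm{mult}_{s=\gamma+k}R_p(q)=\max\{0,\,Q_k+A_{k+1}-P_k\}$, where $P_k=\mathrm{mult}_{s=\gamma+k}p$, $Q_k=\mathrm{mult}_{s=\gamma+k}q$ and $A_k=\mathrm{mult}_{s=\gamma+k}c_{p,q}$ satisfies $A_k=\min\{P_k,\,Q_k+A_{k+1}\}$.

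\emph{The linear case.} Here $c_{s+r,b_w}$ divides $s+r$, hence equals $(s+r)$ if $b_w(-r)=0$ and $1$ otherwise; thus $R_{s+r}(b_w)$ agrees with $b_w$ outside the residue class of $-r$, and inside that class it moves, when $-r$ is a root of $b_w$, exactly one unit of multiplicity from $-r$ down to $-r-1$. For a root $-\alpha$ \emph{outside} the class of $-r$, Lemma \ref{lem:conversediv} gives $b_{(s+r)w}(s)\mid[s+r+1]_k\,b_w(s)$ with $[s+r+1]_k$ supported in the class of $-r$, whence $\mathrm{mult}_{s=-\alpha}b_{(s+r)w}\le\mathrm{mult}_{s=-\alpha}b_w=\mathrm{mult}_{s=-\alpha}R_{s+r}(b_w)$. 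Inside the class of $-r$ the statement is transparent at the top $V$-level of $w$ (the level $\beta$ with $w\in V^\beta\setminus V^{>\beta}$): on $\gr^{\beta}_V\iota_{+}\cM$ the operator $s+r$ equals $(r-\beta)+N$ with $N=s+\beta$ nilpotent, so it is invertible when $r\neq\beta$ and equals $N$ when $r=\beta$; combined with \eqref{eqn: mult via ker} this yields $\mathrm{mult}_{s=-\beta}b_{(s+r)w}=\max\{0,\,\mathrm{mult}_{s=-\beta}b_w-\mathrm{mult}_{s=-\beta}(s+r)\}$, and $(s+r)w$ drops strictly deeper into the $V$-filtration precisely when that truncation bites.

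\emph{Main obstacle.} The difficulty — and the only place the exact shape of $c_{p,b_m}$ is forced — is the roots of $b_w$ in the class of $-r$ that lie strictly below the top $V$-level, where $w$ itself no longer sits in the pertinent $V^\alpha$ and \eqref{eqn: mult via ker} does not apply directly. I would reach these by a secondary induction on $V$-depth, replacing $w$ by $(s+\beta)^{\mathrm{mult}_{s=-\beta}b_w}w\in V^{>\beta}$ (whose $b$-function agrees with $b_w$ away from the top level, up to the contamination measured by Lemma \ref{lem:conversediv}) and iterating; the point is to show that multiplying by $(s+r)$ lowers the multiplicity at $-r$ by one and raises it at $-r-1$ by at most one while leaving all deeper multiplicities in that class unchanged — i.e. to upgrade Lemma \ref{lem:conversediv}, which a priori permits spurious multiplicity at $-r-1,-r-2,\ldots$, to this sharp one-step behavior. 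Making the secondary induction mesh with the $\gcd$-chain description of $c_{p,b_m}$ is the crux; the reformulation, the reduction to linear $p$, and the combinatorial identities for $R$ are routine by comparison.
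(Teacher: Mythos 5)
Your proposal has a genuine gap, and you identify it yourself: the whole argument funnels into the claim that multiplying a section by a single factor $(s+r)$ changes the $b$-function in the sharp ``one-step'' way (multiplicity at $-r$ drops by one, multiplicity at $-r-1$ rises by at most one, nothing else moves), and for roots of $b_w$ lying strictly below the top $V$-level of $w$ you only sketch a ``secondary induction on $V$-depth'' without carrying it out. But that sharp statement \emph{is} essentially Proposition \ref{prop:bgcd} for linear $p$, so the proposal defers the entire content of the result to the step it does not prove; Lemma \ref{lem:conversediv} only yields $b_{(s+r)w}\mid [s+r+1]_k\,b_w$ with uncontrolled $k$, and \eqref{eqn: mult via ker} is unavailable precisely where you need it. Worse, you assert the one-step bound for \emph{arbitrary} local sections $w\in\iota_+\cM$, which is strictly stronger than the proposition (the proposition concerns $w=p(s)mf^s$ with $m\in\cM$), and nothing in your sketch uses the special form of $w$ — yet that special form is exactly what makes the statement accessible. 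There is also a concrete flaw in the reduction machinery: the chain identity $R_{s+r}\bigl(R_{\tilde p}(q)\bigr)=R_{(s+r)\tilde p}(q)$ fails for an arbitrary factorization $p=(s+r)\tilde p$. Take $q(s)=s$, $p(s)=s(s+1)$: by Lemma \ref{lem:expbfun}, $R_p(q)=s+2$, but peeling off the factor $s$ first gives $\tilde p=s+1$, $c_{\tilde p,q}=1$, hence $R_{\tilde p}(q)=s$ and then $R_{s}(s)=s+1\neq s+2$. So the induction on $\deg p$ only works if the linear factors are peeled in a carefully chosen order, which you do not address.

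For comparison, the paper's proof avoids $V$-filtrations entirely and is a short functional-equation manipulation that exploits $w=mf^s$ with $m\in\cM$: starting from $P(s)\cdot mf^{s+1}=b_m(s)\,mf^s$, iterating $P(s)t$ gives $(P(s)t)^{i+1}(p(s)mf^s)=p(s+i+1)\prod_{j=0}^{i}b_m(s+j)\cdot mf^s$ for all $i$; a Bezout combination of these equations produces $Q(s,t)\,t\,(p(s)mf^s)=d(s)\,mf^s$ with $d(s)=b_m(s)\,c_{p,b_m}(s+1)$, and multiplying by $p(s)/\gcd(p(s),d(s))$ (with $\gcd(p(s),d(s))=c_{p,b_m}(s)$, which is your own recursion $c_{p,q}(s)=\gcd\bigl(p(s),q(s)c_{p,q}(s+1)\bigr)$) turns this into a genuine Bernstein--Sato equation for $p(s)mf^s$, giving \eqref{bpsm divides bm with another factor} directly. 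If you want to salvage your route, the honest path is to prove the sharp linear-case bound for sections of the special form $p(s)mf^s$ — at which point you will find yourself reproducing the paper's iteration argument.
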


\begin{proof}
Consider an equation yielding the Bernstein-Sato polynomial of $m$:
\[P(s) \cdot mf^{s+1} = b_{m}(s) \cdot mf^s,  \quad \textrm{with $P(s)\in \sD_X[s]$}.\]
For each $i\in \Z_{\geq 0}$, we obtain
\begin{equation}\label{eq:beq}
(P(s)\cdot t)^{i+1} \cdot (p(s) m f^s) = p(s+i+1) \prod_{j=0}^i b_{m}(s+j) \cdot m f^s.
\end{equation}
Let $d(s)$ denote the greatest common divisor of the elements $p(s+i+1)\prod_{j=0}^ib_m(s+j)$ for all $i\in \Z_{\geq 0}$. By Bezout's theorem, there exists a $\C[s]$-linear combination of these equations yielding:
\begin{equation}\label{eq:beq2}
Q(s,t) \cdot t \cdot (p(s)mf^s) = d(s) \cdot m f^s, \quad \textrm{for some $Q(s,t) \in \sD_X\langle s, t \rangle$}.
\end{equation}
Let $d'(s)=d(s)/\gcd(p(s),d(s))$ and $p'(s)=p(s)/\gcd(p(s),d(s))$. Multiplying (\ref{eq:beq2}) by $p'(s)$ gives:
\[p'(s)Q(s,t) \cdot t \cdot (p(s)mf^s) = d'(s) \cdot (p(s)m f^s), \]
which implies that $b_{p(s)m}(s) \mid d'(s)$. A direct computation shows that $d(s)=b_m(s)\cdot c_{p,b_{m}}(s+1)$ and $\gcd(p(s),d(s)) = c_{p, b_m}(s)$, yielding the desired result.
\end{proof}

\begin{lemma}\label{lem:expbfun}
    Let $p(s), q(s)\in \C[s]$, and factor $q(s)=\prod_{i=1}^d(s+r_i)$. Choose the maximal $(k_1,\dots, k_d) \in \Z_{\geq 0}^d$ such that $\prod_{i=1}^d[s+r_i]_{k_i}$ divides $p(s)$. Then
    \[q(s) \cdot \dfrac{c_{p,q}(s+1)}{c_{p,q}(s)} = \prod_{i=1}^d(s+r_i + k_i).\]
\end{lemma}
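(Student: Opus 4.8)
The plan is to work entirely at the level of the formal combinatorics of the ideal-theoretic operation $c_{p,q}(s)=\gcd_{i\geq 0}\bigl(p(s+i)\prod_{j=0}^{i-1}q(s+j)\bigr)$, reducing everything to a prime-by-prime (i.e.\ root-by-root) analysis in the PID $\C[s]$. Since the factor $q(s)=\prod_{i=1}^d(s+r_i)$ and the claimed answer $\prod_{i=1}^d(s+r_i+k_i)$ are both products over the roots, the natural first move is to localize: for each $c\in\C$ compute the multiplicity $\op{mult}_{s=-c}$ of both sides. Write $\op{mult}_{s=-c}p(s)=e_c$, and let $M_q$ be the multiset of roots of $q$ together with their multiplicities. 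The quantity $\op{mult}_{s=-c}\bigl(\prod_{j=0}^{i-1}q(s+j)\bigr)$ is $\sum_{j=0}^{i-1}(\text{mult of }-c-j\text{ in }q)$, and $\op{mult}_{s=-c}p(s+i)=e_{c+i}$, so
\[
\op{mult}_{s=-c}c_{p,q}(s)=\min_{i\geq 0}\Bigl(e_{c+i}+\sum_{j=0}^{i-1}m_{c+j}\Bigr),
\]
where $m_x$ denotes the multiplicity of $x$ as a root of $q$. The target identity $q(s)\cdot c_{p,q}(s+1)/c_{p,q}(s)=\prod_i(s+r_i+k_i)$ then becomes, for each $c$, the scalar identity $m_c+\min_i(e_{c+1+i}+\sum_{j=0}^{i-1}m_{c+1+j})-\min_i(e_{c+i}+\sum_{j=0}^{i-1}m_{c+j})=\#\{i: r_i+k_i=c\}$, and the whole lemma is reduced to verifying this.

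Next I would make the hypothesis on $(k_1,\dots,k_d)$ explicit in these terms. Saying $\prod_i[s+r_i]_{k_i}$ divides $p(s)$ with $(k_i)$ maximal means: for every $c$, $e_c\geq\#\{i: r_i\leq c< r_i+k_i \text{ along the chain}\}$ — more precisely, grouping the $r_i$ into chains differing by integers, $k_i$ is the length of the longest run of consecutive "slots" starting at $r_i$ all of whose required multiplicities are supplied by $p$; maximality says the run cannot be extended, i.e.\ at the slot $r_i+k_i$ there is a deficiency. I would organize the roots $r_1,\dots,r_d$ into $\Z$-cosets and handle one coset at a time, so that within a fixed coset everything is indexed by integers and the $\min_i$ over $i\geq0$ becomes a genuinely one-dimensional optimization. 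Within one coset, $m_x$ counts how many $r_i$ equal $x$, and I expect the clean way to finish is to identify $\op{mult}_{s=-c}c_{p,q}(s)$ with something like $\sum_i \#\{j\geq 0 : j<k_i,\ r_i+j\geq c\}$ reindexed appropriately — i.e.\ to show directly that the optimal $i$ in the $\min$ is the one that "walks past" exactly the slots lying in $[c,\infty)$ that are already filled, and stops at the first deficiency, which by maximality is precisely at the slots $r_i+k_i$.

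The main obstacle, I expect, is the bookkeeping when several roots $r_i$ lie in the same $\Z$-coset with overlapping or nested intervals $[r_i,r_i+k_i)$: the $\gcd$ over all shifts $i$ mixes contributions from different roots, and one has to be careful that the maximality of the tuple $(k_i)$ — which is a statement about the product $\prod_i[s+r_i]_{k_i}$ dividing $p$, not about individual factors — translates correctly into the slotwise deficiency condition. A robust way around this is probably to prove the telescoping identity $c_{p,q}(s+1)/c_{p,q}(s)\cdot q(s)=\prod_i(s+r_i+k_i)$ by a direct induction that strips off one root of $q$ at a time, using a factorization $c_{p,q}$ compatible with writing $q=(s+r)q_1$; or, alternatively, to prove the two divisibilities $\prod_i(s+r_i+k_i)\mid q(s)c_{p,q}(s+1)/c_{p,q}(s)$ and the reverse separately, the first by exhibiting, for the minimizing index $i_0$ at $s=-c$, that the shifted minimizer at $s=-c-1$ drops the multiplicity by exactly $m_c-\#\{i:r_i+k_i=c\}$, and the second by the same estimate in the other direction. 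I would present whichever of these two routes keeps the multiset bookkeeping lightest; in the write-up I would do the single-coset case in detail and note that the general case is the direct sum over cosets.
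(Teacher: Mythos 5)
Your reduction to a root-by-root multiplicity computation in the PID $\C[s]$ is a legitimate framework, but as written it contains errors and, more importantly, stops short of the actual proof. First, the translation is mis-indexed: $p(s+i)$ vanishes at $s=-c$ exactly when $p$ vanishes at $-(c-i)$, so the correct formula is $\op{mult}_{s=-c}c_{p,q}(s)=\min_{i\geq 0}\bigl(e_{c-i}+\sum_{j=0}^{i-1}m_{c-j}\bigr)$, with the shifts running \emph{downward}, not $e_{c+i}+\sum_j m_{c+j}$. Second, the closed form you ``expect'' for $\op{mult}_{s=-c}c_{p,q}(s)$, namely $\sum_i \#\{j\geq 0: j<k_i,\ r_i+j\geq c\}$, is false: for $p(s)=s(s+1)$, $q(s)=s$ (so $d=1$, $r_1=0$, $k_1=2$) one computes $c_{p,q}(s)=s(s+1)$, whose multiplicity at $s=0$ is $1$, while your expression gives $2$. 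The correct statement, which is essentially what the lemma amounts to, is $c_{p,q}(s)=\prod_{i=1}^d[s+r_i]_{k_i}$, i.e.\ the multiplicity at $-c$ is $\#\{i : c\in\{r_i,r_i+1,\dots,r_i+k_i-1\}\}$.

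The genuine gap is that the one step carrying the mathematical content---how maximality of the tuple $(k_1,\dots,k_d)$, a condition on the \emph{product} dividing $p$, enters the computation---is left at the level of ``I expect'' and ``probably''. Your fallback suggestion (strip one root of $q$ at a time) is indeed the paper's route, but it only works with a specific choice that you never make: one must remove a root $r_1$ for which $r_1+k_1$ is maximal within its $\Z$-coset. With that choice, $(s+r_1+k_1)$ occurs in no factor $[s+r_l]_{k_l}$ (maximality of $r_1+k_1$), so if it divided $p(s)$ one could enlarge $k_1$, contradicting maximality of the tuple; and for $i\geq 1$ no factor $q(s+j)$, $0\leq j<i$, contributes the root $-(r_1+k_1+i)$, since that would force some $r_l\in r_1+k_1+\Z_{>0}$. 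This non-divisibility is exactly what yields the factorization $c_{p,q}(s)=[s+r_1]_{k_1}\cdot c_{p',q'}(s)$ with $p=[s+r_1]_{k_1}p'$, $q=(s+r_1)q'$, after which maximality is inherited by $(k_2,\dots,k_d)$ and the identity telescopes by induction. Without identifying which root to strip and why the factorization holds---or, in your slotwise variant, without actually carrying out the minimizer analysis per coset with the corrected indexing---the proposal does not yet prove the lemma, although the valuation-theoretic setup could be completed into a proof along either of the two lines you mention.
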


\begin{proof}
We pursue induction on $d$; the case $d=0$ is trivial. Without loss of generality, assume that $\max\{r_i + k_i \mid 1\leq i \leq d\} = r_1+k_1$, and set $n=r_1+k_1$. Then,
\begin{equation}\label{eq:nodiv0}
(s+n+i)\, \nmid \, p(s+i) \prod_{j=0}^{i-1} q(s+j), \quad \textrm{for every $i\in \Z_{\geq 0}$}.
\end{equation}
Write $p(s) = [s+r_1]_{k_1} \cdot p'(s)$ and $q(s)=(s+r_1) \cdot q'(s)$. By (\ref{eq:nodiv0}) we have
\begin{align*}c_{p,q}(s) &= [s+r_1]_{k_1} \cdot \gcd_{i\in \Z_{\geq 0}}\left( [s+n]_i \cdot p'(s+i) \cdot \prod_{j=0}^{i-1} q'(s+j) \right) \\
&= [s+r_1]_{k_1} \cdot \gcd_{i\in \Z_{\geq 0}}\left( p'(s+i) \cdot \prod_{j=0}^{i-1} q'(s+j) \right)\\
&=[s+r_1]_{k_1} \cdot c_{p',q'}(s).\end{align*}
Therefore
\[q(s) \cdot \dfrac{c_{p,q}(s+1)}{c_{p,q}(s)} = (s+n) \cdot q'(s) \cdot\dfrac{c_{p',q'}(s+1)}{c_{p',q'}(s)},\]
and the result follows by induction.
\end{proof}

Now, for a non-zero $m \in \cM$ and $\alpha \in \Q$, define the set
\[I_{m, \alpha} :=   \{p(s) \in \C[s] \, \mid \,  p(s)mf^s\in V^{\alpha}\iota_{+}\cM\}\]
which is a non-zero ideal in $\C[s]$.
\begin{definition}\label{definition: pfunction of m}
For any $m\in \cM$, define the \emph{$p$-function} of $m$ with respect to $\alpha$, denoted by $p_{m,\alpha}(s)$, to be the monic polynomial $p(s)\in \C[s]$ of minimal degree such that $p(s) m f^s\in V^{\alpha}\iota_{+}\cM$; i.e. $(p_{m,\alpha}(s))=I_{m,\alpha}$. Define $\nu_{m,\alpha}=\mathrm{mult}_{s=-\alpha}b_{p_{m,\alpha}(s)m}(s)$.
\end{definition}

\begin{remark}\label{remark: p function do not vanishing on -alpha}
One can construct $p_{m,\alpha}(s)$ inductively using the fact that $(s+\beta)$ acts nilpotently on $\gr^{\beta}_V\iota_{+}\cM$, see \cite{LY25}. It follows that $p_{m,\alpha}(-\alpha)\neq 0$.
\end{remark}
\begin{lemma}\label{lemma: basic property of pfunction}
   Let $r(s)\in \C[s]$ satisfy $r(-\alpha)\neq 0$. Then
    \[ \mathrm{mult}_{s=-\alpha}b_{r(s)p_{m,\alpha}(s)m}(s)=\mathrm{mult}_{s=-\alpha}b_{p_{m,\alpha}(s)m}(s).\]
\end{lemma}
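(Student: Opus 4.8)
The plan is to deduce this directly from Lemma \ref{lemma: basic property of q(s)w}, after a routine translation through the Malgrange isomorphism. Set $w \colonequals p_{m,\alpha}(s)\, m f^s$, regarded as an element of $\iota_{+}\cM$ via \eqref{eqn: Malgrange isomorphism}; by the very definition of the $p$-function we have $w \in V^{\alpha}\iota_{+}\cM$. Unwinding Definition \ref{definition: definition of BS polynomials}, the quantity $b_{p_{m,\alpha}(s)m}(s)$ is by construction $b_{w}(s)$, and likewise $b_{r(s)p_{m,\alpha}(s)m}(s) = b_{r(s)w}(s)$: indeed, under \eqref{eqn: Malgrange isomorphism} multiplication by $s$ on $\cM[s]f^s$ corresponds to the action of $-\d_t t$ on $\iota_{+}\cM$, so $(r\cdot p_{m,\alpha})(-\d_t t)\cdot(m\otimes 1)$ is precisely $r(s)\cdot w$.

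Now apply Lemma \ref{lemma: basic property of q(s)w} with this $w\in V^{\alpha}\iota_{+}\cM$ and with $q(s)=r(s)$: the hypothesis $r(-\alpha)\neq 0$ is exactly the condition $q(-\alpha)\neq 0$, so the lemma yields
\[ \mathrm{mult}_{s=-\alpha}b_{r(s)w}(s) = \mathrm{mult}_{s=-\alpha}b_{w}(s), \]
which is the asserted equality.

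There is essentially no obstacle here: the statement is a bookkeeping corollary of Lemma \ref{lemma: basic property of q(s)w}, whose own content (the inequality $\mathrm{mult}_{s=-\alpha}b_{w}(s)\leq \mathrm{mult}_{s=-\alpha}b_{q(s)w}(s)$ from \cite[Lemma 2.5]{DLY}, and the reverse inequality from Theorem \ref{thm: Sabbah} together with the divisibility Lemma \ref{lem:conversediv}) has already been established. The only point requiring any care is to verify that $w$ indeed lies in $V^{\alpha}\iota_{+}\cM$ and that $r$ does not vanish at $-\alpha$ — both immediate, respectively from the definition of $p_{m,\alpha}$ and from the hypothesis — so that Lemma \ref{lemma: basic property of q(s)w} is applicable as stated.
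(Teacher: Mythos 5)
Your proposal is correct and is exactly the paper's argument: both apply Lemma \ref{lemma: basic property of q(s)w} to $w=p_{m,\alpha}(s)mf^s$, which lies in $V^{\alpha}\iota_{+}\cM$ by the definition of the $p$-function, with $q(s)=r(s)$ and $r(-\alpha)\neq 0$. The extra bookkeeping you include about the Malgrange identification is fine but not needed beyond what the paper states.
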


\begin{proof}
    This follows from Lemma \ref{lemma: basic property of q(s)w}, since $p_{m,\alpha}(s)mf^s\in V^{\alpha}\iota_{+}\cM$.
\end{proof}

\section{Equivariant setup}
\label{sec:equiv}

In this section, let $X$ be an affine smooth algebraic variety equipped with an action by a connected linear algebraic group $G$.  Let $\cM$ be a (strongly) \emph{$G$-equivariant} $\sD_X$-module; that is, $\cM$ is holonomic and there exists a $\sD_{G\times X}$-isomorphism $\tau:  m^{\ast}\cM\xrightarrow{\sim} p^{\ast}\cM$, where $p$ and $m$ are the projection and multiplication maps from $G\times X$ to $X$, and $\tau$ satisfies the usual compatibility conditions on $G\times G\times X$. For the more general concept of twisted equivariant $\sD$-modules, see \cite{hotta}.
\begin{definition}\label{definition: semiinvariant of weight sigma}
A regular function $f$ on $X$ is called \emph{semi-invariant with respect to $G$ of weight $\sigma\in \mathrm{Hom}(G,\C^{\ast})$} if $g\cdot f=\sigma(g)f$. In other words,
\[ f(g\cdot x)=\sigma(g)^{-1} f(x), \quad \forall x\in X,g\in G.\]
\end{definition}
From now on, fix a semi-invariant function $f$ of weight $\sigma$. By a slight abuse of notation, we also denote the corresponding character of the Lie algebra $\lie$ by $\sigma$.

\subsection{$G$-equivariance of $V$-filtrations and differential operators}\label{sec: Gequivariant diff}
The function $f$ induces a unique $G$-action on $X\times \C$ via 
\[g\cdot (x,t)\colonequals (g\cdot x, \sigma(g)^{-1}t), \quad \forall g\in G,\] 
making $\iota:X\to X\times \C$ a $G$-equivariant map and turning $t:X\times \C\to \C$ into a semi-invariant function of weight $\sigma$. In particular, $\iota_{+}\cM$ is a $G$-equivariant $\sD_{X\times \C}$-module. Denote by $V^{\bullet}\iota_{+}\cM$ the $V$-filtration along $f$.

\begin{lemma}\label{lemma: G and s compatible on V filtration}
For each $\alpha$, $V^{\alpha}\iota_{+}\cM$ is a $G$-equivariant $\cO_{X \times \C}$-submodule of $\iota_{+}\cM$, i.e.
\[ g(V^{\alpha}\iota_{+}\cM)\subseteq V^{\alpha}\iota_{+}\cM, \quad \forall g\in G.\]
\end{lemma}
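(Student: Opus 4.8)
The plan is to deduce the statement from the canonicity of the $V$-filtration, using that the equivariant structure of $\iota_{+}\cM$ is compatible with it. Write $m,p\colon G\times X\times\C\to X\times\C$ for the action and the projection, so that the $G$-equivariance of $\iota_{+}\cM$ is an isomorphism $\tau\colon m^{\ast}\iota_{+}\cM\xrightarrow{\sim}p^{\ast}\iota_{+}\cM$ of $\sD_{G\times X\times\C}$-modules satisfying the cocycle condition on $G\times G\times X\times\C$. Saying that $V^{\alpha}\iota_{+}\cM$ is a $G$-equivariant $\cO_{X\times\C}$-submodule means precisely that $\tau$ restricts, for every $\alpha\in\Q$, to an isomorphism $m^{\ast}(V^{\alpha}\iota_{+}\cM)\xrightarrow{\sim}p^{\ast}(V^{\alpha}\iota_{+}\cM)$; this is what I would establish.

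First I would note that $m$ and $p$ are smooth and that $m^{-1}(X\times\{0\})=p^{-1}(X\times\{0\})=G\times X\times\{0\}$ (since $\sigma(g)\in\C^{\ast}$), so both $m^{\ast}\iota_{+}\cM$ and $p^{\ast}\iota_{+}\cM$ carry $V$-filtrations along the common smooth hypersurface $G\times X\times\{0\}$. The key input is the standard compatibility of the $V$-filtration with smooth pullback: if $h$ is smooth and the source hypersurface is the preimage of the target hypersurface, then $V^{\alpha}(h^{\ast}N)=h^{\ast}(V^{\alpha}N)$. Applied to $h=p$ this gives $V^{\alpha}(p^{\ast}\iota_{+}\cM)=p^{\ast}(V^{\alpha}\iota_{+}\cM)$, and applied to $h=m$ it gives $V^{\alpha}(m^{\ast}\iota_{+}\cM)=m^{\ast}(V^{\alpha}\iota_{+}\cM)$; here one also uses that although $m$ carries the coordinate $t$ to the function $\sigma(g)^{-1}t$, the latter cuts out the same hypersurface $G\times X\times\{0\}$, and the $V$-filtration along a smooth hypersurface is intrinsic to that hypersurface (multiplying the defining coordinate by the unit $\sigma(g)^{-1}$ does not affect conditions $(1)$--$(3)$ of Theorem \ref{thm: uniqueness of V filtration}). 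Since $\tau$ is an isomorphism of $\sD$-modules along this fixed hypersurface, canonicity forces it to be strict for these $V$-filtrations, and combining the three identities yields $\tau\bigl(m^{\ast}V^{\alpha}\iota_{+}\cM\bigr)=p^{\ast}V^{\alpha}\iota_{+}\cM$.

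As a cross-check, and a quick route to the infinitesimal content, I would also record that $\sD_X\langle s,t\rangle$ is nothing but $V^{0}\sD_{X\times\C}$, so by part $(2)$ of Theorem \ref{thm: uniqueness of V filtration} each $V^{\alpha}\iota_{+}\cM$ is a $V^{0}\sD_{X\times\C}$-submodule of $\iota_{+}\cM$. Because $\iota$ is $G$-equivariant and $\cM$ is strongly $G$-equivariant, so is $\iota_{+}\cM$, and its derived $\lie$-action is the one through $\lie\to T_{X\times\C}\hookrightarrow\sD_{X\times\C}$; the image of this map consists of vector fields tangent to $X\times\{0\}$ — the $\C$-component of the generator attached to $\xi\in\lie$ is $-\sigma(\xi)\,t\,\d_t$ — and hence lies in $V^{0}\sD_{X\times\C}$. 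Therefore $\lie\cdot V^{\alpha}\iota_{+}\cM\subseteq V^{\alpha}\iota_{+}\cM$, the infinitesimal form of the assertion, which upgrades to the full statement since $G$ is connected.

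The main difficulty here is bookkeeping rather than anything conceptual: one must be careful that ``$G$-equivariant $\cO$-submodule'' is read off from the equivariance datum $\tau$ (equivalently, reconstructed from $\lie$-stability using connectedness of $G$), that $V^{\alpha}\iota_{+}\cM$ is only quasi-coherent so that ``pullback'' refers to pullback of quasi-coherent sheaves, and — in the first approach — that the scaling $t\mapsto\sigma(g)^{-1}t$ built into $m$ leaves the normalization of the $V$-filtration untouched. None of these points is serious, but they are exactly where a careful write-up must be precise.
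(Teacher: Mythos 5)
Your proposal is correct, and its main line is essentially the paper's: both arguments come down to the canonicity of the $V$-filtration (Theorem \ref{thm: uniqueness of V filtration}) together with the automatic strictness of $\sD$-module morphisms with respect to it, applied to the equivariance isomorphism. The difference is packaging. The paper fixes $g\in G$ and checks $\tau_g(m_g^{\ast}V^{\alpha})=V^{\alpha}$, then asserts the family/cocycle compatibilities follow; you instead work with the full datum $\tau$ over $G\times X\times\C$, using the compatibility of the $V$-filtration with smooth pullback and the observation that the rescaling $t\mapsto\sigma(g)^{-1}t$ leaves the $V$-filtration along $G\times X\times\{0\}$ unchanged. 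This buys a cleaner passage from pointwise preservation to the actual equivariant-subsheaf statement, at the cost of invoking the (standard, but external) smooth-pullback compatibility, which is itself proved by the same uniqueness argument. Your infinitesimal cross-check is a genuinely different and more elementary route: the vector fields generating the $G$-action on $X\times\C$ lie in $V^{0}\sD_{X\times\C}=\sD_X\langle s,t\rangle$ (the $\C$-component being $-\sigma(\xi)\,t\,\d_t$), so each $V^{\alpha}\iota_{+}\cM$ is $\lie$-stable directly from part (2) of Theorem \ref{thm: uniqueness of V filtration}, with no appeal to uniqueness; just be aware that upgrading $\lie$-stability to genuine $G$-equivariance of the subsheaf uses, besides connectedness of $G$, the local finiteness of the rational $G$-action (immediate in the affine setting the paper actually works in, but worth saying if one wants the lemma for a general smooth $X$).
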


\begin{proof}
Let us consider the filtration $g(V^{\bullet}\iota_{+}\cM)$
on $\iota_{+}\cM$. It is easy to check that it satisfies all the conditions in Theorem \ref{thm: uniqueness of V filtration} with respect to the function $f$: the property (1) follows from the fact that $g$ is an automorphism of $\iota_{+}\cM$. For property (2), one uses the fact that $g\cdot t=\sigma(g)t$ and so
\[ g(t\cdot V^{\alpha}\iota_{+}\cM)=(g\cdot t)\cdot g(V^{\alpha}\iota_{+}\cM)=\sigma(g)t\cdot g(V^{\alpha}\iota_{+}\cM) = t \cdot g(V^{\alpha}\iota_{+}\cM). \]
The part with $\partial_t$ is analogous as $g \cdot \partial_t = \sigma(g)^{-1} \partial_t$. The property (3) follows in a similar way. Consequently, by the uniqueness of $V$-filtration we have
\[ g(V^{\alpha}\iota_{+}\cM)=V^{\alpha}\iota_{+}\cM,\quad \textrm{for all $\alpha$}. \qedhere\]\end{proof}
\begin{remark} 
The isomorphism $\iota_{+}\cM\cong \cM[s]f^s$ from \eqref{eqn: inverse of Malgrange isomorphism} is a morphism of (strongly) $[G,G]$-equivariant $\sD_X$-modules, though not necessarily of $G$-equivariant $\sD_X$-modules. Nevertheless, by viewing $f^s$ as an invariant element, the isomorphism becomes an isomorphism of weakly $G$-equivariant modules.
\end{remark}

\begin{prop}\label{prop: quotient of weight are equivariant}
For any $\alpha\in \Q,\ell\in \Z$, the quotient $W(N)_{\ell}\gr^{\alpha}_V\iota_{+}\cM/W(N)_{\ell-2}\gr^{\alpha}_V\iota_{+}\cM$ is a $-\alpha\sigma$-twisted equivariant $\sD$-module.
\end{prop}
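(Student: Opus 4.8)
The plan is to transport the honest $G$-equivariant structure of $\iota_{+}\cM$ down to $\gr^{\alpha}_V\iota_{+}\cM$, observe that there it is compatible with the $\sD_X$-module structure only up to a twist by a $\C[s]$-linear multiple of $\sigma$, and then specialise $s$ to $-\alpha$ — which is legitimate precisely on the quotient $Q:=W(N)_{\ell}\gr^{\alpha}_V\iota_{+}\cM/W(N)_{\ell-2}\gr^{\alpha}_V\iota_{+}\cM$ because $N$ kills it. The two inputs I would use are Lemma \ref{lemma: G and s compatible on V filtration} (the $V$-filtration is $G$-stable) and the Remark following it (the Malgrange isomorphism is weakly $G$-equivariant once $f^s$ is viewed as $G$-invariant).

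By Lemma \ref{lemma: G and s compatible on V filtration}, each $V^{\beta}\iota_{+}\cM$ is a $G$-stable $\cO_{X\times\C}$-submodule, so $\gr^{\alpha}_V\iota_{+}\cM$ inherits a $G$-action making it a weakly $G$-equivariant $\sD_X\langle s\rangle$-module; under $\iota_{+}\cM\cong\cM[s]f^s$ this is the naive action $g\cdot(p(s)mf^s)=p(s)(g\cdot m)f^s$. Since the $\sD_X$-action of the vector field $\xi^{\#}$ generating the $G$-action on $X$ sends $mf^s$ to $\bigl(\xi^{\#}(m)+s\,m\,\xi^{\#}(f)/f\bigr)f^s$, the infinitesimal $G$-action of $\xi\in\lie$ on $\gr^{\alpha}_V\iota_{+}\cM$ differs from the $\sD_X$-action of $\xi^{\#}$ by $\sigma(\xi)$ times a scalar multiple of $s$; by the very definition of the $\sD_X\langle s,t\rangle$-module $\cM[s]f^s$ and of $\cM\cdot f^{-\alpha}=\cM[s]f^s/(s+\alpha)$, this discrepancy specialises to $-\alpha\sigma(\xi)$ at $s=-\alpha$. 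Next, $t$ and $\d_t$ have $G$-weights $\sigma$ and $-\sigma$, so $s=-\d_t t$ has weight $0$ and $N=s+\alpha$ is $G$-equivariant; hence all $\ker N^{i+1}$ and $\Im N^{j}$ are $G$-submodules, and the convolution formula \eqref{eqn: convolution formula} exhibits each $W(N)_{\ell}\gr^{\alpha}_V\iota_{+}\cM$ as a sum of such, hence as a $G$-submodule (alternatively, invoke uniqueness of the monodromy weight filtration, as in the proof of Lemma \ref{lemma: G and s compatible on V filtration}). Thus $Q$ is a $G$-equivariant quotient; and since $N\cdot W(N)_{\ell}\subseteq W(N)_{\ell-2}$, the operator $N$, hence $s+\alpha$, annihilates $Q$, so $s$ acts on $Q$ as the scalar $-\alpha$, $Q$ is a genuine $\sD_X$-module, and by the previous point its infinitesimal $G$-action differs from its $\sD_X$-action by $-\alpha\sigma$. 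As quasi-coherence, $G$-equivariance of the $\cO_X$-structure, and the cocycle conditions are all inherited from $\iota_{+}\cM$, this is exactly the assertion that $Q$ is a $-\alpha\sigma$-twisted equivariant $\sD_X$-module.

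The step I expect to be the main obstacle is the twist bookkeeping: matching the honest $G$-equivariant structure on the $\sD_{X\times\C}$-module $\iota_{+}\cM$ (in which $t$ and $\d_t$ genuinely act) with the weakly $G$-equivariant $\sD_X\langle s\rangle$-module $\gr^{\alpha}_V\iota_{+}\cM$, and pinning the constant twist down as exactly $-\alpha\sigma$ rather than some shift of it — which forces care with the normalisations of the graph embedding and of the Malgrange isomorphism. A formally cleaner route reduces to the case $\alpha=0$ via the identification $\gr^{\alpha}_V\iota_{+}\cM\cong\gr^{0}_V\iota_{+}(\cM\cdot f^{-\alpha})$ together with the fact that $\cM\cdot f^{-\alpha}$ is itself $-\alpha\sigma$-twisted equivariant, but this merely repackages the same computation.
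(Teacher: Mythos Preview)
Your argument is correct and follows essentially the same route as the paper's proof: the paper simply observes that differentiating the $G$-action on $\cM[s]f^s$ yields the operators $a(\xi)-s\cdot\sigma(\xi)\in\sD_X[s]$, and that since $N\cdot W(N)_{\ell}\subseteq W(N)_{\ell-2}$ these descend to $a(\xi)+\alpha\cdot\sigma(\xi)$ on the quotient. You carry out the same computation in more detail, and additionally make explicit two points the paper leaves implicit --- that $N$ is $G$-equivariant so the $W(N)_\ell$ are $G$-stable, and that the weak equivariance and cocycle conditions descend from $\iota_+\cM$ --- which is fine.
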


\begin{proof}
    Let $a: \lie \to \sD_X$ be the Lie algebra map induced by the action of $G$ on $X$ on the level of operators. Then differentiating the $G$-module action on $\cM[s]f^s$ agrees with the action of the operators $a(\xi) - s \cdot \sigma(\xi) \in \sD_X[s]$, with $\xi \in \mathfrak{g}$. Since $N \cdot W_{\ell}\gr^{\alpha}_V\iota_{+}\cM \subseteq  W_{\ell-2}\gr^{\alpha}_V\iota_{+}\cM$, the latter operators descend to $a(\xi) + \alpha \cdot \sigma(\xi) \in \sD_X$ on the quotient. Thus, differentiating the $G$-module action on the quotient $W(N)_{\ell}\gr^{\alpha}_V\iota_{+}\cM/W(N)_{\ell-2}\gr^{\alpha}_V\iota_{+}\cM$ agrees with the action of the vector fields in $\mathfrak{g}$, up to the twist of the character $-\alpha \sigma$, which is equivalent to twisted equivariance (see \cite{hotta}; for the case $\alpha = 0$, cf. \cite[Proposition 2.6]{vanderbergh}).
\end{proof}
\begin{prop}\label{prop:invdiff}
    Assume $G$ is reductive, and let $w\in \iota_{+}\cM$ be such that its $G$-span is irreducible. Then the differential operator $P(s,t)$ computing $b_w(s)$ as in \eqref{eqn: definition of b-function} can be chosen to be a $G$-semi-invariant of weight $-\sigma$.
\end{prop}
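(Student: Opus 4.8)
The plan is to first reduce to a ``uniform'' version of the statement over the irreducible $G$-module $V_w\colonequals\langle w\rangle_G\subseteq\iota_{+}\cM$, and then to produce the semi-invariant operator by an averaging argument. Throughout write $A=\sD_X\langle s,t\rangle$; the group $G$ acts on $A$ by algebra automorphisms with $t$ semi-invariant of weight $\sigma$ and $s=-\d_t t$ invariant, and since $X$ is affine and $G$ reductive, $A$ and $\iota_{+}\cM$ are locally finite rational $G$-modules, so we have at our disposal the Reynolds projector $e_{-\sigma}\colon A\to A_{-\sigma}$ onto the $(-\sigma)$-semi-invariant part, $e_{-\sigma}(Q)=\int_G\sigma(g)(g\cdot Q)\,dg$.

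First I would check that $b_w$ is constant along $V_w$: by $G$-equivariance of the $V$-filtration (Lemma \ref{lemma: G and s compatible on V filtration}) together with Theorem \ref{thm: Sabbah} one gets $b_{gw}=b_w$, and since $b_{m+m'}$ divides $\mathrm{lcm}(b_m,b_{m'})$ (again from Theorem \ref{thm: Sabbah} and \eqref{eqn: mult via ker}) while $w\in\langle v\rangle_G$ for every nonzero $v\in V_w$, it follows that $b_v=b_w$ for all nonzero $v\in V_w$; in particular $b_w(s)\,v\in A\cdot(tv)$ for each such $v$. The reduction I would then make is the claim that \emph{it suffices to find one $R\in A$ with $R\cdot(tv)=b_w(s)\,v$ for every $v\in V_w$}, not merely for $v=w$. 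Indeed, granting this, put $P=e_{-\sigma}(R)\in A_{-\sigma}$; since $t$ has weight $\sigma$ we have $g^{-1}(tw)=\sigma(g)^{-1}t\,(g^{-1}w)$, and combining this with the $G$-equivariance of the $A$-action on $\iota_{+}\cM$, the invariance of $s$, and $R(t\,(g^{-1}w))=b_w(s)(g^{-1}w)$, a direct computation gives $(g\cdot R)(tw)=\sigma(g)^{-1}b_w(s)\,w$ for all $g\in G$, whence $P(tw)=\int_G b_w(s)\,w\,dg=b_w(s)\,w$. Thus $P$ is semi-invariant of weight $-\sigma$ and computes $b_w$ as in \eqref{eqn: definition of b-function}.

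What remains is to construct the uniform operator $R$, equivalently to prove $b_w(s)\,V_w\subseteq A_{-\sigma}\cdot(tV_w)$, and this is the step I expect to be the main obstacle. A helpful structural point is that $tV_w\subseteq A\cdot(tw)$, so that $\widetilde N\colonequals A\cdot(tV_w)=A\cdot(tw)$ is a cyclic, $G$-stable $A$-submodule of $\iota_{+}\cM$: this holds because $V_w$ is already irreducible as a $\mathfrak g$-module, hence $V_w=U(\mathfrak g)\cdot w$ with $\mathfrak g$ acting through differential operators in $A$ (strong equivariance; cf. the proof of Proposition \ref{prop: quotient of weight are equivariant}), and conjugation by the non-zero-divisor $t$ preserves the subalgebra these operators generate because their commutators with $t$ lie in $A\cdot t$. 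Inside the cyclic $G$-equivariant module $\widetilde N$ one then has to realize, by an element of $A$, the unique-up-to-scalar $(-\sigma)$-semi-invariant map $tV_w\to b_w(s)V_w$; here the irreducibility of $\langle w\rangle_G$ is essential (for a reducible span the conclusion genuinely fails), and I would carry this out along the lines of M. Sato's construction of $b$-functions of semi-invariants as developed in \cite{holoprehom}. Once $R$ is in hand, the averaging described above completes the proof.
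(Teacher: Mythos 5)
Your reduction-plus-averaging strategy is sound as far as it goes, and in fact the second half of your argument is a mild streamlining of what the paper does: once one has a single operator $R$ with $R\cdot(tv)=b_w(s)\,v$ for \emph{all} $v$ in the irreducible span, your one-step projection $P=\int_G\sigma(g)(g\cdot R)\,dg$ combines the paper's two final moves (averaging over the derived subgroup $G'=[G,G]$, then extracting the weight $-\sigma$ component for the torus $G/G'$), and the computation $(g\cdot R)(tw)=\sigma(g)^{-1}b_w(s)\,w$ is correct. The problem is that you have not proved the statement you reduced to. The existence of the \emph{uniform} operator $R$ is precisely the content of the proposition's difficulty: knowing that $b_v=b_w$ for every $v$ in the span (even granting your lcm argument, which itself leans on a divisibility fact not established in the paper) only tells you that for each $v$ there is \emph{some} operator $R_v$ with $R_v(tv)=b_w(s)v$; the set of $v$ on which a fixed operator works is merely a linear subspace, and nothing you say forces these operators to be glued into one. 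Your closing sentence defers exactly this step to ``the lines of M. Sato's construction'' in \cite{holoprehom}, which is not an argument.

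The paper closes this gap with a concrete device you are missing: the Jacobson Density Theorem \cite{jacobson}. Since $\sigma$ vanishes on $[\mathfrak g,\mathfrak g]$, the derived algebra acts on $\iota_{+}\cM$ through operators lying in $\sD_X$ (hence commuting with $t$), and its image $E\subseteq\sD_X$ acts irreducibly on the finite-dimensional module $W=\langle w\rangle_G$. Density then supplies $\xi_i,\eta_i\in E$ with $\xi_i\cdot w_i=w$, $\xi_i\cdot w_j=0$ for $j\neq i$, and $\eta_i\cdot w=w_i$ for a basis $\{w_i\}$ of $W$; starting from any $P$ as in \eqref{eqn: definition of b-function}, the operator $Q=\sum_i\eta_i\,P\,\xi_i$ satisfies $Q\cdot(tw_i)=b_w(s)\,w_i$ for every $i$, i.e.\ it is exactly the uniform operator your plan requires. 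With that in hand your averaging paragraph finishes the proof; without it (or an equivalent construction), the proposal does not establish the proposition.
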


\begin{proof}
Recall the equation
\begin{equation}\label{eq:turninv}
    P(s,t)\cdot (tw) = b_w(s)\cdot w, \quad \textrm{for some $P(s,t) \in \sD_X\langle s,t\rangle$}.
\end{equation}
Let $G'=[G,G]$ be the derived subgroup of $G$, which is semisimple and connected. By construction $g\cdot t= \sigma(g)\cdot t$, so $t$ is $G'$-invariant. Since $\iota_{+}\cM$ is $G$-equivariant, we can choose an irreducible $G$-submodule $W \subseteq \iota_{+}\cM$ containing $w$, which is irreducible also as a $G'$-module.

We first show that $P(s,t)$ in \eqref{eq:turninv} can be chosen to be $G'$-invariant. Let $\{w_i\}$ be a basis of $W$, and let $E$ be the image of the universal enveloping algebra of $G'$ inside $\sD_X$. By the Jacobson Density Theorem \cite[Section 4.3]{jacobson}, for each $i$ there exists $\xi_i \in E$ such that $\xi_i \cdot w_i = w$ and $\xi_i \cdot w_j = 0$ for $j\neq i$. Furthermore, there exist elements $\eta_i \in E$ such that $\eta_i \cdot w = w_i$.

Define the operator $Q = \sum_{i} \eta_i \cdot P \cdot \xi_i 
 \,\, \in\sD_X\langle s,t\rangle$. For each $i$, we have $Q \cdot (tw_i) = b_w(s) \cdot w_i$. Applying the Reynolds operator $\sD_X\langle s,t\rangle \to \sD_X^{G'}\langle s,t\rangle $ (i.e., averaging over $G$) yields an operator $Q'\in \sD_X^{G'}\langle s,t\rangle$ such that $Q' \cdot (tw_i) = b_w(s) \cdot w_i$, for all $i$. In particular, $Q' \cdot (tw) = b_w(s) \cdot w$.

Now consider the $G$-span of $Q'$ in $\sD_X^{G'}\langle s,t\rangle$, denoted $Z$, which is a $G/G'$-module. Since $G/G'$ is a torus, we can decompose $Z= \oplus Z_\chi$ into a direct sum $G$-modules with weights corresponding to characters $\chi$ of $G$.  Write $Q' = \sum_i Q_\chi = Q_{-\sigma} + Q''$, where $Q_{-\sigma}$ is the summand of weight $-\sigma$. Since $W$ is an irreducible $G$-module and $t$ is semi-invariant of weight $\sigma$, the only summand of $Q'$ contributing in the $G$-isotypic component of $Q'\cdot (tw)$ corresponding to $W$ is $Q_{-\sigma}$. Since $Q' \cdot (tw) = b_w(s) \cdot w$, we get that $Q''\cdot (tw)=0$ and $Q_{-\sigma} \cdot (tw) = b_w(s) \cdot w$. Therefore $Q_{-\sigma}$ is the desired $G$-semi-invariant operator.
\end{proof}

\subsection{Multiplicity-free spaces}
Assume that $(G,X)$ is a multiplicity-free space. 
\begin{lemma}\label{lem:multfree}
 Let $\cS$ be a simple equivariant $\sD$-module with $\mathrm{supp}(\cS)\not\subset f^{-1}(0)$, and let $\cM=\cS_f$. Then $\cM$ admits a multiplicity-free decomposition as a $G$-module.
\end{lemma}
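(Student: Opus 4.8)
The plan is to first reduce to showing that the simple module $\cS$ itself is multiplicity-free, and then to reduce that, in turn, to a representation-theoretic statement on the open orbit of its support. For the reduction: since $\ker(f\colon\cS\to\cS)$ is a $\sD_X$-submodule of the simple module $\cS$ and $\supp(\cS)\not\subseteq f^{-1}(0)$, the map $f$ acts injectively on $\cS$, so $\cM=\cS_f=\bigcup_{k\geq 0}f^{-k}\cS$ is an increasing union of $G$-submodules, with multiplication by $f^{k}$ giving a $G$-equivariant isomorphism $\cS\otimes\C_{-k\sigma}\xrightarrow{\sim}f^{-k}\cS$. A character twist does not change multiplicities, so if $\cS$ is multiplicity-free then so is each $f^{-k}\cS$, and for every dominant weight $\lambda$ the space $\operatorname{Hom}_G(V_\lambda,\cM)=\bigcup_k\operatorname{Hom}_G(V_\lambda,f^{-k}\cS)$ is an increasing union of spaces of dimension $\le 1$, hence at most one-dimensional. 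Thus it suffices to prove that $\cS$ is multiplicity-free.

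\emph{The homogeneous orbit.} Write $\supp(\cS)=\overline O$ with $O$ the unique dense $G$-orbit, so $\cL_0\colonequals\cS|_O$ is a simple $G$-equivariant $\sD_O$-module. As $X$ has finitely many $B$-orbits, the $B$-stable irreducible closed subvariety $\overline O$ is a finite union of $B$-orbits and so has a dense one; since $\overline O\setminus O$ has strictly smaller dimension, that dense $B$-orbit lies in $O$, whence $O=G/H$ is a spherical homogeneous space, and so is the finite cover $G/H^{\circ}$. Every $G$-equivariant holonomic $\sD$-module on a homogeneous space is an integrable connection, so $\cL_0$ corresponds to an irreducible representation $\rho$ of the finite group $\pi_0(H)=H/H^{\circ}$, and by the algebraic Peter--Weyl theorem with Frobenius reciprocity
\[
\Gamma(O,\cL_0)=\bigoplus_{\lambda}V_\lambda\otimes\operatorname{Hom}_H(V_\lambda,\rho),\qquad \operatorname{mult}_{V_\lambda}\Gamma(O,\cL_0)=\dim\operatorname{Hom}_H(V_\lambda,\rho).
\]
Since $H^{\circ}$ acts trivially on $\rho$, every $H$-map $V_\lambda\to\rho$ factors through the $H^{\circ}$-coinvariants $(V_\lambda)_{H^{\circ}}$, so $\dim\operatorname{Hom}_H(V_\lambda,\rho)=\dim\operatorname{Hom}_{\pi_0(H)}((V_\lambda)_{H^{\circ}},\rho)\le\dim(V_\lambda)_{H^{\circ}}=\dim(V_\lambda^{*})^{H^{\circ}}$; and $\dim(V_\lambda^{*})^{H^{\circ}}\le 1$ because $\C[G/H^{\circ}]=\bigoplus_\mu V_\mu\otimes(V_\mu^{*})^{H^{\circ}}$ is multiplicity-free, $G/H^{\circ}$ being spherical. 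Hence $\Gamma(O,\cL_0)$ is multiplicity-free.

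\emph{From $O$ to $X$.} If $\supp(\cS)=X$, then $O$ is open in $X$; $\cS$ is simple with no nonzero section supported on $X\setminus O$, so the adjunction $\cS\hookrightarrow j_{*}(\cS|_O)$ for $j\colon O\hookrightarrow X$ is injective, giving a $G$-equivariant injection $\Gamma(X,\cS)\hookrightarrow\Gamma(O,\cL_0)$ and hence multiplicity-freeness of $\cS$ by the previous paragraph. For general $\cS$ one aims to reduce to this case: choose a $G$-equivariant resolution $p\colon\widetilde Y\to\overline O\hookrightarrow X$ with $\widetilde Y$ a smooth spherical $G$-variety having $O$ as open orbit, let $\widetilde{\cS}$ be the simple module $j'_{!*}\cL_0$ on $\widetilde Y$ (multiplicity-free by the full-support case applied on $\widetilde Y$), and use the decomposition theorem to realize $\cS$ as a $G$-equivariant direct summand of $H^{0}p_{+}\widetilde{\cS}$.

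\emph{Expected obstacle.} The delicate point is exactly this last reduction. Unlike for open immersions, the de Rham pushforward $p_{+}$ along the proper (non-smooth) resolution enlarges global sections by normal-derivative directions, so one cannot simply transport multiplicities from $\widetilde Y$ to $X$; one must instead control $\Gamma(X,\cS)$ directly via the conormal-variety description $\Gamma(X,\cS)\cong\Gamma(O,\cL_0\otimes\operatorname{Sym}(\cN)\otimes\det\cN)$ near $O$, using that conormal varieties of $G$-orbit closures in a spherical variety are again spherical — or else invoke the known structural fact that simple equivariant $\sD$-modules on multiplicity-free spaces are multiplicity-free. By contrast, the reduction $\cM\rightsquigarrow\cS$ and the count on the homogeneous space $O$ are routine.
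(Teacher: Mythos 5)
Your argument is complete only in the special case $\supp(\cS)=X$, and the gap you flag is genuine, not cosmetic. The lemma (and its use in Section 4) allows $\supp(\cS)$ to be any orbit closure not contained in $f^{-1}(0)$, and such proper orbit closures do occur (e.g.\ for reducible multiplicity-free spaces, or when $f$ is one of several semi-invariants), so the non-full-support case cannot be discarded. Neither of your proposed patches closes it: the decomposition theorem does exhibit $\cS$ as a summand of $H^0p_+\widetilde{\cS}$, but multiplicity-freeness of $\Gamma(\widetilde Y,\widetilde{\cS})$ gives no control on $\Gamma(X,H^0p_+\widetilde{\cS})$, since pushforward along a proper, non-finite map neither embeds nor bounds spaces of global sections (your own remark about normal-derivative directions applies here too); and the formula $\Gamma(X,\cS)\cong\Gamma(O,\cL_0\otimes\operatorname{Sym}(\cN)\otimes\det\cN)$ is unjustified -- it would require $\overline{O}$ to be smooth and $\cS$ to be the full direct image from $\overline{O}$, whereas in general $\overline{O}$ is singular and $\cS$ is only the intermediate extension, so at best one has a filtration-type heuristic, not a $G$-module isomorphism. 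Your final fallback, ``invoke the known structural fact that simple equivariant $\sD$-modules on multiplicity-free (spherical) varieties are multiplicity-free,'' is exactly the external input in question: without it your proof is incomplete, and with it the orbit analysis you carried out is no longer needed.

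For comparison, the paper's proof is a two-line reduction to that cited fact: let $j:U=f^{-1}(\C^{\times})\to X$; then $j^{\ast}\cM=j^{\ast}\cS$ is a simple equivariant $\sD_U$-module on the affine spherical variety $U$ \emph{whatever} the support of $\cS$ is (as long as it is not inside $f^{-1}(0)$), its sections are multiplicity-free by \cite[Theorem 3.17 (a)]{LW19}, and $\cM=j_{\ast}j^{\ast}\cM$ gives $\Gamma(X,\cM)=\Gamma(U,j^{\ast}\cM)$. The parts of your argument that are correct -- the reduction of $\cM=\bigcup_k f^{-k}\cS$ to $\cS$ via character twists, and the Frobenius-reciprocity count $\dim\operatorname{Hom}_H(V_\lambda,\rho)\le\dim(V_\lambda^{\ast})^{H^{\circ}}\le 1$ on the open orbit -- do yield a self-contained proof when $\supp(\cS)=X$ (where $\cS\hookrightarrow j_{O\ast}(\cS|_O)$ on sections), but the case of a simple module supported on a proper, generally singular orbit closure is precisely the nontrivial content of the cited theorem, and your proposal does not supply it.
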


\begin{proof}
Let $j:U=f^{-1}(\C^{\times})\to X$ be the open embedding. Then $j^{\ast}\cM$ is a simple equivariant $\sD_U$-module.  Since $U$ is spherical, it follows from \cite[Theorem 3.17 (a)]{LW19} that the space of sections of $j^{\ast}\cM$ decomposes multiplicity-freely as a $G$-module. Hence so does $\cM=j_*j^*\cM$. \end{proof}
\begin{lemma}\label{lem:decomp}
Let $P \in \sD_X$ be a semi-invariant operator of weight $-\sigma$. Then there exists a $G$-invariant operator $Q \in \sD_X$ such that $P=Q \cdot \partial f$. 
\end{lemma}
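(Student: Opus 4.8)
The plan is to pass to the associated graded ring $\mathrm{gr}\,\sD_X = \C[T^*X] = \C[X\times X^*]$ for the order filtration, where the asserted factorization becomes an ordinary divisibility of polynomials that can be resolved using multiplicity-freeness.

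First a normalization. Since $\partial f$ has constant coefficients and is $G$-semi-invariant of weight $-\sigma$, it lies in $\C[\partial_1,\dots,\partial_n]\cong\C[X^*]$ and corresponds there to a $G$-semi-invariant $f^{\vee}$ of weight $-\sigma$. Decomposing $\C[X^*]$ into homogeneous pieces — each multiplicity-free, since the dual of a multiplicity-free space is multiplicity-free (indeed $\C[X^*]_a=\Sym^a X\cong(\Sym^a X^*)^*=(\C[X]_a)^*$ as $G$-modules) — shows such a semi-invariant is unique up to scalar and homogeneous of degree $d=\deg f$; hence $\partial f$ is a homogeneous operator of order $d$ whose symbol is $f^{\vee}(\xi)$. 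Now I would induct on $\mathrm{ord}(P)$. If $P\neq 0$, then $\mathrm{symb}(P)\in\C[X\times X^*]$ is $G$-semi-invariant of weight $-\sigma$, because the symbol map is $G$-equivariant. The key claim below is that $\mathrm{symb}(P)=\bar q\cdot f^{\vee}(\xi)$ for some $\bar q\in\C[X\times X^*]^G$. Since $G$ is reductive, taking invariants commutes with passing to the associated graded, so $\bar q$ lifts to some $Q_0\in\sD_X^G$ with $\mathrm{symb}(Q_0)=\bar q$; then $Q_0\,\partial f$ has the same order and the same symbol as $P$, so $P-Q_0\,\partial f$ is again $G$-semi-invariant of weight $-\sigma$ but of strictly smaller order, and one concludes by induction — the base case being that, by the key claim, any nonzero $G$-semi-invariant operator of weight $-\sigma$ has order $\ge d$.

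The heart of the argument is the claim that every $h\in\C[X\times X^*]$ that is $G$-semi-invariant of weight $-\sigma$ is divisible by $f^{\vee}(\xi)$ (the quotient is then automatically $G$-invariant, by comparing weights in the domain $\C[X\times X^*]$). I would prove this representation-theoretically. Reducing to $h$ bihomogeneous, write $h\in\C[X]_a\otimes\C[X^*]_b$ and decompose $\C[X]_a=\bigoplus_\mu V_\mu$, $\C[X^*]_b=\bigoplus_\nu V_\nu$, each sum multiplicity-free (here one uses that both $(G,X)$ and $(G,X^*)$ are multiplicity-free). Correspondingly $h=\sum_{\mu,\nu}h_{\mu,\nu}$ with $h_{\mu,\nu}$ in the weight-$(-\sigma)$ part of $V_\mu\otimes V_\nu$. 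That weight space is nonzero only when $V_\nu\cong V_\mu^*\otimes L_{-\sigma}$, where $L_{-\sigma}$ is the one-dimensional representation of weight $-\sigma$. But $f^{\vee}$ spans a copy of $L_{-\sigma}$ inside $\C[X^*]$, and $V_\mu\subseteq\C[X]_a$ forces $V_\mu^*\subseteq\C[X^*]_a\cong(\C[X]_a)^*$; by multiplicity-freeness of $\C[X^*]$, multiplying that copy of $V_\mu^*$ by $f^{\vee}$ produces exactly the copy $V_\nu$. Hence $h_{\mu,\nu}$ lies in $\C[X]\otimes f^{\vee}\C[X^*]=(f^{\vee}(\xi))$ for all $\mu,\nu$, so $f^{\vee}(\xi)\mid h$.

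I expect the main obstacle to be recognizing this framework: working with symbols so that the noncommutative factorization turns into divisibility in a polynomial ring, and pinning down which isotypic components can occur in a weight-$(-\sigma)$ semi-invariant by combining multiplicity-freeness with the self-duality $\C[X^*]_a\cong(\C[X]_a)^*$. After that, everything is bookkeeping: the bihomogeneous reduction, lifting symbols using reductivity, the downward induction on order, and the weight comparison identifying the quotient. As a consistency check, one may also note that $\sD_X$ is a domain and $\partial f$ a nonzerodivisor, so in any factorization $P=R\,\partial f$ with $P$ semi-invariant of weight $-\sigma$ the cofactor $R$ is forced to be $G$-invariant.
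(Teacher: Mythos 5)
Your argument is correct, and its representation-theoretic heart coincides with the paper's: Schur's lemma confines a weight $-\sigma$ semi-invariant to the summands $V_\mu\otimes V_\nu$ with $V_\nu\cong V_\mu^*\otimes\C_{-\sigma}$, and multiplicity-freeness of $\C[X^*]$ identifies that copy of $V_\nu$ as $f^{\vee}$ times the copy of $V_\mu^*$ — this is exactly the paper's statement $U^*_{\lambda+\sigma}=U^*_\lambda\cdot\partial f$ inside $\Sym X$, dual to $f\cdot U_\lambda\cong U_{\lambda+\sigma}$ in $\C[X]$. Where you differ is the packaging: you prove the divisibility in $\gr\sD_X=\C[X\times X^*]$ and then climb back to $\sD_X$ by induction on order, lifting the invariant symbol with the Reynolds operator at each step. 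The paper skips this layer entirely by using that the normal-ordering map $\Sym X^*\otimes\Sym X\to\sD_X$ is an isomorphism of $G$-modules and that $\partial f$ has constant coefficients: each semi-invariant component $P_\lambda\in U_\lambda\otimes U^*_{\lambda+\sigma}$ then factors exactly in $\sD_X$, with invariant cofactor in $U_\lambda\otimes U_\lambda^*$, so the whole proof is a one-step factorization with no filtration argument. Your route costs the extra induction and lifting (and, as you note, the graded reduction would not generalize beyond constant-coefficient $\partial f$, since the multiplicity-free argument lives in $\C[X^*]$ rather than in $\C[X\times X^*]$), but it does make explicit two useful side facts the paper leaves implicit: the cofactor is automatically $G$-invariant because $\sD_X$ is a domain, and any nonzero semi-invariant operator of weight $-\sigma$ has order at least $\deg f$.
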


\begin{proof}
It is known (cf. Lemma \ref{lem:multfree}) that there is a multiplicity-free decomposition 
\[\C[X] = \Sym X^* = \bigoplus_{\lambda\in \Lambda(\cO_X)} U_\lambda.\]
This implies a $G$-decomposition of the ring of differential operators:
\[\sD_X = \Sym X^* \otimes \Sym X = \bigoplus_{\alpha, \beta} U_\lambda \otimes U_\beta^*. \]
We aim to describe the semi-invariants of weight $-\sigma$ among these summands; equivalently, $G$-invariant elements in $U_{\lambda+\sigma} \otimes U_{\beta}^*$. By Schur's lemma, 
\[(U_{\lambda+\sigma} \otimes U_{\beta}^*)^G= \op{Hom}_G (U_\beta, U_{\lambda+\sigma})\neq 0 \Longleftrightarrow \beta = \lambda+\sigma.\]
Therefore, any semi-invariant operator $P\in \sD$ of weight $-\sigma$ can be written as: 
\[P= \sum_\lambda P_\lambda, \mbox{ with } P_\lambda \in U_{\lambda} \otimes U_{\lambda+\sigma}^*.\]
Now, since $\Sym X^*$ is multiplicity-free, multiplication by $f$ induces an isomorphism \[f\cdot U_{\lambda} \cong U_{\lambda+\sigma} \textrm{ in $\C[X]=\Sym X^*$}.\]
Dually, we have $U^*_{\lambda +\sigma}= U^*_\lambda \cdot \partial f$ in $\Sym X$, from which the conclusion follows.
\end{proof}

\subsection{The case of irreducible isotypic components}\label{sec:reductive}
Assume $G$ is \emph{reductive}, and let $\Lambda$ be the set of integral dominant weights, parameterizing the set of finite-dimensional irreducible $G$-modules. Let $\cS$ be a simple equivariant $\sD$-module with $\textrm{supp}(\cS)\not\subset f^{-1}(0)$, and suppose $\cM=\cS_f$. For each $\lambda\in \Lambda$, let $\cM_{\lambda}$ denote the corresponding isotypic component in the $G$-decomposition.

\medskip

Fix $\lambda \in \Lambda$ such that $\cM_\lambda$ is \emph{irreducible}; which is equivalent to $\cM_{\lambda+k \sigma}$ being irreducible for all $k \in \Z$. We repeatedly use the following observation.
\begin{lemma}\label{lem:easysubmod}
    All $(G,\C[s])$-submodules of $\cM_\lambda[s]$ are of the form $(q(s)) \cdot \cM_\lambda$, for some $q(s)\in\C[s]$ and 
\[
    (\cM[s]f^s)_\lambda = \cM_\lambda[s]f^s.
\]
\end{lemma}

\begin{proof}
    Since we view both $s$ and $f^s$ as $G$-invariants, the latter claim is clear. 
    
    For the former, let $U \subset \cM_\lambda[s]$ first be just a $G$-submodule. We claim that $U=\cM_\lambda \otimes W$, for some subspace $W\subset \C[s]$. As $\cM_\lambda[s]$ is a semisimple $G$-module, so is $U$, hence a sum of its simple submodules. Therefore, it is enough to prove the claim for $U$ simple. In this case, $U$ is finite dimensional, hence there is a finite-dimensional subspace $W' \subset \C[s]$ such that $U\subset \cM_\lambda \otimes W'$. Clearly, the natural map $\mathrm{Hom}_G(U, \cM_\lambda)\otimes_\C W' \to \mathrm{Hom}_G(U, \cM_\lambda\otimes_\C W')$ is an isomorphism. By Schur's lemma, $\mathrm{Hom}_G(U, \cM_\lambda) = \C$, hence $U=\cM_\lambda \otimes W$, with $W \subset W'$ and $\dim W=1$. 

    Now let $N\subset \C[s]$ be a $(G,\C[s])$-module. By the previous claim, we can write $N=\cM_\lambda \otimes W$, for some subspace $W\subset \C[s]$. Since $N$ is also stable under $\C[s]$, so must be $W$, thus the conclusion.    
\end{proof}

\begin{lemma}\label{lemma: consequence of full support} If $\supp \cS=X$, then $(\cO_X)_{k\sigma} = \C \cdot f^k$, for all $k \in \Z_{\geq 0}$.
\end{lemma}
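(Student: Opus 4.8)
The plan is to prove the two inclusions $\C\cdot f^{k}\subseteq(\cO_X)_{k\sigma}$ and $(\cO_X)_{k\sigma}\subseteq\C\cdot f^{k}$ separately; the first is immediate, and the second reduces to the fact that $X$ admits a dense $G$-orbit.

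For the first inclusion, since $f$ is semi-invariant of weight $\sigma$ we have $g\cdot f^{k}=\sigma(g)^{k}f^{k}$, so $\C\cdot f^{k}$ is a one-dimensional $G$-submodule of $\cO_X$ on which $G$ acts through the character $k\sigma$; hence $\C\cdot f^{k}$ lies in the $k\sigma$-isotypic component $(\cO_X)_{k\sigma}$. For the reverse inclusion I would first observe that the hypothesis $\supp\cS=X$ forces $X$ to contain a dense $G$-orbit $\cO$: under the running assumptions a simple equivariant holonomic $\sD_X$-module is the intermediate extension of an irreducible equivariant connection on a single $G$-orbit, with support the closure of that orbit, so $\overline{\cO}=\supp\cS=X$. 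Consequently every $G$-invariant rational function on $X$ is constant along $\cO$ and hence constant, i.e. $\C(X)^{G}=\C$. Now take $0\neq h\in(\cO_X)_{k\sigma}$, so that $g\cdot h=\sigma(g)^{k}h$; then $\varphi\colonequals h/f^{k}\in\C(X)$ is regular on $U=X\setminus f^{-1}(0)$, and since $G$ acts on $\C(X)$ by ring automorphisms, $g\cdot\varphi=(g\cdot h)/(g\cdot f^{k})=\varphi$. Thus $\varphi\in\C(X)^{G}=\C$, say $\varphi=c$, whence $h=c\,f^{k}\in\C\cdot f^{k}$. Combining the two inclusions yields $(\cO_X)_{k\sigma}=\C\cdot f^{k}$ for every $k\in\Z_{\geq 0}$.

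The substantive point is the implication $\supp\cS=X\Rightarrow X$ is $G$-quasi-homogeneous; this is where the standing hypotheses on $(G,X)$ in this section (in particular, that $X$ has only finitely many $G$-orbits) are used, through the orbit classification of simple equivariant $\sD$-modules — indeed the implication is false for arbitrary reductive actions, for instance $\C^{\ast}$ scaling the first coordinate of $\C^{2}$ with $\cS=\cO_{\C^{2}}$. Once a dense orbit is available, the remaining steps are purely formal manipulations with semi-invariants, so I anticipate no further obstacle.
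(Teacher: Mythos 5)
Your first inclusion and the final division argument are fine, but the crucial step --- deducing that $X$ has a dense $G$-orbit from $\supp\cS=X$ --- rests on a hypothesis the paper does not make. You assert that the standing hypotheses of this section include that $X$ has only finitely many $G$-orbits; they do not. Section \ref{sec:reductive} assumes only that $G$ is reductive, $X$ is smooth and affine, $\cS$ is a simple equivariant $\sD_X$-module with $\supp\cS\not\subset f^{-1}(0)$, and --- this is the key point --- that the fixed isotypic component $\cM_\lambda$ of $\cM=\cS_f$ is an irreducible $G$-module (equivalently, $\cM_{\lambda+k\sigma}$ is irreducible for all $k\in\Z$). Finitely many orbits is mentioned in the introduction only as a sufficient condition for regularity and quasi-unipotence, and the intended applications beyond multiplicity-free spaces (e.g.\ prehomogeneous spaces in Section \ref{sec:last}) typically have infinitely many orbits. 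Without finitely many orbits, a simple equivariant $\sD_X$-module need not be supported on an orbit closure --- your own example ($\C^{\ast}$ scaling one coordinate of $\C^2$, $\cS=\cO_{\C^2}$) is simple, equivariant, has full support, and there is no dense orbit --- so your route to $\C(X)^G=\C$ is simply not available under the stated hypotheses. Note also that your argument never uses the irreducibility of $\cM_\lambda$, which is precisely the assumption that excludes such examples and that the lemma actually relies on.

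For comparison, the paper's proof is a short highest-weight argument: given $g\in(\cO_X)_{k\sigma}$, take a highest weight vector $m_\lambda\in\cM_\lambda$; then $g\,m_\lambda$ and $f^k m_\lambda$ are both highest weight vectors of weight $\lambda+k\sigma$ in the irreducible module $\cM_{\lambda+k\sigma}$, hence proportional, say $g\,m_\lambda=c f^k m_\lambda$; and since $\cS$ is simple with $\supp\cS=X$, no nonzero section of $\cM=\cS_f$ is annihilated by a nonzero function, forcing $g=cf^k$. Your argument does prove the statement in any setting where a dense $G$-orbit is available (multiplicity-free or prehomogeneous spaces, or finitely many orbits), but to prove the lemma in the generality in which it is stated you would have to either deduce the dense orbit from the irreducibility of $\cM_\lambda$ (which you do not attempt) or argue directly with that irreducibility, as the paper does.
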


\begin{proof}
    Suppose there exists  $k \in  \Z_{\geq 0}$ and $g\in (\cO_X)_{k\sigma}$ with $g \notin \C \cdot f^k$. Let $m_{\lambda}$ be a highest weight vector in $\cM_\lambda$. Then both $g \cdot m_{\lambda}$ and $f^k \cdot m_{\lambda}$ are highest weight vectors in $\cM_{\lambda+k\sigma}$, which are irreducible by assumption. Hence there exists $c \in \C$ such that $g m_{\lambda} = c f^k m_{\lambda}$. Since $\supp \cS = X$, this forces $g=c f^k$, contradicting the assumption.
\end{proof}

\begin{prop}\label{prop:bfunirred} Assume either $\supp \cS = X$ or that $X$ is a multiplicity-free space. Then there exist a unique semi-invariant differential operator $\partial f$ with constant coefficients of weight $-\sigma$ and $\partial f \cdot f=1$, and a monic polynomial $b_{\lambda}(s)$ with $\deg b_{\lambda}(s) = \deg f$, such that for any $0\neq m \in \cM_{\lambda}$ one has
\[ \partial f \cdot m f^{s+1} = b_{\lambda}(s) \cdot m f^s, \]
and $b_{\lambda}(s)=b_m(s)$.
\end{prop}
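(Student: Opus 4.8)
The plan is to produce the operator $\partial f$ first, then show that applying it to $mf^{s+1}$ for $m\in\cM_\lambda$ yields $b_m(s)\cdot mf^s$ for a polynomial $b_m(s)$ independent of the choice of $m$, and finally to identify the degree of that polynomial with $\deg f$. I would split into the two hypotheses.

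\emph{Construction of $\partial f$.} In the multiplicity-free case, the decomposition $\C[X]=\bigoplus_{\lambda\in\Lambda(\cO_X)}U_\lambda$ and the isomorphism $f\cdot U_\lambda\cong U_{\lambda+\sigma}$ used in Lemma~\ref{lem:decomp} show that multiplication by $f$ is an isomorphism $U_0=\C\to U_\sigma=\C\cdot f$; dually there is a unique $G$-invariant (hence constant-coefficient, since it lies in $\Sym X$ and has weight $-\sigma$ on the nose only in degree matching $\deg f$... more precisely it lies in the degree-$\deg f$ part of $\Sym X$ paired with $U_\sigma$) differential operator $\partial f\in U_\sigma^*\cdot\ (\text{const.})$ with $\partial f\cdot f=1$; uniqueness is forced because $U_\sigma$ is one-dimensional as the $\sigma$-weight space in the multiplicity-free ring, so its dual pairing pins down $\partial f$ up to scalar, and $\partial f\cdot f=1$ fixes the scalar. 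In the case $\supp\cS=X$ (with $X$ an affine space, so $\cO_X=\Sym X^*$), Lemma~\ref{lemma: consequence of full support} gives $(\cO_X)_{\sigma}=\C\cdot f$, which again is one-dimensional, and the same dual-pairing argument produces a unique constant-coefficient semi-invariant $\partial f$ of weight $-\sigma$ with $\partial f\cdot f=1$.

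\emph{The $b$-function equation and independence of $m$.} Fix $0\neq m\in\cM_\lambda$. Since $\partial f$ has constant coefficients of weight $-\sigma$ and $f^s$ is weakly $G$-invariant, the element $\partial f\cdot mf^{s+1}$ lies in the $\sigma$-weight-shifted isotypic component $(\cM[s]f^s)_\lambda=\cM_\lambda[s]f^s$ (using \eqref{eq:isotmf}); it is the image of $m$ under a $\C[s]$-linear, $G$-equivariant endomorphism of $\cM_\lambda[s]f^s$ (the composite: multiply by $f$, apply $\partial f$). By Schur's lemma such an endomorphism of the $\C[s]$-free rank-one-up-to-$G$ module $\cM_\lambda[s]f^s\cong\cM_\lambda\otimes_\C\C[s]$ is multiplication by a scalar in $\C[s]$; call it $b_\lambda(s)$. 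This gives $\partial f\cdot mf^{s+1}=b_\lambda(s)\cdot mf^s$ for every $m$, with the same $b_\lambda(s)$. Monicity (after rescaling) is harmless. That $b_m(s)=b_\lambda(s)$: the defining equation is of the form required in Definition~\ref{definition: definition of BS polynomials} (rewrite $\partial f\cdot mf^{s+1}$ as $P(s,t)\cdot t\cdot(m\otimes 1)$ using $t\cdot ms^\ell f^s=m(s+1)^\ell f\cdot f^s$ and the dictionary \eqref{eqn: Malgrange isomorphism}/\eqref{eqn: inverse of Malgrange isomorphism}), so $b_m(s)\mid b_\lambda(s)$; conversely any equation computing $b_m(s)$ can, by Proposition~\ref{prop:invdiff}, be taken with $P$ a weight-$(-\sigma)$ semi-invariant, and then in the multiplicity-free (resp. full-support) setting Lemma~\ref{lem:decomp} (resp. the one-dimensionality of $(\cO_X)_\sigma$ together with $\partial f\cdot f=1$) writes $P=Q\cdot\partial f$ with $Q$ $G$-invariant, whence $Q$ acts on the irreducible $\cM_\lambda$ by a scalar $c(s)\in\C[s]$ and the equation becomes $c(s)b_\lambda(s)\cdot mf^s=b_m(s)\cdot mf^s$, forcing $b_\lambda(s)\mid b_m(s)$. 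Hence they agree up to scalar, and monic normalization gives equality.

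\emph{Degree.} We have $\deg b_\lambda(s)\le\deg f$ directly: $\partial f$ can be taken of order exactly $\deg f$ (it is dual to $f\in\Sym^{\deg f}X^*$), so each $\partial_{x_i}$ in it contributes at most one factor of $s$ when commuted past $f^{s+1}$ via $\partial_{x_i}(gf^{s+1})=(\partial_{x_i}g)f^{s+1}+(s+1)g\frac{\partial_{x_i}f}{f}f^{s+1}$, giving a polynomial in $s$ of degree $\le\deg f$. For the reverse inequality $\deg b_\lambda(s)\ge\deg f$, I would argue that $b_\lambda(s)$ has no root at a generic value: specializing $s=\beta$, the operator $\partial f$ composed with multiplication by $f^{\beta+1}$ is, for all but finitely many $\beta$, injective on $\cM_\lambda$ (this is the statement that $f^\beta\cdot m$ generates a nonzero element, which holds since $\cM=\cS_f$ is $f$-localized so $f$ acts invertibly, and $\partial f$ applied to $f\cdot(\cdot)$ is the identity composed with lower-order corrections that cannot annihilate a whole isotypic line for generic twist); more robustly, invoke that on a multiplicity-free space or under $\supp\cS=X$ the $b$-function of $f$ itself has degree $\deg f$ (Sato's theory / \cite{holoprehom}) and $b_1(s)=b_{1}(s)$ is the $\lambda=0$ instance, then propagate to general $\lambda$ by comparing with $b_1$ via Lemma~\ref{lem:bdiv} and its converse. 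I would phrase the cleanest version as: $b_\lambda(s)$ and $b_1(s)$ have the same degree because passing from the highest weight vector of $\cM_0=\C$ to that of $\cM_\lambda$ changes the operator equation only by a $G$-invariant factor of degree $0$ in $s$ (again Lemma~\ref{lem:decomp}), and $\deg b_1(s)=\deg f$ is classical for these spaces.

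\emph{Main obstacle.} The delicate point is the lower bound $\deg b_\lambda(s)\ge\deg f$ — i.e. that no spurious cancellation drops the degree. The upper bound and the Schur-lemma independence-of-$m$ are formal. I expect the degree identity to require genuinely invoking the structure of these spaces (Sato's $b$-function computation, or the results of \cite{holoprehom} quoted in the introduction as "Proposition \ref{prop:bfunirred}"), rather than following from soft representation theory alone; the full-support and multiplicity-free cases may need slightly different justifications (in the multiplicity-free case one can read off $\deg b_\lambda$ from the chain $U_{\lambda}\xrightarrow{f}U_{\lambda+\sigma}\xrightarrow{\partial f}U_\lambda$ of one-dimensional spaces, computing the scalar explicitly on highest weight vectors).
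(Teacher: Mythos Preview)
Your overall strategy matches the paper's: construct $\partial f$ from the one-dimensionality of $(\cO_X)_\sigma$, use Schur's lemma on the $(\C[s],G)$-endomorphism $\partial f\cdot f$ of $\cM_\lambda[s]f^s$ to get a common $b_\lambda(s)$, and then prove $b_m(s)=b_\lambda(s)$ by establishing the two divisibilities. For the multiplicity-free case your argument for $b_\lambda(s)\mid b_m(s)$ via Proposition~\ref{prop:invdiff} and Lemma~\ref{lem:decomp} is exactly what the paper does.

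There is, however, a genuine gap in the full-support case. You claim that ``the one-dimensionality of $(\cO_X)_\sigma$ together with $\partial f\cdot f=1$'' allows you to factor an arbitrary weight-$(-\sigma)$ semi-invariant operator $P\in\sD_X[s]$ as $Q\cdot\partial f$ with $Q$ invariant. This does not follow. The proof of Lemma~\ref{lem:decomp} uses the multiplicity-free decomposition of $\Sym X^*$ to conclude that the only semi-invariants of weight $-\sigma$ in $\sD_X=\Sym X^*\otimes\Sym X$ live in summands $U_\lambda\otimes U_{\lambda+\sigma}^*$, and then that $U_{\lambda+\sigma}^*=U_\lambda^*\cdot\partial f$ in $\Sym X$. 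Knowing only that $(\cO_X)_\sigma=\C\cdot f$ tells you $(\Sym X)_{-\sigma}=\C\cdot\partial f$, i.e.\ the \emph{constant-coefficient} piece is one-dimensional; it says nothing about higher-order operators of weight $-\sigma$, which in a non-multiplicity-free $\cO_X$ can arise from isotypic components of dimension $>1$ and need not factor through $\partial f$. The paper does not attempt this factorization in the full-support case; instead it invokes the argument of \cite[Theorem~2.18]{holoprehom} directly for the reverse divisibility.

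Your identification of the lower bound $\deg b_\lambda(s)\geq\deg f$ as the delicate point is accurate, and the paper likewise defers it (along with $c=1$) to \cite[Theorem~2.18]{holoprehom} and \cite[\S4]{saki}. Your sketch invoking $b_1(s)$ is not quite right as stated, since for general $\cS$ the element $1$ need not lie in $\cM$, but since you flag this as the obstacle and point to the same external sources, this is not a defect of the proposal.
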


\begin{proof}
By Lemma \ref{lemma: consequence of full support}, we have $(\cO_X)_\sigma = \C \cdot f$. For the explicit construction of $\partial f$, see \cite[Section 4, Proposition 24]{saki}. Consider the $G$-invariant operator $Q=\partial f \cdot f \in \sD_X$, and restrict to the $\lambda$-isotypic component $\cM_{\lambda}$. This induces a $(G,\C[s])$-module map
\[Q_\lambda :  (\cM[s]f^s)_\lambda \to (\cM[s]f^s)_\lambda.\]
By Lemma \ref{lem:easysubmod}, the image of $Q$ is of the form $(b_\lambda(s))\cdot \cM_\lambda f^s$, with $b_{\lambda}(s)$ monic. Using Schur's lemma and the irreduciblity of $\cM_{\lambda}$, this implies 
\[ \partial f \cdot m f^{s+1} =c\cdot b_\lambda(s)\cdot m f^s\]
for some $c\in \C$. As in the proof of \cite[Theorem 2.18]{holoprehom} (see also \cite[Section 4]{saki}), we can deduce that $\deg b_{\lambda}(s) = \deg f$ and $c=1$, hence $b_m(s) \mid b_\lambda(s)$.

If $\supp(\cS)=X$, the reverse divisibility $b_\lambda(s)\mid b_{m}(s)$ follows as in \cite[Theorem 2.18]{holoprehom}. If instead $X$ is multiplicity-free, based on Proposition \ref{prop:invdiff}, there exists a $G$-semi-invariant operator $P(s) \in \sD_X[s]$ of weight $-\sigma$ such that  $P(s) \cdot  m f^{s+1} = b_{m}(s) \cdot m f^s$. Using Lemma \ref{lem:decomp}, we can write $P(s)= R(s) \cdot \partial f$ for some $G$-invariant operator $R(s) \in \sD_X[s]$. It follows that $b_\lambda(s) \, | \, b_{m}(s)$.
\end{proof}

\begin{thm}\label{thm:bfun}
For $0\neq m\in \cM_\lambda$ and $p(s)\in \C[s]$, then
\[ b_{p(s)m}(s)=b_{m}(s)\cdot\frac{c_{p,b_{m}}(s+1)}{c_{p,b_{m}}(s)},\]
where $c_{p,q}(s)$ is as in \eqref{eqn: notation cpq}. In particular, $\deg b_{p(s)m}(s) = \deg b_m(s)$. Moreover, if $b_m(s)=\prod_{i=1}^d(s+r_i)$ and $\prod_{i=1}^d [s+r_i]_{k_i}$ divides $p(s)$ is maximal for some $(k_1,\dots, k_d) \in \Z_{\geq 0}^d$, then $b_{p(s)m}(s) = \prod_{i=1}^d (s+r_i+k_i)$.
\end{thm}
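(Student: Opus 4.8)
The plan is to derive the displayed identity from Proposition \ref{prop:bgcd}, which already supplies the divisibility $b_{p(s)m}(s)\mid b_m(s)\cdot c_{p,b_m}(s+1)/c_{p,b_m}(s)$, by establishing the opposite divisibility; since both sides are monic, equality follows. Once equality is in hand, the closing assertions — the product formula for $b_{p(s)m}(s)$ and the equality $\deg b_{p(s)m}(s)=\deg b_m(s)$ — are immediate from Lemma \ref{lem:expbfun} applied with $q(s)=b_m(s)$.

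To obtain the reverse divisibility I would work in $\iota_{+}\cM$ via the Malgrange isomorphism of Proposition \ref{prop:malgrange}. Put $w=p(s)mf^s$, so that $b_{p(s)m}(s)=b_w(s)$ is the monic generator of the ideal of those $b(s)$ with $b(s)w\in\sD_X\langle s,t\rangle\cdot(tw)$. By Lemma \ref{lem:easysubmod}, $(\iota_{+}\cM)_\lambda=\cM_\lambda[s]f^s$, and the subspace $Y:=\big(\sD_X\langle s,t\rangle\cdot tw\big)\cap(\iota_{+}\cM)_\lambda$ is a $(G,\C[s])$-submodule (it is stable under multiplication by $s\in\sD_X\langle s,t\rangle^G$), hence of the form $Y=(g(s))\cdot\cM_\lambda f^s$ for a unique monic $g(s)$. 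Intersecting with the free rank-one $\C[s]$-module $\C[s]\cdot mf^s$ then gives $b_w(s)=g(s)/\gcd(g(s),p(s))$, so the whole problem reduces to computing $g(s)$.

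The engine is the relation $\partial f\cdot mf^{s+1}=b_\lambda(s)mf^s$ of Proposition \ref{prop:bfunirred}, together with $b_\lambda(s)=b_m(s)$ and the fact that $\partial f$ has constant coefficients, hence commutes with both $t$ and $s$. Setting $T=\partial f\cdot t$ one gets $T^{j+1}(mf^s)=\prod_{\ell=0}^{j}b_m(s+\ell)\cdot mf^s$, and since $(\partial f)^{j+1}t^{j+1}=T^{j+1}$, applying $(\partial f)^{j+1}t^{j}$ to $tw$ produces $p(s+j+1)\prod_{\ell=0}^{j}b_m(s+\ell)\cdot mf^s\in Y$ for every $j\geq 0$; this shows $g(s)\mid d(s)$, where $d(s):=\gcd_{j\geq 0}\big(p(s+j+1)\prod_{\ell=0}^{j}b_m(s+\ell)\big)$, which equals $b_m(s)\cdot c_{p,b_m}(s+1)$ (the identity computed in the proof of Proposition \ref{prop:bgcd}). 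For the reverse inclusion $Y\subseteq(d(s))\cdot\cM_\lambda f^s$, take $R\in\sD_X\langle s,t\rangle$ with $R\cdot(tw)\in(\iota_{+}\cM)_\lambda$; replacing $R$ by its weight $-\sigma$ isotypic component (which leaves $R\cdot tw$ unchanged) and pushing all powers of $t$ to the right, write $R=\sum_{j\geq 0}R_j(s)t^j$ with $R_j(s)\in\sD_X[s]$ semi-invariant of weight $-(j+1)\sigma$. The crucial step is a factorization $R_j(s)=A_j(s)\cdot(\partial f)^{j+1}$ with $A_j(s)\in\sD_X[s]$ $G$-invariant: in the multiplicity-free case this follows by applying Lemma \ref{lem:decomp} repeatedly ($j+1$ times, over $\C[s]$), and in the full-support case it is as in \cite[Theorem 2.18]{holoprehom}. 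Granting it, $A_j(s)$ acts on $\cM_\lambda[s]f^s$ as multiplication by a scalar polynomial $\beta_j(s)$ by Schur's lemma, so $R_j(s)\cdot(f^{j+1}m)f^s=\beta_j(s)\prod_{\ell=0}^{j}b_m(s+\ell)\cdot mf^s$ and hence $R\cdot tw=\big(\sum_j\beta_j(s)\,p(s+j+1)\prod_{\ell=0}^{j}b_m(s+\ell)\big)mf^s\in(d(s))\cdot\cM_\lambda f^s$. Therefore $g(s)=d(s)$, and $b_{p(s)m}(s)=d(s)/\gcd(d(s),p(s))=b_m(s)\cdot c_{p,b_m}(s+1)/c_{p,b_m}(s)$, using the equality $\gcd(d(s),p(s))=c_{p,b_m}(s)$ from the proof of Proposition \ref{prop:bgcd}; the ``moreover'' is then precisely Lemma \ref{lem:expbfun}.

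The step I expect to be the main obstacle is exactly this factorization $R_j(s)=A_j(s)(\partial f)^{j+1}$ of a semi-invariant operator of weight $-(j+1)\sigma$ through a power of $\partial f$. For multiplicity-free spaces it reduces cleanly to iterating Lemma \ref{lem:decomp}, but making the argument run uniformly (in particular in the full-support case) seems to need the prehomogeneous-space input of \cite{holoprehom}; in addition one must keep careful track of the non-commutativity inside $\sD_X\langle s,t\rangle$ — the relation $st=t(s-1)$ used to bring $R$ into the normal form $\sum_j R_j(s)t^j$, and the identity $[\partial f,t]=0$ used to replace $(\partial f)^{j+1}t^{j+1}$ by $T^{j+1}$.
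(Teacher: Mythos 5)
Your overall frame is the right one (and close to the paper's): get one divisibility from Proposition \ref{prop:bgcd}, prove the reverse divisibility by bringing the relevant operator into the normal form $\sum_j R_j(s)t^j$ with $R_j(s)$ semi-invariant of weight $-(j+1)\sigma$, use Schur's lemma on isotypic components, and finish with the gcd computation and Lemma \ref{lem:expbfun}. But two steps in your reverse-divisibility argument have genuine gaps. First, the reduction to a semi-invariant operator: you take an arbitrary $R$ with $R\cdot(tw)\in(\iota_{+}\cM)_\lambda$ and assert that replacing $R$ by its weight $-\sigma$ isotypic component leaves $R\cdot tw$ unchanged. This is false in general: $tw$ spans (with its $G$-orbit) a copy of $U_{\lambda+\sigma}$, and components of $R$ of any $G$-type $\chi$ with $U_\lambda\subset U_\chi\otimes U_{\lambda+\sigma}$ can contribute nontrivially to the $\lambda$-isotypic part of $R\cdot tw$ — not just the character $-\sigma$. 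This is exactly why Proposition \ref{prop:invdiff} needs the Jacobson density/Reynolds operator argument, and why its proof exploits that the right-hand side of the $b$-function equation is a $\C[s]$-multiple of $w$ itself (so the equation can be propagated to a basis of the irreducible $G$-span). For a general element of your module $Y=\sD_X\langle s,t\rangle(tw)\cap(\iota_{+}\cM)_\lambda$ no such structure is available, so your plan of computing all of $Y$ and extracting $b_w(s)=g(s)/\gcd(g(s),p(s))$ (that identity itself is fine) stalls at this reduction. The paper sidesteps the issue by never computing $Y$: it applies Proposition \ref{prop:invdiff} directly to the defining equation $P\cdot p(s+1)mf^{s+1}=b_{p(s)m}(s)\,p(s)mf^s$ and works only with that single semi-invariant $P$.

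Second, the key divisibility $\prod_{\ell=0}^{j}b_m(s+\ell)\mid q_j(s)$: you route it through an operator-level factorization $R_j(s)=A_j(s)(\partial f)^{j+1}$ with $A_j$ invariant, which you can only argue in the multiplicity-free case (and even there Lemma \ref{lem:decomp} as stated covers weight $-\sigma$ only, so you must redo its proof for weight $-(j+1)\sigma$ rather than literally iterate it), while in the full-support case you defer to \cite{holoprehom} without an argument — and this is precisely the hypothesis split under which Theorem \ref{thm:bfun} must hold. The paper avoids any factorization: once Schur's lemma gives $P_i(s)\cdot mf^{s+i+1}=q_i(s)\cdot mf^s$, the substitution $s\mapsto(i+1)s$ turns this into a Bernstein--Sato-type equation for the function $f^{i+1}$, and Proposition \ref{prop:bfunirred} applied to $f^{i+1}$ (i.e.\ the minimality of the $b$-function of $m$ with respect to $f^{i+1}$, which equals the product $\prod_{j=0}^{i}b_m$ up to rescaling of the variable) yields the divisibility uniformly in both the multiplicity-free and the full-support settings. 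With these two repairs — argue at the level of the $b$-function equation via Proposition \ref{prop:invdiff}, and replace the factorization by the $f^{i+1}$ minimality trick — your computation of the gcd $d(s)=b_m(s)c_{p,b_m}(s+1)$, the division by $c_{p,b_m}(s)=\gcd(p(s),d(s))$, and the appeal to Lemma \ref{lem:expbfun} for the closing product formula all go through as you wrote them.
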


\begin{proof}
Proposition \ref{prop:bgcd} shows that $b_{p(s)m}(s)$ divides $b_{m}(s)\cdot\frac{c_{p,b_{m}}(s+1)}{c_{p,b_{m}}(s)}$. To show the reverse divisibility, note that $\cM_{\lambda}[s]f^s$, being the $G$-span of $p(s)mf^s$ in $\cM[s]f^s=\iota_{+}\cM$, is irreducible. Hence by Proposition \ref{prop:invdiff}, there exists a semi-invariant operator $P \in \sD_X\langle s, t \rangle$ of weight $-\sigma$ such that 
\begin{equation}\label{eq:bsection3}
    P \cdot p(s+1)m f^{s+1} = b_{p(s)m}(s) \cdot p(s) m f^s.
\end{equation}

Decomposing $P=\sum_{i\geq 0} P_i \cdot t^i$ with each $P_i \in \sD_X[s]$ semi-invariant of weight $-(i+1)\sigma$, as in the proof of Proposition \ref{prop:bfunirred}, we can use Schur's lemma to obtain
\[P_i(s) \cdot mf^{s+i+1} = q_i(s) \cdot mf^{s},\]
for some $q_i(s) \in \C[s]$. Substituting $s\mapsto (i+1)\cdot s$, we get
\[P_i((i+1)s) \cdot m(f^{i+1})^{s+1} = q_i((i+1)s) \cdot m(f^{i+1})^{s}.\]
Applying Proposition \ref{prop:bfunirred} to the function $f^{i+1}$ instead of $f$, we obtain that $q_i(s)$ is divisible by $\prod_{j=0}^i b_m(s+j)$. Denote by $q'_i(s)=q_i(s)/ \prod_{j=0}^i b_m(s+j)$. 

Then equation (\ref{eq:bsection3}) becomes
\begin{equation*}
\sum_{i\geq 0} q'_i(s) \cdot\left(p(s+i+1)\prod_{j=0}^{i}b_{m}(s+j)\right)\cdot m f^s = b_{p(s)m}(s)\cdot p(s) m f^s.
\end{equation*}
Thus, $b_{p(s)m}(s)p(s)$ lies in the ideal of $\C[s]$ generated by the elements $p(s+i+1)b_m(s)b_m(s+1)\dots b_m(s+i)$, with $i\in \Z_{\geq 0}$, whose greatest common divisor is $b_{m}(s)\cdot c_{p,b_{m}}(s+1)$.  It follows that
\[ \frac{b_{m}(s)\cdot c_{p,b_{m}}(s+1)}{c_{p,b_{m}}(s)}=\frac{b_{m}(s)\cdot c_{p,b_{m}}(s+1)}{\gcd(p(s),b_{m}(s)\cdot c_{p,b_{m}}(s+1))}\mid b_{p(s)m}(s),\]
which proves the converse divisibility. The final formula follows from Lemma \ref{lem:expbfun}.
\end{proof}

\subsection{Proof of main results}
\begin{proof}[Proof of Theorem \ref{thm: V filtration general G}]
    By Sabbah's Theorem \ref{thm: Sabbah}, Theorem \ref{thm:bfun} implies that $p(s) \cdot m f^s \in (V^\alpha \iota_+ \cM)_{\lambda}$  if and only if $\prod_{i=1}^d [s+\lambda_i]_{\lceil \alpha - \lambda_i\rceil}$ divides $p(s)$. 
\end{proof}

\begin{remark}\label{rem:weighgrVlambda}
    As shown in the proof of Corollary \ref{corollary: G decomposition of grV}, the formula \ref{eqn: convolution formula} implies: if $\nu_{\lambda, \alpha} > -\ell$, then as $(\C[s], G)$-modules
    \[(W(N)_\ell \gr_V^\alpha \iota_+ \cM)_\lambda = \C[s]/((s+\alpha)^{\min\{\lfloor \frac{\nu_{\lambda,\alpha}+\ell+1}{2} \rfloor, \nu_{\lambda,\alpha}\}} \otimes \cM_\lambda\]
    and the $\lambda$-isotopic component vanishes if $\nu_{\lambda, \alpha} \leq -\ell$.
\end{remark}
\begin{proof}[Proof of Theorem \ref{thm:weightisot}]
Applying Theorem \ref{thm:bfun} to \eqref{eqn: plambdaalpha in general} shows that $\nu_{m,\alpha}$ from Definition \ref{definition: pfunction of m} satisfies \eqref{eqn: numalpha}. 

``$\Leftarrow$": Using the map \eqref{eqn: evaluation map on grV} and Remark \ref{remark: p function do not vanishing on -alpha}, we have
\[ ev\left(\left[\frac{p_{m,\alpha}(s)}{p_{m,\alpha}(-\alpha)}mf^s\right]\right)=[m\cdot f^{-\alpha}].\]
By \eqref{eqn: mult via ker} and \eqref{eqn: convolution formula}, we see that $\left[\frac{p_{m,\alpha}(s)}{p_{m,\alpha}(-\alpha)}mf^s\right]\in W(N)_{\nu_{m,\alpha}-1}\gr^{\alpha}_V\iota_{+}\cM$. Hence \eqref{eqn:evaluation map on W(N)} implies $m\cdot f^{-\alpha} \in W_{\nu_{m, \alpha}} \cM f^{-\alpha}$.

``$\Rightarrow$": From the convolution formula \eqref{eqn: convolution formula}, we get $W(N)_{\ell-1}\subseteq \Ker N^{\ell}+\mathrm{Im}(N)$. Using \eqref{eqn: evaluation map on grV}, the map \eqref{eqn:evaluation map on W(N)} descends to a surjective map:
\begin{equation}\label{eqn: evaluation map from Ker Nell} \Ker N^{\ell} \twoheadrightarrow W_{q+\ell}(\cM\cdot f^{-\alpha})/\cS^{-\alpha}.\end{equation}
After twisting by $\C_{\alpha\sigma}$, the map becomes $G$-equivariant. Taking the $\lambda$-isotypic component yields a surjection:
\[ ev_\lambda:(\Ker N^{\ell})_{\lambda} \twoheadrightarrow W_{q+\ell}(\cM\cdot f^{-\alpha})_{\lambda-\alpha \sigma}/(\cS^{-\alpha})_{\lambda-\alpha \sigma} \otimes \C_{\alpha\sigma}.\]

To prove $\ell\geq \nu_{m,\alpha}$ for $m\cdot f^{-\alpha}\in W_{q+\ell}(\cM\cdot f^{-\alpha})_{\lambda-\alpha \sigma}$, consider two cases. 

\textbf{Case 1}: If $m\cdot f^{-\alpha}\not\in \cS^{-\alpha}$, the class $[m\cdot f^{-\alpha}]$ is nonzero. Choose $[w] \in (\Ker N^\ell)_\lambda$ such that $ev_\lambda([w])=[m \cdot f^{-\alpha}]$. As $p_{\lambda,\alpha}(s)=p_{m,\alpha}(s)$, by Theorem \ref{thm: V filtration general G} we pick a representative 
\[ w = c(s) p_{m,\alpha}(s) m f^s \in (V^\alpha \iota_+ \cM)_\lambda\]
for some $c(s)\in \C[s]$ with $c(-\alpha)\neq 0$. Applying \eqref{eqn: mult via ker} and  Lemma \ref{lemma: basic property of pfunction} gives:
\[ \ell\geq \mathrm{mult}_{s=-\alpha} b_{c(s) p_{m,\alpha}(s) m }(s)=\nu_{m,\alpha}.\]

\textbf{Case 2}: If $m\cdot f^{-\alpha}\in \cS^{-\alpha}$, then the $G$-equivariance of \eqref{eqn: ev sends V>alpha to Salpha} gives a surjection:
\[ ev:(V^{>\alpha} \iota_+ \cM)_\lambda \twoheadrightarrow (\cS^{-\alpha})_{\lambda-\alpha \sigma} \otimes \C_{\alpha\sigma}.\]
As before, choose an element $c(s)p_{m,\alpha}(s) m f^s\in (V^{>\alpha} \iota_+ \cM)_\lambda$ mapping to $m\cdot f^{-\alpha}$. Since $c(-\alpha)\neq 0$, Lemma \ref{lemma: basic property of pfunction} and Theorem \ref{thm: Sabbah} imply $\nu_{m, \alpha} = 0\leq \ell$.

\end{proof}

\begin{corollary}\label{cor:sinm}
    For a nonzero $m\in \cM_{\lambda}$, $m \cdot f^{-\alpha}\,\in \cS^{-\alpha}$ from \eqref{eqn: definition of Salpha} if and only if $b_\lambda(s-\alpha)$ has no roots in $\Z_{\geq 0}$.
\end{corollary}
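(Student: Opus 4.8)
The plan is to read off the statement from Theorem \ref{thm:weightisot} evaluated at the smallest weight level. By \eqref{eqn: definition of Salpha} we have $\cS^{-\alpha}=W_q(\cM\cdot f^{-\alpha})$, so taking $\ell=0$ in Theorem \ref{thm:weightisot} (legitimate since $0\in\N$) gives
\[ m\cdot f^{-\alpha}\in \cS^{-\alpha}\iff \nu_{m,\alpha}\leq 0.\]
Since $\nu_{m,\alpha}=\mathrm{mult}_{s=-\alpha}b_{p_{m,\alpha}(s)m}(s)$ is a non-negative integer, the right-hand side is equivalent to $\nu_{m,\alpha}=0$.

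Next I would translate the condition $\nu_{m,\alpha}=0$ into a statement about $b_\lambda$. Writing $b_\lambda(s)=b_m(s)=\prod_{i=1}^d(s+\lambda_i)$ as in Proposition \ref{prop:bfunirred}, formula \eqref{eqn: numalpha} gives $\nu_{m,\alpha}=\#\{i\mid \alpha-\lambda_i\in\Z_{\geq 0}\}$. On the other hand the roots of $b_\lambda(s-\alpha)=\prod_{i=1}^d(s-\alpha+\lambda_i)$ are exactly the numbers $\alpha-\lambda_i$, so $b_\lambda(s-\alpha)$ has a root in $\Z_{\geq 0}$ precisely when some $\alpha-\lambda_i\in\Z_{\geq 0}$, i.e. precisely when $\nu_{m,\alpha}>0$. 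Combining the two equivalences yields the claim: $m\cdot f^{-\alpha}\in\cS^{-\alpha}$ iff $b_\lambda(s-\alpha)$ has no roots in $\Z_{\geq 0}$.

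There is essentially no obstacle here beyond bookkeeping: the content is already packaged in Theorem \ref{thm:weightisot} and \eqref{eqn: numalpha}. The only points requiring a moment of care are (i) that $\ell=0$ is an admissible value in Theorem \ref{thm:weightisot}, so that $W_q=\cS^{-\alpha}$ is captured, and (ii) that $\nu_{m,\alpha}\geq 0$, so that $\nu_{m,\alpha}\leq 0$ forces $\nu_{m,\alpha}=0$; both are immediate. One should also note that $\nu_{m,\alpha}$ depends only on $\lambda$ (it equals $\nu_{\lambda,\alpha}$), which is consistent with the corollary being stated for an arbitrary nonzero $m\in\cM_\lambda$.
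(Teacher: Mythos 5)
Your proposal is correct and follows exactly the paper's route: the paper's proof is the one-line observation that $\cS^{-\alpha}=W_q(\cM\cdot f^{-\alpha})$ by \eqref{eqn: definition of Salpha}, so the statement is Theorem \ref{thm:weightisot} at $\ell=0$, with the translation $\nu_{m,\alpha}=0\iff b_\lambda(s-\alpha)$ has no roots in $\Z_{\geq 0}$ coming from \eqref{eqn: numalpha}. Your write-up just makes this bookkeeping explicit.
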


\begin{proof}
    Since $\cS^{-\alpha}=W_q(\cM\cdot f^{-\alpha})$ by \eqref{eqn: definition of Salpha}, this follows directly from Theorem \ref{thm:weightisot}.
\end{proof}
\begin{proof}[Proof of Corollary \ref{cor: ValphaiotaS}]
Note $V^\alpha \iota_+ \cS = V^\alpha \iota_+ \cM \bigcap \iota_+ \cS$. One can check that under the isomorphism (\ref{eqn: inverse of Malgrange isomorphism}) we have
\begin{equation*}
(\iota_+ \cS)_\lambda = ([s-(r-1)]_r) \cdot \cM_\lambda,
\end{equation*}
where $r$ is the smallest nonnegative integer such that $\lambda + r \sigma \in \Lambda(\cS)$. By Corollary \ref{cor:sinm}, this is equivalent to $b_{\lambda+r\sigma}(s)=b_{\lambda}(s+r)$ having no nonnegative roots. Thus $r=r_{\lambda}+1$ and the conclusion follows from Theorem \ref{thm: V filtration general G}.
\end{proof}

It follows that the nilpotency order of $s+\alpha$ on $\gr^{\alpha}_V\iota_{+}\cM$ is also determined by $b_\lambda(s)=\prod_{i=1}^{d} (s+\lambda_i)$ (compare with \cite[\S 4.7]{Gyoja1996}). In particular, any irreducible isotypic component of $\cM$ determines the nilpotency order.

\begin{prop}\label{prop:nilp}
    The nilpotency order of $N=s+\alpha$ on $\gr^{\alpha}_V\iota_{+}\cM$ is $ \# \{ i \mid \lambda_i \in \alpha+ \mathbb{Z}  \}$.
\end{prop}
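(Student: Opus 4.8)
The plan is to reduce the computation of the nilpotency order of $N = s+\alpha$ on $\gr^{\alpha}_V\iota_{+}\cM$ to a statement about a single irreducible isotypic component, and then invoke the identity \eqref{eqn: mult via ker} together with Theorem~\ref{thm:bfun}. First I would recall that the nilpotency order of $N$ on $\gr^{\alpha}_V\iota_{+}\cM$ equals $\max_{w} \mathrm{mult}_{s=-\alpha} b_w(s)$, where the maximum runs over local sections $w\in V^{\alpha}\iota_{+}\cM$, by \eqref{eqn: mult via ker} in Theorem~\ref{thm: Sabbah}. Since the $V$-filtration and hence $\gr^{\alpha}_V\iota_{+}\cM$ is $G$-equivariant (Lemma~\ref{lemma: G and s compatible on V filtration}), the operator $N$ respects the isotypic decomposition, so the nilpotency order is the supremum over $\mu\in\Lambda$ of the nilpotency order of $N$ on $(\gr^{\alpha}_V\iota_{+}\cM)_{\mu}$, which in turn equals $\nu_{\mu,\alpha}$ by Lemma~\ref{lemma: basic property of pfunction} applied to $w = c(s)p_{\mu,\alpha}(s)m f^s$ for $0\neq m\in\cM_\mu$ (every element of $(V^{\alpha}\iota_{+}\cM)_\mu$ has this shape by Theorem~\ref{thm: V filtration general G}, with $c(-\alpha)\neq 0$ by Remark~\ref{remark: p function do not vanishing on -alpha}).

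Next I would relate the $\nu_{\mu,\alpha}$ across the torus-twist orbit $\mu = \lambda + k\sigma$. By Proposition~\ref{prop:bfunirred}, $b_{\lambda+k\sigma}(s) = b_\lambda(s+k) = \prod_{i=1}^{d}(s + \lambda_i + k)$, so the roots of $b_{\lambda+k\sigma}$ are exactly $\{-\lambda_i - k\}$. By \eqref{eqn: numalpha}, $\nu_{\lambda+k\sigma,\alpha} = \#\{\,i \mid \alpha - \lambda_i - k \in \Z_{\geq 0}\,\}$. As $k$ ranges over $\Z$, I would choose $k$ small enough (very negative) that $\alpha - \lambda_i - k \geq 0$ for every $i$ with $\lambda_i \equiv \alpha \pmod{\Z}$ simultaneously, while for $i$ with $\lambda_i \not\equiv \alpha \pmod{\Z}$ the difference $\alpha - \lambda_i - k$ is never an integer regardless of $k$. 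For such $k$ one gets $\nu_{\lambda+k\sigma,\alpha} = \#\{\,i \mid \lambda_i \in \alpha + \Z\,\}$, and this is clearly the maximal possible value of $\nu_{\mu,\alpha}$ over all $\mu$ of the form $\lambda+k\sigma$.

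The remaining point — and the one requiring a little care — is that the supremum over all $\mu\in\Lambda(\cM)$, not merely over the single orbit $\lambda + \Z\sigma$, still gives the same answer; i.e. that no other isotypic component of $\cM$ can have a strictly larger multiplicity. Here I would argue that $\iota_{+}\cM = \cM[s]f^s$ is generated over $\sD_X\langle s,t\rangle$ by sections lying in the $[G,G]$-span of $\cM_\lambda[s]f^s$ for our fixed irreducible $\lambda$ together with $\partial f$ and $f$, so that every $b_w(s)$ divides a product of shifts of $b_\lambda(s)$ — concretely via Lemma~\ref{lem:bdiv}/Proposition~\ref{prop:bgcd} and the fact (from the proof of Proposition~\ref{prop:bfunirred}) that $b$-functions of all relevant sections are built from $b_\lambda(s) = \prod_i(s+\lambda_i)$ by integer shifts and factor cancellation. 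Consequently $\mathrm{mult}_{s=-\alpha}b_w(s) \leq \#\{i \mid \lambda_i \in \alpha + \Z\}$ for every $w$, matching the lower bound already produced. Combining the two bounds yields the claimed equality. The main obstacle is precisely this last uniform upper bound across all isotypic components; if $\supp\cS = X$ it is immediate from the generation statement, and in the multiplicity-free case one invokes Lemma~\ref{lem:decomp} and Lemma~\ref{lem:multfree} as in the proof of Proposition~\ref{prop:bfunirred} to control the $b$-functions of arbitrary sections.
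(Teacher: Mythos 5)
Your reduction to isotypic components and your lower bound (taking $\mu=\lambda+k\sigma$ with $k$ chosen so that every $\lambda_i$ with $\lambda_i\in\alpha+\Z$ contributes to $\nu_{\mu,\alpha}$) are fine, but the uniform upper bound over \emph{all} isotypic components --- which you yourself flag as the main obstacle --- is not actually proved, and the tools you cite do not give it. For an isotypic component $\cM_\mu$ that is not irreducible, neither $b_\mu(s)$ nor $\nu_{\mu,\alpha}$ is defined, and the description of $(V^\alpha\iota_+\cM)_\mu$ as $(p_{\mu,\alpha}(s))\cdot\cM_\mu[s]f^s$ fails: Theorem \ref{thm: V filtration general G}, Theorem \ref{thm:bfun} and Lemma \ref{lemma: basic property of pfunction} all require irreducibility, so your intermediate claim that the nilpotency order on $(\gr^\alpha_V\iota_+\cM)_\mu$ ``equals $\nu_{\mu,\alpha}$'' does not make sense outside the orbit $\lambda+\Z\sigma$. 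The key assertion that ``every $b_w(s)$ divides a product of shifts of $b_\lambda(s)$ because $\iota_+\cM$ is generated by the span of $\cM_\lambda[s]f^s$'' is not a consequence of generation: applying a general operator $P\in\sD_X\langle s,t\rangle$ to a section does not change its $b$-function by integer shifts and cancellations, and the paper only controls multiplication by $t$, $\partial_t$ and polynomials in $s$ (Lemma \ref{lem:conversediv}, Proposition \ref{prop:bgcd}). Lemma \ref{lem:bdiv} only treats $m\in\cO_X$ and compares with shifts of $b_1(s)$, so it covers at best sections $mf^s$ in the case $\cS=\cO_X$, not elements $p(s)mf^s$ with $m\in\cM_\mu$ having denominators, nor a general simple $\cS$; and nothing of the kind is established in the proof of Proposition \ref{prop:bfunirred}. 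So ``immediate from the generation statement'' is a genuine gap, not a routine verification.

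The paper's proof avoids this difficulty entirely: it uses that the nilpotency order $n_\alpha$ of $N$ on $\gr^\alpha_V\iota_+\cM$ equals the weight length of $\cM\cdot f^{-\alpha}$, i.e.\ the smallest $\ell$ with $W_{q+\ell}(\cM\cdot f^{-\alpha})=\cM\cdot f^{-\alpha}$. One then takes $m\in\cM_\lambda$ and $k\gg 0$ so that $m'\cdot f^{-\alpha}=mf^{-k}$ generates $\cM\cdot f^{-\alpha}$ as a $\sD_X$-module; a generator cannot lie in the proper $\sD_X$-submodule $W_{q+n_\alpha-1}(\cM\cdot f^{-\alpha})$, so Theorem \ref{thm:weightisot} (which applies because $m'$ still lies in an irreducible isotypic component in the orbit of $\lambda$) forces $n_\alpha=\nu_{m',\alpha}$, and \eqref{eqn: numalpha} evaluates this to $\#\{i\mid\lambda_i\in\alpha+\Z\}$. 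To salvage your direct approach you would need to supply exactly the missing uniform bound, e.g.\ by this weight-theoretic argument or by an independent proof that $\mathrm{mult}_{s=-\alpha}b_w(s)\leq\#\{i\mid\lambda_i\in\alpha+\Z\}$ for every section $w$ of $V^\alpha\iota_+\cM$, in every isotypic component.
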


\begin{proof}
Let $n_\alpha$ denote this nilpotency order. It is known that the weight length of $\cM \cdot f^{-\alpha}$ equals $n_\alpha$, i.e. $W_{q+n_\alpha-1} (\cM f^{-\alpha}) \neq \cM f^{-\alpha}$ and $W_{q+n_\alpha} (\cM f^{-\alpha}) = \cM f^{-\alpha}$. Take $m\in \cM_\lambda$ and let $m':= m f^{\alpha-k}$ for $k\gg 0$. Then $m'$ generates $\cM f^{-\alpha}$ as a $\sD_X$-module, and by Theorem \ref{thm:weightisot} we have $n_\alpha=\nu_{m', \alpha}$. Using (\ref{eqn: numalpha}) we obtain $\nu_{m',\alpha}= \# \{ i \mid \lambda_i \in \alpha+ \mathbb{Z}  \}$.
\end{proof}

Theorem \ref{thm:weightisot} also provides an explicit construction of simple composition factors using Bernstein-Sato polynomials (cf. \cite[Proposition 4.9]{LW19}).

\begin{prop}\label{prop:simples}
Let $\alpha\in \Q$. For a non-zero $m\in \cM_{\lambda}$, the $\sD$-module 
\[\sD_X \cdot m f^{-\alpha} / \sD_X \cdot m f^{-\alpha+1}\]
is non-zero if and only if $b_\lambda(-\alpha) = 0$. In that case, its unique simple quotient is 
$\sD_X  m f^{-\alpha} / (\sD_X m f^{-\alpha} \cap W_{q+\nu_{\lambda,\alpha}-1} (\cM  f^{-\alpha}))$, which is also a summand of $\gr^W_{q+\nu_{\lambda,\alpha}}( \cM f^{-\alpha})$.
\end{prop}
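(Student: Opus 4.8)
The plan is to analyze the $\sD_X$-module $\cN_\alpha \colonequals \sD_X\cdot mf^{-\alpha}$ and its submodule $\cN_{\alpha-1}=\sD_X\cdot mf^{-\alpha+1}$ via the $b$-function $b_\lambda(s)$ and the weight filtration on $\cM\cdot f^{-\alpha}$. First I would observe that $mf^{-\alpha+1}=f\cdot(mf^{-\alpha})$, so $\cN_{\alpha-1}\subseteq \cN_\alpha$ always, and the quotient is nonzero precisely when $mf^{-\alpha}\notin \sD_X\cdot mf^{-\alpha+1}$. The standard fact (from the theory of $b$-functions, e.g. via the functional equation $\partial f\cdot mf^{s+1}=b_\lambda(s)mf^s$ of Proposition \ref{prop:bfunirred} evaluated at $s=-\alpha$) is that $mf^{-\alpha}\in \sD_X\cdot mf^{-\alpha+1}$ if and only if $b_\lambda(-\alpha)\neq 0$: indeed, if $b_\lambda(-\alpha)\neq 0$ then $mf^{-\alpha}=b_\lambda(-\alpha)^{-1}\partial f\cdot(mf^{-\alpha+1})\in \cN_{\alpha-1}$; conversely, if $b_\lambda(-\alpha)=0$, one uses that $\cM\cdot f^{-\alpha}$ underlies a mixed twistor (or Hodge) module and the jump is detected by the $V$-filtration, so $mf^{-\alpha}$ cannot lie in the image of $t=f$ acting from $\cM\cdot f^{-\alpha+1}$. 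This establishes the first claim.

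Next, assuming $b_\lambda(-\alpha)=0$, I would identify the unique simple quotient. Since $\cM\cdot f^{-\alpha}$ underlies a mixed twistor $\sD$-module with weight filtration $W_\bullet$ starting at $q$ and ending at $q+d$ (Corollary \ref{thm: Gdecomposition of weight filtration}), and since $m\in \cM_\lambda$ with $m f^{-\alpha}\in W_{q+\nu_{\lambda,\alpha}}(\cM\cdot f^{-\alpha})$ but not in $W_{q+\nu_{\lambda,\alpha}-1}$ by Theorem \ref{thm:weightisot} (here $\nu_{\lambda,\alpha}\geq 1$ because $b_\lambda(-\alpha)=0$ forces $\alpha-\lambda_i\in\Z_{\geq 0}$ for at least one $i$, after noting $m$ generates so we may replace $m$ by $mf^{\alpha-k}$ for $k\gg 0$ as in Proposition \ref{prop:nilp} to ensure $\nu$ is computed correctly — actually for the stated $m$ one argues directly). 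The module $\cN_\alpha$ is generated by $mf^{-\alpha}$, so its image in $\gr^W_{q+\nu_{\lambda,\alpha}}(\cM\cdot f^{-\alpha})$ is a nonzero $\sD_X$-submodule; since $\gr^W_{q+\nu_{\lambda,\alpha}}(\cM\cdot f^{-\alpha})$ is semisimple (purity of the weight filtration for twistor modules), this image is a direct sum of simple summands. I claim it is a single simple summand: $\cN_\alpha\cap W_{q+\nu_{\lambda,\alpha}-1}(\cM\cdot f^{-\alpha})$ is the maximal proper $\sD_X$-submodule of $\cN_\alpha$, because any proper submodule must miss the generator $mf^{-\alpha}$, hence lands in a proper sub-object of the image, and the image being a cyclic module over the generator's class forces the complement to be in the kernel. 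Thus $\cN_\alpha/(\cN_\alpha\cap W_{q+\nu_{\lambda,\alpha}-1})$ is simple, it is the unique simple quotient of $\cN_\alpha$ and hence of $\cN_\alpha/\cN_{\alpha-1}$, and it is a summand of $\gr^W_{q+\nu_{\lambda,\alpha}}(\cM\cdot f^{-\alpha})$.

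The main obstacle I anticipate is the verification that $\cN_\alpha\cap W_{q+\nu_{\lambda,\alpha}-1}(\cM\cdot f^{-\alpha})$ is genuinely the \emph{unique} maximal submodule — i.e. that the image of $\cN_\alpha$ in $\gr^W_{q+\nu_{\lambda,\alpha}}$ is \emph{simple} rather than merely semisimple. For this one should use the $G$-equivariance: by Corollary \ref{corollary: G decomposition of grV}, $\gr^W_{q+\nu_{\lambda,\alpha}}(\cM\cdot f^{-\alpha})$ is multiplicity-free as a $\lie$-module in the spherical setting, and $mf^{-\alpha}$ lies in the single isotypic piece $U_{\lambda-\alpha\sigma}$; since a $\sD_X$-submodule is automatically $G$-stable, the submodule generated by this isotypic vector inside a multiplicity-free semisimple module is forced to be a single simple summand. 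In the general reductive case where only $\cM_\lambda$ (not all isotypic components) is assumed irreducible, one instead argues that $\cN_\alpha$ has a unique simple quotient because it is generated by a single element, and then matches that quotient against $\gr^W$ using strictness of $W_\bullet$ under the morphism $\cN_\alpha\hookrightarrow\cM\cdot f^{-\alpha}$ of mixed twistor modules — the subtlety being that $\cN_\alpha$ a priori need not be a sub-twistor-module, which is circumvented by noting its weight filtration is the induced one and invoking the comparison \eqref{eqn: associated graded of weight}.
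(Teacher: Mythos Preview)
Your overall strategy matches the paper's: use the functional equation for the easy direction of the first claim, and use Theorem \ref{thm:weightisot} together with the weight filtration for the rest. However, two steps in your argument are either unexecuted or incorrect.

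\medskip

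\textbf{The converse of the first claim.} Your justification that $b_\lambda(-\alpha)=0$ forces $mf^{-\alpha}\notin\sD_X\cdot mf^{-\alpha+1}$ (``the jump is detected by the $V$-filtration, so $mf^{-\alpha}$ cannot lie in the image of $t=f$'') is too vague to be a proof: $\sD_X\cdot mf^{-\alpha+1}$ is much larger than the image of multiplication by $f$. The paper's argument is precisely the weight-filtration computation you set up later but never apply here: since $mf^{-\alpha+1}\in\cM_{\lambda+\sigma}\cdot f^{-\alpha}$, Theorem \ref{thm:weightisot} gives $mf^{-\alpha+1}\in W_{q+\nu_{\lambda+\sigma,\alpha}}(\cM\cdot f^{-\alpha})$, and from \eqref{eqn: numalpha} one checks $\nu_{\lambda,\alpha}-\nu_{\lambda+\sigma,\alpha}=\mathrm{mult}_{s=-\alpha}b_\lambda(s)\geq 1$. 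Hence $\sD_X\cdot mf^{-\alpha+1}\subseteq W_{q+\nu_{\lambda,\alpha}-1}$, while $mf^{-\alpha}\notin W_{q+\nu_{\lambda,\alpha}-1}$.

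\medskip

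\textbf{Uniqueness of the simple quotient.} Your claim in the last paragraph that ``$\cN_\alpha$ has a unique simple quotient because it is generated by a single element'' is false: cyclic modules need not have unique simple quotients (e.g.\ $\Z$ over itself). Likewise, the argument in your second paragraph that any proper submodule ``must miss the generator, hence lands in a proper sub-object of the image'' does not show the proper submodule lies in $W_{q+\nu_{\lambda,\alpha}-1}$. The paper's mechanism, which works under the standing hypothesis that $\cM_\lambda$ is irreducible (not only in the spherical case), is: $\cN_\alpha$ is generated as a $\sD_X$-module by the irreducible $\lie$-module $(\cM\cdot f^{-\alpha})_{\lambda-\alpha\sigma}$, and every $\sD_X$-submodule is $\lie$-stable (since $\lie$ acts through operators in $\sD_X$, cf.\ Proposition \ref{prop: quotient of weight are equivariant}). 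Therefore every proper $\sD_X$-submodule has trivial $(\lambda-\alpha\sigma)$-isotypic component, so the sum of all proper submodules is still proper---this is the unique maximal submodule. Now $\cN_\alpha/(\cN_\alpha\cap W_{q+\nu_{\lambda,\alpha}-1})$ embeds into the semisimple module $\gr^W_{q+\nu_{\lambda,\alpha}}$, hence is semisimple; being a nonzero quotient of a module with a unique maximal submodule, it is simple. Your worry about $\cN_\alpha$ not being a sub-twistor-module is a red herring: no strictness of $W_\bullet$ on $\cN_\alpha$ is needed.
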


\begin{proof}
If $b_\lambda(-\alpha)\neq 0$, substituting $s=-\alpha$ into \eqref{eqn: definition of b-function} gives $\sD_X \cdot m f^{-\alpha+1}=\sD_X \cdot m f^{-\alpha}$. If $b_{\lambda}(-\alpha)=0$, Theorem \ref{thm:weightisot} tells us that
\[ m\cdot f^{-\alpha} \notin W_{q+\nu_{\lambda,\alpha}-1}(\cM\cdot f^{-\alpha}), \mbox{ and } m\cdot f^{-\alpha+1}\in W_{q+\nu_{\lambda+\sigma,\alpha}}(\cM\cdot f^{-\alpha}).\]
By \eqref{eqn: numalpha} one has $\nu_{\lambda,\alpha} - \nu_{\lambda+\sigma, \alpha} = \mathrm{mult}_{s=-\alpha} b_\lambda(s) \geq 1$, so $\sD_X \cdot m f^{-\alpha} / \sD_X \cdot m f^{-\alpha+1}\neq 0$.

The module $\sD_X \cdot m f^{-\alpha}$ is generated by $(\cM \cdot f^{-\alpha})_{\lambda-\alpha\sigma}$, which is irreducible as a $\lie$-module. The sum of proper $\sD_X$-submodules of $\sD_X \cdot m f^{-\alpha}$ must have a trivial $\lambda-\alpha\sigma$-isotypical component, making it the unique maximal submodule. Thus, let $k=q+\nu_{\lambda,\alpha}-1$, the non-zero quotient $\sD$-module
\[\sD_X  m f^{-\alpha} / \sD_X m f^{-\alpha} \cap W_{k} (\cM  f^{-\alpha}) \cong \sD_X  m f^{-\alpha}+W_{k} (\cM  f^{-\alpha})/W_{k} (\cM  f^{-\alpha}) \subseteq \gr^W_{k+1}( \cM f^{-\alpha})\]
also has a unique maximal submodule. Hence, the desired statement follows from the semisimplicity of $\gr^W_{k+1}( \cM \cdot f^{-\alpha})$. \end{proof}

\begin{prop}\label{prop:hodgeVgen}
    Assume that $\cS$ underlies a pure Hodge module, $\alpha \in \Q_{>0}$. Let $r_\lambda$ be the largest integer root of $(s+1)b_\lambda(s)$, and consider the set $\Pi_k := \{\ell > r_\lambda \mid (F_{k-\ell} \cS)_{\lambda+\ell \sigma} \neq 0\} \subset \Z_{\geq 0}$. Then
    \[(V^{\alpha}\iota_{+}\cM\cap F_{k+1}\iota_{+}\cM)_{\lambda} \, = \, (p_{\lambda,\alpha}(s)) \bigcap  \underset{\ell \in \Pi_k}{\mathrm{span}_\C} \{[s-\ell+1]_\ell\} \, \otimes \cM_{\lambda} f^s.\]
\end{prop}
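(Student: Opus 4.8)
## Proof proposal for Proposition \ref{prop:hodgeVgen}

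The plan is to compute the intersection $(V^{\alpha}\iota_{+}\cM\cap F_{k+1}\iota_{+}\cM)_{\lambda}$ by first understanding the Hodge filtration on $\iota_{+}\cM$ restricted to the $\lambda$-isotypic component, and then intersecting with $(V^{\alpha}\iota_{+}\cM)_{\lambda}$, which Theorem \ref{thm: V filtration general G} already expresses as the ideal $(p_{\lambda,\alpha}(s))\cdot \cM_{\lambda}f^s$. The key point is that, under the isomorphism \eqref{eqn: inverse of Malgrange isomorphism}, the filtration $F_{k+1}\iota_{+}\cM=\sum_{i+j\leq k}F_i\cM\otimes\partial_t^j$ translates into a filtration on $\cM_{\lambda}[s]f^s$ whose $\lambda$-component is governed by the $G$-decomposition of the Hodge filtration on $\cS$ together with the local structure coming from the graph embedding. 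Concretely, each basis element $m\otimes\partial_t^j$ with $m\in F_i\cM$ corresponds to $\frac{m}{f^j}\prod_{\ell=0}^{j-1}(-s+\ell)f^s$; passing to the $\lambda$-isotypic component, and using that $\Sym X^{*}$ (resp. $\cS$) is multiplicity-free in the relevant sense via Lemma \ref{lem:easysubmod} and the identification $(\cM[s]f^s)_{\lambda}=\cM_{\lambda}[s]f^s$, one sees that $\frac{m}{f^j}$ contributes to $\cM_{\lambda}$ precisely when $m\in\cM_{\lambda+j\sigma}$, i.e. when $(F_i\cS)_{\lambda+j\sigma}\neq 0$. Since $\prod_{\ell=0}^{j-1}(-s+\ell)=(-1)^j[s-j+1]_j$ up to sign, this shows
\[
(F_{k+1}\iota_{+}\cM)_{\lambda}=\underset{j\,:\,(F_{k-j}\cS)_{\lambda+j\sigma}\neq 0}{\mathrm{span}_{\C}}\{[s-j+1]_j\}\otimes\cM_{\lambda}f^s,
\]
where the index range is $j\leq k$ (from $i+j\leq k$, $i\geq 0$) — the condition that defines $\Pi_k$ except for the truncation $\ell>r_\lambda$, which I address next.

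The next step is to explain why the indices $\ell\leq r_\lambda$ may be discarded, i.e. why $\{[s-\ell+1]_\ell : \ell\leq r_\lambda,\ (F_{k-\ell}\cS)_{\lambda+\ell\sigma}\neq 0\}$ is redundant inside the span once we intersect with $(p_{\lambda,\alpha}(s))$ (and, more basically, why it does not enlarge $(F_{k+1}\iota_{+}\cM)_{\lambda}$ itself). Here I would invoke Corollary \ref{cor: ValphaiotaS}: the module $(\iota_{+}\cS)_{\lambda}$ equals $([s-r_\lambda]_{r_\lambda+1})\cdot\cM_{\lambda}$, so for $\ell\leq r_\lambda$ the polynomial $[s-\ell+1]_\ell$ already divides a generator of an ideal contained in what the higher $\ell$'s produce — more precisely, the $\C$-span of $\{[s-\ell+1]_\ell\}$ over all $\ell$ with $(F_{k-\ell}\cS)_{\lambda+\ell\sigma}\neq 0$ is unchanged, as a subspace of $\C[s]\cdot\cM_\lambda f^s$, if we restrict to $\ell>r_\lambda$, because the small-$\ell$ contributions are absorbed: the relevant elements of $\iota_+\cS$ are already of the form $(\text{poly})\cdot[s-r_\lambda]_{r_\lambda+1}$. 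I would make this precise by a short linear-algebra argument comparing degrees: the polynomials $[s-\ell+1]_\ell$ form a "staircase" basis, and the claim reduces to showing that the presence of low-degree members is implied by membership in $\iota_+\cS$, which Corollary \ref{cor: ValphaiotaS} pins down exactly as the ideal generated in degree $r_\lambda+1$.

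Finally, I would assemble the two computations: intersecting $(F_{k+1}\iota_{+}\cM)_{\lambda}=\mathrm{span}_{\C}\{[s-\ell+1]_\ell : \ell\in\Pi_k\}\otimes\cM_\lambda f^s$ with $(V^{\alpha}\iota_{+}\cM)_{\lambda}=(p_{\lambda,\alpha}(s))\otimes\cM_\lambda f^s$ from Theorem \ref{thm: V filtration general G} gives exactly
\[
(V^{\alpha}\iota_{+}\cM\cap F_{k+1}\iota_{+}\cM)_{\lambda}=(p_{\lambda,\alpha}(s))\bigcap\underset{\ell\in\Pi_k}{\mathrm{span}_{\C}}\{[s-\ell+1]_\ell\}\otimes\cM_\lambda f^s,
\]
as claimed. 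The main obstacle I anticipate is the careful identification of $(F_{k+1}\iota_{+}\cM)_{\lambda}$ in the first paragraph: one must verify that taking the $\lambda$-isotypic component commutes with the sum $\sum_{i+j\leq k}F_i\cM\otimes\partial_t^j$ and that, after dividing by $f^j$, the $\lambda$-component picks out precisely $(F_i\cS)_{\lambda+j\sigma}$ with \emph{no} cross terms or cancellations among different $j$ — this uses multiplicity-freeness of the ambient decomposition (or the irreducibility hypothesis on $\cM_\lambda$) in an essential way, together with the fact that multiplication by $f$ and by $\partial f$ are mutually inverse isomorphisms between consecutive isotypic layers (Proposition \ref{prop:bfunirred}). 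The truncation-to-$\ell>r_\lambda$ step is technically delicate but ultimately bookkeeping once Corollary \ref{cor: ValphaiotaS} is in hand.
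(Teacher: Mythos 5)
Your overall strategy (describe $(F_{k+1}\iota_{+}\bullet)_{\lambda}$ under the isomorphism \eqref{eqn: inverse of Malgrange isomorphism} and intersect with the description of $(V^{\alpha}\iota_{+}\cM)_{\lambda}$ from Theorem \ref{thm: V filtration general G}) is the same as the paper's, but there is a genuine gap at the first step. You work with $F_{k+1}\iota_{+}\cM=\sum_{i+j\leq k}F_i\cM\otimes\partial_t^j$, i.e.\ with the Hodge filtration of the \emph{localization} $\cM=\cS_f$, and then assert that the contribution in degree $j$ is governed by the condition $(F_{k-j}\cS)_{\lambda+j\sigma}\neq 0$. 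This silently replaces $F_{k-j}\cM$ by $F_{k-j}\cS$, and these differ: already for $\cS=\cO_X$ one has $F_p\cM=\cO_X((p+1)D)\cdot I_p(D)$, so $(F_{k-j}\cM)_{\lambda+j\sigma}$ can be nonzero for indices $j\leq r_\lambda$ where $\cS_{\lambda+j\sigma}=0$. The correct identity is $(F_{k+1}\iota_{+}\cM)_{\lambda}=\bigoplus_{j:\,(F_{k-j}\cM)_{\lambda+j\sigma}\neq 0}[s-j+1]_j\cdot\cM_{\lambda}f^s$, which is exactly why the paper's Theorem \ref{thm: V cap F any alpha general G} carries the extra term $[s-\ell_k+1]_{\ell_k}$ for general $\alpha$. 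Your second paragraph tries to discard the indices $\ell\leq r_\lambda$ by appealing to Corollary \ref{cor: ValphaiotaS}, but that corollary describes $(V^\alpha\iota_{+}\cS)_\lambda$ and does not show that the genuine low-$\ell$ summands of $(F_{k+1}\iota_{+}\cM)_{\lambda}$ contribute nothing to the intersection with $(p_{\lambda,\alpha}(s))$; the ``staircase/absorption'' claim is precisely the nontrivial point and is left unproved.

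The paper closes this gap before any isotypic computation: since $\alpha>0$, one has $V^{\alpha}\iota_{+}\cM=V^{\alpha}\iota_{+}\cS\subseteq\iota_{+}\cS$, and strictness of the Hodge filtration for the inclusion $\iota_{+}\cS\hookrightarrow\iota_{+}\cM$ gives $F_{k+1}\iota_{+}\cM\cap\iota_{+}\cS=F_{k+1}\iota_{+}\cS$, hence
\[(V^{\alpha}\iota_{+}\cM\cap F_{k+1}\iota_{+}\cM)_{\lambda}=(V^{\alpha}\iota_{+}\cM\cap F_{k+1}\iota_{+}\cS)_{\lambda}.\]
After this reduction your computation becomes correct with $\cS$ in place of $\cM$: $(F_{k+1}\iota_{+}\cS)_{\lambda}=\bigoplus_{\ell}q_\ell(s)\cdot\cM_\lambda f^s$ over those $\ell$ with $(F_{k-\ell}\cS)_{\lambda+\ell\sigma}\neq 0$, and the restriction $\ell>r_\lambda$ in $\Pi_k$ is automatic (not something to be argued away), because $\cS_{\lambda+\ell\sigma}=0$ for $0\leq\ell\leq r_\lambda$ by Corollary \ref{cor:sinm}. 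With the strictness step added and the role of Corollary \ref{cor: ValphaiotaS} replaced by this observation, your argument matches the paper's proof; without it, the intermediate identity you state for $(F_{k+1}\iota_{+}\cM)_{\lambda}$ is false and the hypothesis $\alpha>0$ is never used, which it must be.
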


\begin{proof}
    Since $\alpha>0$, we have $V^{\alpha}\iota_{+}\cS=V^{\alpha}\iota_{+}\cM$. The strictness implies that
\begin{equation}
    (V^{\alpha}\iota_{+}\cM\cap F_{k+1}\iota_{+}\cM)_{\lambda} \, = \,(V^{\alpha}\iota_{+}\cM\cap F_{k+1}\iota_{+}\cS)_{\lambda}.
\end{equation}
    Under the isomorphism from \eqref{eqn: inverse of Malgrange isomorphism}, one has
    \[ F_{k+1}\iota_{+}\cS\colonequals \bigoplus_{\ell=0}^\infty F_{k-\ell}\cS\otimes \d_t^{\ell}\xrightarrow{\sim}\bigoplus_{\ell=0}^\infty F_{k-\ell}\cS\otimes_{\cO_X} \cO_X(\ell D)\cdot q_{\ell}(s)f^s,\]
where $q_{\ell}(s)\colonequals \prod_{j=0}^{\ell-1}(s-j)$. It follows that
\begin{equation*}\label{eqn: G decomposition of FkiotaM}
(F_{k+1}\iota_{+}\cS)_{\lambda}=\bigoplus_{\ell \geq 0,\,\, (F_{k-\ell}\cS\otimes \cO_X(\ell D))_{\lambda}\neq 0} q_{\ell}(s)\cdot \cM_{\lambda}f^s.
\end{equation*}

Note that by Corollary \ref{cor:sinm}, the condition $(F_{k-\ell}\cS\otimes \cO_X(\ell D))_{\lambda}\neq 0$ is equivalent to $\ell \in \Pi_k$. The conclusion now follows from Theorem \ref{thm: V filtration general G}.
\end{proof}

\begin{remark}
    The finite sets $\Pi_k$ satisfy the following basic properties for $k \in \Z$:
    \begin{enumerate}
    \item $\bigcup_{k=-\infty}^{\infty} \Pi_k = r_\lambda+\Z_{> 0}$,  and if $k \ll 0$ then $\Pi_k = \emptyset$;
    \item $\Pi_k \subset \Pi_{k+1}$;
     \item $\Pi_k+1 \subset \Pi_{k+1}$; 
     \item $\Pi_k - 1 \subset \Pi_{k+d-1}\cup\{r_\lambda\}$.
    \end{enumerate}
    These are the respective consequences of the Hodge filtration being an exhaustive, increasing filtration of $\cO_X$-modules, satisfying Griffiths transversality.
\end{remark}

\begin{thm}\label{thm:FS}
    Let $\alpha \in \Q_{>0}$. We have $(F_k (\cM \cdot f^{-\alpha}))_{\lambda-\alpha\sigma} \neq 0$ if and only if there exists $h(s) \in \C[s]$ with $h(-\alpha)\neq 0$ such that
    \[h(s) \cdot p_{\lambda,\alpha}(s) \in   \underset{\ell \in \Pi_k}{\mathrm{span}_\C} \{[s-\ell+1]_\ell\}.\]
\end{thm}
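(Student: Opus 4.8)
The proof will be a short diagram chase combining the Hodge-filtration formula \eqref{eqn: Hodge filtration on M via V filtration}, the explicit computation of $V^{\alpha}\iota_{+}\cM\cap F_{k+1}\iota_{+}\cM$ from Proposition \ref{prop:hodgeVgen}, and the non-vanishing $p_{\lambda,\alpha}(-\alpha)\neq 0$ recorded in Remark \ref{remark: p function do not vanishing on -alpha}.

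First I would apply \eqref{eqn: Hodge filtration on M via V filtration} (valid here since $\cM\cdot f^{-\alpha}$ underlies a mixed Hodge module, by \cite[Corollary 1.2]{DY24}) to write $F_k(\cM\cdot f^{-\alpha}) = \mathrm{ev}_{s=-\alpha}(V^{\alpha}\iota_{+}\cM\cap F_{k+1}\iota_{+}\cM)$. The evaluation map is $G$-equivariant after twisting the target by $\C_{\alpha\sigma}$ (cf. the proof of Theorem \ref{thm:weightisot} and Proposition \ref{prop: quotient of weight are equivariant}), and taking $G$-isotypic components is exact, so $(F_k(\cM\cdot f^{-\alpha}))_{\lambda-\alpha\sigma}$ is, up to the twist, the image of $(V^{\alpha}\iota_{+}\cM\cap F_{k+1}\iota_{+}\cM)_{\lambda}$ under $\mathrm{ev}_{s=-\alpha}$. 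Since $\cM_\lambda$ is irreducible, it then suffices to decide when this restricted map is nonzero.

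Next I would substitute Proposition \ref{prop:hodgeVgen}: under the Malgrange identification \eqref{eqn: inverse of Malgrange isomorphism},
\[(V^{\alpha}\iota_{+}\cM\cap F_{k+1}\iota_{+}\cM)_{\lambda} = \Bigl((p_{\lambda,\alpha}(s))\cap \underset{\ell \in \Pi_k}{\mathrm{span}_\C}\{[s-\ell+1]_\ell\}\Bigr)\otimes \cM_{\lambda}f^s,\]
and recall that under \eqref{eqn: inverse of Malgrange isomorphism} the map $\mathrm{ev}_{s=-\alpha}$ sends $q(s)\otimes m f^s$ to $q(-\alpha)\,m f^{-\alpha}$. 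Hence the restricted map is nonzero precisely when the $\C$-subspace $(p_{\lambda,\alpha}(s))\cap \mathrm{span}_\C\{[s-\ell+1]_\ell : \ell\in\Pi_k\}$ contains a polynomial not vanishing at $s=-\alpha$. Every element of this subspace is of the form $h(s)p_{\lambda,\alpha}(s)$ for some $h(s)\in\C[s]$; by Remark \ref{remark: p function do not vanishing on -alpha} we have $p_{\lambda,\alpha}(-\alpha)\neq 0$, so $h(s)p_{\lambda,\alpha}(s)$ is nonzero at $-\alpha$ if and only if $h(-\alpha)\neq 0$; and $h(s)p_{\lambda,\alpha}(s)$ automatically lies in $(p_{\lambda,\alpha}(s))$, so its membership in the subspace reduces to $h(s)p_{\lambda,\alpha}(s)\in\mathrm{span}_\C\{[s-\ell+1]_\ell : \ell\in\Pi_k\}$. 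Combining these gives exactly the asserted criterion.

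Since the argument is a diagram chase, there is no serious obstacle; the one point requiring care is the equivariance bookkeeping — tracking the twist $\C_{\alpha\sigma}$ and the weight shift $\lambda\mapsto\lambda-\alpha\sigma$ consistently so that exactness of the isotypic decomposition may be invoked — together with the use of $p_{\lambda,\alpha}(-\alpha)\neq0$, which is precisely what allows passing from non-vanishing of $h(s)p_{\lambda,\alpha}(s)$ at $s=-\alpha$ to non-vanishing of $h(s)$ there.
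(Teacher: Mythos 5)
Your argument is correct and follows essentially the same route as the paper's own proof: combine \eqref{eqn: Hodge filtration on M via V filtration} with Proposition \ref{prop:hodgeVgen} and the twisted equivariance of $\mathrm{ev}_{s=-\alpha}$, then use $p_{\lambda,\alpha}(-\alpha)\neq 0$ to translate non-vanishing at $s=-\alpha$ of an element $h(s)p_{\lambda,\alpha}(s)$ of the intersection into $h(-\alpha)\neq 0$. The only cosmetic difference is that you package both implications into a single evaluation criterion, while the paper treats the two directions separately.
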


\begin{proof}
    Assume that $(F_k (\cM \cdot f^{-\alpha}))_{\lambda-\alpha\sigma} \neq 0$. Then by Proposition \ref{prop:hodgeVgen}, (\ref{eqn: Hodge filtration on M via V filtration}), and the (twisted) equivariance of $\mathrm{ev}_{s=-\alpha}$, there exists \[ g(s) \in \, (p_{\lambda,\alpha}(s)) \bigcap  \underset{\ell \in \Pi_k}{\mathrm{span}_\C} \{[s-\ell+1]_\ell\}\]
with $g(-\alpha)\neq 0$. Since $p_{\lambda,\alpha}(-\alpha)\neq 0$, there exists $h(s) \in \C[s]$ with $h(-\alpha)\neq 0$ such that $g(s) = h(s) \cdot p_{\lambda,\alpha}(s)$.

    Conversely, if such $h(s)$ exists, then $h(s) \cM_\lambda f^s \subseteq (V^{\alpha}\iota_{+}\cM\cap F_{k+1}\iota_{+}\cM)_{\lambda} $ by Proposition \ref{prop:hodgeVgen}, and under (\ref{eqn: Hodge filtration on M via V filtration}) we obtain $(F_k (\cM \cdot f^{-\alpha}))_{\lambda-\alpha \sigma} \neq 0$.
\end{proof}
The above result completely determines the isotypic components of the Hodge filtration on $\cM \cdot f^{-\alpha}$, with the knowledge of $\Pi_k$, i.e. isotypic components of Hodge filtration on $\cS$. Note that the condition $\alpha >0$ is not restrictive, as one has $F_k (\cM\cdot f^{-\alpha}) = F_k (\cM\cdot f^{-(\alpha+r)})$, for any $r \in \Z$.

\begin{corollary}\label{cor:Fpart} Let $\alpha \in \Q_{>0}$. We have the following:
    \begin{enumerate}
        \item If $\max \Pi_k < \deg p_{\lambda,\alpha}(s)$, then $(F_k (\cM \cdot f^{-\alpha}))_{\lambda-\alpha\sigma} =0$.
        \item If $\{r_\lambda+1, \dots, \deg p_{\lambda,\alpha}(s)\} \subset \Pi_k$, then $(F_k (\cM \cdot f^{-\alpha}))_{\lambda-\alpha\sigma} = \cM_\lambda \cdot f^{-\alpha}$. 
    \end{enumerate}
\end{corollary}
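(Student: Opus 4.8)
The plan is to read off both statements directly from Theorem \ref{thm:FS}, which characterizes $(F_k(\cM\cdot f^{-\alpha}))_{\lambda-\alpha\sigma}\neq 0$ by the existence of $h(s)\in\C[s]$ with $h(-\alpha)\neq 0$ and $h(s)\cdot p_{\lambda,\alpha}(s)\in\mathrm{span}_\C\{[s-\ell+1]_\ell\mid \ell\in\Pi_k\}$. The key elementary observation is that the elements $[s-\ell+1]_\ell$ for $\ell\geq 0$ form a $\C$-basis of $\C[s]$ adapted to degree: $\deg[s-\ell+1]_\ell=\ell$, so $\mathrm{span}_\C\{[s-\ell+1]_\ell\mid \ell\in\Pi_k\}$ consists precisely of polynomials supported on the ``degrees'' in $\Pi_k$ in this basis. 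In particular every nonzero element of this span has degree in $\Pi_k$, hence degree at most $\max\Pi_k$; and the span contains every polynomial all of whose basis-degrees lie in $\Pi_k$.

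For part (1): if $(F_k(\cM\cdot f^{-\alpha}))_{\lambda-\alpha\sigma}\neq 0$, pick $h(s)$ as in Theorem \ref{thm:FS}. Since $h(-\alpha)\neq 0$, $h$ is a nonzero polynomial, so $\deg(h\cdot p_{\lambda,\alpha})\geq\deg p_{\lambda,\alpha}(s)$. But $h\cdot p_{\lambda,\alpha}$ lies in the span, hence is a nonzero polynomial of degree at most $\max\Pi_k$. Thus $\deg p_{\lambda,\alpha}(s)\leq\max\Pi_k$, contradicting the hypothesis $\max\Pi_k<\deg p_{\lambda,\alpha}(s)$; therefore $(F_k(\cM\cdot f^{-\alpha}))_{\lambda-\alpha\sigma}=0$.

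For part (2): it suffices to produce an $h(s)$ as in Theorem \ref{thm:FS} with $h=1$, i.e. to show $p_{\lambda,\alpha}(s)$ itself lies in the span $\mathrm{span}_\C\{[s-\ell+1]_\ell\mid\ell\in\Pi_k\}$; then $h(s)=1$ satisfies $h(-\alpha)=1\neq 0$, giving $(F_k(\cM\cdot f^{-\alpha}))_{\lambda-\alpha\sigma}\neq 0$, and by Theorem \ref{thm: V filtration general G} together with (\ref{eqn: Hodge filtration on M via V filtration}) (or simply because $F_k$ is increasing and the isotypic component $\cM_\lambda\cdot f^{-\alpha}$ is irreducible) this forces the component to be all of $\cM_\lambda\cdot f^{-\alpha}$. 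Expanding $p_{\lambda,\alpha}(s)$ in the basis $\{[s-\ell+1]_\ell\}_{\ell\geq 0}$, the only degrees occurring are $0,1,\dots,\deg p_{\lambda,\alpha}(s)$. By Theorem \ref{thm: V filtration general G} and the shift $F_k(\cM\cdot f^{-\beta})=F_k(\cM\cdot f^{-(\beta+r)})$, the lower-degree contributions (those of degree $\leq r_\lambda$) are controlled by the values $(F_{k-\ell}\cS)_{\lambda+\ell\sigma}$ for $\ell\leq r_\lambda$; combining the hypothesis $\{r_\lambda+1,\dots,\deg p_{\lambda,\alpha}(s)\}\subset\Pi_k$ with the properties of $\Pi_k$ (monotonicity, $\Pi_k+1\subset\Pi_{k+1}$) one checks that all basis-degrees appearing in $p_{\lambda,\alpha}(s)$ lie in $\Pi_k$, hence $p_{\lambda,\alpha}(s)$ is in the span.

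The main obstacle is the bookkeeping for the low-degree part in part (2): the set $\Pi_k$ is only required to contain $\{r_\lambda+1,\dots,\deg p_{\lambda,\alpha}(s)\}$, so one must argue that the coefficients of $[s-\ell+1]_\ell$ for $\ell\leq r_\lambda$ in the basis expansion of $p_{\lambda,\alpha}(s)$ either vanish or are harmless — i.e. that $p_{\lambda,\alpha}(s)$ already lies in $\mathrm{span}_\C\{[s-\ell+1]_\ell\mid \ell>r_\lambda\}$ up to the degrees forced to be in $\Pi_k$. This should follow from the explicit factored form $p_{\lambda,\alpha}(s)=\prod_{i=1}^d[s+\lambda_i]_{\lceil\alpha-\lambda_i\rceil}$ in (\ref{eqn: plambdaalpha in general}) and the fact that, for $\alpha>0$, the factor attached to the root $r_\lambda$ already contributes a block $[s-r_\lambda+1]_{r_\lambda}$, exactly as in the last paragraph of the (struck-out) proof of Proposition \ref{prop:hodgeVgen}; I would isolate this divisibility $[s-r_\lambda+1]_{r_\lambda}\mid p_{\lambda,\alpha}(s)$ as the one computational lemma needed and otherwise keep the argument purely formal.
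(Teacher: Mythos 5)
Your part (1) and the overall strategy of part (2) are the same as the paper's: for (2) one shows that $p_{\lambda,\alpha}(s)$ itself lies in $\mathrm{span}_\C\{[s-\ell+1]_\ell \mid \ell\in\Pi_k\}$ and applies Theorem \ref{thm:FS} with $h=1$. However, as written your part (2) has a gap, concentrated in the ``one computational lemma'' you isolate: it is off by one. Since $\Pi_k\subseteq\Z_{>r_\lambda}$ by definition, the span $\mathrm{span}_\C\{[s-\ell+1]_\ell\mid\ell\in\Pi_k\}$ contains no polynomial whose expansion in the basis $\{[s-\ell+1]_\ell\}_{\ell\geq 0}$ has a nonzero coefficient in any degree $\leq r_\lambda$. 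Divisibility of $p_{\lambda,\alpha}(s)$ by $[s-r_\lambda+1]_{r_\lambda}$ (roots $0,\dots,r_\lambda-1$) does not rule out a nonzero coefficient on the degree-$r_\lambda$ basis element $[s-r_\lambda+1]_{r_\lambda}$ itself, and $r_\lambda\notin\Pi_k$, so that coefficient cannot be absorbed. For the same reason the intermediate claim that the contributions in degrees $\leq r_\lambda$ are ``controlled by $(F_{k-\ell}\cS)_{\lambda+\ell\sigma}$ for $\ell\leq r_\lambda$'' and that ``all basis-degrees appearing in $p_{\lambda,\alpha}(s)$ lie in $\Pi_k$'' cannot work: no degree $\leq r_\lambda$ is ever in $\Pi_k$. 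The correct statement is that those coefficients vanish.

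What is needed, and what the paper uses, is the stronger divisibility $[s-r_\lambda]_{r_\lambda+1}\mid p_{\lambda,\alpha}(s)$, i.e. vanishing at $s=0,1,\dots,r_\lambda$ including $r_\lambda$. This does follow from \eqref{eqn: plambdaalpha in general} under your standing hypothesis $\alpha>0$: if $r_\lambda\geq 0$ then $-r_\lambda=\lambda_i$ for some $i$, and the corresponding factor is $[s-r_\lambda]_{\lceil\alpha+r_\lambda\rceil}$ with $\lceil\alpha+r_\lambda\rceil\geq r_\lambda+1$; if $r_\lambda=-1$ the divisibility is vacuous. With this correction, the degree-$\leq\deg p_{\lambda,\alpha}$ part of the ideal $([s-r_\lambda]_{r_\lambda+1})$ is exactly $\mathrm{span}_\C\{[s-\ell+1]_\ell\mid r_\lambda+1\leq\ell\leq\deg p_{\lambda,\alpha}\}$ (compare degrees and dimensions), which your hypothesis places inside the $\Pi_k$-span; then $h=1$ works in Theorem \ref{thm:FS}, and the remainder of your argument — part (1) by the degree comparison, and the passage from nonvanishing of the isotypic component to equality with $\cM_\lambda\cdot f^{-\alpha}$ — is fine and coincides with the paper's proof.
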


\begin{proof}
The first case immediately follows from Theorem \ref{thm:FS}.

Now consider case (2). The assumption then implies that 
\[([s-r_{\lambda}]_{r_\lambda+1})_{\leq \deg p_{\lambda, \alpha}} \subset \underset{i \in \Pi_k}{\mathrm{span}_\C} \{[s-i+1]_i\}.\]
Since $[s-r_{\lambda}]_{r_\lambda+1} \mid p_{\lambda, \alpha}(s)$ by \eqref{eqn: plambdaalpha in general}, we get $p_{\lambda,\alpha}(s) \in \underset{i \in \Pi_k}{\mathrm{span}_\C} \{[s-i+1]_i\}$, which proves the claim by Theorem \ref{thm:FS}.
\end{proof}

We will not pursue describing in general $\Pi_k$ here, and leave it for future work. However, in the most important case when $\cS=\cO_X$, this is easy to describe, where the Hodge filtration is given by $F_k(\cO_X)=\cO_X$ if $k\geq 0$ and $F_{-1}\cO_X=0$. Clearly, in this case 
\[\Pi_k = \{r_\lambda+1, \dots, k\}.\]

\begin{proof}[Proof of Theorem \ref{thm:hodgeisot}] For $\alpha \in \Q_{>0}$, this follows readily from Corollary \ref{cor:Fpart}. Now assume $\alpha\leq 0$, choose $r\in \Z$ such that $r+\alpha>0$. For $m\in \cM_{\lambda}$, the condition $m\cdot f^{-\alpha}\in F_{k}(\cM\cdot f^{-\alpha})$ is equivalent to:
\begin{align*}
    (mf^r)\cdot f^{-(\alpha+r)} \in F_k(\cM\cdot f^{-(\alpha+r)})\Longleftrightarrow \deg p_{mf^r,\alpha+r}(s)\leq k \Longleftrightarrow \deg p_{\lambda+r\sigma,\alpha+r}(s)\leq k.
\end{align*}
Since $p_{\lambda+r\sigma,\alpha+r}(s)=p_{\lambda,\alpha}(s+1)$, this is equivalent to $\deg p_{\lambda,\alpha}(s)\leq k$.
\end{proof}

\begin{proof}[Proof of Theorem \ref{thm: V cap F any alpha general G}]
Recall $\cM=(\cO_X)_f$. If $\alpha>0$, it follows directly from Proposition \ref{prop:hodgeVgen}. For general $\alpha$, note that under the isomorphism from \eqref{eqn: inverse of Malgrange isomorphism}, one has 
    \[ F_{k+1}\iota_{+}\cM\colonequals \bigoplus_{i=0}^k F_{k-i}\cM\otimes \d_t^{i}\xrightarrow{\sim}\bigoplus_{i=0}^k F_{k-i}\cM\otimes_{\cO_X} \cO_X(i D)\cdot q_{i}(s)f^s,\]
where $q_{i}(s)\colonequals \prod_{j=0}^{i-1}(s-j)=[s-(i-1)]_{i}$ and $q_0(s)=1$. It follows that
\begin{equation*}
(F_{k+1}\iota_{+}\cM)_{\lambda}=\bigoplus_{0\leq i \leq k,\, (F_{k-i}\cM\otimes \cO_X(i D))_{\lambda}\neq 0} q_{i}(s)\cdot \cM_{\lambda}f^s.
\end{equation*}
By \cite[Proposition 13.1]{MP16}, if $i<k$, then $f\cdot F_{k-i}\cM\subseteq F_{k-i-1}\cM$, so
\begin{align*}
F_{k-i}\cM\otimes \cO_X(i D)\subseteq F_{k-i-1}\cM\otimes \cO_X((i+1))D).
\end{align*}
Suppose there exists a minimal integer $\ell\in [0,k]$ such that $(F_{k-\ell}\cM\otimes \cO_X(\ell D))_{\lambda}\neq 0$ (which is equivalent to $\deg p_{\lambda,-\ell}(s)=\deg p_{\lambda+\ell \sigma,0}(s)\leq k-\ell$ by Theorem \ref{thm:hodgeisot}). Then $(F_{k+1}\iota_{+}\cM)_{\lambda}\neq 0$ and
\[ (F_{k+1}\iota_{+}\cM)_{\lambda}=\mathrm{span}_{\C}\{q_{\ell}(s),q_{\ell+1}(s),\ldots, q_{k}(s)\}\cdot \cM_{\lambda}f^s=(q_{\ell}(s))_{\leq k}\cdot \cM_{\lambda} f^s.\]
The formula for $\alpha\in \Q$ now follows from Theorem \ref{thm: V filtration general G}. 
\end{proof}

\begin{proof}[Proof of Corollary \ref{corollary: isotypic component of Hodge and higher}]
    From \eqref{eqn: definition of Hodge ideal}, we have $I_k(\alpha D)\otimes \cO_X(kD)=F_k(\cM\cdot f^{-\alpha})\cdot f^{\alpha}$ and hence Theorem \ref{thm:hodgeisot} implies that $
    (I_k(\alpha D)\otimes\cO_X(kD))_{\lambda}\cong \cM_{\lambda}$ if $\deg p_{\lambda,\alpha}(s)\leq k$ and vanishes otherwise. On the other hand, the definition \eqref{eqn: definition of higher multiplier ideals} and Theorem \ref{thm: V cap F any alpha general G} imply that
    \[ (\tilde{I}_k(\alpha D)\otimes \cO_X(kD)_{\lambda} \cong (p_{\lambda,\alpha}(s))_{=k}\cdot \cM_{\lambda}f^s.\] This completes the proof.
\end{proof}

\subsection{Actions of $(t,\d_t)$ and $(f,\d f)$}
\begin{corollary}\label{corollary: the action of t and dt on V}
Let $t: (V^\alpha \iota_+ \cM)_{\lambda} \to (V^{\alpha+1} \iota_+\cM)_{\lambda+\sigma}$ 
and $\partial_t: (V^\alpha \iota_+ \cM)_{\lambda} \to (V^{\alpha-1} \iota_+\cM)_{\lambda-\sigma}$. 
Define $m_{\lambda+\sigma}=f \cdot m_{\lambda}$ and $m_{\lambda-\sigma}=-m_{\lambda}/f$. Then
    \[t \cdot \left(p_{\lambda,\alpha}(s)m_{\lambda} f^s\right) = p_{\lambda+\sigma,\alpha+1}(s) m_{\lambda+\sigma} f^s, \quad \partial_t \cdot \left(p_{\lambda,\alpha}(s) m_{\lambda} f^s\right) = s \cdot p_{\lambda-\sigma,\alpha-1}(s) m_{\lambda-\sigma} f^s.\]
\end{corollary}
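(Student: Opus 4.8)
The plan is to deduce both formulas directly from the explicit description of $(V^{\alpha}\iota_{+}\cM)_{\lambda}$ in Theorem~\ref{thm: V filtration general G}, once we isolate a single elementary identity among the polynomials $p_{\lambda,\alpha}(s)$, namely
\[ p_{\lambda+\sigma,\alpha+1}(s)=p_{\lambda,\alpha}(s+1)\qquad\text{and, equivalently,}\qquad p_{\lambda-\sigma,\alpha-1}(s)=p_{\lambda,\alpha}(s-1). \]
The second follows from the first via the substitution $(\lambda,\alpha)\mapsto(\lambda-\sigma,\alpha-1)$, so only the first needs proof. For that, I would first check $b_{\lambda+\sigma}(s)=b_{\lambda}(s+1)$: starting from $\partial f\cdot m_{\lambda}f^{s+1}=b_{\lambda}(s)\,m_{\lambda}f^{s}$ of Proposition~\ref{prop:bfunirred} and using $m_{\lambda+\sigma}f^{s+1}=(fm_{\lambda})f^{s+1}=m_{\lambda}f^{s+2}$, one gets $\partial f\cdot m_{\lambda+\sigma}f^{s+1}=b_{\lambda}(s+1)\,m_{\lambda+\sigma}f^{s}$, whence $b_{\lambda+\sigma}(s)=b_{\lambda}(s+1)$ by the uniqueness in Proposition~\ref{prop:bfunirred}. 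Writing $b_{\lambda}(s)=\prod_{i=1}^{d}(s+\lambda_i)$, we may then order the roots so that $(\lambda+\sigma)_i=\lambda_i+1$, and plugging into \eqref{eqn: plambdaalpha in general} gives $p_{\lambda+\sigma,\alpha+1}(s)=\prod_i[s+\lambda_i+1]_{\lceil\alpha-\lambda_i\rceil}=p_{\lambda,\alpha}(s+1)$.

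Next, the $t$-action is immediate from the $\sD_{X}\langle s,t\rangle$-module structure on $\cM[s]f^{s}$, under which $t\cdot(p(s)\,m\,f^{s})=p(s+1)\,(fm)\,f^{s}$ for any $m\in\cM$ and $p\in\C[s]$. Taking $m=m_{\lambda}$, $p=p_{\lambda,\alpha}$ and using $fm_{\lambda}=m_{\lambda+\sigma}$ together with the identity above yields
\[ t\cdot\bigl(p_{\lambda,\alpha}(s)\,m_{\lambda}f^{s}\bigr)=p_{\lambda,\alpha}(s+1)\,m_{\lambda+\sigma}f^{s}=p_{\lambda+\sigma,\alpha+1}(s)\,m_{\lambda+\sigma}f^{s}. \]
For the $\partial_t$-action I would rewrite the source element as an image of $t$: since $t\cdot(q(s)\,(m_{\lambda}/f)\,f^{s})=q(s+1)\,m_{\lambda}f^{s}$ and $\cM=\cM_{f}$, choosing $q(s)=p_{\lambda,\alpha}(s-1)$ gives $p_{\lambda,\alpha}(s)\,m_{\lambda}f^{s}=t\cdot\bigl(p_{\lambda,\alpha}(s-1)\,(m_{\lambda}/f)\,f^{s}\bigr)$. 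Applying $\partial_t$ and using $\partial_t t=-s$ (as $s:=-\partial_t t$), then $-m_{\lambda}/f=m_{\lambda-\sigma}$ and $p_{\lambda,\alpha}(s-1)=p_{\lambda-\sigma,\alpha-1}(s)$, we obtain
\[ \partial_t\cdot\bigl(p_{\lambda,\alpha}(s)\,m_{\lambda}f^{s}\bigr)=-s\cdot p_{\lambda,\alpha}(s-1)\,(m_{\lambda}/f)\,f^{s}=s\cdot p_{\lambda-\sigma,\alpha-1}(s)\,m_{\lambda-\sigma}f^{s}. \]
Finally I would note that these are the maps in the statement: $t$ (resp.\ $\partial_t$) is semi-invariant of weight $\sigma$ (resp.\ $-\sigma$) for the induced $G$-action on $X\times\C$ and sends $V^{\alpha}$ into $V^{\alpha+1}$ (resp.\ $V^{\alpha-1}$) by Theorem~\ref{thm: uniqueness of V filtration}, so the targets are exactly $(V^{\alpha+1}\iota_{+}\cM)_{\lambda+\sigma}$ and $(V^{\alpha-1}\iota_{+}\cM)_{\lambda-\sigma}$, consistent with Theorem~\ref{thm: V filtration general G}.

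I expect the only delicate point to be bookkeeping rather than any real obstacle: $s$ and $t$ do not commute in $\sD_{X}\langle s,t\rangle$ (one has $st=t(s-1)$), so every computation must be carried out through the explicit module rules for $\cM[s]f^{s}$ rather than by naively ``substituting $s\mapsto s\pm1$'', and the shift convention $b_{\lambda+\sigma}(s)=b_{\lambda}(s+1)$ must be tracked consistently through the formula for $p_{\lambda,\alpha}(s)$. Everything else is routine.
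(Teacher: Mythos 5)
Your proposal is correct and follows essentially the same route as the paper: the key identity $p_{\lambda+\sigma,\alpha+1}(s)=p_{\lambda,\alpha}(s+1)$ deduced from $b_{\lambda+\sigma}(s)=b_{\lambda}(s+1)$, the module rule for the $t$-action on $\cM[s]f^s$, and the relation $\partial_t=-s\cdot t^{-1}$ (which you phrase equivalently by writing the element as $t$ applied to a preimage and using $\partial_t t=-s$). The only difference is that you spell out the verification of $b_{\lambda+\sigma}(s)=b_{\lambda}(s+1)$, which the paper asserts without proof; this is a harmless elaboration.
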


\begin{proof}
Since $b_{\lambda+\sigma}(s)= b_{\lambda}(s+1)$, we have
\begin{equation}\label{eqn: plambda+sigma}
    p_{\lambda+\sigma,\alpha+1}(s)=\prod_{i=1}^d [s+\lambda_i+1]_{\lceil \alpha+1-(\lambda_i+1)\rceil}=p_{\lambda,\alpha}(s+1).
\end{equation}
This proves the first formula. The second follows from the identity $\d_t=-s\cdot t^{-1}$. 
\end{proof}

We now describe the linear maps corresponding to multiplication by $f$ and $\partial f$ on the $\lambda$-isotypic components of $\gr^\alpha_V \iota_+ \cM$. These maps encode additional information about the $\sD$-module structure of $\gr^\alpha_V \iota_+ \cM$, e.g. the extensions between its composition factors. In certain cases, they even determine the entire $\sD$-module structure \cite{nang} (see also \cite[Section 6]{LW19}). Since $f$ and $\d_f$ are semi-invariant, Schur's lemma and Remark \ref{rem:weighgrVlambda} imply that their actions are given by the matrices of the form $A \otimes \textrm{Id}_{\dim \cM_\lambda}$, for some matrices $A$. Thus, it suffices to describe these matrices $A$, which we do in the basis $1, s+\alpha, (s+\alpha)^2, \dots$ of $\C[s]$. 

\begin{corollary}
    Fix $\alpha \in \Q$ and $\ell \in \Z$ such that $\nu_{\lambda,\alpha} > - \ell$. Let $\rho:=\mathrm{mult}_{s=-\alpha} b_\lambda(s)$, $\nu:= \min\{\lfloor \frac{\nu_{\lambda,\alpha}+\ell+1}{2} \rfloor, \nu_{\lambda, \alpha}\}$, and $\mu:= \min\{\lfloor \frac{\nu_{\lambda, \alpha}-\rho+\ell+1}{2} \rfloor, \nu_{\lambda, \alpha}-\rho\}$. Write \[b_\lambda(s)/(s+\alpha)^\rho = \sum_{i \geq 0} c_i \cdot (s+\alpha)^i\]
    Then the matrices of 
    \[f: (W_\ell \gr_V^\alpha \iota_+ \cM)_{\lambda-\sigma} \to (W_\ell \gr_V^\alpha \iota_+ \cM)_{\lambda},\quad \partial f: (W_\ell \gr_V^\alpha \iota_+ \cM)_{\lambda+\sigma} \to (W_\ell \gr_V^\alpha \iota_+ \cM)_{\lambda},\]
    are given by $[\mathrm{Id}_\nu \,\,\, 0]$ and $[0 \,\,\, C]^{T}$, respectively, where $C$ is the $\mu \times (\nu-\rho)$ matrix
    \[C=\begin{pmatrix}
        c_0 & c_1 & c_2 & \dots \\
        0 & c_0 & c_1 & \dots \\
        0 & 0 & c_0 & \dots \\
        \vdots & \vdots & \vdots & \ddots
    \end{pmatrix}.\]
\end{corollary}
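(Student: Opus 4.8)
The approach is a direct computation carried out in the explicit $\C[s]$-module model of the isotypic components of $\gr^{\alpha}_V\iota_{+}\cM$ from Remark \ref{rem:weighgrVlambda}. Recall from there that, for each weight $\lambda'$ on the $\sigma$-string through $\lambda$ with $\nu_{\lambda',\alpha}>0$, the component $(\gr^{\alpha}_V\iota_{+}\cM)_{\lambda'}$ is a single Jordan block for $N=s+\alpha$, namely $\C[s]/(s+\alpha)^{\nu_{\lambda',\alpha}}\otimes_{\C}\cM_{\lambda'}$ with $\C[s]$-generator $e_{\lambda'}$ the class of $p_{\lambda',\alpha}(s)m_{\lambda'}f^s$, and that the sublevel $(W(N)_{\ell}\gr^{\alpha}_V\iota_{+}\cM)_{\lambda'}$ is the submodule $(s+\alpha)^{\nu_{\lambda',\alpha}-d_{\lambda'}}\C[s]\cdot e_{\lambda'}$, where $d_{\lambda'}:=\min\{\lfloor\tfrac{\nu_{\lambda',\alpha}+\ell+1}{2}\rfloor,\nu_{\lambda',\alpha}\}$ (so $d_{\lambda}=\nu$ and $d_{\lambda+\sigma}=\mu$), identified with $\C[s]/(s+\alpha)^{d_{\lambda'}}$ in the basis $1,s+\alpha,(s+\alpha)^2,\dots$ via $(s+\alpha)^{\nu_{\lambda',\alpha}-d_{\lambda'}}\cdot(-)$. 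By Schur's lemma it suffices to compute the $\C[s]$-module factor. Since $f,\partial f\in\sD_X$ commute with $s$, they are $\C[s]$-linear, hence commute with $N$ and are strict for the monodromy weight filtration by the convolution formula \eqref{eqn: convolution formula}; so I would first read these operators off on the ambient Jordan blocks and then restrict to $W_{\ell}$.

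For $f$: as an element of $\cO_X\subset\sD_X$ it sends $p_{\lambda-\sigma,\alpha}(s)m_{\lambda-\sigma}f^s$ to $p_{\lambda-\sigma,\alpha}(s)(fm_{\lambda-\sigma})f^s$; using the identity $p_{\lambda-\sigma,\alpha}(s)=q(s)\,p_{\lambda,\alpha}(s)$ for an explicit polynomial $q(s)$ with $q(-\alpha)\neq 0$ (immediate from the product formula \eqref{eqn: plambdaalpha in general} and $b_{\lambda-\sigma}(s)=b_{\lambda}(s-1)$), one sees that $f\colon(\gr^{\alpha}_V\iota_{+}\cM)_{\lambda-\sigma}\to(\gr^{\alpha}_V\iota_{+}\cM)_{\lambda}$ is a unit of $\C[s]/(s+\alpha)^{\nu_{\lambda,\alpha}}$ times the tautological reduction $\C[s]/(s+\alpha)^{\nu_{\lambda-\sigma,\alpha}}\twoheadrightarrow\C[s]/(s+\alpha)^{\nu_{\lambda,\alpha}}$. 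Absorbing that unit into the choice of the string of generators — setting $e_{\lambda}:=f\cdot e_{\lambda-\sigma}$ and $e_{\lambda+\sigma}:=f\cdot e_{\lambda}$, legitimate whenever the target Jordan block is nonzero — makes $f$ literally the reduction map; intersecting with $W_{\ell}$ and invoking strictness then yields the partial-identity block $[\mathrm{Id}_{\nu}\ 0]$.

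For $\partial f$: in the same normalization, Proposition \ref{prop:bfunirred} gives $\partial f\circ f=b_{\lambda}(s)$ as an endomorphism of $\cM_{\lambda}[s]f^s$ (the relation $\partial f\cdot mf^{s+1}=b_{\lambda}(s)mf^s$ with ``$mf^{s+1}$'' $=(fm)f^s$), which descends to $(\gr^{\alpha}_V\iota_{+}\cM)_{\lambda}$; hence $\partial f\cdot e_{\lambda+\sigma}=\partial f\cdot f\cdot e_{\lambda}=b_{\lambda}(s)\cdot e_{\lambda}$ there. Writing $b_{\lambda}(s)=(s+\alpha)^{\rho}\cdot\sum_{i\ge 0}c_i(s+\alpha)^i$ with $\rho=\mathrm{mult}_{s=-\alpha}b_{\lambda}(s)$ and $c_0\neq0$, the operator $\partial f$ is ``multiply by $(s+\alpha)^{\rho}\sum_i c_i(s+\alpha)^i$, then reduce'': in the basis $1,s+\alpha,\dots$ this is exactly the lower-triangular Toeplitz matrix in the $c_i$ shifted down by $\rho$ rows. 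Restricting to $W_{\ell}$ and tracking the drop of $\C[s]$-length from $\nu_{\lambda,\alpha}$ to $\nu_{\lambda+\sigma,\alpha}=\nu_{\lambda,\alpha}-\rho$ (cf. the proof of Proposition \ref{prop:simples}) then produces $[0\ C]^{T}$ with $C$ as in the statement, the leading $0$-block recording the factor $(s+\alpha)^{\rho}$.

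The step I expect to be the real obstacle is the combinatorial bookkeeping matching the monodromy-weight sublevels of the three consecutive blocks $(\gr^{\alpha}_V\iota_{+}\cM)_{\lambda-\sigma}$, $(\gr^{\alpha}_V\iota_{+}\cM)_{\lambda}$, $(\gr^{\alpha}_V\iota_{+}\cM)_{\lambda+\sigma}$: one has to verify that the offsets $\nu_{\lambda',\alpha}-d_{\lambda'}$ align along the string so that, in the common basis $1,s+\alpha,(s+\alpha)^2,\dots$, the restricted maps come out with \emph{exactly} the claimed block shapes and sizes $[\mathrm{Id}_{\nu}\ 0]$ and $[0\ C]^{T}$ (using $\nu_{\lambda+\sigma,\alpha}=\nu_{\lambda,\alpha}-\rho$ and the analogous identity at $\lambda-\sigma$), and to dispose separately of the degenerate cases — $\nu_{\lambda,\alpha}=0$ or $\nu_{\lambda+\sigma,\alpha}=0$, where the relevant component of $W_{\ell}$ vanishes, and $\nu\le\rho$, where $C$ degenerates to an empty block.
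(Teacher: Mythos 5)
Your route is the same as the paper's: the paper's proof is a one-line appeal to Remark \ref{rem:weighgrVlambda}, i.e.\ a direct computation in the explicit cyclic $\C[s]$-module model, and your identities on the full blocks of $\gr^\alpha_V\iota_{+}\cM$ are correct: $p_{\lambda-\sigma,\alpha}(s)=q(s)\,p_{\lambda,\alpha}(s)$ with $q(-\alpha)\neq 0$, so multiplication by $f$ is a unit times the reduction $\C[s]/(s+\alpha)^{\nu_{\lambda-\sigma,\alpha}}\to\C[s]/(s+\alpha)^{\nu_{\lambda,\alpha}}$, and $\partial f\circ f=b_\lambda(s)$ by Proposition \ref{prop:bfunirred}. (A minor point: maps commuting with $N$ preserve $W(N)$ but are not automatically strict for it; you only use preservation, so this does not hurt you.)

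The genuine gap is exactly the step you defer as ``combinatorial bookkeeping'': aligning the offsets $\nu_{\lambda',\alpha}-d_{\lambda'}$ of the $W_\ell$-pieces along the $\sigma$-string. This is not a formality and your normalization $e_\lambda:=f\cdot e_{\lambda-\sigma}$ does not settle it. In the bases $(s+\alpha)^{\nu_{\lambda',\alpha}-d_{\lambda'}}e_{\lambda'},\dots,(s+\alpha)^{\nu_{\lambda',\alpha}-1}e_{\lambda'}$ your computation gives a truncation shifted by $\delta=(\nu_{\lambda-\sigma,\alpha}-d_{\lambda-\sigma})-(\nu_{\lambda,\alpha}-\nu)$, and $\delta>0$ whenever $b_\lambda(-\alpha-1)=0$ (so $\nu_{\lambda-\sigma,\alpha}>\nu_{\lambda,\alpha}$) and $\ell$ is small. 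Concretely, take the $2\times 2$ determinant, $\lambda$ the trivial weight, $\alpha=1$, $\ell=0$: then $b_\lambda(s)=(s+1)(s+2)$, $\nu_{\lambda,1}=1$, $\nu_{\lambda-\sigma,1}=2$, the piece $(W_0\gr^1_V\iota_{+}\cM)_{\lambda-\sigma}$ is $\mathrm{Im}\,N$ in a length-two block, spanned by the class of $s(s+1)f^{-1}f^s$, and $f$ sends this to the class of $s(s+1)f^s$, which lies in $V^{>1}\iota_{+}\cM$ and hence vanishes in $\gr^1_V$; no choice of bases turns this restricted map into a rank-$\nu$ matrix of the shape $[\mathrm{Id}_\nu\ 0]$. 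So pushing your setup through does not ``come out with exactly the claimed block shapes'': to arrive at the stated matrices one must pin down precisely which identifications of the $W_\ell$-pieces with $\C[s]/(s+\alpha)^{d}$ are used and treat separately the regime where $\nu_{\lambda',\alpha}$ jumps along the string (the analogous alignment issue also affects your $[0\ C]^{T}$ claim for $\partial f$). As written, the proposal stops exactly at this decisive step, so it does not yet constitute a proof of the statement.
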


\begin{proof}
    Follows directly from Remark \ref{rem:weighgrVlambda}.
\end{proof}
\begin{remark}
     Note that if $\ell \geq v_{\lambda,\alpha}-1$, then $C$ is invertible. Moreover, when $\rho=0$, the map $\partial f$ is invertible. Conversely, if $\rho\neq 0$, then both $f \circ \partial f$ and $\partial f \circ f$ act nilpotently on $(W_\ell \gr_V^\alpha \iota_+ \cM)_{\lambda}$.
\end{remark}

\section{Multiplicity-free spaces}\label{sec: V filtration on multiplicity free spaces}

Assume $X$ is a multiplicity-free space. Since $G$ acts on $X$ with finitely many orbits, any coherent equivariant $\sD_X$-module is regular holonomic and quasi-unipotent (see \cite[Section 11.6]{HTT}; the quasi-unipotence follows because simples come from local systems with finite monodromy, and quasi-unipotence is preserved under extensions). Thus, any equivariant $\sD_X$-module admits a $V$-filtration along any semi-invariant function. Since the functor $V^\alpha(-)$ is exact due to the strict compatibility property, it suffices to study simple equivariant $\sD_X$-modules $\cS$. If $\cS$ is supported in the hypersurface $f^{-1}(0)$, the $V$-filtration is easily described \cite[Lemme 3.1.3]{Saito88}. Therefore, we assume $\mathrm{supp}(\cS)\not\subset f^{-1}(0)$ and set $\cM:= \cS_f$.

\subsection{Decomposition of $V$-filtrations}

Recall that $\Lambda(\mathcal{M})$ is the set of integral dominant weights $\lambda$ for which $\cM_\lambda \neq 0$. By Lemma \ref{lem:multfree}, $\cM_\lambda = U_\lambda$ for any $\lambda$, yielding the decomposition into irreducibles
\begin{equation*}\label{eq:multfree}
    \cM = \bigoplus_{\lambda \in \Lambda(\cM)} \, U_{\lambda}.
\end{equation*}
Let $U(\mathfrak{g})$ be the universal enveloping algebra of the Lie algebra $\mathfrak{g}$ of $G$. Then, extending scalars, we have a decomposition as $U(\mathfrak{g})[s]$-modules:
\begin{equation}\label{eq:multfreefs}
    \cM[s]f^s = \bigoplus_{\lambda \in \Lambda(\cM)} \, U_\lambda[s]  f^s,
\end{equation}
where $U_\lambda[s]:=U_\lambda\otimes \C[s]$ is the $\lambda$-isotypic component of $\cM[s]$, and $f^s$ has $\lie$-weight $s \cdot \sigma$.

We may also view \eqref{eq:multfreefs} as a decomposition of $(\C[s],G)$-modules, treating $f^s$ as a $G$-invariant element. Since the $V$-filtration is both $G$- and $\C[s]$-stable (cf. Lemma \ref{lemma: G and s compatible on V filtration}), and these actions commute, we obtain a $(\C[s], G)$-module decomposition:
\begin{equation}\label{eq:vfiltp}
V^\alpha \iota_+ \cM = \bigoplus_{\lambda \in \Lambda(\cM)} (p_{\lambda, \alpha}(s)) \otimes \cM_{\lambda} f^s, \quad \textrm{for any $\alpha\in \Q$},
\end{equation}
where $p_{\lambda,\alpha}(s) =p_{m,\alpha}(s)$ for any $m\in \cM_{\lambda}$ and is determined by Theorem \ref{thm: V filtration general G}.

\subsection{More on $b$-functions when $\cS=\cO_X$}\label{sec:moreb}
Let $\cM=(\cO_X)_f$. Let $B\subseteq G$ be a Borel subgroup, and consider the ring of highest weight vectors (i.e. $B$-semi-invariants), which forms a polynomial ring \cite[\S 4.]{saki} generated by irreducible highest weight polynomials (up to scalar) $h_i$, so that $\cO_X^{[B,B]} = \C[h_1, \dots, h_k]$. Let $\lambda^i$ denote the highest weight of $h_i$. Then $\Lambda(\cO_X)$ is freely generated as an additive semigroup by $\lambda^1,\dots, \lambda^k$. 

The following shows that computing the $b_\lambda(s)$ for all $\lambda \in \Lambda(\cM)$ is a finite task (suffices to work with $\lambda\in \Lambda(\cO_X)$). Let the $b$-function of $f$ factor as $\prod_{s=1}^d (s+r_i)$, with $r_i \in \Q_{>0}$. 

\begin{thm}\label{thm: computation of blambda} Assume the complement of the dense $G$-orbit in $X$ is a hypersurface. Then there exists constants $c_{ij}\in \Q_{\geq 0}$, with $c_{ij} \leq \deg h_j$, such that
\[b_{\sum_{i=1}^k a_i \lambda^i}(s) = \prod_{i=1}^d (s+ r_i + \sum_{j=1}^k c_{ij} a_j), \quad \forall (a_1, a_2, \dots, a_k) \in \Z_{\geq 0}^k.\]
\end{thm}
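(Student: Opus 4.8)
The plan is to reduce to a computation with the constant-coefficient operator $\partial f$ acting on monomials in the $h_i$, and then to extract the product formula from the polynomial dependence of $b_\lambda(s)$ on $\mathbf a := (a_1,\dots,a_k)$.

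First I would fix $\lambda = \sum_i a_i\lambda^i \in \Lambda(\cO_X)$ and let $m_\lambda := h_1^{a_1}\cdots h_k^{a_k}$, the (unique up to scalar) highest weight vector of $\cM_\lambda$. By Proposition \ref{prop:bfunirred}, $b_\lambda(s) = b_{m_\lambda}(s)$, $\deg b_\lambda(s) = \deg f =: d$, and $\partial f \cdot m_\lambda f^{s+1} = b_\lambda(s)\cdot m_\lambda f^s$, where $\partial f$ is the constant-coefficient semi-invariant of weight $-\sigma$ with $\partial f\cdot f = 1$. Since $X$ is multiplicity-free, $\cO_X = \Sym X^*$ has $B$-semi-invariant subring $\C[h_1,\dots,h_k]$; as $f$ is $G$-semi-invariant of weight $\sigma = \sum_i n_i\lambda^i$, it is a monomial $f = c\,h_1^{n_1}\cdots h_k^{n_k}$. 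Applying Schur's lemma to the multiplicity-free $G$-decomposition of $\cO_X[(h_1\cdots h_k)^{-1}]$, the operator $\partial f$ sends each monomial $h^{\mathbf b} := \prod_j h_j^{b_j}$ to $B(\mathbf b)\cdot h^{\mathbf b - \mathbf n}$ for a scalar $B(\mathbf b)$, and $\mathbf b\mapsto B(\mathbf b)$ is the restriction to $\Z^k$ of a polynomial on $\C^k$ of degree $d$: this is the standard fact that the $G$-invariant operator $\partial f\cdot f$ acts by polynomial eigenvalues on the isotypic components of a multiplicity-free space. Treating $f^{s+1}$ as a formal exponent, the defining equation becomes the specialization
\begin{equation*}
b_\lambda(s) \;=\; (\mathrm{const})\cdot B\big(\mathbf a + (s+1)\mathbf n\big),
\end{equation*}
so $b_\lambda(s)$, monic of degree exactly $d$ in $s$, is the restriction of the fixed polynomial $B$ to the affine line $s\mapsto \mathbf a + (s+1)\mathbf n$.

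The heart of the matter is to show that $B$ is a product of $d$ affine-linear forms (equivalently, that $\{B=0\}$ is a union of hyperplanes transverse to $\mathbf n$). Granting this, each such form restricts on the line to $s + (\text{affine in }\mathbf a)$, giving $b_\lambda(s) = \prod_{i=1}^d (s + L_i(\mathbf a))$ with $L_i$ affine-linear; specializing $\mathbf a = 0$ against $b_1(s) = \prod_i(s+r_i)$ forces $L_i(\mathbf a) = r_i + \sum_j c_{ij}a_j$, and the divisibility of Lemma \ref{lem:bdiv} applied to $m_\lambda\mapsto h_j m_\lambda$ shows that increasing $a_j$ by $1$ moves each root to the left by an integer in $[0,\deg h_j]$, giving $0\le c_{ij}\le \deg h_j$. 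To establish the factorization I see two routes. One is to invoke M. Sato's method for $b$-functions of regular prehomogeneous-type data --- which is where the hypothesis that the dense-orbit complement is a hypersurface enters --- in the form developed in \cite{holoprehom} (cf. \cite{saki}): one further writes $\partial f = c'(\partial h_1)^{n_1}\cdots(\partial h_k)^{n_k}$ using the dual statement that $\Sym X$ has $B^-$-semi-invariant subring $\C[\partial h_1,\dots,\partial h_k]$, reducing to the fact that the eigenvalue polynomial of each $\partial h_i$ on the $h^{\mathbf b}$ is itself a product of affine-linear forms. The other, more self-contained, route uses only that, by Lemma \ref{lem:bdiv}, the roots of $b_\lambda(s)$ lie on the fixed lattice $\bigcup_i(-r_i + \Z)$, are linearly bounded and monotone in $\mathbf a$, while from the display above their elementary symmetric functions are polynomials in $\mathbf a$ of bounded degree; a diophantine growth argument then forces the roots to depend affine-linearly on $\mathbf a$.

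The main obstacle is precisely this factorization of $B$ (equivalently, upgrading boundedness of the root-shifts to exact linearity): the reduction in the second paragraph is routine given the results already established, as are the polynomiality of $B$ and the bounds on the $c_{ij}$, but pinning down linearity requires genuine input --- either the prehomogeneous-vector-space machinery unlocked by the hypersurface hypothesis, or the diophantine argument indicated above.
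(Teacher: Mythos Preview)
Your plan is essentially correct, and route (a) is the paper's approach. The paper's execution, however, is more direct than your outline: it simply observes that the hypersurface hypothesis is equivalent to $(G,X)$ being a \emph{regular} prehomogeneous vector space, hence $(B,X)$ is regular prehomogeneous as well (the same $f$ is a $B$-semi-invariant with nonzero Hessian, and $B$ has a dense orbit by the multiplicity-free hypothesis). Sato's structure theorem for $b$-functions on regular prehomogeneous spaces (\cite[\S4, Theorem 2]{sato}) then applies to $(B,X)$ with its basic relative invariants $h_1,\dots,h_k$, and yields the affine-linear factorization of $b_\lambda(s)$ immediately. Your proposed intermediate step of writing $\partial f = c'\prod_i (\partial h_i)^{n_i}$ and arguing separately about each $\partial h_i$ is unnecessary --- Sato's theorem already packages this. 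Setting $\mathbf a = 0$ identifies the constant terms with the $r_i$, just as you say.

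Two smaller points. First, your bound argument (``increasing $a_j$ by $1$ moves each root by an integer in $[0,\deg h_j]$'') is not what Lemma~\ref{lem:bdiv} gives: that lemma only places the roots of $b_\lambda$ inside $\bigcup_l(-r_l+\Z)$, and the roots may permute among the $d$ slots as $\mathbf a$ varies. The paper instead fixes $j$, sets the other $a_l=0$, and sends $a_j\to\infty$: negativity of all roots forces $c_{ij}\ge 0$, and the containment from Lemma~\ref{lem:bdiv} gives $r_i + c_{ij}a_j \le \max_l r_l + a_j\deg h_j$, whence $c_{ij}\le\deg h_j$; rationality drops out the same way. Second, your route (b) --- deducing linearity of the roots from lattice constraints plus polynomiality of the symmetric functions --- is not in the paper and is left as a sketch; it is not clear that boundedness and integrality alone force affine-linearity of the individual roots without some separation of the $r_i$ modulo $\Z$, so absent a complete argument you should rely on route (a).
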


\begin{proof} 
Since $G$ is reductive, the assumption is equivalent to $(G,X)$ being a regular prehomogeneous space, i.e. the existence of a $G$-semi-invariant function on $X$ whose Hessian determinant is nonzero \cite[\S 4.]{saki}. Then $(B,X)$ is also a regular prehomogeneous space, and Sato's theorem on functions satisfying the cocycle condition applies \cite[\S 4. Theorem 2.]{sato},  granting the existence of a factorization
\[b_{\sum_{i=1}^k a_i \lambda^i}(s) = \prod_{i=1}^d (s+ r'_i + \sum_{j=1}^k c_{ij} a_j)\]
for all $(a_1, a_2, \dots, a_k) \in \Z_{\geq 0}^k$. Setting $a_i=0$ for all $i$ shows $r_i' = r_i$.

By Lemma \ref{lem:bdiv}, $b_{a\lambda^i}(s)$ must have negative rational roots for all $a\in \Z_{\geq 0}$, and taking $a \gg 0$ shows $c_{ji} \in \Q_{\geq 0}$ with $c_{ji} \leq  \deg h_i$ for all $j=1,\dots, d$.
\end{proof}

Furthermore, under the same assumption, $b$-functions exhibits a symmetry \cite[Theorem 3.13]{holoprehom}. For simplicity, we state it when the hypersurface is irreducible \cite[Corollary 3.14]{holoprehom}, which will be useful in Proposition \ref{prop: b function E6}.

\begin{prop}\label{prop:bfunsymmetry}
    Assume $f$ is irreducible and $X \setminus \{f=0\}$ is a dense $G$-orbit. Let $n=\dim X$. Then for any $\lambda \in \Lambda(\cM)$, we have $\lambda^* \in \Lambda(\cM)$, and
    \[b_{\lambda}(s) = (-1)^d \cdot b_{\lambda^*}(-s-\frac{n}{d}-1).\]
\end{prop}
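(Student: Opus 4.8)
The plan is to obtain this as the irreducible-relative-invariant specialization of the twisted $b$-function symmetry for regular prehomogeneous vector spaces, i.e.\ \cite[Theorem 3.13]{holoprehom}, of which the irreducible case is \cite[Corollary 3.14]{holoprehom}; I sketch the underlying mechanism. First I would note that the hypotheses --- $f$ irreducible and $X\setminus\{f=0\}$ a single dense $G$-orbit --- say exactly that $(G,X)$ is a regular irreducible prehomogeneous vector space in the sense of \cite[\S4]{saki} and \cite{sato}. Regularity gives a dual prehomogeneous vector space $(G,X^{*})$ whose relative invariant $f^{*}$ has the same degree $d$, together with the $G$-equivariant birational ``gradient'' map $X\dashrightarrow X^{*}$, $x\mapsto \mathrm{grad}\,\log f(x)$, which restricts to an isomorphism of the open orbits. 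Since $\cM=(\cO_X)_f$ equals $\cO(X^{\circ})$ by the orbit hypothesis, and the open orbits $X^{\circ}\subset X$, $X^{*\circ}\subset X^{*}$ are isomorphic as homogeneous spaces carrying the $G$-action and its dual, one gets $\cO(X^{\circ})\cong\cO(X^{*\circ})^{\vee}$ as $G$-modules and hence $\lambda\in\Lambda(\cM)\iff\lambda^{*}\in\Lambda(\cM)$ (in the examples of interest $X\cong X^{*}$ as $G$-modules, so this is even more direct).

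For the displayed identity I would run M.\ Sato's Fourier-transform argument in the twisted form. Let $\cF\colon\mathrm{Mod}(\sD_X)\to\mathrm{Mod}(\sD_{X^{*}})$ be the Fourier transform; it exchanges multiplication by a coordinate with differentiation in the dual coordinate (up to sign), hence carries the constant-coefficient operator $\partial f$ of weight $-\sigma$ from Proposition \ref{prop:bfunirred} --- whose symbol is a scalar multiple of $f^{*}$ --- to multiplication by $f^{*}$, and multiplication by $f$ to $\partial f^{*}$. One checks that $\cF$ intertwines the $G$-action with its dual, so $\cF\cM\cong(\cO_{X^{*}})_{f^{*}}$ and, up to a meromorphic scalar $c_\lambda(s)$, $\cF$ sends the rank-one piece $\cM_\lambda f^{s}$ to $\cM^{*}_{\lambda^{*}}(f^{*})^{\widehat s}$ with $\widehat s=-s-n/d$ forced by homogeneity (Fourier transform sends polynomial degree $\mu$ to $-\mu-n$, and $f^{s}$ has degree $ds$). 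Applying $\cF$ to $\partial f\cdot mf^{s+1}=b_\lambda(s)\,mf^{s}$, and applying $\cF^{-1}$ to the analogous relation $\partial f^{*}\cdot m'(f^{*})^{t+1}=b_{\lambda^{*}}(t)\,m'(f^{*})^{t}$ on $X^{*}$, produces two recursions $c_\lambda(s+1)/c_\lambda(s)=b_\lambda(s)/\gamma^{d}$ and $c_{\lambda^{*}}(t+1)/c_{\lambda^{*}}(t)=b_{\lambda^{*}}(t)/\gamma^{d}$ for a normalizing constant $\gamma$. Combining these with the Fourier-inversion identity $c_\lambda(s)\,c_{\lambda^{*}}(-s-n/d)=\epsilon$ and substituting $t=-s-n/d-1$ yields $b_\lambda(s)=\epsilon'\cdot b_{\lambda^{*}}(-s-n/d-1)$ for some constant $\epsilon'$.

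Finally I would fix the constant: both $b_\lambda$ and $b_{\lambda^{*}}$ are monic of degree $d$ (Proposition \ref{prop:bfunirred}), while the polynomial $b_{\lambda^{*}}(-s-n/d-1)$ has leading coefficient $(-1)^{d}$, so $\epsilon'=(-1)^{d}$, which is the claimed formula. The main obstacle is precisely this bookkeeping --- ensuring that $\cF$ takes the $\lambda$-isotypic rank-one piece exactly to the $\lambda^{*}$-isotypic one with no stray twist by a power of $\sigma$, and controlling the scalar factors $c_\lambda(s)$ well enough to read off $\epsilon'$ without circularity. This is what is carried out in \cite[\S3]{holoprehom}, so for our purposes it suffices to invoke \cite[Corollary 3.14]{holoprehom} after matching our $b_\lambda(s)$ with the twisted $b$-functions there via Proposition \ref{prop:bfunirred}.
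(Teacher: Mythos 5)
Your plan ultimately rests on the same citation as the paper (reduce to \cite[Corollary 3.14]{holoprehom}), but it skips the one step that actually constitutes the paper's proof: verifying that the corollary applies to the particular $b_\lambda(s)$ of Proposition \ref{prop:bfunirred}. The symmetry in \cite{holoprehom} is a statement about twisted $b$-functions attached to specific equivariant modules/weights, and matching it with the $b_\lambda(s)$ defined by $\partial f \cdot m f^{s+1}=b_\lambda(s)\, m f^s$ is not automatic: one must rule out interference from the \emph{other} simple equivariant $\sD_X$-modules with full support (nontrivial local systems on the open orbit), whose isotypic components may contain the same weight. The paper handles this by first replacing $\lambda$ with $\lambda'=\lambda+p\sigma\in\Lambda(\cO_X)$ for $p\gg 0$ (harmless, since $b_{\lambda+p\sigma}(s)=b_\lambda(s+p)$ and the dual weight shifts compatibly) and then checking that $\lambda'$ is a \emph{witness representation} of $\cO_X$, i.e.\ $\cN_{\lambda'}=0$ for every simple full-support equivariant $\cN\neq\cO_X$; this is exactly the hypothesis needed in \cite[Lemma 3.9, Corollary 3.14]{holoprehom} and it is supplied by \cite[Corollary 3.23]{LW19}. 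Your phrase ``after matching our $b_\lambda(s)$ with the twisted $b$-functions there via Proposition \ref{prop:bfunirred}'' is precisely where this check is needed and is missing, so as written the invocation of the corollary is unjustified.

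Two smaller points. Your Fourier-transform sketch of the mechanism is only heuristic and in places not literally correct: in the algebraic $\sD$-module setting $\cF\cM\not\cong(\cO_{X^*})_{f^*}$ in general, and the intertwining of the rank-one isotypic pieces with no stray twist is exactly the delicate content of \cite[\S 3]{holoprehom}, so it cannot substitute for the hypothesis check above. Also, the duality $\lambda\in\Lambda(\cM)\Leftrightarrow\lambda^*\in\Lambda(\cM)$ should be argued via the gradient isomorphism $X^\circ\cong X^{*\circ}$ giving $\cO(X^\circ)\cong\cO(X^{*\circ})$ as $G$-modules together with $\C[X^*]=\Sym(X)$, not via a dual ``$\cO(X^{*\circ})^\vee$''; and note the parenthetical ``$X\cong X^*$ in the examples of interest'' fails for the $E_6$ example in this paper, which is not self-dual.
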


\begin{proof}
As in the proof of \cite[Lemma 3.9]{holoprehom}, to apply \cite[Corollary 3.14]{holoprehom}, it suffices to verify that for $p\gg 0$, the shifted weight $\lambda'=\lambda + p \sigma \in \Lambda(\cO_X)$ is a witness representation of $\cO_X$, meaning that for any simple equivariant $\sD_X$-module $\cN \neq \cO_X$ with $\mathrm{supp} \, \cN = X$, we have $\cN_{\lambda'}=0$. This follows from \cite[Corollary 3.23]{LW19}.
\end{proof}

\begin{remark}\label{rem:cijnilp}
    Proposition \ref{prop:nilp} imposes further constraints on the $c_{ij}$: write $b_\lambda(s) = \prod_{i=1}^d (s+\lambda_i)$, then for any $\alpha \in \Q$ the number
 $\# \{ i \mid \lambda_i \in \alpha+ \mathbb{Z}  \}$ is independent of $\lambda \in \Lambda(\cM)$.
\end{remark}

\begin{remark}
See also  \cite{multfreeactions} for related results on eigenvalues of invariant operators for multiplicity-free actions, and references therein.
\end{remark}

\subsection{$G$-decomposition of various filtrations}
\begin{proof}[Proof of Theorem \ref{thm: V filtration on multiplicity free space via representation data}]
It follows from Theorem \ref{thm: V filtration general G} and \eqref{eq:vfiltp}. 
\end{proof}
\begin{remark}
    Corollary \ref{cor: ValphaiotaS} (see there for the notation $r_\lambda$) together with \eqref{eq:vfiltp} implies that
\begin{equation*}\label{eqn: V0S}
V^\alpha \iota_{+} \mathcal{S} =\bigoplus_{\lambda \in \Lambda(\mathcal{\cM})} \left(\mathrm{lcm}\{[s-r_{\lambda}]_{r_{\lambda}+1}, p_{\lambda,\alpha}(s)\}\right) \otimes \cM_\lambda f^s.
\end{equation*}
\end{remark}

\begin{proof}[Proof of Corollary \ref{corollary: G decomposition of grV}]
From \eqref{eqn: plambdaalpha in general} and \eqref{eqn: numalpha}, we have $p_{\lambda,\alpha+\varepsilon}(s) = p_{\lambda,\alpha}(s) \cdot (s+\alpha)^{\nu_{\lambda,\alpha}} $
for small enough $\varepsilon>0$. Hence the $(\C[s], G)$-module decomposition of $\gr_V^\alpha \iota_+ \cM$ follows immediately from Theorem \ref{thm: V filtration on multiplicity free space via representation data}. To compute the weight filtration, observe:
\[\mathrm{Im} N^\ell = (s+\alpha)^\ell \cdot \gr_V^{\alpha}{\iota_+} \cM = \bigoplus_{\lambda \in \Lambda(\cM),\nu_{\lambda, \alpha}>\ell} \C[s]/((s+\alpha)^{\nu_{\lambda, \alpha}-\ell}) \otimes \cM_{\lambda}f^s,\]
\[\ker N^\ell = \bigoplus_{\lambda \in \Lambda(\cM)} ((s+\alpha)^{\max\{\nu_{\lambda, \alpha}-\ell, 0\}})/((s+\alpha)^{\nu_{\lambda, \alpha}}) \otimes \cM_{\lambda}f^s = \bigoplus_{\lambda \in \Lambda(\cM)} \C[s]/((s+\alpha)^{\min\{\ell, \nu_{\lambda, \alpha}\}}) \otimes \cM_{\lambda}f^s\]
Combining these with the convolution formula \eqref{eqn: convolution formula} gives \eqref{eqn: G decomposition of grV}.

The semisimplicity of $\gr^{W(N)}_{\ell}\gr^{\alpha}_V$ follows from \cite[Theorem 19.4.2]{Mochizuki}. Recall from \S \ref{sec: Gequivariant diff} that the differential operators yielding the $G$-action in \eqref{eqn: G decomposition of grV} are of the form $\xi - s \cdot d\sigma(\xi) \in \sD_X[s]$, for $\xi \in \mathfrak{g}$. To recover the natural $\mathfrak{g}$-action via vector fields, we twist by $\alpha \cdot d\sigma(\xi)$ (note $\xi - s \cdot d\sigma(\xi)$ acts as $\xi + \alpha \cdot d\sigma(\xi) \in \sD_X$). The remaining parts follow.
\end{proof}
\begin{proof}[Proof of Corollary \ref{thm: Gdecomposition of weight filtration}] By \eqref{eqn: associated graded of weight} and Corollary \ref{corollary: G decomposition of grV}, we have a $\lie$-decomposition 
\[ \gr^W_{\ell+q}(\cM\cdot f^{-\alpha}) \cong \frac{\gr^{W(N)}_{\ell-1}\gr^{\alpha}_V\iota_{+}\cM}{N\cdot \gr^{W(N)}_{\ell+1}\gr^{\alpha}_V\iota_{+}\cM}=\bigoplus_{\nu_{\lambda,\alpha}=\ell}U_{\lambda-\alpha \sigma}, \quad \forall \ell>0.\]
Using \eqref{eqn: definition of Salpha}, this proves \eqref{eqn: G decomposition of Well M}. The final claim follows since $\nu_{\lambda,\alpha}\leq d$ (see \eqref{eqn: numalpha}).
\end{proof}
\begin{corollary}\label{cor:samenumroot}
    Let $\cN$ be a simple composition factor of the $\sD$-module $\cM \cdot f^{-\alpha}$. Then for any $\lambda, \mu \in \Lambda(\cM)$ such that $\cN_{\lambda - \alpha \sigma} \neq 0 \neq \cN_{\mu - \alpha \sigma} $, we have $\nu_{\lambda,\alpha} = \nu_{\mu, \alpha}$.
\end{corollary}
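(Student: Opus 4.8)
The plan is to read off the conclusion directly from the weight filtration, exploiting that $W_\bullet(\cM\cdot f^{-\alpha})$ is a filtration by $\sD_X$-submodules whose graded pieces are \emph{semisimple} and have a \emph{known} $\lie$-decomposition. The starting point is Corollary~\ref{thm: Gdecomposition of weight filtration}: since $\cS$ is simple it underlies a pure twistor $\sD$-module, say of weight $q$, so $\cM\cdot f^{-\alpha}$ carries a weight filtration $W_\bullet$, and for every $\ell$ one has a $\lie$-isomorphism
\[
\gr^W_{q+\ell}(\cM\cdot f^{-\alpha}) \;=\; \bigoplus_{\lambda\in\Lambda(\cM),\ \nu_{\lambda,\alpha}=\ell} U_{\lambda-\alpha\sigma},
\]
where, as recorded after Corollary~\ref{thm: Gdecomposition of weight filtration} (via the Mochizuki semisimplicity used in the proof of Corollary~\ref{corollary: G decomposition of grV} together with \eqref{eqn: associated graded of weight}), each $\gr^W_{q+\ell}(\cM\cdot f^{-\alpha})$ is a semisimple $\sD_X$-module.

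First I would invoke the standard fact that for any finite filtration of a module by submodules, the simple composition factors of the total module coincide, with multiplicity, with those of the associated graded. Applying this to $W_\bullet(\cM\cdot f^{-\alpha})$ and using that each $\gr^W_{q+\ell}(\cM\cdot f^{-\alpha})$ is semisimple, any simple composition factor $\cN$ of $\cM\cdot f^{-\alpha}$ is isomorphic, as a $\sD_X$-module, to a direct summand of $\gr^W_{q+\ell}(\cM\cdot f^{-\alpha})$ for some $\ell$. Next I would use that $\cM$, hence $\cM\cdot f^{-\alpha}$, is multiplicity-free as a $G$-module by Lemma~\ref{lem:multfree}, and therefore --- $G$ being connected --- multiplicity-free as a $\lie$-module; consequently each isotypic piece $U_\nu$ occurs in at most one simple $\sD_X$-summand of $\gr^W_{q+\ell}(\cM\cdot f^{-\alpha})$. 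Combining these facts, the $\lie$-isotypic components of $\cN$ (with its inherited $(-\alpha\sigma)$-twisted equivariant structure, which agrees with its abstract $\lie$-module structure under the isomorphism above) all lie in $\{\, U_{\lambda-\alpha\sigma} : \lambda\in\Lambda(\cM),\ \nu_{\lambda,\alpha}=\ell\,\}$. In particular, whenever $\cN_{\lambda-\alpha\sigma}\neq 0\neq \cN_{\mu-\alpha\sigma}$ with $\lambda,\mu\in\Lambda(\cM)$, both memberships force $\nu_{\lambda,\alpha}=\ell=\nu_{\mu,\alpha}$, which is the assertion.

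I expect no serious obstacle here beyond bookkeeping. The one point requiring a little care is the compatibility of $\lie$-structures: a composition factor $\cN$ inherits a $\lie$-action as a subquotient of the twisted equivariant module $\cM\cdot f^{-\alpha}$, and one must check this coincides with the $\lie$-structure it receives as a direct summand of $\gr^W_{q+\ell}(\cM\cdot f^{-\alpha})$ --- this is automatic from the functoriality of the weight filtration and of the equivariant structure (Proposition~\ref{prop: quotient of weight are equivariant}). One should also note that it suffices to range over $\ell\geq 0$, since $\gr^W_{q+\ell}(\cM\cdot f^{-\alpha})=0$ for $\ell<0$, and the case $\ell=0$, where $\gr^W_q(\cM\cdot f^{-\alpha})=\cS^{-\alpha}$, is already subsumed in Corollary~\ref{thm: Gdecomposition of weight filtration}.
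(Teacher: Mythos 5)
Your argument is correct and is exactly the (implicit) one intended in the paper: since each $\gr^W_{q+\ell}(\cM\cdot f^{-\alpha})$ is a semisimple $\sD_X$-module, every composition factor $\cN$ sits inside a single graded piece, whose $\lie$-decomposition by Corollary~\ref{thm: Gdecomposition of weight filtration} consists precisely of the $U_{\lambda-\alpha\sigma}$ with $\nu_{\lambda,\alpha}=\ell$, forcing $\nu_{\lambda,\alpha}=\nu_{\mu,\alpha}=\ell$. Your extra remarks on the compatibility of the twisted $\lie$-structure (via Proposition~\ref{prop: quotient of weight are equivariant}) are the right points of care, and the appeal to multiplicity-freeness, while not strictly needed for the final step, does no harm.
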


\begin{proof}[Proof of Corollary \ref{cor: G decomposition of Hodge filtration}]
This follows from Theorem \ref{thm:hodgeisot}.
\end{proof}

\begin{proof}[Proof of Corollary \ref{corollary: G decomposition of Hodge ideals}]

Since $\cO_X$ is multiplicity-free, Corollary \ref{corollary: isotypic component of Hodge and higher} implies that $I_k(\alpha D)=\tilde{I}_{k}(\alpha D)$ as ideal sheaves.
\end{proof}
\section{Examples}

We now apply our general results to some multiplicity-free representations (classified in \cite{class1}, \cite{class2}), where our results are the strongest; and some prehomogeneous spaces.

\subsection{Generic determinant}\label{sec: generic determinant}
Let $X= (\C^n)^* \otimes (\C^n)^*$, identified with the space of $n\times n$ matrices, with $G=\GL_n(\C) \times \GL_n(\C)$ acting naturally by row and column operations. The irreducible semi-invariant function $f$ is the determinant, with weight $\sigma = (\det,\det)$, where $\det=(1^n)$ and $\deg f = n$. The $G$-decomposition of $\cM=(\cO_X)_f$ is given by the Cauchy formula (see \cite[Corollary 2.3.3]{weymanbook})
\begin{equation*}\label{eq:cauchy}
\cM= \bigoplus_{\underline{p}} U_{\underline{p}}=\bigoplus_{\underline{p}\in \Z^n_{\mathrm{dom}}} S_{\underline{p}} \C^n \otimes S_{\underline{p}} \C^n,
\end{equation*}
where  $\Z^n_{\mathrm{dom}}=\{\underline{p}\in \Z^n \mid p_1\geq p_2\geq \ldots \geq p_n\}$,
and $S_{\underline{p}}\C^n$ denotes the irreducible $\GL_n$-representation of highest weight $\underline{p}$. For any non-zero $m_{\underline{p}} \in S_{\underline{p}} \C^n \otimes S_{\underline{p}} \C^n$, let $b_{\underline{p}}(s)$ denote its $b$-function with respect to $f$. Then Proposition \ref{prop:bfunirred} gives
\[  \partial f \cdot m_{\underline{p}} f^{s+1} = b_{\underline{p}}(s) \cdot m_{\underline{p}} f^s.\]

\begin{prop}\label{prop: bfunction iota determinant}
For $\underline{p}=(p_1,\dots, p_n)\in \Z^n_{\mathrm{dom}}$, we have $b_{\underline{p}}(s) =\prod_{i=1}^n(s+1+p_i+n-i)$.
\end{prop}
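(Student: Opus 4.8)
The plan is to pin down $b_{\underline p}(s)$ by evaluating the semi-invariant operator on a highest weight vector, using the classical Capelli identity. Recall that the ring $\cO_X^{U\times U}$ of invariants under a maximal unipotent subgroup of each $\GL_n$ factor is the polynomial ring $\C[\Delta_1,\dots,\Delta_n]$ generated by the leading principal minors $\Delta_k=\det(x_{ij})_{1\le i,j\le k}$, each $\Delta_k$ being a highest weight vector of highest weight $((1^k,0^{n-k}),(1^k,0^{n-k}))$, and $\Delta_n=f$. Hence, writing $q_k=p_k-p_{k+1}$ with $p_{n+1}=0$, the element $v_{\underline p}:=f^{p_n}\prod_{k=1}^{n-1}\Delta_k^{q_k}\in\cM=(\cO_X)_f$ is a nonzero highest weight vector lying in $U_{\underline p}$, hence spans its highest weight line. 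By Proposition \ref{prop:bfunirred} it therefore suffices to compute $b_{v_{\underline p}}(s)$.

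Let $E_{ij}=\sum_{k}x_{ik}\partial_{jk}$ generate the $\mathfrak{gl}_n$-action through the first factor. The Capelli identity reads $\det(x_{ij})\cdot\det(\partial_{ij})=C_n$ in $\sD_X$, where $C_n$ is the column-determinant of $(E_{ij}+(n-i)\delta_{ij})_{i,j}$; this is a central element of $U(\mathfrak{gl}_n)$ that acts on the isotypic component $S_\mu\C^n\otimes S_\mu\C^n\subseteq\cO_X$ by the scalar $\prod_{i=1}^n(\mu_i+n-i)$. For an integer $N\gg 0$, the product $f^N v_{\underline p}$ is an honest highest weight vector in $\C[x_{ij}]$ of highest weight $\underline p+N(1^n)$, so the Capelli identity yields
\[\det(\partial_{ij})\bigl(f^N v_{\underline p}\bigr)=\prod_{i=1}^n\bigl(N+p_i+n-i\bigr)\cdot f^{N-1}v_{\underline p}.\]
On the other hand, since $\det(\partial_{ij})$ has weight $-\sigma$ and $\cM$ is multiplicity-free, $\det(\partial_{ij})\cdot v_{\underline p}f^{s+1}=Q(s)\,v_{\underline p}f^s$ for a single polynomial $Q(s)\in\C[s]$ (cf. Proposition \ref{prop:bfunirred}); specializing $s=N-1$ and comparing with the display for all large $N$ forces $Q(s)=\prod_{i=1}^n(s+1+p_i+n-i)$. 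In particular $b_{v_{\underline p}}(s)$ divides this polynomial; since both are monic of degree $\deg f=n$ by Proposition \ref{prop:bfunirred}, they coincide, and by the same proposition $b_{\underline p}(s)=b_{m_{\underline p}}(s)=b_{v_{\underline p}}(s)$ for every nonzero $m_{\underline p}\in U_{\underline p}$. The case $\underline p=0$ recovers Cayley's identity $b_f(s)=(s+1)\cdots(s+n)$.

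The step requiring the most care is the classical invariant-theoretic input: that $C_n$ is central in $U(\mathfrak{gl}_n)$ and acts on $S_\mu\C^n\otimes S_\mu\C^n$ by $\prod_{i=1}^n(\mu_i+n-i)$, i.e. that the Capelli determinant acts on a highest weight vector of weight $\mu$ by this scalar. The other mild subtlety is the reduction, via specialization at integer exponents $N\gg 0$, of the formal identity involving $v_{\underline p}f^{s+1}$ to honest computations in $\C[x_{ij}]$ — the device that avoids direct manipulation of the $f^s$-calculus. Everything else (the description of highest weight vectors through principal minors, and the concluding degree count) is routine.
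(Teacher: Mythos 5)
Your proof is correct and follows essentially the same route as the paper's (sketched) argument: clearing denominators with a power of $f$ so the highest weight vector becomes a product of principal minors, then applying the Capelli identity and reading off the eigenvalue $\prod_{i}(\mu_i+n-i)$ of the Capelli determinant. The only difference is cosmetic — you evaluate at integer exponents $N\gg 0$ and interpolate, whereas the paper evaluates the Capelli operators directly on $m_{\underline p}\cdot f^s$; both are valid.
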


\begin{proof}
We sketch the proof, as it is analogous to the classical determination of the Bernstein--Sato polynomial of $f$ via the Capelli identity. After multiplying by a suitable power of $f$, we may assume that $p_n\geq 0$ so that $m_{\underline{p}}$ can be taken as a suitable product of principal minors of the generic $n \times n$ matrix. Applying the Capelli identity expresses $\partial f \cdot f$ as the determinant of Capelli operators \cite[Section 3.4]{procesi}, which are easy to evaluate on $m_{\underline{p}} f^s$.
\end{proof}

\begin{remark}
    The $I_k(\alpha D)$ in Corollary \ref{for: Hodge ideals and higher multiplier ideals for determinants} can be computed directly using \cite[Theorem 7.1]{dCEP80}. Specifically, for $\alpha\in (0,1]$ and $\lambda=(p_1,\ldots,p_n)\in \Z^n_{\mathrm{dom}}$, we have: 
  \[  \deg p_{\lambda,\alpha+k}(s)\leq k\Longleftrightarrow\sum_{i=t}^n p_i \geq (n-t)(k-1)-\binom{n-t}{2}, \quad \forall 1\leq t\leq n.\]
  Moreover, this recovers the primary decomposition of $I_k(D)$ in \cite[Theorem 1.1]{PerlmanRaicu}.
\end{remark}
\begin{remark}\label{remark: nearby cycle perverse sheaf for generic determinant}
We can also obtain the following computation of nearby and vanishing cycle mixed Hodge modules. Denote by $\IC_{O_j}$ the intersection cohomology module associated to $O_j$, the loci of rank $j$ matrices. Then $\gr^{\alpha}_V\iota_{+}\cO_X=0$ if $\alpha\not\in \Z$ and 
\[ \gr^{W(N)}_{\ell}\gr^1_V\iota_{+}\cO_X=\bigoplus_{j\geq 0}\IC_{O_{n-\ell-1-2j}}, \quad \gr^{W(N)}_{\ell}\gr^0_V\iota_{+}\cO_X=\gr^{W(N)}_{\ell+1}\gr^1_V\iota_{+}\cO_X,\]
if $\ell\geq 0$, and $\gr^{W(N)}_{\ell}\gr^1_V\iota_{+}\cO_X\cong \gr^{W(N)}_{-\ell}\gr^1_V\iota_{+}\cO_X$ if $\ell<0$. In particular $\gr^{W(N)}_{\ell}\gr^1_V\iota_{+}\cO_X$ is nonzero if and only if $-(n-1)\leq \ell\leq n-1$. This recovers \cite[Theorem 4.12]{BG}.  To obtain the claim, by Corollary \ref{corollary: G decomposition of grV}, we have a $G$-decomposition
\[ \gr^{W(N)}_{\ell}\gr^1_V\iota_{+}\cO_X=\bigoplus_{\underline{p}\in \Z^n_{\mathrm{dom}}, \, \nu_{\underline{p},1}=\ell+1+2j,\, j\in \Z_{\geq 0}} U_{\underline{p}}.\]
Proposition \ref{prop: bfunction iota determinant} and \eqref{eqn: numalpha} imply that
\[ \nu_{\underline{p},1}=\#\{ i \mid 1-(p_i+n-i+1)\in \Z_{\geq 0}\}=\#\{ i \mid p_i\leq i-n\}.\]
Then one concludes via the $G$-decomposition of $\IC_{O_j}$, e.g. in \cite{characters} or \cite[(2.7)]{PerlmanRaicu}.

Since $\IC^H_{O_j}$ is the only Hodge module with the underlying $\sD$-module $\IC_{O_p}$ up to a Tate twist (\cite[\S 2.3]{PerlmanRaicu}), one can lift the equalities as mixed Hodge modules. Furthermore, one can describe the Hodge and weight filtrations (in particular, the Tate twists) using Corollaries \ref{cor: Hodge filtration on nearby cycle} and \ref{corollary: G decomposition of grV}, which recovers \cite[Theorem 1.4]{PerlmanRaicu}. We leave the details for the interested reader. We give a detailed discussion for the Pfaffian case in Corollary \ref{cor: simple for Phaffian}.
\end{remark}

\subsection{Symmetric determinant}

Let $X = \Sym_2 (\C^n)^*$ be the space of symmetric $n\times n$ matrices, equipped with the natural action of $G=\GL_n(\C)$. The semi-invariant function $f$ is the determinant of a generic symmetric matrix, with weight $\sigma = \det^2 = (2^n)$ and $\deg f = n$. The $G$-decomposition of $\cM=(\cO_X)_f$ is given by (e.g. \cite[Proposition 2.3.8]{weymanbook})
\begin{equation}\label{eq:cauchy symmetric}
\cM= \bigoplus_{\underline{p}} U_{\underline{p}}=\bigoplus_{\underline{p}\in \Z^n_{\mathrm{dom}}, \, p_i \text{ even}} S_{\underline{p}} \C^n.
\end{equation}
As before, denote by $b_{\underline{p}}(s)$ the $b$-function associated to $\underline{p}$.

\begin{prop}\label{prop: bfunction iota determinant symmetric}
For $\underline{p}=(p_1,\dots, p_n)\in \Z^n_{\mathrm{dom}}$ with all $p_i$ even, we have 
\[ b_{\underline{p}}(s) = \prod_{i=1}^n \left(s+1+\frac{p_i+n-i}{2} \right).\]
\end{prop}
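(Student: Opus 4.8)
The plan is to follow the strategy of the proof of Proposition \ref{prop: bfunction iota determinant}, replacing the classical Capelli identity by its symmetric analogue (Turnbull's identity; see also Howe--Umeda). First I would reduce to the case where all $p_i$ are non-negative and even. Multiplication by $f$ gives a $G$-isomorphism $S_{\underline{p}}\C^n \xrightarrow{\sim} S_{\underline{p}+(2^n)}\C^n$, and since $f$ has weight $\sigma=(2^n)$ one has $b_{\underline{p}+(2^n)}(s)=b_{\underline{p}}(s+1)$ (as in the proof of Corollary \ref{corollary: the action of t and dt on V}); the claimed formula for $\underline{p}$ is equivalent to the one for $\underline{p}+(2^n)$, because $\prod_i\bigl(s+1+\tfrac{(p_i+2)+n-i}{2}\bigr)=\prod_i\bigl((s+1)+1+\tfrac{p_i+n-i}{2}\bigr)$. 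Iterating, we may assume $p_1\geq\cdots\geq p_n\geq 0$ with all $p_i$ even. Then, writing $\delta_i$ for the $i\times i$ leading principal minor of the generic symmetric matrix and putting $p_{n+1}=0$, the product $m_{\underline p}:=\prod_{i=1}^{n}\delta_i^{(p_i-p_{i+1})/2}$ is a highest weight vector of torus weight $\underline{p}$ (as $\delta_i$ has weight $(2^i,0^{n-i})$ and the $\delta_i$ generate the ring of $B$-semi-invariants of $\cO_X$), hence generates the isotypic component $S_{\underline p}\C^n$.

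Next I would apply the symmetric Capelli identity. With $\partial f$ the constant-coefficient semi-invariant operator of weight $-\sigma$ furnished by Proposition \ref{prop:bfunirred} — concretely, the suitably normalized determinant of the matrix $(\partial_{ij})$ of dual coordinates — Turnbull's identity expresses $\partial f\cdot f$ as the column-determinant of $E^{\mathrm{sym}}+\mathrm{diag}(c_1,\dots,c_n)$, where $E^{\mathrm{sym}}_{ij}=\sum_k x_{ik}\partial_{kj}$ are the symmetrized polarization operators spanning a copy of $\mathfrak{gl}_n$ and $(c_1,\dots,c_n)$ is an explicit shift vector. Because $m_{\underline p}$ is a highest weight vector, the entries $E^{\mathrm{sym}}_{ij}$ with $i>j$ raise or annihilate $m_{\underline p}\cdot f^s$, so the column-determinant acts on $m_{\underline p}\cdot f^s$ in upper-triangular form, through the product of its diagonal eigenvalues. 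Evaluating $E^{\mathrm{sym}}_{ii}$ on $m_{\underline p}\cdot f^s=\bigl(\prod_j \delta_j^{(p_j-p_{j+1})/2}\bigr)\delta_n^{\,s}$ records, with the symmetric normalization, the ``degree in the $i$-th row'', which together with the shift $c_i$ produces the factor $s+1+\tfrac{n-i}{2}+\tfrac{p_i}{2}$; the halves are precisely the footprint of $\sigma=\det^2$. Multiplying over $i$ yields the asserted $b_{\underline p}(s)$. Three consistency checks corroborate the answer: for $\underline p=0$ it recovers the classical Bernstein--Sato polynomial $(s+1)(s+\tfrac32)\cdots(s+\tfrac{n+1}{2})$ of the symmetric determinant; it has the product shape predicted by Theorem \ref{thm: computation of blambda}, with shifts $r_i=1+\tfrac{n-i}{2}$ and coefficients $c_{ij}=[\,j\geq i\,]$; and it satisfies the functional equation of Proposition \ref{prop:bfunsymmetry} with $\dim X=\binom{n+1}{2}$ and $d=\deg f=n$, as a short substitution shows.

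The main obstacle I anticipate is essentially bookkeeping: pinning down the correct normalization of $\partial f$ (the factors of $2$ on the off-diagonal entries of $(\partial_{ij})$) and the exact shift vector $(c_1,\dots,c_n)$ in Turnbull's identity, and then tracking the factor-of-two discrepancy relative to the ordinary determinant case so that the eigenvalue of the $i$-th diagonal operator on $m_{\underline p}\cdot f^s$ comes out exactly as $s+1+\tfrac{n-i}{2}+\tfrac{p_i}{2}$ rather than off by a scalar or shift. A secondary, easier point requiring care is checking that $m_{\underline p}$ built from the $\delta_i$ is a highest weight vector for the Borel matching the normalization of $E^{\mathrm{sym}}$, so that the column-determinant genuinely acts triangularly on $m_{\underline p}\cdot f^s$.
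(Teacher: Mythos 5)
Your proposal is correct and follows essentially the same route as the paper: the paper's proof is a one-line sketch saying the argument is analogous to the generic determinant case (reduce to nonnegative $p_i$ by multiplying by powers of $f$, realize $m_{\underline p}$ as a product of leading principal minors, and evaluate $\partial f\cdot f$ on $m_{\underline p}f^s$ via the symmetric Capelli/Turnbull identity of Howe--Umeda), which is exactly what you spell out, with the added (and correct) consistency checks against the classical $b$-function of the symmetric determinant, Theorem \ref{thm: computation of blambda}, and Proposition \ref{prop:bfunsymmetry}.
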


\begin{proof}
    The proof is analogous to that of Proposition \ref{prop: bfunction iota determinant}, using the symmetric version of the Capelli identity \cite{howeumeda}. More precisely, consider the Capelli-type identity in $\sD_X$:
    \begin{equation}\label{turnbull}
    f \cdot \partial f = \det(E_{ij} + (n-i)\delta_{ij}),
    \end{equation}
    where $f=\det(x_{ij})$ with variables $x_{ij}=x_{ji}$, $\partial f = \det((1+\delta_{ij})\partial_{ij})$, 
    \[ E_{ij} = \sum_{k=1}^n (1+\delta_{k,j}) x_{ki} \partial_{kj} \, + (n-i)\delta_{ij},\]
    and $\delta_{ij}$ is the Kronecker delta. With a suitable choice of triangular matrices $B \subseteq \GL_n(\C)$, using the notation from Section \ref{sec:moreb} we have $\cO_X^{[B,B]} = \C[h_1, \dots, h_n]$, where $h_k$ denotes the principal $k\times k$ minor $\det([x_{ij}]_{1\leq i, j \leq  k})$ with highest weight $(2^k)$. Thus, the element $h^{\underline{a}}:=\prod_{i=1}^n h_i^{a_i}$ is the highest weight vector of $S_{\underline{p}} \C^n$, with $p_i = 2 \sum_{j=i}^n a_j$.

    By Proposition \ref{prop:bfunirred} and (\ref{turnbull}), we have an equation
    \begin{equation}\label{hwv}
        \det(E_{ij} + (n-i)\delta_{ij}) \cdot h^{\underline{a}} f^{s+1} = (f \cdot \partial f) \cdot h^{\underline{a}} f^{s+1} = c \cdot b_{\underline{p}}(s)  \cdot h^{\underline{a}}f^{s+1},
    \end{equation}
with a non-zero scalar $c$. Since $h^{\underline{a}}f^{s+1}$ is a highest weight vector, we have $E_{ij} \cdot h^{\underline{a}}f^{s+1} =0$, whenever $i<j$, and $E_{ii}  \cdot h^{\underline{a}}f^{s+1} = (2s+2+2 \sum_{j=i}^n a_j) h^{\underline{a}}f^{s+1}$. As  $\det(E_{ij} + (n-i)\delta_{ij})$ is expanded in the left to right order, only the diagonal product can act non-zero, that is,
\[\det(E_{ij} + (n-i)\delta_{ij}) \cdot h^{\underline{a}}f^{s+1} =  \prod_{i=1}^n (E_{ii}+(n-i)) \, \cdot h^{\underline{a}}f^{s+1} = \prod_{i=1}^n (2s+2 + n-i+ 2\sum_{j=i}^{n} a_i) \cdot h^{\underline{a}}f^{s+1}.\]
This, together with (\ref{hwv}) and $p_i = 2 \sum_{j=i}^n a_j$, yields the result.
\end{proof}

\begin{corollary}\label{cor: hodge ideal symmetric}
    Let $k\in \N$ and $\alpha\in (0,1]$. Then $\tilde{I}_k(\alpha D)=I_k(\alpha D)$ has a primary decomposition
   \[\begin{cases}\bigcap_{n-t \textrm{ even}} J_t^{\left(\lceil \frac{1}{2}(n-t)(2k-\frac{n-t}{2}-1)\rceil\right)}\cap \bigcap_{n-t \textrm{ odd}}J_t^{\left(\lceil \frac{1}{2}(n-t)(2k-\frac{n-t}{2}-1)-\frac{1}{4}\rceil\right)}, \quad &\textrm{if $\alpha\in (0,1/2]$},\\
\bigcap_{n-t \textrm{ even}} J_t^{\left(\lceil \frac{1}{2}(n-t)(2k-\frac{n-t}{2})\rceil\right)}\cap \bigcap_{n-t \textrm{ odd}}J_t^{\left(\lceil \frac{1}{2}(n-t)(2k-\frac{n-t}{2})+\frac{1}{4}\rceil\right)}, \quad &\textrm{if $\alpha\in (1/2,1]$}. \end{cases}\]
where $J_t^{(d)}$ denotes the symbolic power of $J_t$, the ideal of functions vanishing to order $d$ along the subvariety of symmetric matrices of rank $<t$ and $J_t^{(d)}=\cO_X$ if $d\leq 0$.
\end{corollary}

\begin{remark}\label{remark: first hodge ideal for symmetric}
    Note that for $0<\epsilon\ll 1$, $\tilde{I}_1\left((\frac{1}{2}+\epsilon)D\right)=J_{n-1}=I_{D_{\mathrm{sing}}}\neq \cO_X$ is the first proper ideal, which is compatible with $\tilde{\alpha}_f=\frac{3}{2}$ (see \cite[Lemma 3.10]{SY23}). Applying \cite[Theorem E]{MP16}, we obtain
    \[ I_k(D)\subseteq J_t^{(q)}, \quad \textrm{where $q=\min\left(n-t,(k+1)(n+1-t)-\binom{n+2-t}{2}\right)$}.\]
     Our result shows that, for large $k$, the optimal exponent is $\lceil \frac{1}{2}(n-t)(2k-\frac{n-t}{2}-1)-\frac{1}{4}\rceil$.
\end{remark}

\begin{proof}
    Denote by $\lambda=(p_1,\ldots,p_n)\in \Z^n_{\mathrm{dom}}$ with $p_i$ even. Using \eqref{eqn: plambdaalpha in general}, Proposition \ref{prop: bfunction iota determinant symmetric} and Corollary \ref{corollary: G decomposition of Hodge ideals}, it suffices to consider those $\lambda$ such that $\deg p_{\lambda,\alpha+k}(s)\leq k$, which is equivalent to
\begin{equation}\label{eqn: degree p leq k symm}
\sum_{i=1}^{n} \max(\lceil(\alpha+k)-(1+\frac{p_i+n-i}{2})\rceil,0)\leq k.
\end{equation}
Since $p_i\geq p_{i+1}$ for each $i$, one has
\[ \lceil(\alpha+k)-(1+\frac{p_i+n-i}{2})\rceil \leq \lceil(\alpha+k)-(1+\frac{p_{i+1}+n-(i+1)}{2})\rceil.\]
Therefore \eqref{eqn: degree p leq k symm} is equivalent to
\begin{equation}\label{eqn: bound on the partial sum symm}
\sum_{i=t}^{n} \lceil(\alpha+k)-(1+\frac{p_i+n-i}{2})\rceil\leq k, \quad \textrm{for all $1\leq t\leq n$}.\end{equation}
Note that one has 
\[ \lceil(\alpha+k)-(1+\frac{p_i+n-i}{2})\rceil=\begin{cases}k-\frac{p_i+n-i}{2}, &\textrm{if $n-i$ even, $\alpha\in (0,1]$},\\
k-\frac{p_i+n-i}{2}-\frac{1}{2}, &\textrm{if $n-i$ odd, $\alpha\in (0,1/2]$},\\
k-\frac{p_i+n-i}{2}+\frac{1}{2},&\textrm{if $n-i$ odd, $\alpha\in (1/2,1]$}.
\end{cases}\]

Using this, let us compute \eqref{eqn: bound on the partial sum symm}. First suppose $\alpha\in (0,1/2]$, then
\[ \sum_{i=t}^{n} \lceil(\alpha+k)-(1+\frac{p_i+n-i}{2})\rceil=\sum_{i=t}^n (k-\frac{p_i+n-i}{2}) -\frac{1}{2}\cdot \#,\]
where $\#$ is the number of $i$ between $t$ and $n$ such that $n-i$ is odd. If $n-t$ is even, then $\#=(n-t)/2$; if $n-t$ is odd, then $\#=(n-t+1)/2$. Hence \eqref{eqn: bound on the partial sum symm} is equivalent to
    \[\begin{cases}-\frac{n-t}{4}+\sum_{i=t}^{n} (k-\frac{p_i+n-i}{2})\leq k, &\textrm{if $n-t$ even},\\
    -\frac{n-t+1}{4}+\sum_{i=t}^{n} (k-\frac{p_i+n-i}{2})\leq k, &\textrm{if $n-t$ odd}.
    \end{cases}\]
Notice that
\begin{align*} \sum_{i=t}^{n} (k-\frac{p_i+n-i}{2})=k(n-t+1)-\frac{1}{2}\sum_{i=t}^n p_i-\frac{(n-t)(n-t+1)}{4}
\end{align*}
So if $n-t$ is even, then  \eqref{eqn: bound on the partial sum symm}  is equivalent to 
\[ \sum_{i=t}^n p_i\geq 2\cdot \left(-\frac{n-t}{4}+k(n-t)-\frac{(n-t)(n-t+1)}{4}\right)=(n-t)(2k-\frac{n-t}{2}-1).\]
Similarly, if $n-t$ is odd, we have
\begin{align*}\sum_{i=t}^n p_i&\geq 2\cdot \left(-\frac{n-t}{4}+k(n-t)-\frac{(n-t)(n-t+1)}{4}-\frac{1}{4}\right)=(n-t)(2k-\frac{n-t}{2}-1)-\frac{1}{2}.\end{align*}
Hence we conclude that  \eqref{eqn: bound on the partial sum symm}  is equivalent to 
    \[\sum_{i=t}^n p_i\geq \begin{cases} (n-t)(2k-\frac{n-t}{2}-1), \quad &\textrm{if $n-t$ even},\\
    (n-t)(2k-\frac{n-t}{2}-1)-\frac{1}{2}, \quad &\textrm{if $n-t$ odd}.
    \end{cases}\] 

If $\alpha\in (1/2,1]$, a similar computation shows that \eqref{eqn: bound on the partial sum symm}  is equivalent to 
    \[\sum_{i=t}^n p_i\geq \begin{cases}(n-t)(2k-\frac{n-t}{2}), \quad &\textrm{$n-t$ even},\\
    (n-t)(2k-\frac{n-t}{2})+\frac{1}{2}, \quad &\textrm{$n-t$ odd}.
    \end{cases}\]

To summarize, the constraint \eqref{eqn: bound on the partial sum symm}  is equivalent to the following.

\begin{enumerate}
\item If $\alpha\in (0,1/2]$, then
  \[\sum_{i=t}^n p_i\geq \begin{cases} (n-t)(2k-\frac{n-t}{2}-1), \quad &\textrm{if $n-t$ even},\\
    (n-t)(2k-\frac{n-t}{2}-1)-\frac{1}{2}, \quad &\textrm{if $n-t$ odd}.
    \end{cases}\]
\item If $\alpha\in (1/2,1]$, then
      \[\sum_{i=t}^n p_i\geq \begin{cases}(n-t)(2k-\frac{n-t}{2}), \quad &\textrm{$n-t$ even},\\
    (n-t)(2k-\frac{n-t}{2})+\frac{1}{2}, \quad &\textrm{$n-t$ odd}.
    \end{cases}\]
\end{enumerate}
Since $\cO_X$ is multiplicity-free, by \cite[Proposition 4.15]{multfreeinv} (see Page 29 and Lemma 4.13 for the notations) or \cite{abeasis},  the $G$-decomposition of $J_t^{(\ell)}$ contains representation $S_{\underline{p}}\C^n$ such that $\sum_{i=t}^n p_i \geq 2\ell$. As a consequence, this gives the desired primary decomposition.\end{proof}
\begin{remark}
The formula in Corollary \ref{cor: hodge ideal symmetric} matches up with known formulas for $I_k(\alpha D)$ and $\tilde{I}_k(\alpha D)$ in the literature. Set $n=2$, then $f=xy-z^2$ and the only nontrivial $J_t$ is $J_1$, the maximal ideal of the zero matrix. Up to a change of coordinates, this corresponds to the divisor $D=\textrm{div}(x^2+y^2+z^2)$ in $\C^3$. Denote by $\mathfrak{m}$ the ideal of the origin in $\C^3$. Then Corollary \ref{cor: hodge ideal symmetric} implies that
\[ I_k(\alpha D)=\tilde{I}_k(\alpha D)=\begin{cases}\mathfrak{m}^{(k-1)}=\mathfrak{m}^{k-1}, \quad &\textrm{if $\alpha\in (0,1/2]$}\\
\mathfrak{m}^{(k)}=\mathfrak{m}^{k}, \quad &\textrm{if $\alpha\in (1/2,1]$}.
\end{cases}\]

By \cite{SY23} (see Example 6.17 in the first arxiv version), one has  $\tilde{I}_k(\alpha D)=\mathfrak{m}^{\lceil k+\alpha-\frac{3}{2}\rceil}$. A simple computation shows that this matches up with formula above. Turning to Hodge ideals, it suffices to show
\[ I_k(\alpha D)=\mathfrak{m}^{\lceil k+\alpha-\frac{3}{2}\rceil}, \quad \textrm{whenever $0<\alpha\leq 1$}.\]
Note that $I_0(\alpha D)=\tilde{I}_0(\alpha D)=\cO_X=\mathfrak{m}^0$, because $\tilde{\alpha}_f=3/2>1$. Hence by \cite[Proposition D]{Zhang}, we have
\[ I_1(\alpha D)=\tilde{I}_1(\alpha D)=\mathfrak{m}^{\lceil 1+\alpha-\frac{3}{2}\rceil}.\]
So by \cite[Proposition D]{Zhang} again, we also know $I_2(\alpha D)=\tilde{I}_2(\alpha D)$. Arguing this inductively and using the fact that $\tilde{I}_k(\alpha D)=\mathfrak{m}^{\lceil k+\alpha-\frac{3}{2}\rceil}$, we conclude that 
\[ I_k(\alpha D)=\tilde{I}_k(\alpha D)=\mathfrak{m}^{\lceil k+\alpha-\frac{3}{2}\rceil}.\]
Alternatively, one can use \cite[Corollary B]{Zhang} to prove the claim. We leave the details for the interested readers.
\end{remark}

\subsection{Pfaffian}

Let $X = \bigwedge^2 (\C^n)^*$ be the space of skew-symmetric $n\times n$ matrices with the natural action of $G=\GL_n$, where $n$ is even. The semi-invariant function $f$ is the Pfaffian of a generic skew-symmetric matrix, with weight $\sigma = \det = (1^n)$ and $\deg f=n/2$. The $G$-decomposition of $\cM=(\cO_X)_f$ is given by (e.g. \cite[Proposition 2.3.8]{weymanbook})
\begin{equation}\label{eq:cauchy skew-symmetric}
\cM= \bigoplus_{\underline{p}} U_{\underline{p}}=\bigoplus_{\underline{p}\in \Z^n_{\mathrm{dom}},\, p_{2i-1} = p_{2i}} S_{\underline{p}} \C^n.
\end{equation}
As before, denote by $b_{\underline{p}}(s)$ the $b$-function associated to $\underline{p}$.

\begin{prop}\label{prop: bfunction iota determinant skewsymmetric}
Let $\underline{p}=(p_1,\dots, p_n)\in \Z^n_{\mathrm{dom}}$ with $p_{2i-1}=p_{2i}$ for all $1\leq i \leq n/2$. Then
\[ b_{\underline{p}}(s) = \prod_{i=1}^{n/2} (s+1+p_{2i}+n-2i).\]
\end{prop}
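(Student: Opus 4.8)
The plan is to follow the template of Propositions~\ref{prop: bfunction iota determinant} and~\ref{prop: bfunction iota determinant symmetric}, replacing the Capelli identity (resp. its symmetric version) by the \emph{skew-symmetric Capelli identity} for the $\GL_n$-action on $\bigwedge^2\C^n$. Write $x=(x_{ab})$ for the generic alternating $n\times n$ matrix, so $f=\mathrm{Pf}(x)$, and let $\partial f=\mathrm{Pf}(\partial_{x_{ab}})$ be the constant-coefficient operator of weight $-\sigma$ normalized by $\partial f\cdot f=1$ as in Proposition~\ref{prop:bfunirred}. Since multiplying $m_{\underline p}$ by $f^r$ replaces $\underline p$ by $\underline p+(r^n)$ and, because $b_{\lambda+\sigma}(s)=b_\lambda(s+1)$, shifts $b_{\underline p}(s)$ to $b_{\underline p}(s+r)$, it suffices to treat the case $p_n\geq 0$. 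In that case the highest-weight generator of $S_{\underline p}\C^n\subset\C[X]$ can be taken to be the product $\prod_{j=1}^{n/2}\mathrm{Pf}_{2j}(x)^{\,p_{2j}-p_{2j+2}}$ over the nested principal sub-Pfaffians (with $p_{n+2}:=0$), where $\mathrm{Pf}_{2j}$ is the $B$-semi-invariant of highest weight $(1^{2j},0^{n-2j})$; these freely generate $\Lambda(\cO_X)$.

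Next I would invoke the skew Capelli identity (see \cite{howeumeda}, and \cite{multfreeinv}): it expresses the $\GL_n$-invariant operator $\partial f\cdot f=\mathrm{Pf}(\partial)\,\mathrm{Pf}(x)$ as a shifted product $\prod_{i=1}^{n/2}(E_i+c_i)$, where the operators $E_i$ are the Euler/Capelli operators attached to the chain $\GL_2\subset\GL_4\subset\cdots\subset\GL_n$ of principal-submatrix subgroups and are simultaneously diagonalized by the sub-Pfaffians, the $c_i$ being the corresponding $\rho$-shifts. Evaluating the $i$-th factor on $\bigl(\prod_j\mathrm{Pf}_{2j}^{\,p_{2j}-p_{2j+2}}\bigr)f^s$: the $\GL_n$-action is twisted by $s$ through $f^s=\mathrm{Pf}_n^s$, so $\mathrm{Pf}_{2i}$ contributes eigenvalue $s+p_{2i}$ (total degree in the first $2i$ rows/columns), and the $\rho$-shift contributes $1+n-2i$; hence the factor acts by $s+1+p_{2i}+n-2i$. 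Taking the product over $i=1,\dots,n/2$ and comparing with \eqref{eqn: blambda} yields $b_{\underline p}(s)=\prod_{i=1}^{n/2}(s+1+p_{2i}+n-2i)$, which in particular has degree $n/2=\deg f$, consistent with Proposition~\ref{prop:bfunirred}; as a sanity check, $\underline p=0$ recovers the classical $b$-function $(s+1)(s+3)\cdots(s+n-1)$ of the Pfaffian.

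The main obstacle is pinning down the precise shifts in the skew Capelli identity and in the eigenvalues on the nested sub-Pfaffians — i.e. verifying that the $i$-th factor carries exactly $n-2i$ rather than some other combinatorial shift coming from the $\rho$-vector of $\GL_n$ restricted along the chain $\GL_2\subset\cdots\subset\GL_n$. If a clean reference for the skew Capelli identity in the required form is not at hand, there is an alternative route that bypasses it: since $(\GL_n,\bigwedge^2\C^n)$ is a regular prehomogeneous space with the Pfaffian as its single relative invariant, Theorem~\ref{thm: computation of blambda} already guarantees $b_{\sum_j a_j\lambda^j}(s)=\prod_{i=1}^{n/2}(s+r_i+\sum_j c_{ij}a_j)$ with $r_i$ the roots of $b_f(s)$ and $c_{ij}\in\Q_{\geq 0}$, $c_{ij}\leq\deg\mathrm{Pf}_{2j}=j$; it then remains to determine the finitely many constants $c_{ij}$, e.g. by testing on the single-sub-Pfaffian weights $\underline p=(1^{2j},0^{n-2j})$ together with Lemma~\ref{lem:bdiv} and the functional equation of Proposition~\ref{prop:bfunsymmetry}, or via the dual-pair reduction of \cite[Proposition 4.15]{multfreeinv}.
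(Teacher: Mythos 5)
Your proposal is correct and is essentially the paper's own argument: the proof given there is exactly the one-line analogue of Proposition \ref{prop: bfunction iota determinant} that you describe — reduce to $p_n\geq 0$, realize $m_{\underline p}$ as a product of principal sub-Pfaffians, and evaluate the invariant operator $\partial f\cdot f$ on $m_{\underline p}f^s$ via the skew-symmetric Capelli identity of \cite{howeumeda}. Your fallback route through Theorem \ref{thm: computation of blambda}, Lemma \ref{lem:bdiv} and Proposition \ref{prop:bfunsymmetry} is extra insurance the paper does not invoke, but the main line of your argument (including the sanity checks on the degree and on $\underline p=0$) is the intended one.
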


\begin{proof}
    The proof follows as in Proposition \ref{prop: bfunction iota determinant symmetric}, using the skew-symmetric version of the Capelli identity \cite{howeumeda}.
\end{proof}

\begin{corollary}\label{cor: Hodge ideal Pfaffian}
  Let $k\in \N$ and $\alpha\in (0,1]$, then there is a primary decomposition
  \[ \tilde{I}_k(\alpha D)=I_k(\alpha D)=\bigcap_{t=1}^{n/2} J_{2t}^{\left((\frac{n}{2}-t)(k-1)-(\frac{n}{2}-t)^2\right)},\]
  where $J_{2t}^{(d)}$ denotes the $d$-th symbolic power of $J_{2t}$, the ideal of functions vanishing to order $d$ along the subvariety of skew-symmetric matrices with rank $< 2t$.
\end{corollary}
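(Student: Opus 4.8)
The plan is to reduce the statement to a comparison of $G$-modules, exactly as in the treatment of the generic and symmetric determinants. Since $\cO_X$ is multiplicity-free, Corollary \ref{corollary: G decomposition of Hodge ideals} already gives $\tilde{I}_k(\alpha D) = I_k(\alpha D)$ as ideal sheaves, so it suffices to show that an irreducible $U_{\underline{p}}$ from the Cauchy-type decomposition \eqref{eq:cauchy skew-symmetric} lies in $\tilde{I}_k(\alpha D)$ if and only if it lies in the claimed intersection of symbolic powers. By the same Corollary \ref{corollary: G decomposition of Hodge ideals}, $U_{\underline{p}} \subseteq \tilde{I}_k(\alpha D)$ exactly when $\deg p_{\underline{p}, \alpha+k}(s) \leq k$.

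The first computation would be to make this degree condition explicit. By Proposition \ref{prop: bfunction iota determinant skewsymmetric} the roots of $b_{\underline{p}}(s)$ are the integers $-(1+p_{2i}+n-2i)$ for $1 \leq i \leq n/2$, so, using $\alpha \in (0,1]$ in formula \eqref{eqn: plambdaalpha in general}, one gets
\[ \deg p_{\underline{p}, \alpha+k}(s) = \sum_{i=1}^{n/2} \max\{\, k - p_{2i} - n + 2i,\ 0 \,\}. \]
Since $\underline{p}$ is dominant, the numbers $k - p_{2i} - n + 2i$ are non-decreasing in $i$, and the same elementary bookkeeping used for the generic determinant shows that the sum of their positive parts is $\leq k$ if and only if $\sum_{i=t}^{n/2}(k - p_{2i} - n + 2i) \leq k$ for every $1 \leq t \leq n/2$; rearranging, this becomes
\[ \sum_{i=t}^{n/2} p_{2i} \ \geq\ \left(\tfrac{n}{2} - t\right)(k-1) - \left(\tfrac{n}{2} - t\right)^2, \qquad 1 \leq t \leq \tfrac{n}{2}. \]

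The remaining step is to identify the right-hand sides with the conditions defining the symbolic powers. The variety of skew-symmetric matrices of rank $< 2t$ is $G$-stable with prime defining ideal $J_{2t}$, and by the representation-theoretic description of the coordinate rings of Pfaffian determinantal varieties and their symbolic powers — the skew-symmetric analogue of de Concini--Eisenbud--Procesi \cite[Section 7]{dCEP80}, as recorded in \cite{multfreeinv} — one has $S_{\underline{p}}\C^n \subseteq J_{2t}^{(d)}$ if and only if $\sum_{i=t}^{n/2} p_{2i} \geq d$, with the convention $J_{2t}^{(d)} = \cO_X$ for $d \leq 0$. Taking $d = d_t := (\tfrac{n}{2}-t)(k-1) - (\tfrac{n}{2}-t)^2$ and intersecting over $t$ then produces precisely the set of $\underline{p}$ cut out above; since symbolic powers of prime ideals are primary, the resulting intersection is a primary decomposition, which yields the claim.

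I expect the only genuinely non-routine input to be pinning down the correct representation-theoretic (equivalently, symbolic-power) description of the ideals of Pfaffian determinantal varieties and matching its normalization to the constraint $p_{2i-1} = p_{2i}$, so that the exponents come out exactly as $d_t$ and the boundary case $d_t \leq 0$ gives $\cO_X$; the $b$-function input (Proposition \ref{prop: bfunction iota determinant skewsymmetric}) and the combinatorial reduction are then immediate.
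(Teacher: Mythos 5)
Your proposal is correct and follows essentially the same route as the paper: multiplicity-freeness plus Corollary \ref{corollary: G decomposition of Hodge ideals} reduce everything to the degree condition $\deg p_{\underline{p},\alpha+k}(s)\leq k$, which via Proposition \ref{prop: bfunction iota determinant skewsymmetric} and the tail-sum bookkeeping becomes $\sum_{i=t}^{n/2}p_{2i}\geq (\tfrac{n}{2}-t)(k-1)-(\tfrac{n}{2}-t)^2$, matched against the representation-theoretic description of symbolic powers of Pfaffian ideals. The only cosmetic difference is the source cited for that last description (the paper uses \cite[Theorem 5.1]{Pfaffians} rather than \cite{multfreeinv}).
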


\begin{proof}
    Since $\cO_X$ is multiplicity-free, we can compare both sides representation-theoretically. The result then reduces to a direct computation using \eqref{eqn: plambdaalpha in general}, Proposition \ref{prop: bfunction iota determinant skewsymmetric}, Corollary \ref{corollary: G decomposition of Hodge ideals} and \cite[Theorem 5.1]{Pfaffians}.
\end{proof}

\begin{remark}
    Observe that for $0<\alpha\ll 1$, one has $\tilde{I}_3(\alpha D)=J_{2(\frac{n}{2}-1)}=I_{D_{\mathrm{sing}}}\neq \cO_X$  is the first proper higher multiplier ideal, which is consistent with $\tilde{\alpha}_f=3$ (\cite[Lemma 3.10]{SY23}). Applying \cite[Theorem E]{MP16}, we obtain
    \[ I_k(D)\subseteq J_{2t}^{(q)}, \textrm{where $q=\min\left(\frac{n}{2}-t,(k+1)(\frac{n}{2}+1-t)-\binom{n-2t+2}{2}\right)$}.\]
     Our result shows that as $k$ becomes large, the optimal exponent is $(\frac{n}{2}-t)(k-1)-(\frac{n}{2}-t)^2$.
\end{remark}

As another corollary, we describe the Hodge filtrations on all the simple equivariant $\sD$-modules. Namely, let $\IC_{O_j}$ be the intersection cohomology $\sD$-module associated to $O_j$, the loci of rank $2j$ skew-symmetric matrices. 
By \cite[Theorem 5.7]{LW19} and Corollary \ref{thm: Gdecomposition of weight filtration}, it is easy to see that the socle filtration of $\cM=(\cO_X)_f$ must agree with the weight filtration. Since by definition $\gr^W_{\dim X+ \ell}\cM$ has weight $\dim X+\ell$ and $\IC_{O_{n/2-\ell}}$ has weight $\dim O_{n/2-\ell}=\dim X-\binom{2\ell}{2}$, we have
\[\gr^W_{\dim X+ \ell}\cM \cong \IC_{O_{n/2-\ell}}(-\ell^2), \quad \textrm{for all $0\leq \ell \leq n/2$},\]
where the Tate twist $(k)$ decreases the weight by $2k$.

\begin{corollary}\label{cor: simple for Phaffian} For $0 \leq j \leq n/2$, consider the set 
\[\Lambda^k_j = \{ \underline{p} \in \Z^n_{\mathrm{dom}} \mid p_{2i-1} = p_{2i}, \, p_{2j}\geq 2j-n, \, p_{2j+1}\leq 2j+1-n, \,\, - \sum_{i=j+1}^{n/2} p_{2i} \leq k \} \]
Then the $G$-decomposition of the Hodge filtration is given by
    \[F_k(\IC_{O_j}) = \bigoplus_{\underline{p}\in \Lambda^k_j} \, S_{\underline{p}}\, \C^n.\]
\end{corollary}
\begin{proof}
    By (\ref{eq:cauchy skew-symmetric}) and Corollaries \ref{thm: Gdecomposition of weight filtration} and \ref{cor: G decomposition of Hodge filtration}, we have a decomposition
    \[ F_k \gr_{\dim X+ \ell}^W (\mathcal{M}) =\bigoplus_{\substack{\underline{p}\in \Z^n_{\mathrm{dom}},\, p_{2i-1} = p_{2i}\\ \nu_{\underline{p}, 0}=\ell, \,\, \deg p_{\underline{p},0}(s) \leq k}} S_{\underline{p}} \C^n. \]
    Put $\ell = n/2-j$. By Proposition \ref{prop: bfunction iota determinant skewsymmetric}, we get $\nu_{\underline{p}, 0} = \ell$ if and only if $p_{2j}\geq 2j-n$ and $p_{2j+2}<2j+2-n$. Furthermore, we also get that $\deg p_{\underline{p}, 0}(s) \leq k $ if and only if $\sum_{i=j+1}^{n/2} (2i-n-p_{2i}-1) \leq k$. The claim now follows from
    \[ F_k(\IC_{O_j})=F_{k-(n/2-j)^2}\IC_{O_j}\left(-\left(\frac{n}{2}-j\right)^2\right)=F_{k-(n/2-j)^2}\gr^W_{\dim X+ \ell}\cM.\qedhere\]
\end{proof}

\subsection{Fundamental representation of $E_6$} 

Let $G = E_6 \times \C^*$, and let $X$ denote the 27-dimensional fundamental representation of $E_6$ (the simply-connected group of this type), with $\C^*$ acting by scalar multiplication. There exists a unique (up to scalar) semi-invariant $f$ with $\deg f = 3$, the Freudenthal cubic, whose $b$-function has roots $-1, -5, -9$ (see \cite{kimura}). Since there is no Capelli identity in this setting (cf. \cite[Section 11.13]{howeumeda}), we empoly a different approach to determine $b_{\lambda}(s)$.

Fix a Borel subgroup $B\subseteq G$. Up to scalar, there exist 3 irreducible $B$-semi-invariants $h_1, h_2, h_3 = f$ of weights $\lambda^1, \lambda^2, \lambda^3=\sigma$, with $\deg h_i = i$ (see \cite{howeumeda}). 

\begin{prop}\label{prop: b function E6}
    Let $\lambda = a_1 \lambda^1 + a_2 \lambda^2 + a_3 \lambda^3$ with $a_i \in \Z_{\geq 0}$. Then
    \[b_\lambda(s) = (s+a_3+1)(s+a_2+a_3+5)(s+a_1+a_2+a_3+9).\]
\end{prop}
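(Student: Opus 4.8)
\emph{Proof proposal.} The plan is to combine the structural results of this section, which reduce the statement to determining finitely many integers, with the functional equation for $b$-functions and one explicit low-degree computation. The pair $(E_6\times\C^*,X)$ is a regular prehomogeneous vector space whose only semi-invariant hypersurface is $\{f=0\}$, so Theorem \ref{thm: computation of blambda} applies with $d=\deg f=3$ and $(r_1,r_2,r_3)=(1,5,9)$, the negatives of the roots $-1,-5,-9$ of the $b$-function of $f$: there are constants $c_{ij}\in\Q_{\geq0}$, with $c_{ij}\leq\deg h_j$, such that
\[ b_{a_1\lambda^1+a_2\lambda^2+a_3\lambda^3}(s)=\prod_{i=1}^{3}\Bigl(s+r_i+\sum_{j=1}^{3}c_{ij}a_j\Bigr),\qquad(a_1,a_2,a_3)\in\Z_{\geq0}^{3}. \]
By Lemma \ref{lem:bdiv} every root of the left-hand side is an integer translate of $-1,-5$ or $-9$, so, specializing the $a_j$ one at a time, each $c_{ij}$ is a nonnegative integer; hence $c_{i1}\in\{0,1\}$, $c_{i2}\in\{0,1,2\}$, $c_{i3}\in\{0,1,2,3\}$, and the claim is precisely that the matrix $(c_{ij})$ has rows $(0,0,1)$, $(0,1,1)$, $(1,1,1)$.

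Two structural constraints cut this down. Since $\lambda^3=\sigma$, Proposition \ref{prop:bfunirred} gives $b_{\lambda+\sigma}(s)=b_\lambda(s+1)$; comparing this with the product formula forces $c_{13}=c_{23}=c_{33}=1$. Next, $f$ is irreducible and $X\setminus\{f=0\}$ is the dense $G$-orbit, so Proposition \ref{prop:bfunsymmetry} applies with $n=\dim X=27$, giving $b_\lambda(s)=-\,b_{\lambda^*}(-s-10)$. Using that $\lambda^1,\lambda^2$ restrict on $E_6$ to the highest weights of the two mutually dual $27$-dimensional representations and that $\mu\mapsto\mu^*$ is induced by the order-two diagram automorphism of $E_6$ together with negation of the $\C^*$-weight, a short computation in the weight semigroup gives
\[ (a_1\lambda^1+a_2\lambda^2+a_3\lambda^3)^{*}=a_2\lambda^1+a_1\lambda^2-(a_1+a_2+a_3)\lambda^3. \]
Substituting this into the functional equation -- first applying $b_\nu(s)=b_{\nu+N\sigma}(s-N)$ for $N\gg0$ to restore a nonnegative $\lambda^3$-coefficient -- and then matching the two resulting products of affine-linear factors in the unique factorization domain $\Q[a_1,a_2][s]$, one finds that the matching permutation must exchange the factors at $r=1$ and $r=9$ (forced by $r_i=10-r_{\pi(i)}$), and hence $c_{11}+c_{32}=1$, $c_{12}+c_{31}=1$, $c_{21}+c_{22}=1$. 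Together with the bounds above this leaves exactly three binary unknowns, say $c_{31},c_{32},c_{21}\in\{0,1\}$.

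To remove this last ambiguity it suffices to compute one $b$-function, most economically
\[ b_{h_1}(s)=b_{\lambda^1}(s)=(s+1+c_{11})(s+5+c_{21})(s+9+c_{31}), \]
since $c_{11},c_{21},c_{31}$ then determine $c_{32},c_{12},c_{22}$ via the three symmetry relations. I would do this by realizing $X$ as the split exceptional Jordan algebra with $f$ the cubic norm, taking $h_1$ to be a highest-weight linear form for an adapted Borel, and producing an explicit identity $P(s)\cdot h_1f^{s+1}=b_{h_1}(s)\cdot h_1f^{s}$ with $P(s)$ built from the constant-coefficient operator $\partial f$ (which satisfies $\partial f\cdot f^{s+1}=(s+1)(s+5)(s+9)f^{s}$) and the polarizations of the dual cubic; alternatively one can invoke the known $b$-functions of the complementary semi-invariants $h_1,h_2$ for this prehomogeneous space. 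The outcome is $b_{h_1}(s)=(s+1)(s+5)(s+10)$, i.e. $c_{11}=c_{21}=0$ and $c_{31}=1$, hence $c_{32}=1$, $c_{12}=0$, $c_{22}=1$, and the product formula becomes $(s+a_3+1)(s+a_2+a_3+5)(s+a_1+a_2+a_3+9)$.

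The main obstacle is this last computation: everything else is formal, but without a Capelli identity there is no evident shortcut to $b_{h_1}(s)$ -- equivalently, to deciding whether $-5,-9$ rather than $-6,-10$ occur as its roots -- so one must either work explicitly inside the $27$-dimensional representation or appeal to the prehomogeneous-spaces literature. A secondary point to check carefully is that the diagram-automorphism description of $\mu\mapsto\mu^{*}$ is compatible with the chosen generators $\lambda^1,\lambda^2,\lambda^3$, so that the displayed formula for $\lambda^{*}$ -- and hence the three symmetry relations -- holds exactly as stated.
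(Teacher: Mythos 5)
Your setup agrees with the paper up to a point: invoking Theorem \ref{thm: computation of blambda}, getting integrality of the $c_{ij}$ (your route via Lemma \ref{lem:bdiv} is fine, the paper uses Remark \ref{rem:cijnilp}), handling $a_3$ via $b_{\lambda+\sigma}(s)=b_\lambda(s+1)$, and extracting from Proposition \ref{prop:bfunsymmetry} with $(\lambda^1)^*=\lambda^2-\sigma$ the relations $c_{11}+c_{32}=1$, $c_{12}+c_{31}=1$, $c_{21}+c_{22}=1$. But the decisive step of your argument is exactly the one you do not carry out: you reduce to three binary unknowns and then propose to resolve them by an explicit computation of $b_{h_1}(s)$ inside the exceptional Jordan algebra, or by an unspecified appeal to the literature. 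No such identity $P(s)\cdot h_1f^{s+1}=b_{h_1}(s)\,h_1f^s$ is produced, and the paper's point of departure is precisely that no Capelli-type identity is available here; as written, the value $b_{h_1}(s)=(s+1)(s+5)(s+10)$ is asserted rather than proved, so the proof is incomplete at its crux. (A small symptom of this: your aside frames the issue as ``$-5,-9$ versus $-6,-10$'', but under your constraints the actual answer mixes the two, namely roots $-1,-5,-10$.)

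The paper closes this gap with two soft arguments that need no new explicit computation. First, substituting $s=-1$ into $\partial f\cdot h_1^{a_1}h_2^{a_2}f^{s+1}=b_{\lambda}(s)h_1^{a_1}h_2^{a_2}f^s$ forces the left side to vanish, because the resulting element of $\cO_X$ would have weight $a_1\lambda^1+a_2\lambda^2-\sigma$, which does not lie in the free semigroup $\Lambda(\cO_X)$; hence $c_{11}=c_{12}=0$, and the symmetry then gives $c_{31}=c_{32}=1$ and $c_{21}+c_{22}=1$, leaving only the single dichotomy $(c_{21},c_{22})\in\{(1,0),(0,1)\}$. Second, the case $(c_{21},c_{22})=(1,0)$ is excluded by a weight-filtration argument: by Corollary \ref{thm: Gdecomposition of weight filtration} it would place $h_1f^{-5}$ in a weight level not containing $h_2f^{-5}$; but the weight levels are $\sD_X$- (in particular $\cO_X$- and $\lie$-) submodules, and the $\lie$-span of $h_1$ is all linear forms, so $h_1f^{-5}$ in that level forces $\mathfrak m\cdot f^{-5}$ into it, contradicting $h_2\in\mathfrak m$. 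If you want to salvage your route, you should either import this kind of structural argument to settle your remaining binary unknowns, or actually exhibit the operator identity computing $b_{h_1}(s)$ (with a precise reference or a complete calculation); as it stands the proposal stops short of a proof.
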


\begin{proof}
We may assume that $a_3=0$. By Theorem \ref{thm: computation of blambda}, there exist $c_{ij} \in \Q_{\geq 0}$ such that
\[\partial f \cdot h_1^{a_1} h_2^{a_2} f^{s+1} = (s+c_{11}a_1+c_{12}a_2+1)(s+c_{21}a_1+c_{22}a_2+5)(s+c_{31}a_1+c_{32}a_2+9) \cdot h_1^{a_1} h_2^{a_2} f^{s}.\]
Substituting $s=-1$ yields $c_{11}=c_{12}=0$. It is easy to verify that $(\lambda^1)^* = \lambda^2 - \sigma$, and thus $\lambda^* = a_1 \lambda^2 + a_2 \lambda^1 - (a_1+a_2)\sigma$. Using Proposition \ref{prop:bfunsymmetry}, we deduce $c_{31} = c_{32}=1$, and $c_{21}+c_{22}=1$. Moreover, by Remark \ref{rem:cijnilp} we conclude $c_{2i} \in \Z_{\geq 0}$.

Assume, for contradiction, that $c_{21}=1$ and $c_{22}=0$. Then by Corollary \ref{thm: Gdecomposition of weight filtration}, we have $h_1 f^{-5} \in W_4 \cM$ and $h_2 f^{-5} \notin W_4 \cM$. Since $h_1\in \mathfrak{m}$, where $\mathfrak{m} \subseteq \cO_X$ is the homogeneous maximal ideal, it would follow that $\mathfrak{m} \cdot f^{-5}\subseteq W_4 \cM$, contradicting the fact that $h_2 \in \mathfrak{m}$. This contradiction completes the proof.
\end{proof}

\begin{corollary}\label{cor: Hodge ideal E6}
  Let $k\in \N$ and $\alpha\in (0,1]$, then there is a primary decomposition
  \[ \tilde{I}_k(\alpha D)=I_k(\alpha D)=I_1^{(2k-12)}\cap I_2^{(k-4)},\]
  where $I_1$ is the maximal ideal of the origin and $I_2$ is the ideal of the singular locus of $D$.
\end{corollary}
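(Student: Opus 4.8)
The plan is to repeat, almost verbatim, the representation-theoretic comparison used in the proofs of Corollary~\ref{cor: hodge ideal symmetric} and Corollary~\ref{cor: Hodge ideal Pfaffian}. Since $\cO_X$ is multiplicity-free, all three ideals in the statement are direct sums of isotypic components $U_\lambda$, with $\lambda=a_1\lambda^1+a_2\lambda^2+a_3\lambda^3$, $a_i\in\Z_{\geq 0}$, so it suffices to check that the same set of $\lambda$ occurs in each. By Corollary~\ref{corollary: G decomposition of Hodge ideals}, $U_\lambda$ occurs in $\tilde I_k(\alpha D)=I_k(\alpha D)$ exactly when $\deg p_{\lambda,\alpha+k}(s)\leq k$.

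First I would evaluate that degree condition. By Proposition~\ref{prop: b function E6} the roots of $b_\lambda(s)=\prod_{i=1}^3(s+\lambda_i)$ are the integers $\lambda_1=a_3+1\leq\lambda_2=a_2+a_3+5\leq\lambda_3=a_1+a_2+a_3+9$. Since $\alpha\in(0,1]$ one has $\lceil\alpha+k-\lambda_i\rceil=k+1-\lambda_i$, so \eqref{eqn: plambdaalpha in general} gives $\deg p_{\lambda,\alpha+k}(s)=\sum_{i=1}^3\max(k+1-\lambda_i,\,0)$. As the $\lambda_i$ are non-decreasing, exactly as in the proof of Corollary~\ref{cor: Hodge ideal Pfaffian} the inequality $\deg p_{\lambda,\alpha+k}(s)\leq k$ is equivalent to $\sum_{i=1}^j(k+1-\lambda_i)\leq k$ for all $j=1,2,3$. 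The case $j=1$ holds automatically (it reads $\lambda_1\geq 1$), while $j=2$ and $j=3$ unwind to
\[ a_2+2a_3\geq k-4 \qquad\text{and}\qquad a_1+2a_2+3a_3\geq 2k-12 .\]

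It remains to match these two inequalities with membership in the symbolic powers on the right (with the convention $J^{(d)}=\cO_X$ for $d\leq 0$). Since $I_1$ is the maximal ideal of the origin, $I_1^{(m)}=\fm^m$ is spanned by the homogeneous pieces of degree $\geq m$, and $U_\lambda$ is homogeneous of degree $a_1\deg h_1+a_2\deg h_2+a_3\deg f=a_1+2a_2+3a_3$; hence $U_\lambda\subseteq I_1^{(2k-12)}$ iff $a_1+2a_2+3a_3\geq 2k-12$. For $I_2$ I would invoke the orbit structure of the prehomogeneous space $(E_6\times\C^\ast,X)$: $Z:=\Sing(D)$ is the closure of the unique $17$-dimensional orbit — the ``rank-one'' variety, the affine cone over the Cayley plane — which is irreducible and smooth at its generic point, so $\mathcal O_{X,I_2}$ is regular local and the order of vanishing $\operatorname{ord}_Z$ along $Z$ is a valuation. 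Because $I_2^{(m)}$ is $G$-stable, $U_\lambda\subseteq I_2^{(m)}$ iff $\operatorname{ord}_Z(h_1^{a_1}h_2^{a_2}f^{a_3})\geq m$, and the values $\operatorname{ord}_Z(h_1)=0$, $\operatorname{ord}_Z(h_2)=1$, $\operatorname{ord}_Z(f)=2$ give $\operatorname{ord}_Z(h_1^{a_1}h_2^{a_2}f^{a_3})=a_2+2a_3$, so $U_\lambda\subseteq I_2^{(k-4)}$ iff $a_2+2a_3\geq k-4$. Combining, $U_\lambda\subseteq I_1^{(2k-12)}\cap I_2^{(k-4)}$ precisely under the two displayed inequalities, matching the left-hand side; and since symbolic powers of primes are primary, this intersection is a primary decomposition. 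The main obstacle is exactly this last step — establishing $\operatorname{ord}_Z(h_i)=i-1$, equivalently the $G$-decomposition of the symbolic powers of $I(\Sing D)$. This rests on classical facts about the Freudenthal cubic: that $\Sing(D)$ is the rank-one orbit closure (so the $G$-span of the linear form $h_1$ spanning $X^\ast$ forces $h_1\notin I_2$), that $D$ has generic multiplicity $2$ along $Z$ (giving $\operatorname{ord}_Z(f)=2$), and that the $27$ components of the Freudenthal adjoint $x\mapsto x^{\#}$ generate $I_2$ in degree $2$ with no quadric vanishing to order $2$ on $Z$ (giving $\operatorname{ord}_Z(h_2)=1$) — which one either cites from the theory of exceptional Jordan algebras and prehomogeneous vector spaces or checks by a local computation at the generic point of $Z$. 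Everything else runs parallel to the determinant, symmetric, and Pfaffian cases.
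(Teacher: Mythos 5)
Your translation of the condition $\deg p_{\lambda,\alpha+k}(s)\leq k$ into the two inequalities $a_2+2a_3\geq k-4$ and $a_1+2a_2+3a_3\geq 2k-12$ is exactly the computation in the paper (the paper writes it in the coordinates $b_1=a_1+a_2+a_3$, $b_2=a_2+a_3$, $b_3=a_3$, where the inequalities read $b_2+b_3\geq k-4$ and $b_1+b_2+b_3\geq 2k-12$), so the first half of your argument coincides with the paper's proof. Where you diverge is the last step, matching these inequalities with $I_1^{(2k-12)}\cap I_2^{(k-4)}$: the paper simply invokes \cite[Proposition 4.15]{multfreeinv}, the same general result on the $G$-decomposition of (symbolic powers of) orbit-closure ideals in multiplicity-free spaces used in the symmetric-determinant case, whereas you rederive the needed decomposition from a valuation argument, computing $\operatorname{ord}_Z(h_1)=0$, $\operatorname{ord}_Z(h_2)=1$, $\operatorname{ord}_Z(f)=2$ along $Z=\Sing(D)$ and using multiplicity-freeness plus $G$-stability of $I_2^{(m)}$ to test membership on highest weight vectors. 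Your framework for this is sound ($\cO_{X,I_2}$ is regular since $X$ is smooth, so $\operatorname{ord}_Z$ is a valuation, and $U_\lambda\subseteq I_2^{(m)}$ iff $h_1^{a_1}h_2^{a_2}f^{a_3}$ has order $\geq m$), and the asserted orders are in fact correct (the cubic norm vanishes to order $3-r$ at rank-$r$ elements, and the adjoint quadrics cut out $Z$ with generically nonvanishing differential, which by irreducibility of their $G$-span gives $\operatorname{ord}_Z(h_2)=1$); but as written these inputs, together with the identification of $\Sing(D)$ with the rank-one cone, are left as facts to be cited or checked, so to be complete you must either carry out the local computation you sketch or supply precise references -- the paper's citation of \cite{multfreeinv} is exactly what discharges this step and lets it avoid any geometry of the Freudenthal cubic beyond Proposition \ref{prop: b function E6}. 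The trade-off: your route is more self-contained geometrically and explains \emph{why} the symbolic powers appear, while the paper's is shorter and uniform across all the examples in Section 5.
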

\begin{remark}
For $0<\alpha\ll 1$, we have $\tilde{I}_5(\alpha D)=I_2\neq \cO_X$ is the first proper ideal, which is consistent with $\tilde{\alpha}_f=5$ via \cite[Lemma 3.10]{SY23}. Applying \cite[Theorem E]{MP16} to $D_t=\mathrm{Zero}(I_t)$, we have
    \[ I_k(D)\subseteq I_t^{(q)}, \quad q=\begin{cases}\min (2,3k-24),&\textrm{$t=1$},\\
    \min(1,2k-8), &\textrm{$t=2$},\end{cases}\]
     given that $\textrm{mult}_{D_1}D=3,\textrm{mult}_{D_2}D=2,\mathrm{codim}D_1=27$ and $\mathrm{codim}D_2=10$. Our result shows that as $k$ becomes large, the optimal value of $q$ is $k-4$.
\end{remark}
\begin{proof}
We set up a correspondence between all $\lambda$ and $\Z^3_{\mathrm{dom}}$ via
\[ \lambda^1 = (1,0,0), \quad \lambda^2=(1,1,0), \quad \lambda^3=(1,1,1).\]
Given $\lambda=a_1\lambda^1+a_2\lambda^2+a_3\lambda^3$, this corresponds to $(b_1,b_2,b_3)\in \Z^3$ with 
\[b_1=a_1+a_2+a_3,\quad b_2=a_2+a_3, \quad b_3=a_3.\]
Using \eqref{eqn: plambdaalpha in general}, Proposition \ref{prop: b function E6}, and Corollary \ref{corollary: G decomposition of Hodge ideals}, it suffices to consider those $\lambda$ such that $\deg p_{\lambda,\alpha+k}(s)\leq k$, which is equivalent to
\[ \max(\lceil \alpha+k-b_3-1\rceil,0)+\max(\lceil \alpha+k-b_2-5\rceil,0)+\max(\lceil \alpha+k-b_1-9\rceil,0)\leq k.\]
Given $\alpha\in (0,1]$, it follows that this inequality reduces to
\[ b_3\geq 0, \, b_2+b_3\geq k-4,\, b_1+b_2+b_3\geq 2k-12.\]
The conclusion follows from \cite[Proposition 4.15]{multfreeinv} and that $\cO_X$ is multiplicity-free.\end{proof}

\subsection{Prehomogeneous spaces and holonomic functions}\label{sec:last} 

The results from Section \ref{sec:1.1} can be readily applied to other representations beyond multiplicity-free spaces, such as prehomogeneous spaces. Let $X$ be a prehomogeneous space with a semi-invariant function $f$, and set $\cM=(\cO_X)_f$. Then the isotypic components $\cM_\lambda$ are irreducible for all $\lambda = \sigma^k$, $k\in \Z$. Consequently, by Theorems \ref{thm:weightisot} and \ref{thm:hodgeisot}, the Bernstein--Sato polynomial of $f$ determines both the weight and Hodge level of the element $f^\alpha$ for any $\alpha \in \Q$. Considerable effort has been made toward computing these $b$-functions in various cases; see, for example \cite{kimura} for the irreducible prehomogeneous vector spaces.

\smallskip

The article \cite{holoprehom} obtains several explicit $b$-functions $b_\lambda(s)$ for $\cM \neq (\cO_X)_f$, that is, for localizations of simple $\sD$-modules corresponding to non-trivial local systems on the dense orbit. In this setting, elements of $\cM$ can be realized as $G$-finite holonomic (even algebraic) functions, and the results are obtained under the assumption that each $\cM_\lambda$ is irreducible (cf. \cite[Section 4]{holoprehom}).  Therefore, Theorem \ref{thm:weightisot}  applies directly to determine the weight levels of such functions in $\cM$. Building on this, we plan to pursue the full determination of the weight filtrations of these $\cM$ in future work.
\bibliographystyle{alpha}
\bibliography{references}

\vspace{\baselineskip}

\footnotesize{
\textsc{Department of Mathematics, University of Oklahoma, 660 Parrington Oval, Norman, OK 73019, United States} \\
\indent \textit{E-mail address:} \href{mailto:lorincz@ou.edu}{lorincz@ou.edu}

\vspace{\baselineskip}

\textsc{Department of Mathematics, University of Kansas, 1450 Jayhawk Blvd, Lawrence, KS 66045, United States} \\
\indent \textit{E-mail address:} \href{mailto:ruijie.yang@ku.edu}{ruijie.yang@ku.edu} 
}

\end{document}